\definecolor{bgcolor}{rgb}{0.8,1,1}
\definecolor{bgcolor2}{rgb}{0.8,1,0.8}
\definecolor{niceblue}{rgb}{0.0,0.19,0.56}
\newcommand{\R}{\mathbb{R}}
\newcommand{\eqdef}{:=}
\def\<#1,#2>{\left\langle #1,#2\right\rangle}
\definecolor{shadecolor}{gray}{0.9}
\declaretheoremstyle[
headfont=\normalfont\bfseries,
notefont=\mdseries, notebraces={(}{)},
bodyfont=\normalfont,
postheadspace=0.5em,
spaceabove=1pt,
mdframed={
 skipabove=8pt,
 skipbelow=8pt,
 hidealllines=true,
 backgroundcolor={shadecolor},
 innerleftmargin=4pt,
 innerrightmargin=4pt}
]{shaded}
\declaretheorem[style=shaded,within=section]{definition}
\declaretheorem[style=shaded,sibling=definition]{theorem}
\declaretheorem[style=shaded,sibling=definition]{assumption}
\declaretheorem[style=shaded,sibling=definition]{corollary}
\declaretheorem[style=shaded,sibling=definition]{lemma}
\declaretheorem[style=shaded,sibling=definition]{remark}
\declaretheorem[style=shaded,sibling=definition]{example}
\newcommand{\algname}[1]{{\sf  #1}\xspace}
\newcommand{\cC}{{\cal C}}
\newcommand{\cK}{{\cal K}}
\newcommand{\cL}{{\cal L}}
\newcommand{\cM}{{\cal M}}
\newcommand{\cN}{{\cal N}}
\newcommand{\cO}{{\cal O}}
\newcommand{\cT}{{\cal T}}
\newcommand{\mI}{{\bf I}}
\newcommand{\EE}{\mathbb{E}}
\newcommand{\PP}{\mathbb{P}}
\newlength{\dhatheight}
\renewcommand{\leq}{\leqslant} 
\renewcommand{\geq}{\geqslant}
\def\Lasym{\cL_{\text{asym}}(L_0, L_1)}
\def\Lsym{\cL_{\text{sym}}(L_0, L_1)}
\newcommand\swapifbranches[3]{#1{#3}{#2}}
\patchcmd{\DeclarePairedDelimiter}{\@ifstar}{\swapifbranches\@ifstar}{}{}
\DeclarePairedDelimiterX{\inp}[2]{\langle}{\rangle}{#1, #2}
\DeclarePairedDelimiterX{\abs}[1]{\lvert}{\rvert}{#1}
\DeclarePairedDelimiterX{\roundup}[1]{\lceil}{\rceil}{#1}
\DeclarePairedDelimiterX{\norm}[1]{\lVert}{\rVert}{#1}
\DeclarePairedDelimiterX{\cbr}[1]{\{}{\}}{#1} 
\DeclarePairedDelimiterX{\rbr}[1]{(}{)}{#1} 
\DeclarePairedDelimiterX{\sbr}[1]{[}{]}{#1} %
\newcommand{\n}[1]{\left\|#1 \right\|}
\newcommand{\lr}[1]{\langle #1\rangle}
\newcommand{\mytag}[1]{%
  \refstepcounter{equation}%
  \edef\@currentlabel{\theequation}%
  {({\@currentlabel})}%
  \@bsphack
  \begingroup
    \@onelevel@sanitize\@currentlabelname
    \edef\@currentlabelname{%
      \expandafter\strip@period\@currentlabelname\relax.\relax\@@@%
    }%
    \protected@write\@auxout{}{%
      \string\newlabel{#1}{%
        {\@currentlabel}%
        {\thepage}%
        {\@currentlabelname}%
        {\@currentHref}{}%
      }%
    }%
  \endgroup
  \@esphack
}
\title{\bf Methods for Convex $(L_0, L_1)$-Smooth Optimization: Clipping, Acceleration, and Adaptivity}
\author{\begin{tabular}{c}
     {\bf Eduard Gorbunov}\thanks{Equal contribution.}  \\
     MBZUAI 
\end{tabular} \quad \begin{tabular}{c}
     {\bf Nazarii Tupitsa}$^{*}$  \\
     MBZUAI 
\end{tabular} \quad \begin{tabular}{c}
     {\bf Sayantan Choudhury}  \\
     JHU
\end{tabular}\\\\ 
\begin{tabular}{c}
     {\bf Alen Aliev} \\
     MBZUAI
\end{tabular}\quad
 \begin{tabular}{c}
     {\bf Peter Richt\'arik}  \\
     KAUST 
\end{tabular}\quad \begin{tabular}{c}
     {\bf Samuel Horv\'ath}  \\
     MBZUAI 
\end{tabular}\quad \begin{tabular}{c}
     {\bf Martin Tak\'a\v{c}}  \\
     MBZUAI 
\end{tabular}}
\newcommand{\revision}[1]{{\color{black} {#1}}}
\begin{document}

\maketitle

\begin{abstract}
Due\footnote{The first version of this work appeared on arXiv on September 23, 2024. In the current version, we provide improved convergence rates for Adaptive Gradient Descent \citep{malitsky2019adaptive} via minor modification of the proof. We also extended the discussion of the related work, restructured the paper, and provided additional numerical results in the Appendix.} to the non-smoothness of optimization problems in Machine Learning, generalized smoothness assumptions have been gaining a lot of attention in recent years. One of the most popular assumptions of this type is $(L_0,L_1)$-smoothness \citep{zhang2020why}. In this paper, we focus on the class of (strongly) convex $(L_0,L_1)$-smooth functions and derive new convergence guarantees for several existing methods. In particular, we derive improved convergence rates for Gradient Descent with (Smoothed) Gradient Clipping and for Gradient Descent with Polyak Stepsizes. In contrast to the existing results, our rates do not rely on the standard smoothness assumption and do not suffer from the exponential dependency on the initial distance to the solution. We also extend these results to the stochastic case under the over-parameterization assumption, propose a new accelerated method for convex  $(L_0,L_1)$-smooth optimization, and derive new convergence rates for Adaptive Gradient Descent \citep{malitsky2019adaptive}.
\end{abstract}

\tableofcontents

\section{Introduction}

Modern optimization problems arising in Machine Learning (ML) and Deep Learning (DL) are typically non-smooth, i.e., the gradient of the objective function is not necessarily Lipschitz continuous. In particular, the gradient of the standard $\ell_2$-regression loss computed for simple networks 
is not Lipschitz continuous \citep{zhang2020why}. Moreover, the methods that are designed to benefit from the smoothness of the objective often perform poorly in Deep Learning, where problems are non-smooth. For example, variance-reduced methods
\citep{schmidt2017minimizing,
johnson2013accelerating,
defazio2014saga,
nguyen2017sarah,
nguyen2021inexact,
beznosikov2021random,
shi2023ai}
are known to be faster in theory (for finite sums of smooth functions) but are outperformed by slower theoretically non-variance-reduced methods \citep{defazio2019ineffectiveness}. All of these reasons motivate researchers to consider different assumptions to replace the standard smoothness assumption.

One such assumption is \emph{$(L_0,L_1)$-smoothness} originally introduced by \citet{zhang2020why} for twice differentiable functions. This assumption allows the norm of the Hessian of the objective to increase linearly with the growth of the norm of the gradient. In particular, $(L_0,L_1)$-smoothness can hold even for functions with polynomially growing gradients -- a typical behavior for DL problems. Moreover, the notion of $(L_0,L_1)$-smoothness can also be extended to the class of differentiable but not necessarily twice differentiable functions \revision{\citep{zhang2020improved,chen2023generalized}.}

Although \citet{zhang2020why} focus on the non-convex problems as well as more recent works such as \citep{zhang2020improved, zhao2021convergence, faw2023beyond, wang2023convergence, li2024convergence, chen2023generalized, hubler2024parameter}, the class of $(L_0,L_1)$-smooth \emph{convex}\footnote{Although many existing problems are not convex, it is useful to understand methods behavior under the convexity assumption as well due to several reasons\revision{; see further details in Appendix~\ref{appendix:generalized_smoothness}.}} function is much weaker explored. In particular, the existing convergence results for the methods such as Gradient Descent with Clipping \citep{pascanu2013on} and Gradient Descent with Polyak Stepsizes \citep{polyak1987introduction} applied to $(L_0,L_1)$-smooth convex problems either rely on additional smoothness assumption \citep{koloskova2023revisiting, takezawa2024polyak} or require (potentially) small stepsizes to ensure that the method stays in the compact set where the gradient is bounded and, as a consequence of $(L_0,L_1)$-smoothness of the objective, Lipschitz continuous \citep{li2024convex}. This leads us to the following natural question:
\begin{gather*}
    \textit{How the convergence bounds for different versions of Gradient Descent depend on $L_0$ and $L_1$}\\
    \textit{when the objective function is convex, $(L_0,L_1)$-smooth but not necessarily $L$-smooth?}
\end{gather*}

In this paper, we address the above question for Gradient Descent with Smoothed Gradient Clipping, Polyak Stepsizes, Similar Triangles Method \citep{gasnikov2016universal}, and Adaptive Gradient Descent \citep{malitsky2019adaptive}: \emph{for each of the mentioned methods, we either improve the existing convergence results or derive the first convergence results under $(L_0,L_1)$-smoothness. We also derive new results for the stochastic versions of Gradient Descent with Smoothed Gradient Clipping and Polyak Stepsizes.}

\subsection{Problem Setup}
Before we continue the discussion of the related work and our results, we need to formalize the problem setup. That is, we consider the unconstrained minimization problem
\begin{equation}
    \min\limits_{x\in \R^d} f(x), \label{eq:main_problem}
\end{equation}
where $f:\R^d \to \R$ is a (strongly) convex differentiable function.

\begin{assumption}[Convexity]\label{as:convexity}
    Function $f: \R^d \to \R$ is $\mu$-strongly convex with\footnote{In this paper, we consider standard $\ell_2$-norm for vectors and spectral norm for matrices.} $\mu \geq 0$:
    \begin{equation}
        f(y) \geq f(x) + \langle \nabla f(x), y - x \rangle + \frac{\mu}{2}\|x-y\|^2, \quad \forall x,y \in \R^d. \label{eq:convexity}
    \end{equation}
\end{assumption}

As we already mentioned earlier, in addition to convexity, we assume that the objective function is $(L_0,L_1)$-smooth. Following\footnote{\revision{The first version of Assumptions~\ref{as:asymmetric_L0_L1} and \ref{as:symmetric_L0_L1} is proposed by \citet{zhang2020improved}.}} \citet{chen2023generalized}, we consider two types of $(L_0,L_1)$-smoothness.

\begin{assumption}[Asymmetric $(L_0, L_1)$-smoothness]\label{as:asymmetric_L0_L1}
    Function $f: \R^d \to \R$ is asymmetrically $(L_0, L_1)$-smooth ($f \in \Lasym$), i.e., for all $x, y \in \R^d$ we have
    \begin{equation}
        \|\nabla f(x) - \nabla f(y)\| \leq \left(L_0 + L_1\|\nabla f(y)\|\right)\|x-y\|. \label{eq:asymmetric_L0_L1}
    \end{equation}
\end{assumption}

\begin{assumption}[Symmetric $(L_0, L_1)$-smoothness]\label{as:symmetric_L0_L1}
    Function $f: \R^d \to \R$ is symmetrically $(L_0, L_1)$-smooth ($f \in \Lsym$), i.e., for all $x, y \in \R^d$ we have
    \begin{equation}
        \|\nabla f(x) - \nabla f(y)\| \leq \left(L_0 + L_1\sup\limits_{u\in [x,y]}\|\nabla f(u)\|\right)\|x-y\|. \label{eq:symmetric_L0_L1}
    \end{equation}
\end{assumption}

Clearly, Assumption~\ref{as:symmetric_L0_L1} is more general than Assumtpion~\ref{as:asymmetric_L0_L1}. Due to this reason, we will mostly focus on Assumption~\ref{as:symmetric_L0_L1}, and by $(L_0,L_1)$-smooth functions, we will mean functions satisfying Assumption~\ref{as:symmetric_L0_L1} if the opposite is not specified. Nevertheless, it is worth mentioning that asymmetric $(L_0, L_1)$-smoothness (under some extra assumptions) is satisfied for a certain problem formulation appearing in Distributionally Robust Optimization \citep{jin2021non}. \citet{chen2023generalized} also show that exponential function satisfies \eqref{eq:symmetric_L0_L1}, and, more generally, for twice differentiable functions Assumption~\ref{as:symmetric_L0_L1} is equivalent to 
\begin{equation}
    \|\nabla^2 f(x)\|_2 \leq L_0 + L_1\|\nabla f(x)\|,\quad \forall x\in \R^d. \label{eq:L0_L1_hessian_def}
\end{equation}
Moreover, below, we provide some examples of functions satisfying Assumption~\ref{as:symmetric_L0_L1} but either not satisfying standard $L$-smoothness, i.e., \eqref{eq:symmetric_L0_L1} with $L_1 = 0$, or satisfying $L$-smoothness with larger constants than $L_0$ and $L_1$ respectively. The detailed proofs are deferred to Appendix~\ref{appendix:examples}.

\begin{example}[Power of Norm]\label{example:power_of_norm}
    Let $f(x) = \|x\|^{2n}$, where $n$ is a positive integer. Then, $f(x)$ is convex and $(2n,2n-1)$-smooth. Moreover, $f(x)$ is not $L$-smooth for $n\geq 2$ and any $L \geq 0$.
\end{example}
\begin{example}[Exponent of the Inner Product]
    Function $f(x) = \exp(a^\top x)$ for some $a\in \R^d$ is convex, $(0,\|a\|)$-smooth,  but not $L$-smooth for $a\neq 0$ and any $L \geq 0$.
\end{example}

These two examples illustrate that $(L_0,L_1)$-smoothness is quite a mild assumption, and it is strictly weaker than $L$-smoothness. However, the next example shows that even when $L$-smoothness holds, it makes sense to consider $(L_0,L_1)$-smoothness as well.

\begin{example}[Logistic Function]\label{example:logistic}
    Consider logistic function: $f(x) = \log\left(1 + \exp(-a^\top x)\right)$, where $a \in \R^d$ is some vector. It is known that this function is $L$-smooth and convex with $L = \|a\|^2$. However, one can show that $f$ is also $(L_0, L_1)$-smooth with $L_0 = 0$ and $L_1 = \|a\|$. For $\|a\| \gg 1$, both $L_0$ and $L_1$ are much smaller than $L$.
\end{example}

\subsection{Related Works}

We overview closely related works below and defer the additional discussion to Appendix~\ref{appendix:generalized_smoothness}.

\paragraph{Results in the non-convex case.} \citet{zhang2020why} introduce $(L_0,L_1)$-smoothness in the form \eqref{eq:L0_L1_hessian_def} and show that Clipped Gradient Descent (\algname{Clip-GD}) has iteration complexity $\cO\left(\max\left\{\nicefrac{L_0 \Delta}{\varepsilon^2}, \nicefrac{(1+L_1^2)\Delta}{L_0}\right\}\right)$ with $\Delta \eqdef f(x) - \inf_{x\in\R^d}f(x)$ for finding $\varepsilon$-approximate first-order stationary point of $(L_0,L_1)$-smooth function. The asymptotically dominant term in this complexity $\cO\left(\nicefrac{L_0 \Delta}{\varepsilon^2}\right)$ is independent of $L_1$, and thus, this term can be much smaller than $\cO\left(\nicefrac{L \Delta}{\varepsilon^2}\right)$, where $L$ is a Lipschitz constant of the gradient (if finite). Under the assumption that $M \eqdef \sup\{\|\nabla f(x)\|\mid x\in \R^d \text{ such that } f(x) \leq f(x^0)\} < +\infty$ \citet{zhang2020why} also show that \algname{GD} with stepsize $\Theta\left(\nicefrac{1}{(L_0 + ML_1)}\right)$ has complexity $\cO\left(\nicefrac{(L_0 + ML_1) \Delta}{\varepsilon^2}\right)$, which is natural to expect since on $\{x \in \R^d \mid f(x)\leq f(x^0)\}$ the norm of the Hessian is bounded as $L_0 + ML_1$ (see \eqref{eq:L0_L1_hessian_def}), i.e., function is $(L_0 + ML_1)$-smooth. \citet{zhang2020improved} generalize the results from \citep{zhang2020why} to the method with heavy-ball momentum \citep{polyak1964some} and clipping of both momentum and gradient. Similar results are derived for Normalized \algname{GD} \citep{zhao2021convergence, chen2023generalized}, \algname{SignGD} \citep{crawshaw2022robustness}, \algname{AdaGrad-Norm}/\algname{AdaGrad} \citep{faw2023beyond, wang2023convergence}, \algname{Adam} \citep{wang2022provable, li2024convergence}, and Normalized \algname{GD} with Momentum \citep{hubler2024parameter}. 
Notably, all papers in this paragraph also address stochastic method versions.

\paragraph{Results in the convex case.} To the best of our knowledge, convex $(L_0,L_1)$-smooth optimization is studied in three papers\footnote{After the first version of our paper appeared on arXiv, another highly-relevant paper appeared online \citep{vankov2024optimizing}. In particular, \citet{vankov2024optimizing} independently derive similar results to ours for \algname{$(L_0,L_1)$-GD} and \algname{GD-PS}, and also obtained new convergence bounds for Normalized \algname{GD} and improved accelerated rates.} \citep{koloskova2023revisiting, takezawa2024polyak, li2024convex}. In particular, under convexity, $L$-smoothness, and $(L_0,L_1)$-smoothness, \citet{koloskova2023revisiting} show that \algname{Clip-GD} with clipping level $c$ has $\cO\left(\max\left\{\nicefrac{(L_0 + cL_1)R_0^2}{\varepsilon}, \sqrt{\nicefrac{R_0^4L(L_0 + cL_1)^2}{c^2\varepsilon}}\right\}\right)$ complexity of finding $\varepsilon$-solution, i.e., $x$ such that $f(x) - f(x^*) \leq \varepsilon$, where $x^* \in \arg\min_{x\in \R^d}f(x)$ and $R_0 \eqdef \|x^0 - x^*\|$. In particular, if $c \sim \nicefrac{L_0}{L_1}$, then the asymptotically dominant term in the complexity is $\cO\left(\nicefrac{L_0R_0^2}{\varepsilon}\right)$, i.e., it is independent of $L_1$ and $L$, which can be significantly better than the complexity of \algname{GD} of $\cO\left(\nicefrac{LR_0^2}{\varepsilon}\right)$ for convex $L$-smooth functions. In the same setting, \citet{takezawa2024polyak} prove $\cO\left(\max\left\{\nicefrac{L_0R_0^2}{\varepsilon}, \sqrt{\nicefrac{R_0^4LL_1^2}{\varepsilon}}\right\}\right)$ complexity bound for \algname{GD} with Polyak Stepsizes (\algname{GD-PS}). Finally, under convexity and $(L_0,L_1)$-smoothness \citet{li2024convex} show that for sufficiently small stepsizes standard \algname{GD} and Nesterov's method (\algname{NAG}) \citep{nesterov1983method} have complexities $\cO\left(\nicefrac{\ell R_0^2}{\varepsilon}\right)$ and $\cO\left(\sqrt{\nicefrac{\ell R_0^2}{\varepsilon}}\right)$ respectively, where $\ell \eqdef L_0 + L_1 G$ and $G$ is some constant depending on $L_0, L_1, R_0, \|\nabla f(x^0)\|$, and $f(x^0) - f(x^*)$. In particular, constant $G$ and stepsizes are chosen in such a way that it is possible to show via induction that in all points generated by \algname{GD}/\algname{NAG} and where $(L_0,L_1)$-smoothness is used the norm of the gradient is bounded by $G$. However, these results have a common limitation: constants $L$ (if finite) and $\ell$ can be much larger than $L_0$ and $L_1$. Moreover, for \algname{Clip-GD} and \algname{GD-PS}, these results lead to a natural question of whether it is possible to achieve $\cO\left(\nicefrac{LR_0^2}{\varepsilon}\right)$ complexity without $L$-smoothness non-asymptotically.

\subsection{Our Contribution}

\begin{itemize}[leftmargin=*]
    \item \textbf{Tighter rates for Gradient Descent with (Smoothed) Clipping.} We prove that Gradient Descent with (Smoothed) Clipping, which we call \algname{$(L_0,L_1)$-GD}, has $\cO\left(\max\left\{\nicefrac{L_0 R_0^2}{\varepsilon}, L_1^2R_0^2\right\}\right)$ worst-case complexity of finding $\varepsilon$-solution for convex $(L_0,L_1)$-smooth functions. In contrast to the previous results \citep{koloskova2023revisiting, li2024convex}, our bound is derived without $L$-smoothness assumption and does not depend on any bound for $\|\nabla f(x^k)\|$. To achieve this, we prove that \algname{$(L_0,L_1)$-GD} has non-increasing gradient norm and show that the method's behavior consists of two phases: initial (and finite) phase when $\|\nabla f(x^k)\| \geq \nicefrac{L_0}{L_1}$ (large gradient), and final phase when $\|\nabla f(x^k)\| < \nicefrac{L_0}{L_1}$ and the method behaves similarly to \algname{GD} applied to $2L_0$-smooth problem. We also extend the result to the strongly/stochastic convex cases.

    \item \textbf{Tighter rates for Gradient Descent with Polyak Stepsizes.} For \algname{GD-PS}, we also derive $\cO\left(\max\left\{\nicefrac{L_0 R_0^2}{\varepsilon}, L_1^2R_0^2\right\}\right)$ worst-case complexity of finding $\varepsilon$-solution for convex $(L_0,L_1)$-smooth functions. In contrast to the existing result \citep{takezawa2024polyak}, our bound is derived without $L$-smoothness assumption. We also extend the result to the strongly/stochastic convex cases.

    \item \textbf{New accelerated method: $(L_0, L_1)$-Similar Triangles Method.} We propose a version of Similar Triangles Method \citep{gasnikov2016universal} for convex $(L_0,L_1)$-smooth optimization, and prove $\cO\left(\sqrt{\nicefrac{L_0(1+L_1R_0\exp(L_1 R_0))R_0^2}{\varepsilon}}\right)$ complexity of finding $\varepsilon$-solution for convex $(L_0,L_1)$-smooth functions. In contrast to the accelerated result from \citep{li2024convex}, our bound is derived without the usage of stepsizes depending on $R_0$ and $f(x^0) - f(x^*)$.

    \item \textbf{New convergence results for Adaptive Gradient Descent.} We also show new convergence result for Adaptive Gradient Descent \citep{malitsky2019adaptive} for convex $(L_0,L_1)$-smooth problems: we prove $\cO\left(\max\left\{\nicefrac{L_0D^2}{\varepsilon}, m^2 (L_1^2D^2 + L_1^4D_1^4)\right\}\right)$ complexity of finding $\varepsilon$-solution, where $D$ is a constant depending on initial suboptimality of the starting point, and $m$ is a logarithmic factor depending on $L_1$ and $D$. We also extend the result to the strongly convex case.

    \item \textbf{New technical results for $(L_0, L_1)$-smooth functions.} We derive several useful inequalities for the class of (convex) $(L_0,L_1)$-smooth functions.
\end{itemize}

\section{Technical Lemmas}\label{sec:technical_results}
In this section, we provide some useful facts about $(L_0,L_1)$-smooth functions. We start with the following result from \citep{chen2023generalized}.

\begin{lemma}[Proposition \revision{1} from \citep{chen2023generalized}]\label{lem:known_technical_lemma}
    Assumption~\ref{as:symmetric_L0_L1} holds if and only if for
    \begin{equation}
        \|\nabla f(x) - \nabla f(y)\| \leq \left(L_0 + L_1\|\nabla f(y)\|\right) \exp\left(L_1\|x-y\|\right)\|x-y\|, \quad \forall x,y \in \R^d. \label{eq:symmetric_L0_L1_corollary}
    \end{equation}
    Moreover, Assumption~\ref{as:symmetric_L0_L1} implies for all $x,y \in \R^d$
    \begin{equation}\label{eq:sym_upper_bound}
        f(y) \leq f(x) + \langle \nabla f(x), y-x \rangle + \frac{L_0 + L_1\|\nabla f(x)\|}{2} \exp\rbr{L_1\|x - y\|}\|x - y\|^2.
    \end{equation}
\end{lemma}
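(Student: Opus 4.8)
The plan is to treat the two assertions separately: the equivalence between Assumption~\ref{as:symmetric_L0_L1} and \eqref{eq:symmetric_L0_L1_corollary}, and the descent-type inequality \eqref{eq:sym_upper_bound}. The descent inequality is the quick part, so I would dispatch it once \eqref{eq:symmetric_L0_L1_corollary} is available. Writing $f(y) - f(x) - \langle \nabla f(x), y-x\rangle = \int_0^1 \langle \nabla f(x+t(y-x)) - \nabla f(x), y-x\rangle\,dt$ and bounding the integrand by Cauchy--Schwarz, I apply \eqref{eq:symmetric_L0_L1_corollary} with base point $x$ to get $\|\nabla f(x+t(y-x)) - \nabla f(x)\| \leq (L_0 + L_1\|\nabla f(x)\|)\exp(L_1 t\|x-y\|)\,t\|x-y\|$. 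Since $\exp(L_1 t\|x-y\|) \leq \exp(L_1\|x-y\|)$ for $t\in[0,1]$ and $\int_0^1 t\,dt = \tfrac{1}{2}$, the integral is at most $\tfrac{1}{2}(L_0 + L_1\|\nabla f(x)\|)\exp(L_1\|x-y\|)\|x-y\|^2$, which is exactly \eqref{eq:sym_upper_bound}.

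For the equivalence I parametrize the segment $z(t) = y + t(x-y)$, $t\in[0,1]$, and set $r = \|x-y\|$, $g(t) = \|\nabla f(z(t))\|$. The direction \eqref{eq:symmetric_L0_L1_corollary}$\Rightarrow$Assumption~\ref{as:symmetric_L0_L1} is an easy telescoping argument: partition $[0,1]$ into $N$ equal pieces $t_k = k/N$, apply \eqref{eq:symmetric_L0_L1_corollary} to each consecutive pair $(z(t_k), z(t_{k-1}))$, bound $\|\nabla f(z(t_{k-1}))\| \leq \sup_{u\in[x,y]}\|\nabla f(u)\| =: M$ and $\|z(t_k)-z(t_{k-1})\| = r/N$, and sum over $k$ by the triangle inequality to obtain $\|\nabla f(x)-\nabla f(y)\| \leq (L_0 + L_1 M)\exp(L_1 r/N)\,r$; letting $N\to\infty$ kills the exponential factor and produces \eqref{eq:symmetric_L0_L1}.

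The reverse direction, Assumption~\ref{as:symmetric_L0_L1}$\Rightarrow$\eqref{eq:symmetric_L0_L1_corollary}, is where the main obstacle lies, because it requires controlling how fast the gradient norm can grow along the segment while $f$ is only assumed differentiable, so I cannot simply integrate a Hessian-level bound. I would run a discrete Gr\"onwall argument. On each subinterval $[t_{k-1},t_k]$, applying Assumption~\ref{as:symmetric_L0_L1} to the pairs $(z(s), z(t_{k-1}))$ for $s\in[t_{k-1},t_k]$ and taking a supremum yields, with $\beta := L_1 r/N < 1$, the recursion $L_0 + L_1 g(t_k) \leq (L_0 + L_1 g(t_{k-1}))/(1-\beta)$, hence $L_0 + L_1 g(t_k) \leq (L_0 + L_1 g(0))(1-\beta)^{-k}$; the same estimate bounds $L_0 + L_1\sup_{[t_{k-1},t_k]}g$. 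Substituting this into $\|\nabla f(x)-\nabla f(y)\| \leq \sum_{k=1}^N (L_0 + L_1\sup_{[t_{k-1},t_k]}g)(r/N)$, summing the resulting geometric series, and passing to the limit $N\to\infty$ gives $\|\nabla f(x)-\nabla f(y)\| \leq (L_0 + L_1\|\nabla f(y)\|)(e^{L_1 r}-1)/L_1$, after which the elementary inequality $(e^{L_1 r}-1)/L_1 \leq r\,e^{L_1 r}$ upgrades the bound to \eqref{eq:symmetric_L0_L1_corollary}. The delicate points to check are the monotonicity in $s$ that legitimizes taking the supremum inside the recursion and the uniform control needed for the limit $N\to\infty$; here continuity of $\nabla f$, which itself follows from Assumption~\ref{as:symmetric_L0_L1}, guarantees that the suprema are finite and the limits exist.
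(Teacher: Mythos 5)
First, a point of comparison you could not have known: the paper contains no proof of this lemma at all --- it is imported verbatim as Proposition~1 of \citep{chen2023generalized} --- so your proposal can only be measured against the argument in that source, which it essentially reproduces in discrete form. Your easy direction (telescoping \eqref{eq:symmetric_L0_L1_corollary} over an equipartition of $[x,y]$ and letting the factor $\exp(L_1 r/N)\to 1$) is the standard one; your hard direction is a discrete Gr\"onwall recursion replacing the integral Gr\"onwall inequality used in the literature, and the algebra is exact: with $q=(1-\beta)^{-1}$ one has $q/(q-1)=N/(L_1 r)$, so the partial bound equals $(L_0+L_1\|\nabla f(y)\|)\bigl((1-\beta)^{-N}-1\bigr)/L_1$ for every $N$ and converges monotonically to $(L_0+L_1\|\nabla f(y)\|)(e^{L_1 r}-1)/L_1$, which is even slightly sharper than \eqref{eq:symmetric_L0_L1_corollary}; the upgrade via $e^s-1\le s\,e^s$ is fine. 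Your two stated worries about ``monotonicity in $s$'' and ``uniform control in the limit $N\to\infty$'' are in fact non-issues: the recursion never uses monotonicity of $g$, and for fixed $(x,y)$ the left-hand side is $N$-independent while the right-hand side converges. Your derivation of \eqref{eq:sym_upper_bound} by Newton--Leibniz plus Cauchy--Schwarz from \eqref{eq:symmetric_L0_L1_corollary} with base point $x$ is also correct, and is literally how this paper derives the analogous bound \eqref{eq:technical_quadr_upper_bound_sym} for the shifted function $\varphi_x$ inside its proof of Lemma~\ref{lem:new_results_for_L0_L1}.

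The one genuine gap is the point you flag and then wave away: finiteness of $M_k:=\sup_{s\in[t_{k-1},t_k]}g(s)$. Rearranging $M_k\le g(t_{k-1})+(L_0+L_1M_k)r/N$ into $L_0+L_1M_k\le(L_0+L_1g(t_{k-1}))/(1-\beta)$ is invalid if $M_k=+\infty$, and your justification --- that continuity of $\nabla f$ ``follows from Assumption~\ref{as:symmetric_L0_L1}'' --- is circular: inequality \eqref{eq:symmetric_L0_L1} makes $\nabla f$ Lipschitz, hence continuous, along a segment only \emph{after} the supremum there is known to be finite, and it is vacuously true on any segment where that supremum is infinite, so it gives no control exactly where you need it. Mere differentiability of $f$ does not supply local boundedness of the gradient (the function $x^2\sin(x^{-2})$ is everywhere differentiable with derivative unbounded near the origin; it happens to violate \eqref{eq:symmetric_L0_L1}, but it shows boundedness must be proved, not assumed). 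The cleanest repairs: in every application in this paper $f$ is convex (Assumption~\ref{as:convexity}), a finite convex function on $\R^d$ is locally Lipschitz, so $\nabla f$ is locally bounded, and a differentiable convex function in fact has a continuous gradient (Rockafellar, Theorem~25.5), so the suprema are finite for free; for the general nonconvex statement, either read it for continuously differentiable $f$, which is the regime in which \citep{chen2023generalized} establish it, or insert an a priori boundedness step, e.g., for $(s-t_0)L_1r\le\nicefrac{1}{2}$ a first-crossing argument based on applying \eqref{eq:symmetric_L0_L1} to the pair $(z(s),z(t_0))$ shows the supremum cannot exceed $2g(t_0)+\nicefrac{L_0}{L_1}$ through finite values (one must separately rule out the supremum jumping directly to $+\infty$, which is where continuity of $\nabla f$ is genuinely used). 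With any of these patches your proof closes.
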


Inequality \eqref{eq:symmetric_L0_L1_corollary} removes the supremum from \eqref{eq:symmetric_L0_L1}, but the price for this is a factor of $\exp(L_1\|x-y\|)$. When $\|x-y\| \leq \nicefrac{1}{L_1}$, this factor is upper-bounded as $e$. However, in general, it cannot be removed since \eqref{eq:symmetric_L0_L1_corollary} is equivalent to \eqref{eq:symmetric_L0_L1}. Inequality \eqref{eq:sym_upper_bound} can be seen as a generalization of standard quadratic upper-bound for $L$-smooth functions \citep{nesterov2018lectures} to the class of $(L_0,L_1)$-smooth functions.
\revision{
Note, that~\citep{zhang2020improved} provides a Hessian free assumption which is equivalent to~\eqref{eq:L0_L1_hessian_def}, it can be seen as~\eqref{eq:symmetric_L0_L1_corollary} and~\eqref{eq:sym_upper_bound} with improved constants. 
}

Using the above lemma, we derive several useful inequalities that we actively use throughout our proofs. Most of these inequalities can be further simplified in the case of Assumption~\ref{as:asymmetric_L0_L1}.

\begin{lemma}\label{lem:new_results_for_L0_L1}
    Let Assumption~\ref{as:symmetric_L0_L1} hold and $\nu$ satisfy\footnote{One can check numerically that $0.56 < \nu < 0.57$.} $\nu = e^{-\nu}$. Then, the following statements hold.
    \begin{enumerate}[leftmargin=*]
        \item For $f_* \eqdef \inf_{x\in\R^d} f(x)$ and arbitrary $x\in \R^d$, we have
        \begin{equation}
            \frac{\nu\|\nabla f(x)\|^2}{2(L_0 + L_1 \|\nabla f(x)\|)} \leq f(x) - f_*. \label{eq:sym_gradient_bound}
        \end{equation}
        \item If additionally Assumption~\ref{as:convexity} holds with $\mu = 0$, then for any $x,y\in\R^d$ such that
        \begin{equation}\label{eq:proximity_rule}
        L_1 \|x-y\| \exp{\rbr*{L_1 \|x-y\|}} \leq 1,
    \end{equation}
   we have 
    \begin{equation}
        \frac{\nu\|\nabla f(x) - \nabla f(y)\|^2}{2(L_0 + L_1\|\nabla f(y)\|)} \leq f(y) - f(x) - \langle\nabla f(x), y - x \rangle, \label{eq:grad_difference_bound_through_bregman_div}
    \end{equation}
    and
    \begin{equation}
        \frac{\nu\|\nabla f(x) - \nabla f(y)\|^2}{2(L_0 + L_1\|\nabla f(y)\|)} + \frac{\nu\|\nabla f(x) - \nabla f(y)\|^2}{2(L_0 + L_1\|\nabla f(x)\|)} \leq \langle \nabla f(x) - \nabla f(y), x - y \rangle. \label{eq:grad_difference_bound_sym_L0_L1}
    \end{equation}
    \end{enumerate}
\end{lemma}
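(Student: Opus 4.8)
The plan is to prove the three displayed inequalities in sequence, with \eqref{eq:sym_gradient_bound} doing the real work and the other two reducing to it. Throughout, the key arithmetic fact is the defining identity $\nu e^{\nu} = 1$ (equivalently $\nu = e^{-\nu}$).

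For \eqref{eq:sym_gradient_bound}, I would apply the descent inequality \eqref{eq:sym_upper_bound} along the anti-gradient ray $y = x - t\nabla f(x)$ with $t>0$. Writing $g = \|\nabla f(x)\|$ and $A = L_0 + L_1 g$, and using $f_* \leq f(y)$, this yields $f(x) - f_* \geq g^2\bigl(t - \tfrac{A}{2}e^{L_1 g t}t^2\bigr)$. The point is to choose the step cleverly rather than optimally: I would take $t_0$ to be the solution of $A t_0 e^{L_1 g t_0} = 1$, which makes the quadratic term collapse and gives $f(x) - f_* \geq \tfrac12 g^2 t_0$. It then remains to show $A t_0 \geq \nu$. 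Since $L_1 g \leq A$, we have $L_1 g t_0 \leq A t_0$, so if $A t_0 < \nu$ then $A t_0 e^{L_1 g t_0} \leq A t_0\, e^{A t_0} < \nu e^{\nu} = 1$, contradicting the choice of $t_0$. Hence $t_0 \geq \nu/A$ and \eqref{eq:sym_gradient_bound} follows.

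For \eqref{eq:grad_difference_bound_through_bregman_div}, I would reduce to \eqref{eq:sym_gradient_bound} by shifting. Fix $x$ and set $\phi(z) = f(z) - \langle \nabla f(x), z\rangle$. Under Assumption~\ref{as:convexity} with $\mu = 0$, $\phi$ is convex with $\nabla\phi(x) = 0$, so $x$ minimizes $\phi$ and $\phi(y) - \min\phi$ is exactly the right-hand side of \eqref{eq:grad_difference_bound_through_bregman_div}. Subtracting the linear term from \eqref{eq:sym_upper_bound} written from $y$ shows that $\phi$ obeys the same descent inequality centered at $y$ with coefficient $B = L_0 + L_1\|\nabla f(y)\|$; repeating the ray argument above along $w = y - t\nabla\phi(y)$, with $G = \|\nabla\phi(y)\| = \|\nabla f(y) - \nabla f(x)\|$, gives $\phi(y) - \min\phi \geq G^2\bigl(t - \tfrac{B}{2}e^{L_1 G t}t^2\bigr)$. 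The monotonicity step used $L_1 g \leq A$ for free, but here the analogue $L_1 G \leq B$ is not automatic, and this is precisely the role of the proximity hypothesis \eqref{eq:proximity_rule}. I would obtain it from the $y$-centered form of \eqref{eq:symmetric_L0_L1_corollary}, namely $\|\nabla f(x) - \nabla f(y)\| \leq B\, e^{L_1\|x-y\|}\|x-y\|$: multiplying by $L_1$ and invoking $L_1\|x-y\|e^{L_1\|x-y\|} \leq 1$ gives $L_1 G \leq B$. With this in hand, the step choice $B t_0 e^{L_1 G t_0} = 1$ together with the same $\nu e^{\nu} = 1$ argument delivers \eqref{eq:grad_difference_bound_through_bregman_div}.

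Finally, \eqref{eq:grad_difference_bound_sym_L0_L1} follows by symmetrization: both $G$ and the constraint \eqref{eq:proximity_rule} are invariant under swapping $x$ and $y$, so applying \eqref{eq:grad_difference_bound_through_bregman_div} once as stated and once with $x,y$ interchanged bounds the two left-hand fractions by $f(y)-f(x)-\langle\nabla f(x),y-x\rangle$ and $f(x)-f(y)-\langle\nabla f(y),x-y\rangle$ respectively; adding these telescopes to $\langle\nabla f(x)-\nabla f(y), x-y\rangle$. I expect the main obstacle to be the first part: identifying the implicit step $t_0$ and proving $A t_0 \geq \nu$ from the defining property of $\nu$, together with verifying in the second part that the proximity condition is exactly what turns \eqref{eq:symmetric_L0_L1_corollary} into the monotonicity bound $L_1 G \leq B$ needed to reuse that argument.
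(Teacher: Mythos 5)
Your proposal is correct and takes essentially the same route as the paper: a single step of size $\sim \nu/(L_0+L_1\|\nabla f(x)\|)$ plugged into \eqref{eq:sym_upper_bound} for part 1, the shifted function $\varphi_x(z)=f(z)-\langle\nabla f(x),z\rangle$ (convex, minimized at $x$) combined with the proximity rule \eqref{eq:proximity_rule} and \eqref{eq:symmetric_L0_L1_corollary} to control the exponential factor for part 2, and swap-and-add symmetrization for part 3. Your two deviations are only cosmetic: you choose the implicit step $t_0$ with $At_0e^{L_1gt_0}=1$ and prove $t_0\geq\nu/A$ by monotonicity of $s\mapsto se^{s}$ where the paper just plugs in $t=\nu/A$ and invokes $\nu e^{\nu}=1$, and you obtain the descent inequality for $\varphi_x$ by subtracting the linear term directly from \eqref{eq:sym_upper_bound} rather than re-deriving it via the Newton--Leibniz integration of \eqref{eq:symmetric_L0_L1_corollary} as the paper's appendix does (a small simplification, since the modulus at the center point is inherited unchanged under linear shifts).
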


This lemma provides us with a set of useful inequalities that can be viewed as generalizations of analogous inequalities that hold for smooth (convex) functions. We provide the complete proof in Appendix~\ref{appendix:proofs_of_technical_lemmas}. Moreover, when Assumption~\ref{as:asymmetric_L0_L1} holds, all inequalities from Lemma~\ref{lem:new_results_for_L0_L1} hold with $\nu = 1$, and requirement \eqref{eq:proximity_rule} is not needed for \eqref{eq:grad_difference_bound_through_bregman_div} and \eqref{eq:grad_difference_bound_sym_L0_L1} to hold. An analog of \eqref{eq:sym_gradient_bound} for a local version of $(L_0,L_1)$-smoothness can be found in \citep{koloskova2023revisiting}. We also refer to \citep{li2024convex} for an analog of inequality \eqref{eq:grad_difference_bound_sym_L0_L1} for $(r,\ell)$-smooth functions. \revision{However, in contrast to the bound from \citep{koloskova2023revisiting}, bound \eqref{eq:sym_gradient_bound} is derived for a global version of $(L_0,L_1)$-smoothness and thus differs in numerical constants, and, in contrast to the proof from \citep{li2024convex}, we do not use local Lipshitzness of the gradient.}

\section{Smoothed Gradient Clipping}\label{sec:GD}

The first method that we consider is closely related to \algname{Clip-GD} and can be seen as a smoothed version\footnote{Indeed, when $\|\nabla f(x^k)\| < \nicefrac{L_0}{L_1}$, the denominator of the stepsize in \algname{$(L_0,L_1)$-GD} lies in $[L_0, 2L_0]$, and when $\|\nabla f(x^k)\| \geq \nicefrac{L_0}{L_1}$, this denominator lies in $[L_1\|\nabla f(x^k)\|, 2L_1\|\nabla f(x^k)\|]$. Such a behavior is very similar to the behavior of \algname{Clip-GD} with clipping level $\nicefrac{L_0}{L_1}$ and stepsize $\nicefrac{\eta}{L_0}$.} of it -- see Algorithm~\ref{alg:L0L1-GD}. Alternatively, this method can be seen as a version of Gradient Descent designed for $(L_0,L_1)$-smooth functions. Therefore, we call this algorithm \algname{$(L_0,L_1)$-GD}.

\begin{algorithm}[H]
\caption{$(L_0, L_1)$-Gradient Descent (\algname{$(L_0,L_1)$-GD})}
\label{alg:L0L1-GD}   
\begin{algorithmic}[1]
\REQUIRE starting point $x^0$, number of iterations $N$, stepsize parameter $\eta > 0$, $L_0 > 0$, $L_1 \geq 0$
\FOR{$k=0,1,\ldots, N-1$}
\STATE $x^{k+1} = x^k - \frac{\eta}{L_0 + L_1\|\nabla f(x^k)\|}\nabla f(x^k)$
\ENDFOR
\ENSURE $x^N$ 
\end{algorithmic}
\end{algorithm}

Similarly to standard \algname{GD}, \algname{$(L_0,L_1)$-GD} satisfies two useful properties, summarized below.

\begin{lemma}[Monotonicity of function value]\label{lem:monotonicity_of_function_value_GD}
    Let Assumption~\ref{as:symmetric_L0_L1} hold. Then, for all $k\geq 0$ the iterates generated by \algname{$(L_0,L_1)$-GD} with $\eta \leq \nu$, $\nu = e^{-\nu}$ satisfy
    \begin{equation}
        f(x^{k+1}) \leq f(x^k) -  \frac{\eta\|\nabla f(x^k)\|^2}{2(L_0 + L_1\|\nabla f(x^k)\|)} \leq f(x^k). \label{eq:monotonicity_function_sym_L0_L1}
    \end{equation}
\end{lemma}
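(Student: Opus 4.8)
The plan is to derive inequality \eqref{eq:monotonicity_function_sym_L0_L1} by plugging the \algname{$(L_0,L_1)$-GD} update into the quadratic upper bound \eqref{eq:sym_upper_bound} from Lemma~\ref{lem:known_technical_lemma}, and then controlling the resulting exponential factor using the stepsize restriction $\eta \le \nu$. Let me write $g_k \eqdef \nabla f(x^k)$, $\eta_k \eqdef \eta/(L_0 + L_1\|g_k\|)$, so that $x^{k+1} - x^k = -\eta_k g_k$ and $\|x^{k+1}-x^k\| = \eta_k \|g_k\| = \eta\|g_k\|/(L_0+L_1\|g_k\|)$.

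First I would apply \eqref{eq:sym_upper_bound} with $x = x^k$ and $y = x^{k+1}$. Since $\langle g_k, x^{k+1}-x^k\rangle = -\eta_k \|g_k\|^2$, this gives
\begin{equation}
    f(x^{k+1}) \leq f(x^k) - \eta_k \|g_k\|^2 + \frac{L_0 + L_1\|g_k\|}{2}\exp\rbr{L_1\|x^{k+1}-x^k\|}\eta_k^2\|g_k\|^2. \label{eq:plan_step1}
\end{equation}
The plan is to show the quadratic correction term is at most half of the linear descent term, i.e. that
\begin{equation}
    \frac{L_0 + L_1\|g_k\|}{2}\exp\rbr{L_1\|x^{k+1}-x^k\|}\eta_k^2\|g_k\|^2 \leq \frac{1}{2}\eta_k\|g_k\|^2, \label{eq:plan_step2}
\end{equation}
after which substituting back into \eqref{eq:plan_step1} yields $f(x^{k+1}) \le f(x^k) - \tfrac{1}{2}\eta_k\|g_k\|^2$, which is exactly the middle expression in \eqref{eq:monotonicity_function_sym_L0_L1} once $\eta_k$ is expanded. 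Canceling one factor of $\eta_k\|g_k\|^2$ from \eqref{eq:plan_step2} and recalling $\eta_k(L_0+L_1\|g_k\|) = \eta$, the requirement \eqref{eq:plan_step2} reduces to $\eta \exp\rbr{L_1\|x^{k+1}-x^k\|} \le 1$.

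The main obstacle, and the only place the structure of the stepsize matters, is bounding the exponent $L_1\|x^{k+1}-x^k\|$. Here I would use that $\|x^{k+1}-x^k\| = \eta\|g_k\|/(L_0+L_1\|g_k\|) \le \eta\|g_k\|/(L_1\|g_k\|) = \eta/L_1$, so that $L_1\|x^{k+1}-x^k\| \le \eta$. Hence $\eta\exp\rbr{L_1\|x^{k+1}-x^k\|} \le \eta e^{\eta}$, and it suffices to show $\eta e^{\eta} \le 1$. Since the map $t \mapsto t e^{t}$ is increasing on $[0,\infty)$ and equals $1$ precisely at $t = \nu$ (because $\nu = e^{-\nu}$ is equivalent to $\nu e^{\nu} = 1$), the assumption $\eta \le \nu$ gives $\eta e^{\eta} \le \nu e^{\nu} = 1$, establishing \eqref{eq:plan_step2}. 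This completes the chain: \eqref{eq:plan_step1} together with \eqref{eq:plan_step2} yields the first inequality in \eqref{eq:monotonicity_function_sym_L0_L1}, and the second inequality $f(x^k) - \tfrac{1}{2}\eta_k\|g_k\|^2 \le f(x^k)$ is immediate since $\eta, \|g_k\|^2 \ge 0$. The clean identification of the threshold constant $\nu$ via $\nu e^{\nu}=1$ is what makes the bound tight and explains the otherwise mysterious choice $\eta \le \nu$ in the statement.
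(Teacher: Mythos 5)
Your proposal is correct and follows essentially the same route as the paper's proof: both apply \eqref{eq:sym_upper_bound} with $x = x^k$, $y = x^{k+1}$, bound the exponent via $L_1\|x^{k+1}-x^k\| \leq \eta$, and use $\eta e^{\eta} \leq \nu e^{\nu} = 1$ to absorb the quadratic term into half the linear descent term. The only cosmetic difference is that you obtain the exponent bound by dropping $L_0$ from the stepsize denominator (which tacitly divides by $L_1\|\nabla f(x^k)\|$ and so needs the trivial side remark for $L_1 = 0$ or $\nabla f(x^k) = 0$), whereas the paper bounds $L_1\|\nabla f(x^k)\|/(L_0 + L_1\|\nabla f(x^k)\|) \leq 1$ directly.
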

\begin{proof}[Proof sketch]
    The inequality follows from \eqref{eq:sym_upper_bound} applied to $y = x^{k+1}$ and $x = x^k$, see the complete proof in Appendix~\ref{appendix:GD}.
\end{proof}

\begin{lemma}[Monotonicity of gradient norm]\label{lem:monotonicity_of_gradient_norm_GD}
    Let Assumptions~\ref{as:convexity} with $\mu = 0$~and~\ref{as:symmetric_L0_L1} hold. Then, for all $k\geq 0$ the iterates generated by \algname{$(L_0,L_1)$-GD} with $\eta \leq \nu$, $\nu = e^{-\nu}$ satisfy
    \begin{equation}
        \|\nabla f(x^{k+1})\| \leq \|\nabla f(x^{k})\|. \label{eq:monotonicity_gradient_sym_L0_L1}
    \end{equation}
\end{lemma}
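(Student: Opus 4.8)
The plan is to invoke the cocoercivity-type estimate \eqref{eq:grad_difference_bound_sym_L0_L1} from Lemma~\ref{lem:new_results_for_L0_L1} with the pair $x = x^{k+1}$ and $y = x^k$, and then collapse the resulting vector inequality into a scalar quadratic inequality in the two quantities $\|\nabla f(x^{k+1})\|$ and $\|\nabla f(x^k)\|$. Writing $g_k \eqdef \nabla f(x^k)$ and $A_k \eqdef L_0 + L_1\|g_k\|$, the \algname{$(L_0,L_1)$-GD} update is $x^{k+1} - x^k = -\frac{\eta}{A_k}g_k$, so every inner product appearing below will be expressible through $\|g_{k+1}\|$, $\|g_k\|$, and $\langle g_{k+1}, g_k\rangle$.

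The first step is to check that the proximity condition \eqref{eq:proximity_rule} is met for this pair, since that is exactly what licenses the use of \eqref{eq:grad_difference_bound_sym_L0_L1}. Because $L_0 > 0$, we have $\|x^{k+1}-x^k\| = \frac{\eta\|g_k\|}{A_k} \leq \frac{\eta}{L_1}$, hence $L_1\|x^{k+1}-x^k\| \leq \eta \leq \nu$. As $t\mapsto t e^{t}$ is increasing and the defining relation $\nu = e^{-\nu}$ gives $\nu e^{\nu} = 1$, it follows that $L_1\|x^{k+1}-x^k\|\exp\rbr{L_1\|x^{k+1}-x^k\|} \leq \nu e^{\nu} = 1$, so \eqref{eq:proximity_rule} holds (and for $L_1 = 0$ it is trivial).

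Next I apply \eqref{eq:grad_difference_bound_sym_L0_L1}, discard its second (nonnegative) summand, and substitute the update into the right-hand side $\langle g_{k+1}-g_k, x^{k+1}-x^k\rangle = -\frac{\eta}{A_k}\langle g_{k+1}-g_k, g_k\rangle$. Multiplying through by $2A_k > 0$ and expanding $\|g_{k+1}-g_k\|^2$ yields
\begin{equation*}
\nu\|g_{k+1}\|^2 + (2\eta - 2\nu)\langle g_{k+1}, g_k\rangle + (\nu - 2\eta)\|g_k\|^2 \leq 0.
\end{equation*}
Since $\eta \leq \nu$ the cross-term coefficient is nonpositive, so applying Cauchy--Schwarz $\langle g_{k+1}, g_k\rangle \leq \|g_{k+1}\|\|g_k\|$ (multiplying by the nonpositive factor reverses the inequality) gives
\begin{equation*}
\nu\|g_{k+1}\|^2 + (2\eta - 2\nu)\|g_{k+1}\|\|g_k\| + (\nu - 2\eta)\|g_k\|^2 \leq 0.
\end{equation*}
Observing that $\|g_{k+1}\| = \|g_k\|$ is a root makes the factorization transparent: the left-hand side equals $\nu\rbr{\|g_{k+1}\| - \|g_k\|}\rbr{\|g_{k+1}\| - \tfrac{\nu-2\eta}{\nu}\|g_k\|}$. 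Because $\tfrac{\nu-2\eta}{\nu}\|g_k\| \leq \|g_k\|$, the product of the two factors can be nonpositive only when $\|g_{k+1}\| \leq \|g_k\|$, which is the desired conclusion \eqref{eq:monotonicity_gradient_sym_L0_L1}.

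The delicate points, rather than any heavy computation, are the two sign-sensitive steps. The proximity check rests precisely on the defining identity $\nu e^{\nu} = 1$, and the Cauchy--Schwarz estimate must be used in the correct direction because the middle coefficient $2\eta - 2\nu$ is negative; spotting that $\|g_{k+1}\| = \|g_k\|$ is a root of the final quadratic is what turns the argument into a one-line factorization and guarantees that the admissible interval lies below $\|g_k\|$.
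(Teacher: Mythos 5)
Your proof is correct, and while it opens exactly as the paper's does --- the same proximity verification $L_1\|x^{k+1}-x^k\|\exp\left(L_1\|x^{k+1}-x^k\|\right)\leq \eta e^{\eta}\leq \nu e^{\nu}=1$ followed by \eqref{eq:grad_difference_bound_sym_L0_L1} applied to $x=x^{k+1}$, $y=x^k$ --- it finishes by a genuinely different algebraic route. The paper keeps \emph{both} cocoercivity terms: writing $\omega_k = L_0+L_1\|\nabla f(x^k)\|$, it multiplies through by $2\omega_k/\nu$ so that the factor $1+\omega_k/\omega_{k+1}$ survives on both sides of the inequality, rewrites the leftover cross term back through the update as $\langle \nabla f(x^{k+1})-\nabla f(x^k),\, x^{k+1}-x^k\rangle$, checks that its coefficient $\frac{2\omega_k}{\eta}\left(1+\frac{\omega_k}{\omega_{k+1}}-\frac{\eta}{\nu}\right)$ is nonnegative, and kills it by a second, explicit appeal to convexity (monotonicity of the gradient); no Cauchy--Schwarz and no quadratic root analysis appear there. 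You instead discard the summand with denominator $L_0+L_1\|\nabla f(x^{k+1})\|$ at the outset, collapse the remaining vector inequality via Cauchy--Schwarz (correctly reversed against the nonpositive coefficient $2\eta-2\nu$), and factor the resulting scalar quadratic as $\nu\left(\|g_{k+1}\|-\|g_k\|\right)\left(\|g_{k+1}\|-\tfrac{\nu-2\eta}{\nu}\|g_k\|\right)\leq 0$ with $g_k=\nabla f(x^k)$, observing $\tfrac{\nu-2\eta}{\nu}\|g_k\|\leq\|g_k\|$; all of these steps check out, including the factorization and the sign argument. Your route is more economical in that convexity enters only once (inside Lemma~\ref{lem:new_results_for_L0_L1} itself) and the conclusion drops out of a one-line factorization, at the harmless cost of throwing away half of the cocoercivity bound; the paper's version retains the symmetric two-term structure, which mirrors the classical smooth-case argument and makes the cancellation structural rather than dependent on locating the roots of a quadratic. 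One cosmetic remark: the bound $\|x^{k+1}-x^k\|\leq \eta/L_1$ follows from $\frac{\|g_k\|}{L_0+L_1\|g_k\|}\leq\frac{1}{L_1}$, which needs $L_1>0$ rather than $L_0>0$ (the latter is what makes the stepsize well defined and lets you multiply by $2A_k>0$); as you note, the case $L_1=0$ is trivial anyway.
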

\begin{proof}[Proof sketch]
    The inequality follows from \eqref{eq:grad_difference_bound_sym_L0_L1} applied to $x = x^{k+1}$ and $y = x^k$, see the complete proof in Appendix~\ref{appendix:GD}.
\end{proof}

We notice that a similar result to Lemma~\ref{lem:monotonicity_of_gradient_norm_GD} is shown in \citep{li2024convex} for \algname{GD} with sufficiently small stepsize. With these lemmas in hand, we derive the convergence result for \algname{$(L_0,L_1)$-GD}.

\begin{theorem}\label{thm:L0_L1_GD_sym_main_result}
    Let Assumptions~\ref{as:convexity} with $\mu = 0$ and \ref{as:symmetric_L0_L1} hold. Then, the iterates generated by \algname{$(L_0,L_1)$-GD} with $0 < \eta \leq \frac{\nu}{2}$, $\nu = e^{-\nu}$
    satisfy the following implication:
    \begin{equation}
        \|\nabla f(x^k)\| \geq \frac{L_0}{L_1}\;  \Longrightarrow \; k \leq \frac{8L_1^2\|x^0 - x^*\|^2}{\nu\eta} - 1~~ \text{and}~~ \|x^{k+1} - x^*\|^2 \leq \|x^k - x^*\|^2 - \frac{\nu\eta}{8L_1^2}. \label{eq:implication_for_large_norm_asym_GD}
    \end{equation}
Moreover, the output after $N > \frac{8L_1^2\|x^0 - x^*\|^2}{\eta} - 1$ iterations satisfies
    \begin{equation}
        f(x^N) - f(x^*) \leq \frac{2L_0\|x^0 - x^*\|^2}{\eta (N + 1 - T)} - \frac{\nu L_0 T}{4 L_1^2 (N+1-T)} \leq \frac{2L_0\|x^0 - x^*\|^2}{\eta (N + 1)}, \label{eq:main_result_L0_L1_GD_asym}
    \end{equation}
    where $T \eqdef |\cT|$ for the set $\cT \eqdef \{ k\in \{0,1,\ldots N-1\}\mid \|\nabla f(x^k)\| \geq \frac{L_0}{L_1}\}$. 
\end{theorem}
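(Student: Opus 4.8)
The plan is to track the squared distance to the solution $\|x^k - x^*\|^2$ and split the analysis into the two regimes identified in the statement, using the two monotonicity lemmas together with the gradient lower bound \eqref{eq:sym_gradient_bound} in place of the usual descent lemma for $L$-smooth functions. Writing $\gamma_k \eqdef \nicefrac{\eta}{(L_0 + L_1\|\nabla f(x^k)\|)}$ for the effective stepsize, I first expand
\[
\|x^{k+1}-x^*\|^2 = \|x^k-x^*\|^2 - 2\gamma_k\langle\nabla f(x^k), x^k - x^*\rangle + \gamma_k^2\|\nabla f(x^k)\|^2,
\]
and lower-bound the inner product by $f(x^k) - f(x^*)$ via convexity. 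The crucial point that removes any need for $L$-smoothness is to control the quadratic term through \eqref{eq:sym_gradient_bound}: since $\|\nabla f(x^k)\|^2 \le \nicefrac{2(L_0+L_1\|\nabla f(x^k)\|)}{\nu}(f(x^k)-f_*)$ and $\gamma_k(L_0 + L_1\|\nabla f(x^k)\|) = \eta$, the term $\gamma_k^2\|\nabla f(x^k)\|^2$ is at most $\nicefrac{2\eta}{\nu}\,\gamma_k(f(x^k)-f_*)$, and this holds at \emph{every} iterate.

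For the large-gradient phase I combine these bounds directly: substituting convexity and the gradient lower bound gives the per-step decrease $\|x^{k+1}-x^*\|^2 \le \|x^k-x^*\|^2 - \nicefrac{\eta(\nu-\eta)\|\nabla f(x^k)\|^2}{(L_0+L_1\|\nabla f(x^k)\|)^2}$. Using $\eta \le \nicefrac{\nu}{2}$ and, in the regime $\|\nabla f(x^k)\| \ge \nicefrac{L_0}{L_1}$, the elementary bound $L_0 + L_1\|\nabla f(x^k)\| \le 2L_1\|\nabla f(x^k)\|$, the ratio collapses to $\nicefrac{1}{4L_1^2}$, yielding the claimed fixed decrement $\nicefrac{\nu\eta}{8L_1^2}$. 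The bound on $k$ then follows because, by Lemma~\ref{lem:monotonicity_of_gradient_norm_GD}, $\|\nabla f(x^k)\|$ is non-increasing, so $\cT$ is an initial segment $\{0,\dots,T-1\}$; summing the fixed decrements over these iterations and using $\|x^{k+1}-x^*\|^2\ge 0$ gives $k+1 \le \nicefrac{8L_1^2\|x^0-x^*\|^2}{\nu\eta}$, and in particular $\|x^T - x^*\|^2 \le \|x^0-x^*\|^2 - \nicefrac{\nu\eta T}{8L_1^2}$.

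For the small-gradient phase ($k \ge T$, where $\|\nabla f(x^k)\| < \nicefrac{L_0}{L_1}$ and hence $\gamma_k \ge \nicefrac{\eta}{2L_0}$), I feed the $\nicefrac{2\eta}{\nu}$-bound back into the expansion to obtain $\|x^{k+1}-x^*\|^2 \le \|x^k-x^*\|^2 - 2\gamma_k(1-\nicefrac{\eta}{\nu})(f(x^k)-f_*)$; since $\eta\le\nicefrac{\nu}{2}$ makes the coefficient $2(1-\nicefrac{\eta}{\nu})\ge 1$, this reduces to $\nicefrac{\eta}{2L_0}(f(x^k)-f_*) \le \|x^k-x^*\|^2-\|x^{k+1}-x^*\|^2$, exactly the recursion one gets for \algname{GD} on a $2L_0$-smooth convex function. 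Telescoping over $k=T,\dots,N-1$, invoking Lemma~\ref{lem:monotonicity_of_function_value_GD} to replace each $f(x^k)$ by $f(x^N)$, and substituting the phase-one estimate for $\|x^T-x^*\|^2$ produces the first inequality in \eqref{eq:main_result_L0_L1_GD_asym}, the negative $T$-term being the carried-over phase-one progress; the last inequality is then pure algebra, dropping the non-positive correction and using the lower bound on $N$ (which guarantees $N+1-T>0$).

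The main obstacle is the cross term $\gamma_k^2\|\nabla f(x^k)\|^2$: in the classical $L$-smooth analysis it is absorbed by the descent lemma, but here there is no global Lipschitz constant, and a naive bound reintroduces a dependence on $\|\nabla f(x^k)\|$ (and hence on a possibly large gradient bound, as in prior work). Replacing that step with the global inequality \eqref{eq:sym_gradient_bound} is precisely what decouples the two phases and keeps every constant expressed through $L_0$, $L_1$, and $\nu$ alone. A secondary subtlety is the bookkeeping that makes $\cT$ an initial segment; this rests entirely on gradient-norm monotonicity, so one must ensure the stepsize restriction $\eta\le\nicefrac{\nu}{2}$ (stronger than the $\eta\le\nu$ required by Lemmas~\ref{lem:monotonicity_of_function_value_GD}--\ref{lem:monotonicity_of_gradient_norm_GD}) is in force throughout, as it is needed to get the factor $\nu/2$ in both the phase-one decrement and the phase-two coefficient.
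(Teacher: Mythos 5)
Your proposal is correct and takes essentially the same route as the paper's proof in Appendix~\ref{appendix:GD}: the same expansion of $\|x^{k+1}-x^*\|^2$, convexity on the inner product, inequality \eqref{eq:sym_gradient_bound} to absorb the quadratic term, the two-regime case split with the fixed decrement $\nicefrac{\nu\eta}{8L_1^2}$, the initial-segment structure of $\cT$ via Lemma~\ref{lem:monotonicity_of_gradient_norm_GD}, last-iterate conversion via Lemma~\ref{lem:monotonicity_of_function_value_GD}, and the final algebraic comparison that trades the negative $T$-term against the shrunken denominator using $N > \nicefrac{8L_1^2\|x^0-x^*\|^2}{\nu\eta} - 1$ (your phase-one bookkeeping, which converts $f(x^k)-f(x^*)$ into $\|\nabla f(x^k)\|^2$ before collapsing the ratio, is an immaterial rearrangement of the paper's steps). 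The one slip is the telescoping range: to obtain the stated denominator $N+1-T$ you must sum the phase-two recursion over $k = T,\ldots,N$ (including the step producing the hypothetical iterate $x^{N+1}$), as the paper does, rather than stopping at $k = N-1$, which would only yield $N-T$.
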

\begin{proof}[Proof sketch]
    Similarly to the proofs from \citep{koloskova2023revisiting, takezawa2024polyak}, our proof is based on careful consideration of two possible situations: either $\|\nabla f(x^k)\| \geq \nicefrac{L_0}{L_1}$ or $\|\nabla f(x^k)\| < \nicefrac{L_0}{L_1}$. When the first situation happens, the squared distance to the solution decreases by $\nicefrac{\eta}{8L_1^2}$. Since the squared distance is non-negative and non-increasing, this cannot happen more than $\nicefrac{8L_1^2\|x^0 - x^*\|^2}{\nu\eta}$ times, which gives the first part of the result. Next, when $\|\nabla f(x^k)\| < \nicefrac{L_0}{L_1}$, the method behaves as \algname{GD} on convex $2L_0$-smooth problem and the analysis is also similar. Together with Lemmas~\ref{lem:monotonicity_of_function_value_GD}~and~\ref{lem:monotonicity_of_gradient_norm_GD}, this gives the second part of the proof, see Appendix~\ref{appendix:GD} for the details. 
\end{proof}

Bound \eqref{eq:main_result_L0_L1_GD_asym} implies that \algname{$(L_0,L_1)$-GD} with $\eta = \nicefrac{\nu}{2}$ satisfies $f(x^N) - f(x^*)\leq \varepsilon$ after $N = \cO\left(\max\left\{\nicefrac{L_0 R_0^2}{\varepsilon}, L_1^2 R_0^2\right\}\right)$ iterations. In contrast, \citet{koloskova2023revisiting, takezawa2024polyak} derive $\cO\left(\max\left\{\nicefrac{L_0R_0^2}{\varepsilon}, \sqrt{\nicefrac{R_0^4LL_1^2}{\varepsilon}}\right\}\right)$ complexity bound that depends on the smoothness constant $L$, which can be much larger than $L_0$ and $L_1$, e.g., when $f(x) = \|x\|^4$ constant $L$ depends on the starting point (since it defines a compact set, where the method stays) as $L_0 + L_1\|\nabla f(x^0)\| = \cO(1 + \|x^0\|^3)$ (see Appendix~\ref{appendix:examples}), while $L_0 = 4$ and $L_1 = 3$. That is, by moving $x^0$ away from the solution, one can make our bound arbitrarily better than the previous one, even for this simple example. Moreover, unlike the result from \citep{li2024convex} for \algname{GD} with small enough stepsize, our bound depends neither on $f(x^0) - f(x^*)$ nor on $\|\nabla f(x^0)\|$ that can be significantly larger than $R_0$ (according to Lemma~\ref{lem:known_technical_lemma} -- exponentially larger). Finally, we highlight that our analysis shows that \algname{$(L_0, L_1)$-GD} exhibits a two-stage behavior: during the first stage, the gradient is large (this stage can be empty), and the squared distance to the solution decreases by a constant, and during the second stage, the method behaves as standard \algname{GD}. This observation is novel on its own and gives a better understanding of the method's behavior. We also provide the result for the strongly convex case in Appendix~\ref{appendix:GD}.


\section{Gradient Descent with Polyak Stepsizes}\label{sec:Polyak_stepsizes}

Next, we provide an improved analysis under $(L_0,L_1)$-smothness for celebrated Gradient Descent with Polyak Stepsizes (\algname{GD-PS}, Algorithm~\ref{alg:GD-PS}).

\begin{algorithm}[H]
\caption{Gradient Descent with Polyak Stepsizes (\algname{GD-PS})}
\label{alg:GD-PS}   
\begin{algorithmic}[1]
\REQUIRE starting point $x^0$, number of iterations $N$, minimal value $f(x^*) := \min_{x\in \R^d}f(x)$
\FOR{$k=0,1,\ldots, N-1$}
\STATE $x^{k+1} = x^k - \frac{f(x^k) - f(x^*)}{\|\nabla f(x^k)\|^2}\nabla f(x^k)$
\ENDFOR
\ENSURE $x^N$ 
\end{algorithmic}
\end{algorithm}

\begin{theorem}\label{thm:GD_PS_sym_main_result}
    Let Assumptions~\ref{as:convexity} with $\mu = 0$ and \ref{as:symmetric_L0_L1} hold. Then, the iterates generated by \algname{GD-PS} satisfy the following implication:
    \begin{equation}
        \|\nabla f(x^k)\| \geq \frac{L_0}{L_1}\;  \Longrightarrow \; \|x^{k+1} - x^*\|^2 \leq \|x^k - x^*\|^2 - \frac{\nu^2}{16L_1^2}. \label{eq:implication_for_large_norm_asym_GD_PS}
    \end{equation}
    Moreover, the output after $N$ steps the iterates satisfy
    \begin{equation}
        \frac{4L_0}{\nu}\|x^{N+1} - x^*\|^2 + \sum\limits_{k \in \{0,1,\ldots,N\}\setminus \cT} \left(f(x^k) - f(x^*)\right) \leq \frac{4L_0}{\nu}\|x^0 - x^*\|^2 - \frac{\nu L_0 T}{4L_1^2}, \label{eq:main_result_GD_PS_asym_1}
    \end{equation}
    where $\cT \eqdef \{k\in \{0,1,\ldots,N\}\mid \|\nabla f(x^k)\| \geq \frac{L_0}{L_1}\}$, $T \eqdef |\cT|$, and if $N > T-1$, it holds that
    \begin{equation}
        f(\hat x^N) - f(x^*) \leq \frac{4L_0\|x^0 - x^*\|^2}{\nu(N - T + 1)} - \frac{\nu L_0 T}{4 L_1^2 (N - T + 1)} \label{eq:main_result_GD_PS_asym_2}
    \end{equation}
    where $\hat x^N \in \{x^0, x^1, \ldots, x^N\}$ is such that $f(\hat x^N) = \min_{x \in \{x^0, x^1, \ldots, x^N\}}f(x)$. In particular, for $N > \frac{16L_1^2\|x^0 - x^*\|^2}{\nu^2} - 1$ inequality $N > T-1$ is guaranteed and
    \begin{equation}
        f(\hat x^N) - f(x^*) \leq \frac{4L_0\|x^0 - x^*\|^2}{\nu(N + 1)}. \label{eq:main_result_GD_PS_asym_3}
    \end{equation}
\end{theorem}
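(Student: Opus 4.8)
The plan is to run the classical Polyak-stepsize argument on the squared distance $r_k := \|x^k - x^*\|^2$ and then use the gradient lower bound \eqref{eq:sym_gradient_bound} to separate the iterations into a large-gradient phase and a small-gradient phase, mirroring the two-stage behavior established for \algname{$(L_0,L_1)$-GD} in Theorem~\ref{thm:L0_L1_GD_sym_main_result}. First I would expand $r_{k+1}$ using the update with stepsize $\gamma_k = (f(x^k)-f(x^*))/\|\nabla f(x^k)\|^2$ and apply convexity (Assumption~\ref{as:convexity} with $\mu=0$) through $\langle \nabla f(x^k), x^k - x^*\rangle \geq f(x^k)-f(x^*)$, which collapses to the standard one-step bound
\[
r_{k+1} \leq r_k - \frac{(f(x^k)-f(x^*))^2}{\|\nabla f(x^k)\|^2}.
\]
Then I would apply \eqref{eq:sym_gradient_bound} at $x = x^k$ to lower-bound one factor of $f(x^k)-f(x^*)$, turning the decrement into $\nu(f(x^k)-f(x^*))/\bigl(2(L_0+L_1\|\nabla f(x^k)\|)\bigr)$.

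Next comes the case split on $\|\nabla f(x^k)\|$. If $\|\nabla f(x^k)\| \geq L_0/L_1$, then $L_0 + L_1\|\nabla f(x^k)\| \leq 2L_1\|\nabla f(x^k)\|$, so \eqref{eq:sym_gradient_bound} yields $(f(x^k)-f(x^*))/\|\nabla f(x^k)\| \geq \nu/(4L_1)$; squaring this inside the one-step bound gives the constant decrement $r_{k+1} \leq r_k - \nu^2/(16L_1^2)$, which is exactly \eqref{eq:implication_for_large_norm_asym_GD_PS}. If instead $\|\nabla f(x^k)\| < L_0/L_1$, then $L_0 + L_1\|\nabla f(x^k)\| < 2L_0$ and the decrement becomes $\nu(f(x^k)-f(x^*))/(4L_0)$, equivalently $f(x^k)-f(x^*) \leq (4L_0/\nu)(r_k - r_{k+1})$. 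In both cases $r_{k+1}\leq r_k$, so $\{r_k\}$ is non-increasing.

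To obtain \eqref{eq:main_result_GD_PS_asym_1}, I would sum the telescoping differences over $k=0,\dots,N$, bounding each $k\in\cT$ below by $\nu^2/(16L_1^2)$ and each $k\notin\cT$ below by $(\nu/4L_0)(f(x^k)-f(x^*))$, so that $r_0-r_{N+1} \geq \nu^2 T/(16L_1^2) + (\nu/4L_0)\sum_{k\notin\cT}(f(x^k)-f(x^*))$; multiplying through by $4L_0/\nu$ and rearranging gives the claim. For \eqref{eq:main_result_GD_PS_asym_2} I would drop the nonnegative term $(4L_0/\nu)r_{N+1}$ and use that $\hat x^N$ minimizes $f$ over the iterates, so $(N+1-T)(f(\hat x^N)-f(x^*)) \leq \sum_{k\notin\cT}(f(x^k)-f(x^*))$; dividing by $N+1-T>0$ finishes it. Finally, the constant decrement in the large-gradient phase together with $r_k\geq 0$ forces $T\cdot\nu^2/(16L_1^2)\leq r_0$, hence $T \leq 16L_1^2\|x^0-x^*\|^2/\nu^2$, so $N > 16L_1^2\|x^0-x^*\|^2/\nu^2 - 1$ guarantees $N+1>T$ and positivity of $N-T+1$; the reduction of \eqref{eq:main_result_GD_PS_asym_2} to \eqref{eq:main_result_GD_PS_asym_3} then amounts to the elementary comparison $(A-BT)/(M-T)\leq A/M$ with $A=4L_0\|x^0-x^*\|^2/\nu$, $B=\nu L_0/(4L_1^2)$, and $M=N+1$, which holds precisely when $MB\geq A$, i.e. when $N+1\geq 16L_1^2\|x^0-x^*\|^2/\nu^2$—exactly the assumed threshold on $N$.

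I expect the main obstacle to lie not in any single estimate but in organizing the telescoping sum over the index set $\cT$ so that the two heterogeneous per-step contributions combine cleanly, and in carrying the correction term $-\nu L_0 T/(4L_1^2)$ consistently from \eqref{eq:main_result_GD_PS_asym_1} through \eqref{eq:main_result_GD_PS_asym_2} to \eqref{eq:main_result_GD_PS_asym_3}; the final step is the delicate one, since the threshold on $N$ must be matched exactly to make the comparison $MB\geq A$ tight at the boundary.
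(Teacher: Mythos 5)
Your proposal is correct and follows essentially the same route as the paper's proof: the same one-step bound $\|x^{k+1}-x^*\|^2 \leq \|x^k-x^*\|^2 - \nicefrac{(f(x^k)-f(x^*))^2}{\|\nabla f(x^k)\|^2}$ via convexity, the same application of \eqref{eq:sym_gradient_bound}, the identical case split at $\|\nabla f(x^k)\| = \nicefrac{L_0}{L_1}$ yielding the constant decrement $\nicefrac{\nu^2}{16L_1^2}$ versus the $\nicefrac{\nu}{4L_0}$-weighted functional decrement, and the same telescoping and $T$-bound. The only cosmetic difference is the last step, where you verify $(A-BT)/(M-T)\leq A/M$ directly via the equivalent condition $MB\geq A$, while the paper shows the right-hand side of \eqref{eq:main_result_GD_PS_asym_2} is decreasing in $T$ by differentiating; both hinge on exactly the threshold $N+1 \geq \nicefrac{16L_1^2\|x^0-x^*\|^2}{\nu^2}$.
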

\begin{proof}[Proof sketch]
    The proof is similar to the one for \algname{$(L_0,L_1)$-GD}, see the details in Appendix~\ref{appendix:Polyak}.
\end{proof}

In other words, the above result shows that \algname{GD-PS} has the same worst-case complexity as \algname{$(L_0,L_1)$-GD}, and the comparison with the results from \citep{koloskova2023revisiting, takezawa2024polyak, li2024convex} that we provied after Theorem~\ref{thm:L0_L1_GD_sym_main_result} is valid for \algname{GD-PS} as well. However, in contrast to \algname{$(L_0,L_1)$-GD}, \algname{GD-PS} requires to know $f(x^*)$ only. In some cases, the optimal value is known in advance, e.g., for over-parameterized problems \citep{vaswani2019fast} $f(x^*) = 0$, and in such situations \algname{GD-PS} can be called parameter-free. The price for this is the potential non-monotonic behavior of \algname{GD-PS}, which we observed in our preliminary computer-aided analysis using PEPit \citep{goujaud2024pepit} even in the case of $L$-smooth functions. Therefore, unlike Theorem~\ref{thm:L0_L1_GD_sym_main_result}, Theorem~\ref{thm:GD_PS_sym_main_result} does not provide last-iterate convergence rates in the convex case and also does not imply that \algname{GD-PS} has a clear two-stage behavior (although the iterates can be split into two groups based on the norm of the gradient as well). We also provide the result for the strongly convex case in Appendix~\ref{appendix:Polyak}.

\section{Acceleration: $(L_0,L_1)$-Similar Triangles Method}\label{sec:acceleration}

In this section, we present an accelerated version of \algname{$(L_0,L_1)$-GD} called $(L_0, L_1)$-Similar Triangles Method (\algname{$(L_0,L_1)$-STM}, Algorithm~\ref{alg:L0L1-STM}). This method can be seen as an adaptation of \algname{STM} \citep{gasnikov2016universal} to the case of $(L_0,L_1)$-smooth functions. The main modification in comparison to the standard \algname{STM} is in Line~\ref{line:STM_main_modification}: stepsize for \algname{GD}-type step is now proportional to $\nicefrac{1}{G_{k+1}}$, where $G_{k+1}$ is some upper bound on $L_0 + L_1\|\nabla f(x^{k+1})\|$, while in \algname{STM} $G_{k+1}$ should be an upper bound for the smoothness constant.

\begin{algorithm}[H]
\caption{$(L_0, L_1)$-Similar Triangles Method (\algname{$(L_0,L_1)$-STM})}
\label{alg:L0L1-STM}   
\begin{algorithmic}[1]
\REQUIRE starting point $x^0$, number of iterations $N$, stepsize parameter $\eta > 0$
\STATE $y^0 = z^0 = x^0$
\STATE $A_k = 0$
\FOR{$k=0,1,\ldots, N-1$}
\STATE Set $\alpha_{k+1} = \frac{\eta(k+2)}{2}$ and $A_{k+1} = A_k + \alpha_{k+1}$
\STATE $x^{k+1} = \frac{A_k y^k + \alpha_{k+1} z^k}{A_{k+1}}$
\STATE\label{line:STM_main_modification} $z^{k+1} = z^k - \frac{\alpha_{k+1}}{G_{k+1}} \nabla f(x^{k+1})$, where $G_{k+1} \geq L_0 + L_1\|\nabla f(x^{k+1})\|$ 
\STATE $y^{k+1} = \frac{A_k y^k + \alpha_{k+1} z^{k+1}}{A_{k+1}}$
\ENDFOR
\ENSURE $y^N$ 
\end{algorithmic}
\end{algorithm}



The next lemma is valid for any choice of $G_{k+1} \geq L_0 + L_1\|\nabla f(x^{k+1})\|$.
\begin{lemma}\label{lem:descent_lemma_asym_L0_L1_STM}
    Let $f$ satisfy Assumptions~\ref{as:convexity} with $\mu = 0$ and \ref{as:symmetric_L0_L1}. Then, the iterates generated by \algname{$(L_0,L_1)$-STM} with $0 < \eta \leq \frac{\nu}{2}$, $\nu = e^{-\nu}$ satisfy for all $N \geq 0$
    \begin{equation*}
        A_N\left(f(y^N) - f(x^*)\right) + \frac{G_{N}}{2}R_{N}^2 \leq \underbrace{\frac{G_1}{2}R_0^2 + \sum\limits_{k=1}^{N-1} \frac{G_{k+1} - G_{k}}{2}R_k^2}_{\mytag{eq:main_inequality_asym_L0_L1_STM_part_1}}
        \underbrace{- \sum\limits_{k=0}^{N-1}\frac{\alpha_{k+1}^2}{4G_{k+1}}\|\nabla f(x^{k+1})\|^2}_{\mytag{eq:main_inequality_asym_L0_L1_STM}},
    \end{equation*}
    where $R_k \eqdef \|z^k - x^*\|$ for all $k\geq 0$.
\end{lemma}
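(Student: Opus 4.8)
The plan is to prove this descent lemma for \algname{$(L_0,L_1)$-STM} by adapting the standard estimate-sequence-style analysis of the Similar Triangles Method to the $(L_0,L_1)$-smooth setting, where the only change is that the smoothness constant is replaced by the iterate-dependent bound $G_{k+1}$. The core object will be the potential $A_N(f(y^N) - f(x^*)) + \frac{G_N}{2}R_N^2$, and the goal is a one-step inequality that telescopes.

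First I would establish the one-step descent relation. Using \eqref{eq:sym_upper_bound} from Lemma~\ref{lem:known_technical_lemma} at $x = x^{k+1}$ and $y = y^{k+1}$, together with the defining relation $y^{k+1} - x^{k+1} = \frac{\alpha_{k+1}}{A_{k+1}}(z^{k+1} - z^k) = -\frac{\alpha_{k+1}^2}{A_{k+1}G_{k+1}}\nabla f(x^{k+1})$, I get an upper bound on $f(y^{k+1})$ in terms of $f(x^{k+1})$ and $\|\nabla f(x^{k+1})\|^2$. The key is to verify that the coefficient in front of $\|x^{k+1} - y^{k+1}\|^2$ is controlled: since $\|x^{k+1} - y^{k+1}\| = \frac{\alpha_{k+1}^2}{A_{k+1}G_{k+1}}\|\nabla f(x^{k+1})\|$ and $G_{k+1} \geq L_0 + L_1\|\nabla f(x^{k+1})\|$, the factor $\frac{L_0 + L_1\|\nabla f(x^{k+1})\|}{2}\exp(L_1\|x^{k+1}-y^{k+1}\|)$ is at most $\frac{G_{k+1}}{2}\exp(L_1\|x^{k+1}-y^{k+1}\|)$, and I must check that $\exp(L_1\|x^{k+1}-y^{k+1}\|)$ is bounded appropriately (e.g.\ by $2$ or by a constant absorbed into the stepsize restriction $\eta \leq \frac{\nu}{2}$) so that the effective descent coefficient is at most $\frac{G_{k+1}}{\alpha_{k+1}^2/A_{k+1}}$ times a benign factor. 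This is where the stepsize bound $0 < \eta \leq \frac{\nu}{2}$ enters, mirroring its role in Lemmas~\ref{lem:monotonicity_of_function_value_GD} and \ref{lem:monotonicity_of_gradient_norm_GD}.

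Next I would combine this with the two convexity estimates standard to \algname{STM}. Convexity \eqref{eq:convexity} with $\mu = 0$ gives $f(x^{k+1}) \leq f(x^*) + \langle \nabla f(x^{k+1}), x^{k+1} - x^*\rangle$ and, via the interpolation identity $A_{k+1}x^{k+1} = A_k y^k + \alpha_{k+1}z^k$, a bound relating $\langle \nabla f(x^{k+1}), x^{k+1} - z^k\rangle$ to $f(y^k) - f(x^{k+1})$. Together with the $z$-update and the identity $\langle \nabla f(x^{k+1}), z^k - x^*\rangle = \frac{G_{k+1}}{\alpha_{k+1}}\langle z^k - z^{k+1}, z^k - x^*\rangle$ expanded through $\|z^{k+1}-x^*\|^2 = \|z^k - x^*\|^2 - 2\langle z^k - z^{k+1}, z^k - x^*\rangle + \|z^k - z^{k+1}\|^2$, this yields the familiar per-step inequality $A_{k+1}(f(y^{k+1}) - f(x^*)) + \frac{G_{k+1}}{2}R_{k+1}^2 \leq A_k(f(y^k) - f(x^*)) + \frac{G_{k+1}}{2}R_k^2$, using the scaling relation $\alpha_{k+1}^2 = A_{k+1}\cdot \eta(k+2)/2$ matched against $A_{k+1}$ to cancel the $\|\nabla f(x^{k+1})\|^2$ terms up to the residual $-\frac{\alpha_{k+1}^2}{4G_{k+1}}\|\nabla f(x^{k+1})\|^2$ that is kept on the right-hand side.

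Finally I would telescope from $k=0$ to $N-1$. The left telescoping gives $A_N(f(y^N)-f(x^*)) + \frac{G_N}{2}R_N^2$; on the right the $\frac{G_{k+1}}{2}R_k^2$ terms do not cancel directly because $G$ varies, so rewriting $\frac{G_{k+1}}{2}R_k^2 = \frac{G_k}{2}R_k^2 + \frac{G_{k+1}-G_k}{2}R_k^2$ and regrouping produces exactly the term \eqref{eq:main_inequality_asym_L0_L1_STM_part_1}, namely $\frac{G_1}{2}R_0^2 + \sum_{k=1}^{N-1}\frac{G_{k+1}-G_k}{2}R_k^2$, while the accumulated gradient residuals give \eqref{eq:main_inequality_asym_L0_L1_STM}. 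I expect the main obstacle to be the first paragraph's bookkeeping: carefully controlling the $\exp(L_1\|x^{k+1}-y^{k+1}\|)$ factor arising from the $(L_0,L_1)$-smooth upper bound \eqref{eq:sym_upper_bound} so that the descent coefficient remains at most the clean $\frac{G_{k+1}}{2}$-type quantity needed for the cancellation, since unlike the $L$-smooth case this factor is not identically one and must be absorbed using $\eta \leq \frac{\nu}{2}$ and the defining property $G_{k+1} \geq L_0 + L_1\|\nabla f(x^{k+1})\|$.
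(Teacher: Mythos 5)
Your proposal is correct and follows essentially the same route as the paper's proof: you apply the $(L_0,L_1)$ upper bound \eqref{eq:sym_upper_bound} at $x = x^{k+1}$, $y = y^{k+1}$ with the exponential factor $\exp(L_1\|x^{k+1}-y^{k+1}\|)$ controlled through $G_{k+1} \geq L_0 + L_1\|\nabla f(x^{k+1})\|$ and the stepsize restriction $\eta \leq \frac{\nu}{2}$, combine it with the two convexity estimates and the three-point expansion of the $z$-update exactly as in Appendix~\ref{appendix:acceleration}, and finish with the same Abel-type regrouping $\frac{G_{k+1}}{2}R_k^2 = \frac{G_k}{2}R_k^2 + \frac{G_{k+1}-G_k}{2}R_k^2$ in the telescoping. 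The only slip is the stated ``scaling relation'' $\alpha_{k+1}^2 = A_{k+1}\cdot \eta(k+2)/2$, which would force $\alpha_{k+1} = A_{k+1}$ and is false; the ingredient actually needed (and which your argument uses in substance) is the inequality $A_{k+1} \geq \frac{\alpha_{k+1}^2}{\eta}$ from Lemma~\ref{lem:stepsizes_lemma_STM}, which gives $\frac{\alpha_{k+1}^2}{A_{k+1}} \leq \eta \leq \frac{\nu}{2}$ and hence both the bound on the exponential factor and the residual coefficient $-\frac{\alpha_{k+1}^2}{4G_{k+1}}$.
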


Since $A_N \geq \frac{\eta N(N+3)}{4}$ (see Lemma~\ref{lem:stepsizes_lemma_STM}) and the term from \eqref{eq:main_inequality_asym_L0_L1_STM} is non-positive, the above lemma gives an accelerated convergence rate, \emph{if we manage to bound \revision{the sum from \eqref{eq:main_inequality_asym_L0_L1_STM_part_1}}.} Unfortunately, in the case of $G_{k+1} = L_0 + L_1\|\nabla f(x^{k+1})\|$, it is unclear whether this sum is bounded due to the well-known non-monotonic behavior (in particular, in terms of the gradient norm) of accelerated methods. Nevertheless, if we enforce $G_{k+1}$ to be non-decreasing as a function of $k$, then from the above lemma one can show that $R_k$ remains bounded by $R_0$ and all iterates generated by \algname{$(L_0,L_1)$-STM} lie in the ball centered at $x^*$ with radius $R_0$. This observation is formalized in the theorem below (see the complete proof in Appendix~\ref{appendix:acceleration}).

\begin{theorem}\label{thm:STM_convergence}
    Let $f$ satisfy Assumptions~\ref{as:convexity} with $\mu = 0$ and \ref{as:symmetric_L0_L1}. Then, the iterates generated by \algname{$(L_0,L_1)$-STM} with $0 < \eta \leq \frac{\nu}{2}$, $\nu = e^{-\nu}$, $G_1 = L_0 + L_1 \|\nabla f(x^0)\|$, and
    \begin{equation}
        G_{k+1} = \max\{G_k, L_0 + L_1\|\nabla f(x^{k+1})\|\},\quad k \geq 0, \label{eq:G_k_condition_STM_asym}
    \end{equation}
    satisfy
    \begin{equation}
        f(y^N) - f(x^*) \leq \frac{2L_0(1+ L_1 \|x^0 - x^*\|\exp(L_1 \|x^0 - x^*\|)) \|x^0 - x^*\|^2}{\eta N(N+3)}. \label{eq:STM_asym_rate}
    \end{equation}
\end{theorem}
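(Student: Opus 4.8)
The plan is to combine the descent inequality from Lemma~\ref{lem:descent_lemma_asym_L0_L1_STM} with the monotonicity enforced by the choice \eqref{eq:G_k_condition_STM_asym} to show that the entire right-hand side stays controlled. The key structural observation is that with $G_{k+1} = \max\{G_k, L_0 + L_1\|\nabla f(x^{k+1})\|\}$, the sequence $(G_k)$ is non-decreasing by construction, so each coefficient $G_{k+1} - G_k$ in the sum \eqref{eq:main_inequality_asym_L0_L1_STM_part_1} is non-negative and the sum telescopes in a useful way. The first step is therefore an induction showing that $R_k \leq R_0$ for all $k$, i.e., all iterates $z^k$ remain in the ball of radius $R_0 = \|x^0 - x^*\|$ around $x^*$.

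To run the induction, I would proceed as follows. Assume $R_j \leq R_0$ for all $j \leq k$. Since every coefficient in \eqref{eq:main_inequality_asym_L0_L1_STM_part_1} is non-negative and the term \eqref{eq:main_inequality_asym_L0_L1_STM} is non-positive, applying Lemma~\ref{lem:descent_lemma_asym_L0_L1_STM} at step $k$ gives
\begin{equation*}
    \frac{G_{k}}{2} R_k^2 \leq A_k\left(f(y^k) - f(x^*)\right) + \frac{G_k}{2} R_k^2 \leq \frac{G_1}{2}R_0^2 + \sum_{j=1}^{k-1}\frac{G_{j+1} - G_j}{2} R_j^2.
\end{equation*}
Using the inductive hypothesis $R_j \leq R_0$ to bound the sum, the right-hand side telescopes to $\tfrac{G_1}{2}R_0^2 + \tfrac{G_k - G_1}{2}R_0^2 = \tfrac{G_k}{2}R_0^2$, which yields $R_k \leq R_0$ and closes the induction. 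This is the conceptual heart of the argument: enforcing monotonicity of $G_k$ is exactly what lets the otherwise uncontrolled sum collapse.

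With $R_k \leq R_0$ established for all $k$, the second step is to bound the limiting constant $G_N$. Since all iterates $z^k$ lie in the ball of radius $R_0$, and $x^{k+1}$ is a convex combination of $y^k$ and $z^k$ (themselves convex combinations tracing back to points in this ball), every point $x^{k+1}$ lies within distance $R_0$ of $x^*$. I would then use \eqref{eq:symmetric_L0_L1_corollary} from Lemma~\ref{lem:known_technical_lemma} with $y = x^*$ (where $\nabla f(x^*) = 0$) to bound $\|\nabla f(x^{k+1})\| \leq L_0\exp(L_1 R_0)R_0$, hence $L_0 + L_1\|\nabla f(x^{k+1})\| \leq L_0(1 + L_1 R_0 \exp(L_1 R_0))$. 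Since $G_1 = L_0 + L_1\|\nabla f(x^0)\|$ admits the same bound and the max in \eqref{eq:G_k_condition_STM_asym} preserves it, we get $G_N \leq L_0(1 + L_1 R_0 \exp(L_1 R_0))$ for all $N$.

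The final step is routine bookkeeping: dropping the non-negative term $\tfrac{G_N}{2}R_N^2$ from the left-hand side of Lemma~\ref{lem:descent_lemma_asym_L0_L1_STM}, bounding the right-hand side by $\tfrac{G_N}{2}R_0^2$ via the same telescoping (now using $R_k \leq R_0$ globally), gives $A_N(f(y^N) - f(x^*)) \leq \tfrac{G_N}{2}R_0^2$. Dividing by $A_N \geq \tfrac{\eta N(N+3)}{4}$ (Lemma~\ref{lem:stepsizes_lemma_STM}) and substituting the bound on $G_N$ produces \eqref{eq:STM_asym_rate}. I expect the main obstacle to be the induction in the first step: one must verify carefully that the non-negativity of the coefficients $G_{k+1} - G_k$ and the sign of the term \eqref{eq:main_inequality_asym_L0_L1_STM} combine correctly so that the inductive bound on the sum is not spoiled by the accelerated coupling, and that $x^{k+1}$ genuinely inherits the $R_0$-ball containment needed for the gradient bound — this is precisely where the non-monotonicity of accelerated methods usually breaks naive arguments, and where enforcing monotone $G_k$ earns its keep.
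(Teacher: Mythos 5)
Your proposal is correct and follows essentially the same route as the paper's proof in Appendix~\ref{appendix:acceleration}: the induction $R_k \leq R_0$ via telescoping of the non-negative increments $G_{k+1}-G_k$, the containment of $\{x^k\}$ and $\{y^k\}$ in $B_{R_0}(x^*)$ through convex combinations, the gradient bound $G_N \leq L_0(1+L_1R_0\exp(L_1R_0))$ from \eqref{eq:symmetric_L0_L1_corollary}, and the final division by $A_N \geq \frac{\eta N(N+3)}{4}$. No gaps.
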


In the special case of $L_0$-smooth functions ($L_1 = 0$), the above result recovers the standard accelerated convergence rate \citep{gasnikov2016universal}. In the general $(L_0,L_1)$-smooth case, the rate is also accelerated and implies an optimal $\cO\left(\sqrt{\nicefrac{L_0(1+L_1R_0\exp(L_1 R_0))R_0^2}{\varepsilon}}\right)$ in $\varepsilon$ complexity. In the case of $(L_0,L_1)$-smooth functions, the complexity $\cO\left(\sqrt{\nicefrac{\ell R_0^2}{\varepsilon}}\right)$ from \citep{li2024convex} derived for Nesterov's method applied to convex $(r,\ell)$-smooth problem coincides with our result in the worst-case. Indeed, in this special case, $\ell = L_0 + 2L_1G$, where $G \sim \|\nabla f(x^0)\|$ \citep[Theorem 4.4]{li2024convex}. However, according to Lemma~\ref{lem:known_technical_lemma}, $\|\nabla f(x^0)\| \sim L_0R_0\exp(L_1 R_0)$ in the worst case, implying that $\ell \sim L_0(1 + 2L_1R_0 \exp(L_1 R_0))$ in the worst case. Nevertheless, the derived complexity is clearly not optimal if $L_1$ is large, $R_0$ is large, and $\varepsilon$ is not too small since $\sqrt{\nicefrac{L_0(1+L_1R_0\exp(L_1 R_0))R_0^2}{\varepsilon}}$ can be larger than $\max\left\{\nicefrac{L_0 R_0^2}{\varepsilon}, L_1^2 R_0^2\right\}$, i.e., \algname{$(L_0,L_1)$-GD} and \algname{GD-PS} can be faster in achieving $\varepsilon$-solutiion for some values of $L_1, R_0$, and $\varepsilon$. Deriving a tight lower bound and optimal method for convex $(L_0,L_1)$-smooth optimization remains an open problem.

\section{Adaptive Gradient Descent}

In this section, we consider Adaptive Gradient Descent (\algname{AdGD}, Algorithm~\ref{alg:main}) proposed by \citet{malitsky2019adaptive}. In the original paper, the method is analyzed under the assumption that the gradient of $f$ is locally Lipschitz, i.e., for any compact set $\cC$ gradient of $f$ is assumed to be bounded. Clearly, $(L_0,L_1)$-smoothness of $f$ implies that $\nabla f$ is locally Lipschitz, e.g., this can be deduced from \eqref{eq:symmetric_L0_L1_corollary}. In particular, \citet{malitsky2019adaptive} prove $\cO(\nicefrac{LD^2}{N})$ convergence rate for \algname{AdGD}, where $L$ is smoothness constant on the convex combination of $\{x^*, x^0, x^1, \ldots\}$: this set is bounded since the authors prove that \algname{AdGD} does not leave ball centered at $x^*$ with radius $D> 0$ such that $D^2 \eqdef \norm{x^1-x^*}^2  + \frac 1 2 \norm{x^1-x^{0}}^2    + 2\lambda_1 \theta_1 (f(x^{0})-f(x^*))$. Moreover, they derive $\|x^k - x^{k-1}\|^2 \leq 2D^2$ for all $k\geq 1$.

\begin{algorithm}[H] 
\caption{Adaptive Gradient Descent~\citep{malitsky2019adaptive}}
\label{alg:main}
\begin{algorithmic}[1]
\STATE \textbf{Input:} $x^0 \in \R^d$, $\lambda_0>0$, $\theta_0=+\infty$, $\gamma \leq \frac{1}{2}$
\STATE  $x^1= x^0-\lambda_0\nabla f(x^0)$
    \FOR{$k = 1,2,\dots$}
        \STATE $\lambda_k = \min\left\{
        \sqrt{1+\theta_{k-1}}\lambda_{k-1},\frac{\gamma\norm{x^{k}-x^{k-1}}}{\norm{\nabla
        f(x^{k})-\nabla f(x^{k-1})}}\right\}$
        \STATE $x^{k+1} = x^k - \lambda_k \nabla f(x^k)$
        \STATE $\theta_k = \frac{\lambda_k}{\lambda_{k-1}}$
    \ENDFOR
 \end{algorithmic}
\end{algorithm}

In the case of $(L_0,L_1)$-smoothness, constant $L$ can be estimated explicitly: in view of the mentioned upper bounds on $\|x^k - x^*\|$ and $\|x^{k} - x^{k-1}\|$, we have
\begin{equation}
    \|\nabla f(x^k)\| \overset{\eqref{eq:symmetric_L0_L1_corollary}}{\leq} L_0 \exp(L_1\|x^k - x^*\|)\|x^k - x^*\| \leq L_0 \exp(L_1D)D, \label{eq:grad_bound_AdGD}
\end{equation}
which allows us to lower-bound $\frac{\gamma\norm{x^{k}-x^{k-1}}}{\norm{\nabla f(x^{k})-\nabla f(x^{k-1})}}$ and $\lambda_k$ as $\frac{\gamma}{L_0(1 + L_1D \exp{\left(L_1 D \right)})\exp{\left(\sqrt{2}L_1D \right)}}$ for all $k\geq 1$. Then, following the proof by \citet{malitsky2019adaptive}, we get the following rate (for the definition of $\hat{x}^N$ and the detailed statement of the result we refer to Appendix~\ref{appendix:AdGD}):
\begin{equation}
        f(\hat{x}^N) - f(x^*) \leq \frac{L_0(1 + L_1D \exp{\left(L_1 D \right)})\exp{\left(\sqrt{2}L_1D \right)}D^2}{N}. \label{eq:AdGD_convergence_symmetric}
\end{equation}


Although this result shows that \algname{AdGD} has the same \emph{rate} $\nicefrac{1}{N}$ of convergence for smooth and $(L_0,L_1)$-smooth functions, constant $(1 + L_1D \exp{\left(L_1 D \right)})\exp{\left(\sqrt{2}L_1D \right)}$ appearing in the upper bound can be huge. To address this issue, we derive a refined convergence result for \algname{AdGD}.

\begin{theorem}\label{thm:AdGD_good_result}
    Let Assumptions~\ref{as:convexity} with $\mu = 0$ and \ref{as:symmetric_L0_L1} hold. For all $N \geq 1$ we define point $\hat{x}^N \eqdef \frac{1}{S_N}\left(\lambda_N (1+\theta_N) + \sum_{k=1}^N w_k x^k\right)$, where $w_k \eqdef \lambda_k(1+\theta_k) - \lambda_{k+1}\theta_{k+1}$, $S_N \eqdef \lambda_1\theta_1 + \sum_{k=1}^N \lambda_k$, and $\{x^k\}_{k\geq 0}$ are the iterates produced by \algname{AdGD} with $\gamma = \nicefrac{1}{4}$. Then, for $N > mK - \frac{\sqrt{2N}(m+1)L_1D}{\nu}$ iterate $\hat{x}^N$ satisfies
    \begin{equation}
        f(\hat{x}^N) - f(x^*) \leq \frac{2L_0 D^2}{\nu(N - mK) - \sqrt{2N}(m+1)L_1D}, \label{eq:AdGD_convergence_symmetric_refined}
    \end{equation}
    where $D > 0$ and $D^2 \eqdef \norm{x^1-x^*}^2  + \frac 3 4 \norm{x^1-x^{0}}^2    + 2\lambda_1 \theta_1 (f(x^{0})-f(x^*))$,\newline $m \eqdef 1 + \log_{\sqrt{2}}\left\lceil\frac{(1 + L_1D \exp{\left(2L_1 D \right)})}{2}\right\rceil$, $K \eqdef \frac{2L_1^2D^2}{\nu^2}$, and $\nu = e^{-\nu}$. In particular, for $N \geq \left(2mK + \frac{4(m+1)L_1D}{\nu}\right)^2$, we have
    \begin{equation}
        f(\hat{x}^N) - f(x^*) \leq \frac{4L_0D^2}{\nu N}. \label{eq:AdGD_convergence_symmetric_refined_simple}
    \end{equation}
\end{theorem}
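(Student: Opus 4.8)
The plan is to combine three ingredients: the boundedness facts already established for \algname{AdGD}, an adapted version of the telescoping argument of \citet{malitsky2019adaptive}, and a refined lower bound on the quantity $S_N$. I would first recall that the iterates stay in the ball centered at $x^*$ of radius $D$ and satisfy $\|x^k - x^{k-1}\| \leq \sqrt{2}D$, so that by \eqref{eq:symmetric_L0_L1_corollary} the gradient norm is bounded along the trajectory by $L_0 D \exp(L_1 D)$; this controls every ``worst-case'' gradient appearance. The target is to prove the two estimates $f(\hat x^N) - f(x^*) \leq \frac{D^2}{2S_N}$ and $S_N \geq \frac{\nu(N - mK) - \sqrt{2N}(m+1)L_1 D}{4L_0}$, whose combination is exactly \eqref{eq:AdGD_convergence_symmetric_refined}; imposing the stated threshold on $N$ then makes the $\sqrt{N}$-correction lower order and yields \eqref{eq:AdGD_convergence_symmetric_refined_simple}.

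For the first estimate I would reproduce the one-step inequality underlying \algname{AdGD}, now with $\gamma = \nicefrac{1}{4}$, chosen so that the coefficient of the incoming displacement term is $\nicefrac34$ (matching the definition of $D^2$) while the outgoing coefficient is strictly larger. Telescoping over $k = 1, \ldots, N$ produces two facts simultaneously. The function-value part telescopes with coefficients $w_k = \lambda_k(1+\theta_k) - \lambda_{k+1}\theta_{k+1}$, which are nonnegative by the stepsize update, so $\hat x^N$ is a genuine convex combination with total weight $S_N = \lambda_1\theta_1 + \sum_{k=1}^N \lambda_k$; convexity (Jensen) then gives $S_N(f(\hat x^N) - f(x^*)) \leq \frac{D^2}{2}$. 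The mismatch between the incoming and outgoing displacement coefficients leaves a nonnegative sum on the left, yielding the summability bound $\sum_{k=1}^N \|x^k - x^{k-1}\|^2 \leq 2D^2$, which by Cauchy--Schwarz upgrades to $\sum_{k=1}^N \|x^k - x^{k-1}\| \leq \sqrt{2N}\,D$.

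It remains to lower-bound $S_N \geq \sum_{k=1}^N \lambda_k$, which is the heart of the argument. I would split the steps into a ``bad'' set where $\|\nabla f(x^k)\| \geq \nicefrac{L_0}{L_1}$ and a ``good'' set. On a good step the safeguard term in the $\lambda_k$ update obeys $\frac{\gamma\|x^k-x^{k-1}\|}{\|\nabla f(x^k)-\nabla f(x^{k-1})\|} \geq \frac{\gamma}{2L_0}\exp(-L_1\|x^k - x^{k-1}\|) \geq \frac{\gamma}{2L_0}(1 - L_1\|x^k - x^{k-1}\|)$ via \eqref{eq:symmetric_L0_L1_corollary} and $e^{-t} \geq 1-t$; summing and invoking the Cauchy--Schwarz bound converts the exponential into the lower-order correction $\sqrt{2N}(m+1)L_1 D$. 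The bad steps are treated by a two-phase argument in the spirit of Theorems~\ref{thm:L0_L1_GD_sym_main_result} and~\ref{thm:GD_PS_sym_main_result}: using \eqref{eq:sym_gradient_bound} together with the safeguarded smallness of the displacement on large-gradient steps, each such step reduces the potential by an amount of order $\nicefrac{\nu^2}{L_1^2}$, so there are at most $K = \nicefrac{2L_1^2 D^2}{\nu^2}$ of them. Finally, since the stepsize may shrink abruptly on a bad step but can only grow by at most a factor $\sqrt{2}$ per iteration (the $\sqrt{1+\theta_{k-1}}$ cap), recovering a good-sized stepsize after each bad event costs at most $m$ iterations; this is exactly where the worst-case gradient bound $L_0 D\exp(L_1 D)$ enters \emph{inside a logarithm}, giving $m = 1 + \log_{\sqrt2}\lceil \nicefrac{(1+L_1 D\exp(2L_1 D))}{2}\rceil = \cO(L_1 D)$ rather than an exponential factor. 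Counting, the good steps number at least $N - mK$, which delivers the claimed lower bound on $S_N$.

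The main obstacle is the bad-step count. Unlike \algname{$(L_0,L_1)$-GD} and \algname{GD-PS}, \algname{AdGD} uses a non-monotone, adaptively chosen stepsize, so a clean ``fixed decrease of $\|x^k - x^*\|^2$ per large-gradient step'' is not immediate: the second-order term $\lambda_k^2\|\nabla f(x^k)\|^2 = \|x^{k+1}-x^k\|^2$ must be shown to be dominated by the first-order decrease coming from $\lambda_k\langle \nabla f(x^k), x^k - x^*\rangle$, and this requires using the safeguard $\lambda_k \leq \frac{\gamma\|x^k - x^{k-1}\|}{\|\nabla f(x^k)-\nabla f(x^{k-1})\|}$ jointly with \eqref{eq:sym_gradient_bound} to pin the displacement scale on such steps. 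Correctly interleaving this count with the $\sqrt{2}$-recovery bookkeeping, so that the factor $m$ multiplies $K$ while the factor $(m+1)$ attaches to the $\sqrt{N}$ correction, is the delicate part; once it is in place, substituting the lower bound on $S_N$ into $f(\hat x^N) - f(x^*) \leq \frac{D^2}{2S_N}$ and taking $N \geq (2mK + \nicefrac{4(m+1)L_1 D}{\nu})^2$ to absorb the $\sqrt{N}$ term yields \eqref{eq:AdGD_convergence_symmetric_refined_simple}.
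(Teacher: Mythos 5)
Your overall skeleton coincides with the paper's proof: the two target estimates $f(\hat x^N)-f(x^*)\leq \frac{D^2}{2S_N}$ (Jensen on the telescoped one-step inequality with nonnegative weights $w_k$) and a lower bound $S_N \geq \frac{\nu(N-mK)-\sqrt{2N}(m+1)L_1D}{4L_0}$; the modified potential for $\gamma=\nicefrac14$ (Lemma~\ref{lemma:energy_2}) whose extra term $\frac12\sum_i\|x^{i+1}-x^i\|^2$ yields $\sum_{k}\|x^{k+1}-x^k\|^2\leq 2D^2$ and, via Cauchy--Schwarz, $\sum_k\|x^{k+1}-x^k\|\leq \sqrt{2N}\,D$; and the $\sqrt2$-growth recovery bookkeeping that puts the worst-case gradient bound inside a logarithm, with $m$ multiplying $K$ and $(m+1)$ attaching to the $\sqrt{N}$ correction. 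All of this is exactly how the paper proceeds.

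The genuine gap is in your count of the ``bad'' steps. You define them as steps with $\|\nabla f(x^k)\|\geq \nicefrac{L_0}{L_1}$ and propose to bound their number by $K$ via a fixed potential decrease of order $\nicefrac{\nu^2}{L_1^2}$ per step, in the spirit of Theorems~\ref{thm:L0_L1_GD_sym_main_result} and~\ref{thm:GD_PS_sym_main_result}. For \algname{AdGD} this step fails (you flag it yourself as ``the delicate part'', but the proposed fix does not close it): the safeguard bounds $\lambda_k$ by $\frac{\gamma\|x^k-x^{k-1}\|}{\|\nabla f(x^k)-\nabla f(x^{k-1})\|}$, and the \emph{previous} displacement $\|x^k-x^{k-1}\|$ is uncontrolled from below, so on a large-gradient step both $\lambda_k\langle\nabla f(x^k),x^k-x^*\rangle$ and $\|x^{k+1}-x^k\|=\lambda_k\|\nabla f(x^k)\|$ can be arbitrarily small --- no per-step decrease of order $\nicefrac{\nu^2}{L_1^2}$ is available, and the claimed bound $K=\nicefrac{2L_1^2D^2}{\nu^2}$ on the number of such steps does not follow. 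The paper's exceptional set is defined entirely differently: $K$ counts steps with large \emph{displacement}, $L_1\|x^k-x^{k-1}\|\geq \nu$, and their number is at most $\nicefrac{2L_1^2D^2}{\nu^2}$ by a one-line pigeonhole from $\sum_i\|x^{i+1}-x^i\|^2\leq 2D^2$ --- precisely the new term in the potential, with no decrease argument needed; on the complement, $\exp(-L_1\|x^k-x^{k-1}\|)\geq e^{-\nu}=\nu$, which is where $\nu$ enters. Relatedly, your small-gradient case split (used to replace $L_0+L_1\|\nabla f(x^k)\|$ by $2L_0$ in the safeguard) is avoided in the paper by the identity $\lambda_k\|\nabla f(x^k)\|=\|x^{k+1}-x^k\|$, which converts the safeguard into the subtractive bound $\lambda_k\geq \frac{\exp(-L_1\|x^k-x^{k-1}\|)}{4L_0}-\frac{L_1}{4L_0}\|x^{k+1}-x^k\|$ valid at \emph{every} safeguard step, the subtracted terms being absorbed into $\sqrt{2N}(m+1)L_1D$. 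Replacing your gradient-based classification and decrease argument by this displacement-based pigeonhole is what makes the proof close; note also that your route would not reproduce the stated constants, since with $\gamma=\nicefrac14$ your good-step coefficient is $\nicefrac{1}{8L_0}$, whereas \eqref{eq:AdGD_convergence_symmetric_refined} requires $\nicefrac{\nu}{4L_0}$ and $\nu>\nicefrac12$.
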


The above result states that \algname{AdGD} converges at least $\sim (1 + L_1D \exp{\left(L_1 D \right)})\exp{\left(\sqrt{2}L_1D \right)}$ faster (for sufficiently large $N$) than the upper bound from \eqref{eq:AdGD_convergence_symmetric}. This is a noticeable factor: for example, if $L_1 = 1$, $D = 10$, it is of the order $8\cdot 10^{6}$. Moreover, in contrast to \eqref{eq:AdGD_convergence_symmetric}, Theorem~\ref{thm:AdGD_good_result} does not follow from the one given by \citet{malitsky2019adaptive}. To achieve it, we use $\gamma = \nicefrac{1}{4}$, get a new potential function
\begin{gather} \label{eq:new_potential_forAdGD}
    \Phi_k \eqdef \norm{x^k-x^*}^2  + \frac 1 4
    \norm{x^k-x^{k-1}}^2    + 2\lambda_k \theta_k (f(x^{k-1})-f(x^*)) + \frac 1 2 \sum_{i=0}^{k-1} \|x^{i+1} - x^{i}\|^2,
\end{gather}
and show that $\Phi_{k+1} \leq \Phi_k\;\;\forall k \geq 1$. 
In contrast, the potential function from \citep{malitsky2019adaptive} does not have term $\frac 1 2 \sum_{i=0}^{k-1} \|x^{i+1} - x^{i}\|^2$, which is the key for obtaining a better guarantee under $(L_0,L_1)$-smoothness. Finally, we also note that up to the numerical factors bound \eqref{eq:AdGD_convergence_symmetric_refined_simple} coincides with the bound from \citep{malitsky2019adaptive} derived for $L$-smooth problems. Moreover, up to the numerical factors and the replacement of $\|x^0 - x^*\|^2$ with $D^2$, which might be larger than $\|x^0 - x^*\|^2$, our bound \eqref{eq:AdGD_convergence_symmetric_refined_simple} for \algname{AdGD} coincides with the ones derived for \algname{$(L_0,L_1)$-GD} and \algname{GD-PS}. These facts highlight the adaptivity of \algname{AdGD} in theory.


\section{Stochastic Extensions}

In this section, we consider the finite-sum minimization problem, i.e., we assume that $f(x) \eqdef \frac{1}{n}\sum_{i=1}^n f_i(x)$. Problems of this type are typical for ML applications \citep{shalev2014understanding}, where $f_i(x)$ represents the loss function evaluated for $i$-th example in the dataset and $x$ are parameters of the model. Since the size of the dataset $n$ is usually large, stochastic first-order methods such as Stochastic Gradient Descent \citep{robbins1951stochastic} are the methods of choice for this class of problems. However, to proceed, we need to impose some assumptions on functions $\{f_i\}_{i=1}^n$.

\begin{assumption}\label{as:stochastic_assumption}
    For all $i = 1,\ldots, n$ function $f_i$ is convex and symmetrically $(L_0,L_1)$-smooth, i.e., inequalities \eqref{eq:convexity} with $\mu = 0$ and \eqref{eq:symmetric_L0_L1} for function $f_i$ as well. Moreover, we assume that there exists $x^* \in \R^d$ such that $x^* \in \arg\min_{x\in \R^d} f_i(x)$ for all $i=1,\ldots,n$, i.e., functions $\{f_i\}_{i=1}^n$ have a common minimizer.
\end{assumption}

The first part of the assumption (convexity and $(L_0,L_1)$-smoothness of all $\{f_i\}_{i=1}^n$) is a natural generalization of convexity and $(L_0,L_1)$-smoothness of $f$ to the finite-sum case. Next, the existence of common minimizer $x^*$ for all $\{f_i\}_{i=1}^n$ is a typical assumption for over-parameterized models \citep{belkin2019does, liang2020just, zhang2021understanding, bartlett1998boosting} and used in several recent works on the analysis of stochastic methods \citep{vaswani2019fast, vaswani2019painless, loizou2021stochastic, gower2021sgd}. Although Assumption~\ref{as:stochastic_assumption} does not cover all possibly interesting stochastic scenarios, it does allow the variance of the stochastic gradients to depend on $x$ and grow with the growth of $\|x - x^*\|$, which is typical for DL, unlike the standard bounded variance assumption.

For such problems, we consider a direct extension of \algname{$(L_0,L_1)$-GD} called $(L_0, L_1)$-Stochastic Gradient Descent (\algname{$(L_0,L_1)$-SGD}, Algorithm~\ref{alg:L0L1-SGD} in Appendix~\ref{appendix:L0L1_SGD}).


\begin{theorem}\label{thm:L0_L1_SGD_sym_main_result}
    Let Assumption~\ref{as:stochastic_assumption} hold. Then, the iterates generated by \algname{$(L_0,L_1)$-SGD} with $0 < \eta \leq \frac{\nu}{2}$, $\nu = e^{-\nu}$
    after $N$ iterations satisfy
    \begin{equation}
        \min\limits_{k=0,\ldots,N}\EE\left[\min\left\{\frac{\nu L_0}{4nL_1^2}, f(x^k) - f(x^*)\right\}\right] \leq \frac{2L_0\|x^0 - x^*\|^2}{\eta(N+1)}. \label{eq:main_result_L0_L1_SGD_sym}
    \end{equation}
\end{theorem}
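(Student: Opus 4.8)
The plan is to mirror the deterministic analysis of \algname{$(L_0,L_1)$-GD} from Theorem~\ref{thm:L0_L1_GD_sym_main_result}, but now working in expectation and exploiting the common-minimizer structure from Assumption~\ref{as:stochastic_assumption}. Let me denote by $i_k$ the index sampled at iteration $k$, so that the update reads $x^{k+1} = x^k - \frac{\eta}{L_0 + L_1\|\nabla f_{i_k}(x^k)\|}\nabla f_{i_k}(x^k)$. The natural Lyapunov quantity is $\|x^k - x^*\|^2$, and the first step is to expand $\|x^{k+1}-x^*\|^2 = \|x^k - x^*\|^2 - 2\eta \frac{\langle \nabla f_{i_k}(x^k), x^k - x^*\rangle}{L_0 + L_1\|\nabla f_{i_k}(x^k)\|} + \eta^2 \frac{\|\nabla f_{i_k}(x^k)\|^2}{(L_0 + L_1\|\nabla f_{i_k}(x^k)\|)^2}$. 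Since $x^*$ is a common minimizer, we have $\nabla f_i(x^*) = 0$ for every $i$, which is the crucial ingredient that lets me reuse the per-function inequalities from Lemma~\ref{lem:new_results_for_L0_L1}.

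The key estimates come by applying the technical lemmas to each $f_{i_k}$ individually. First I would use convexity of $f_{i_k}$ to lower-bound the inner-product term by $f_{i_k}(x^k) - f_{i_k}(x^*)$, and then invoke \eqref{eq:sym_gradient_bound} (applied to $f_{i_k}$, which is legitimate since each $f_i$ satisfies Assumption~\ref{as:symmetric_L0_L1}) to control the negative drift: specifically $\frac{\nu\|\nabla f_{i_k}(x^k)\|^2}{2(L_0 + L_1\|\nabla f_{i_k}(x^k)\|)} \leq f_{i_k}(x^k) - f_{i_k}(x^*)$. The second-order (squared-norm) term is handled by noting that the denominator $(L_0 + L_1\|\nabla f_{i_k}(x^k)\|)^2 \geq (L_0 + L_1\|\nabla f_{i_k}(x^k)\|)\cdot L_0$, so that $\frac{\|\nabla f_{i_k}(x^k)\|^2}{(L_0+L_1\|\nabla f_{i_k}(x^k)\|)^2} \leq \frac{1}{L_0}\cdot\frac{\|\nabla f_{i_k}(x^k)\|^2}{L_0+L_1\|\nabla f_{i_k}(x^k)\|}$, which again ties back to \eqref{eq:sym_gradient_bound}. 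Combining these, and using $\eta \leq \nicefrac{\nu}{2}$ so the $\eta^2$ term is absorbed into the first-order descent, I expect to arrive at a one-step inequality of the form $\Ek{\|x^{k+1}-x^*\|^2} \leq \|x^k - x^*\|^2 - \frac{\eta}{L_0}\avein\left(f_i(x^k)-f_i(x^*)\right)\cdot(\text{positive factor})$, after taking conditional expectation over $i_k$ and recalling $f = \avein f_i$.

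The main obstacle — and the reason the statement takes the unusual $\min\{\cdot,\cdot\}$ form rather than a clean $f(x^k)-f(x^*)$ bound — is the two-phase behavior that was transparent in the deterministic case but becomes delicate stochastically. When $\|\nabla f_{i_k}(x^k)\|$ is large (above $\nicefrac{L_0}{L_1}$), the bound \eqref{eq:sym_gradient_bound} only yields a \emph{constant} decrease proportional to $\nicefrac{\nu L_0}{L_1^2}$ rather than a decrease proportional to the suboptimality, because the stepsize denominator saturates at $\sim L_1\|\nabla f_{i_k}(x^k)\|$. The resolution is to extract, per sampled function, a guaranteed decrease of at least $\frac{\eta}{L_0}\min\{\frac{\nu L_0}{4L_1^2}, f_{i_k}(x^k) - f_{i_k}(x^*)\}$, and then pass through the expectation: because $f(x^k)-f(x^*) = \avein(f_i(x^k)-f_i(x^*))$ with a factor $\nicefrac{1}{n}$ and Jensen applied to the concave function $t \mapsto \min\{c, t\}$ relating the average of the clipped terms to the clip of the average. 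Telescoping the resulting inequality $\Ek{\|x^{k+1}-x^*\|^2} \leq \|x^k-x^*\|^2 - \frac{2\eta}{L_0}\EE\left[\min\{\frac{\nu L_0}{4nL_1^2}, f(x^k)-f(x^*)\}\right]$ over $k = 0,\ldots,N$, dropping the nonnegative terminal term, and dividing by $N+1$ will deliver \eqref{eq:main_result_L0_L1_SGD_sym}; the $\nicefrac{1}{n}$ factor inside the clip in the theorem statement is precisely the footprint of this Jensen/averaging step. The careful bookkeeping of the clip threshold through the finite-sum averaging is the step I expect to require the most attention.
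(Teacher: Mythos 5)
Your plan coincides with the paper's own proof in structure: expand $\|x^{k+1}-x^*\|^2$, apply convexity and \eqref{eq:sym_gradient_bound} to the \emph{sampled} function $f_{\xi^k}$ (legitimate because $x^*$ minimizes every $f_i$, so $f_{\xi^k}(x^*)=\inf f_{\xi^k}$), split on $\|\nabla f_{\xi^k}(x^k)\| \gtrless \nicefrac{L_0}{L_1}$ to get a per-sample decrease of the form $\frac{\eta}{2L_0}\min\bigl\{\frac{\nu L_0}{4L_1^2},\, f_{\xi^k}(x^k)-f_{\xi^k}(x^*)\bigr\}$, take conditional expectation, and telescope. The paper phrases the expectation step through the probability $p_k$ of the large-gradient event, which under uniform sampling over $n$ functions is either $0$ or at least $\nicefrac{1}{n}$; your clipped-average formulation is the same dichotomy in different clothes, and you correctly locate the origin of the $\nicefrac{1}{n}$ inside the min. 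Two of your intermediate steps, however, are wrong as stated.

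First, your bound on the quadratic term via $(L_0+L_1\|\nabla f_{i_k}(x^k)\|)^2 \geq L_0\,(L_0+L_1\|\nabla f_{i_k}(x^k)\|)$ is too lossy. Writing $\omega_k \eqdef L_0+L_1\|\nabla f_{i_k}(x^k)\|$ and $\delta_k \eqdef f_{i_k}(x^k)-f_{i_k}(x^*)$, your route gives $\|x^{k+1}-x^*\|^2 \leq \|x^k-x^*\|^2 - 2\eta\delta_k\bigl(\frac{1}{\omega_k}-\frac{\eta}{\nu L_0}\bigr)$, and with $\eta = \nicefrac{\nu}{2}$ the coefficient $\frac{1}{\omega_k}-\frac{1}{2L_0}$ is negative precisely when $\omega_k > 2L_0$, i.e., exactly in the large-gradient regime where you need the guaranteed constant decrease $\frac{\nu\eta}{8L_1^2}$. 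The fix — and the paper's actual step — is to keep the saturating denominator: $\frac{\eta^2\|\nabla f_{i_k}(x^k)\|^2}{\omega_k^2} = \frac{\eta^2}{\omega_k}\cdot\frac{\|\nabla f_{i_k}(x^k)\|^2}{\omega_k} \overset{\eqref{eq:sym_gradient_bound}}{\leq} \frac{2\eta^2}{\nu}\cdot\frac{\delta_k}{\omega_k}$, so the $\eta^2$ term carries the \emph{same} $\omega_k$ as the first-order term and is absorbed uniformly by $\eta \leq \nicefrac{\nu}{2}$, after which both cases of the split deliver your claimed per-sample decrease. Second, a minor misattribution: the averaging inequality you need is not Jensen. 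For the concave map $t\mapsto\min\{A,t\}$, Jensen gives $\frac{1}{n}\sum_i \min\{A,\delta_i\} \leq \min\{A,\bar\delta\}$ with $\bar\delta \eqdef \frac{1}{n}\sum_i\delta_i$ — the wrong direction. What is true, and what you need, is the elementary bound $\frac{1}{n}\sum_i\min\{A,\delta_i\} \geq \min\bigl\{\frac{A}{n},\,\bar\delta\bigr\}$: either some $\delta_j \geq A$, contributing at least $\frac{A}{n}$, or all $\delta_i < A$ and the clipped average equals $f(x^k)-f(x^*)$ exactly. This case split is precisely the paper's "$p_k = 0$ or $p_k \geq \nicefrac{1}{n}$" argument; with it, and with the corrected quadratic bound above, your plan compiles into the paper's proof.
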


As in the deterministic case, the upper bound is proportional to $L_0$ and $\nicefrac{1}{(N+1)}$ and does not depend on a smoothness constant on some ball around the solution. However, one can notice that the convergence criterion in the above result is quite non-standard: typically, the results are given in terms of $\EE\left[f(x^k) - f(x^*)\right]$. This happens because although functions $\{f_i\}_{i=1}^n$ have a common minimizer, we cannot guarantee that for some $k_0$ and any $k\geq k_0$ we have $\|\nabla f_i(x^k)\| \leq \nicefrac{L_0}{L_1}$ with probability $1$ for all $i\in \{1,\ldots,n\}$, i.e., the method does not have to converge uniformly for all samples. This implies that with some small probability $f(x^k) - f(x^*)$ can be larger than $\nicefrac{\nu L_0}{(4nL_1^2)}$ for any $k = 0,1,\ldots,N$ and any $N\geq 0$. However, in view of \eqref{eq:main_result_L0_L1_SGD_sym}, this probability has to be smaller than $\frac{8n L_1^2 \|x^0 - x^*\|^2}{\eta \nu (N+1)}$, i.e., with probability at least $1 - \frac{8n L_1^2 \|x^0 - x^*\|^2}{\eta \nu (N+1)}$ for $k(N)$ such that $\EE\left[\min\left\{\frac{\nu L_0}{4nL_1^2}, f(x^{k(N)}) - f(x^*)\right\}\right] = \min_{k=0,\ldots,N}\EE\left[\min\left\{\frac{\nu L_0}{4nL_1^2}, f(x^k) - f(x^*)\right\}\right]$ we have $f(x^{k(N)})-f(x^*) \leq \frac{\nu L_0}{4n L_1^2}$, which is small for large enough $n$. 

Next, we consider \algname{SGD-PS} proposed by \citet{loizou2021stochastic} (Algorithm~\ref{alg:SGD-PS} in Appendix~\ref{appendix:sgd_ps}). In contrast to the deterministic case, \algname{SGD-PS} requires to know $\{f_i(x^*)\}_{i=1}^n$ in advance. Nevertheless, these values equal $0$ for some existing over-parameterized models, and thus, the method can be applied in such cases. Under the same assumptions, we also derive a similar result for \algname{SGD-PS}.

\begin{theorem}\label{thm:SGD_PS_sym_main_result}
    Let Assumption~\ref{as:stochastic_assumption} hold. Then, the iterates of \algname{SGD-PS}
    after $N$ iterations satisfy
    \begin{equation}
        \min\limits_{k=0,\ldots,N}\EE\left[\min\left\{\frac{\nu L_0}{4nL_1^2}, f(x^k) - f(x^*)\right\}\right] \leq \frac{4L_0\|x^0 - x^*\|^2}{\nu(N+1)}. \label{eq:main_result_SGD_PS_sym}
    \end{equation}
\end{theorem}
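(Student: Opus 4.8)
The plan is to run the deterministic \algname{GD-PS} argument behind Theorem~\ref{thm:GD_PS_sym_main_result} on the \emph{sampled} summand, and then absorb the randomness through conditional expectation. At iteration $k$, \algname{SGD-PS} draws an index $i_k$ uniformly from $\{1,\dots,n\}$ and sets $x^{k+1} = x^k - \frac{f_{i_k}(x^k)-f_{i_k}(x^*)}{\|\nabla f_{i_k}(x^k)\|^2}\nabla f_{i_k}(x^k)$. First I would expand $\|x^{k+1}-x^*\|^2$, substitute this Polyak stepsize, and use convexity of $f_{i_k}$ in the form $\langle \nabla f_{i_k}(x^k), x^k-x^*\rangle \geq f_{i_k}(x^k)-f_{i_k}(x^*)$ to obtain the one-step bound
\[
\|x^{k+1}-x^*\|^2 \leq \|x^k-x^*\|^2 - \frac{(f_{i_k}(x^k)-f_{i_k}(x^*))^2}{\|\nabla f_{i_k}(x^k)\|^2}.
\]
Here the common-minimizer part of Assumption~\ref{as:stochastic_assumption} is essential: it guarantees $f_{i_k}(x^*) = \inf_x f_{i_k}(x)$, so the Polyak step is well-posed for each summand and no bias term appears.

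Next I would lower-bound the subtracted term. Since each $f_i$ is symmetrically $(L_0,L_1)$-smooth and attains its infimum at $x^*$, inequality \eqref{eq:sym_gradient_bound} applies to $f_{i_k}$ and gives $\frac{(f_{i_k}(x^k)-f_{i_k}(x^*))^2}{\|\nabla f_{i_k}(x^k)\|^2} \geq \frac{\nu(f_{i_k}(x^k)-f_{i_k}(x^*))}{2(L_0+L_1\|\nabla f_{i_k}(x^k)\|)}$. Splitting on the size of $\|\nabla f_{i_k}(x^k)\|$ relative to $\nicefrac{L_0}{L_1}$ exactly as in the deterministic proof --- the large-gradient regime bounding the denominator by $2L_1\|\nabla f_{i_k}(x^k)\|$ and collapsing the right-hand side to the constant $\frac{\nu^2}{16L_1^2}$, the small-gradient regime bounding the denominator by $2L_0$ --- I would merge the two cases into the single inequality
\[
\frac{(f_{i_k}(x^k)-f_{i_k}(x^*))^2}{\|\nabla f_{i_k}(x^k)\|^2} \geq \frac{\nu}{4L_0}\min\left\{\frac{\nu L_0}{4L_1^2},\; f_{i_k}(x^k)-f_{i_k}(x^*)\right\}.
\]

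The step I expect to be the main obstacle --- and the source of the unusual factor $\nicefrac{1}{n}$ in the statement --- is passing from the per-sample minimum to the full-objective minimum after taking the conditional expectation $\EE_k[\cdot]$ over $i_k$. This turns the right-hand side into $\frac{1}{n}\sum_{i=1}^n \min\{a, \Delta_i\}$ with $a \eqdef \frac{\nu L_0}{4L_1^2}$ and $\Delta_i \eqdef f_i(x^k)-f_i(x^*)\geq 0$. The key elementary fact is that $\frac{1}{n}\sum_{i=1}^n \min\{a,\Delta_i\} \geq \min\{\nicefrac{a}{n},\, \frac{1}{n}\sum_{i=1}^n \Delta_i\}$, which I would prove by a short case split: if $\sum_i \Delta_i \leq a$ then each $\Delta_i \leq a$ and both sides equal $\frac{1}{n}\sum_i\Delta_i$; otherwise either some $\Delta_i \geq a$ (so the left side is at least $\nicefrac{a}{n}$) or all $\Delta_i < a$ (so the left side equals $\frac{1}{n}\sum_i \Delta_i > \nicefrac{a}{n}$). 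Since $\frac{1}{n}\sum_i \Delta_i = f(x^k)-f(x^*)$, this produces the recursion
\[
\EE_k\|x^{k+1}-x^*\|^2 \leq \|x^k-x^*\|^2 - \frac{\nu}{4L_0}\min\left\{\frac{\nu L_0}{4nL_1^2},\; f(x^k)-f(x^*)\right\}.
\]

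Finally I would take total expectation, telescope this recursion over $k=0,1,\dots,N$, and drop the nonnegative term $\EE\|x^{N+1}-x^*\|^2$ to obtain $\sum_{k=0}^N \EE\left[\min\left\{\frac{\nu L_0}{4nL_1^2},\, f(x^k)-f(x^*)\right\}\right] \leq \frac{4L_0\|x^0-x^*\|^2}{\nu}$. Bounding the minimum over $k$ by the average (dividing by $N+1$) then yields exactly \eqref{eq:main_result_SGD_PS_sym}.
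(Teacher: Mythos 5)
Your proposal is correct, and its skeleton matches the paper's proof step for step: the same expansion of $\|x^{k+1}-x^*\|^2$ using convexity of the sampled summand, the same application of \eqref{eq:sym_gradient_bound} to $f_{\xi^k}$ (valid precisely because the shared minimizer makes $f_{\xi^k}(x^*)$ the infimum of $f_{\xi^k}$), the same case split at $\|\nabla f_{\xi^k}(x^k)\| = \nicefrac{L_0}{L_1}$ yielding the constant decrease $\frac{\nu^2}{16L_1^2}$ in one regime and $\frac{\nu}{4L_0}\left(f_{\xi^k}(x^k)-f_{\xi^k}(x^*)\right)$ in the other, and the same telescoping finish. The one place you genuinely diverge is the step producing the factor $\nicefrac{1}{n}$. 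The paper argues probabilistically: it introduces the event $E(x^k)=\{\|\nabla f_{\xi^k}(x^k)\|\geq \nicefrac{L_0}{L_1}\}$ with conditional probability $p_k$, observes that uniform sampling over $n$ atoms forces $p_k=0$ or $p_k\geq \nicefrac{1}{n}$, keeps only the constant decrease $p_k\cdot\frac{\nu^2}{16L_1^2}$ when $p_k>0$ (discarding the nonnegative small-gradient contribution), and recovers $f(x^k)-f(x^*)$ from $\EE_k\left[f_{\xi^k}(x^k)-f_{\xi^k}(x^*)\right]$ only when $p_k=0$. You instead merge the two regimes into the single pointwise bound $\frac{\nu}{4L_0}\min\left\{a,\Delta_{i_k}\right\}$ with $a=\frac{\nu L_0}{4L_1^2}$, compute the conditional expectation exactly, and invoke the deterministic inequality $\frac{1}{n}\sum_{i=1}^n\min\{a,\Delta_i\}\geq \min\left\{\nicefrac{a}{n},\,\frac{1}{n}\sum_{i=1}^n\Delta_i\right\}$, whose case analysis you give correctly (the two cases ``all $\Delta_i\leq a$'' and ``some $\Delta_j>a$'' in fact already suffice). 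Both routes land on the identical recursion $\EE_k\|x^{k+1}-x^*\|^2\leq \|x^k-x^*\|^2 - \frac{\nu}{4L_0}\min\left\{\frac{\nu L_0}{4nL_1^2},\, f(x^k)-f(x^*)\right\}$, so the constants agree. Your version buys a cleaner argument: it avoids the indicator bookkeeping, the random variable $p_k$, and the slightly delicate point that the function-value term is retained only on $\{p_k=0\}$; the elementary averaging lemma is self-contained and deterministic. The paper's version makes the source of the $\nicefrac{1}{n}$ factor more transparent (it is the atom mass of the uniform distribution) and adapts verbatim to non-uniform sampling with $\nicefrac{1}{n}$ replaced by the minimal sampling probability, though your lemma reweights just as easily.
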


The result is very similar to the one we derive for \algname{$(L_0,L_1)$-SGD}. Therefore, the discussion provided after Theorem~\ref{thm:L0_L1_SGD_sym_main_result} (with $\eta = \nicefrac{\nu}{2}$) is valid for the above result as well.

\section{Conclusion and Future Work}

\begin{figure*}[t]
\centering
\includegraphics[width=0.32\textwidth]{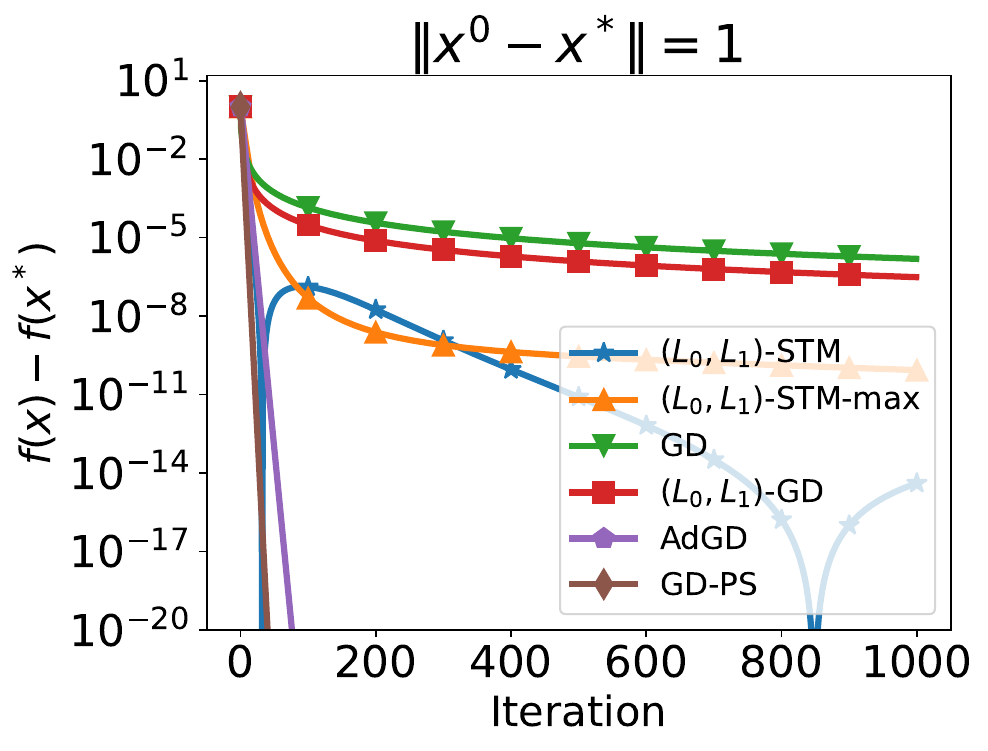}  \includegraphics[width=0.32\textwidth]{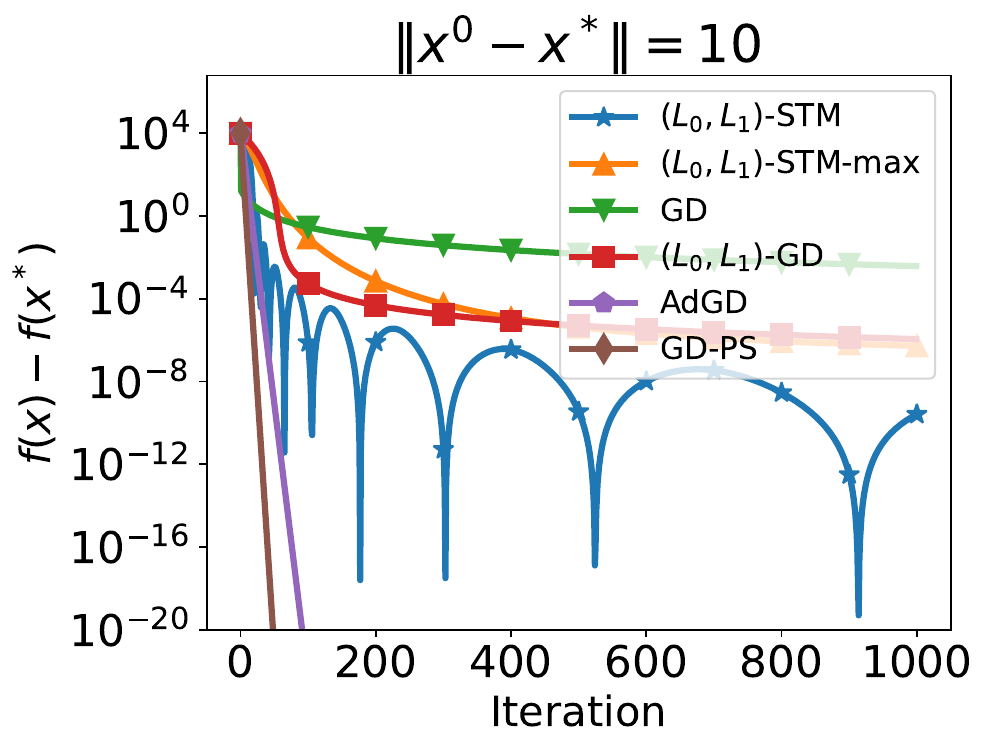} 
\includegraphics[width=0.32\textwidth]{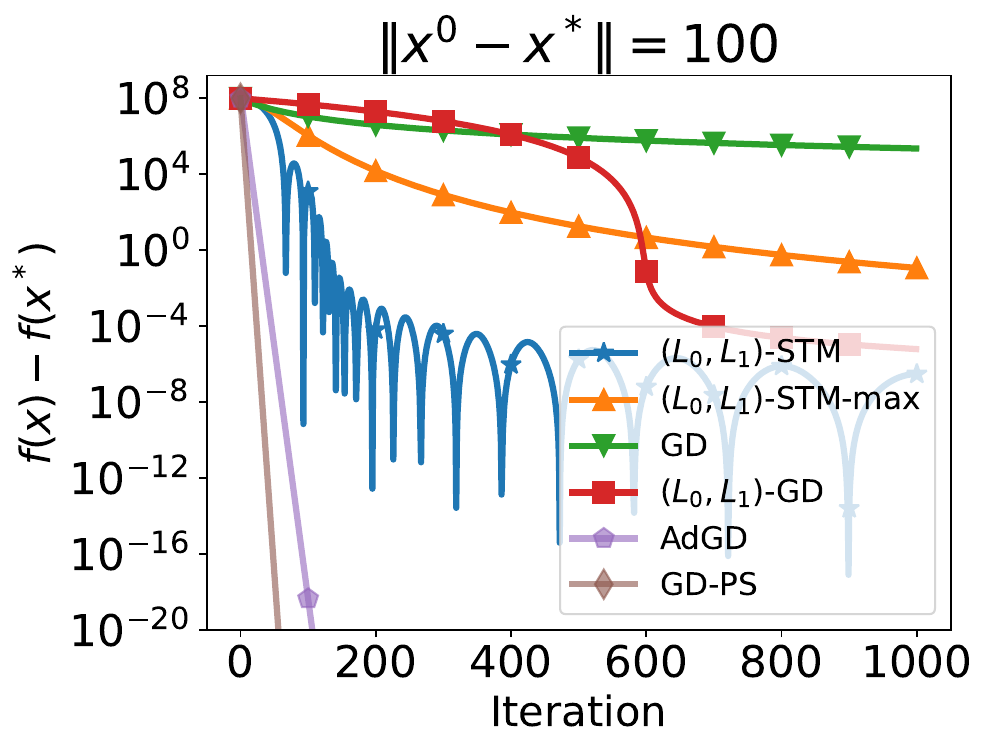}
\caption{\revision{The last iterate discrepancy of algorithms on the one-variable polynomial function $f(x) = x^4$. For the details and additional results, we refer to Appendix~\ref{sec:numerical_experiments}.}}
\label{fig:last_iterate_polynomial}
\end{figure*}

\revision{In this paper, we derive improved convergence rates for \algname{$(L_0, L_1)$-GD} and \algname{GD-PS}, derive convergence guarantees for the new accelerated method called \algname{$(L_0, L_1)$-STM}, and also derive a new result for \algname{AdGD} in the case of (strongly) convex $(L_0, L_1)$-smooth optimization. Our results for \algname{$(L_0, L_1)$-GD} and \algname{GD-PS} depend neither on $\|\nabla f(x^0)\|$ nor on $f(x^0)-f(x^*)$ nor on exponential functions of $R_0$. We also prove new results for the stochastic extensions of \algname{$(L_0, L_1)$-GD} and \algname{GD-PS} in the case of finite sums of functions having a common minimizer. Nevertheless, several important questions remain open. One of these questions is the lower bounds for the class of (strongly) convex $(L_0, L_1)$-smooth functions and optimal methods for this class. Moreover, it would be interesting to develop stochastic extensions of \algname{$(L_0, L_1)$-GD} and \algname{GD-PS} with strong theoretical guarantees beyond the case of finite sums with shared minimizer.}

\section*{Acknowledgements}
We thank Konstantin Mishchenko for useful suggestions for improving the writing.

\bibliography{refs}

\begin{thebibliography}{}

\bibitem[Abadi et~al., 2016]{abadi2016deep}
Abadi, M., Chu, A., Goodfellow, I., McMahan, H.~B., Mironov, I., Talwar, K., and Zhang, L. (2016).
\newblock Deep learning with differential privacy.
\newblock In {\em Proceedings of the 2016 ACM SIGSAC conference on computer and communications security}, pages 308--318.

\bibitem[Abdukhakimov et~al., 2024]{abdukhakimov2024stochastic}
Abdukhakimov, F., Xiang, C., Kamzolov, D., and Tak{\'a}{\v{c}}, M. (2024).
\newblock Stochastic gradient descent with preconditioned polyak step-size.
\newblock {\em Computational Mathematics and Mathematical Physics}, 64(4):621--634.

\bibitem[Bartlett et~al., 1998]{bartlett1998boosting}
Bartlett, P., Freund, Y., Lee, W.~S., and Schapire, R.~E. (1998).
\newblock Boosting the margin: A new explanation for the effectiveness of voting methods.
\newblock {\em The annals of statistics}, 26(5):1651--1686.

\bibitem[Bauschke et~al., 2017]{bauschke2017descent}
Bauschke, H.~H., Bolte, J., and Teboulle, M. (2017).
\newblock A descent lemma beyond lipschitz gradient continuity: first-order methods revisited and applications.
\newblock {\em Mathematics of Operations Research}, 42(2):330--348.

\bibitem[Belkin et~al., 2019]{belkin2019does}
Belkin, M., Rakhlin, A., and Tsybakov, A.~B. (2019).
\newblock Does data interpolation contradict statistical optimality?
\newblock In {\em The 22nd International Conference on Artificial Intelligence and Statistics}, pages 1611--1619. PMLR.

\bibitem[Berrada et~al., 2020]{berrada2020training}
Berrada, L., Zisserman, A., and Kumar, M.~P. (2020).
\newblock Training neural networks for and by interpolation.
\newblock In {\em International Conference on Machine Learning}.

\bibitem[Beznosikov and Tak{\'a}{\v{c}}, 2021]{beznosikov2021random}
Beznosikov, A. and Tak{\'a}{\v{c}}, M. (2021).
\newblock Random-reshuffled sarah does not need a full gradient computations.
\newblock In {\em Optimization for Machine Learning Workshop @ NeurIPS 2021}.

\bibitem[Carmon et~al., 2021]{carmon2021lower}
Carmon, Y., Duchi, J.~C., Hinder, O., and Sidford, A. (2021).
\newblock Lower bounds for finding stationary points ii: first-order methods.
\newblock {\em Mathematical Programming}, 185(1):315--355.

\bibitem[Chang and Lin, 2011]{chang2011libsvm}
Chang, C.-C. and Lin, C.-J. (2011).
\newblock Libsvm: a library for support vector machines.
\newblock {\em ACM transactions on intelligent systems and technology (TIST)}, 2(3):1--27.

\bibitem[Chen et~al., 2020]{chen2020understanding}
Chen, X., Wu, S.~Z., and Hong, M. (2020).
\newblock Understanding gradient clipping in private sgd: A geometric perspective.
\newblock {\em Advances in Neural Information Processing Systems}, 33:13773--13782.

\bibitem[Chen et~al., 2023]{chen2023generalized}
Chen, Z., Zhou, Y., Liang, Y., and Lu, Z. (2023).
\newblock Generalized-smooth nonconvex optimization is as efficient as smooth nonconvex optimization.
\newblock In {\em International Conference on Machine Learning}, pages 5396--5427. PMLR.

\bibitem[Crawshaw et~al., 2022]{crawshaw2022robustness}
Crawshaw, M., Liu, M., Orabona, F., Zhang, W., and Zhuang, Z. (2022).
\newblock Robustness to unbounded smoothness of generalized signsgd.
\newblock {\em Advances in neural information processing systems}, 35:9955--9968.

\bibitem[Cutkosky and Mehta, 2021]{cutkosky2021high}
Cutkosky, A. and Mehta, H. (2021).
\newblock High-probability bounds for non-convex stochastic optimization with heavy tails.
\newblock {\em Advances in Neural Information Processing Systems}, 34:4883--4895.

\bibitem[Defazio et~al., 2014]{defazio2014saga}
Defazio, A., Bach, F., and Lacoste-Julien, S. (2014).
\newblock Saga: A fast incremental gradient method with support for non-strongly convex composite objectives.
\newblock {\em Advances in neural information processing systems}, 27.

\bibitem[Defazio and Bottou, 2019]{defazio2019ineffectiveness}
Defazio, A. and Bottou, L. (2019).
\newblock On the ineffectiveness of variance reduced optimization for deep learning.
\newblock {\em Advances in Neural Information Processing Systems}, 32.

\bibitem[Fang et~al., 2018]{fang2018spider}
Fang, C., Li, C.~J., Lin, Z., and Zhang, T. (2018).
\newblock Spider: Near-optimal non-convex optimization via stochastic path-integrated differential estimator.
\newblock {\em Advances in Neural Information Processing Systems}, 31.

\bibitem[Fatkhullin et~al., 2023]{fatkhullin2023stochastic}
Fatkhullin, I., He, N., and Hu, Y. (2023).
\newblock Stochastic optimization under hidden convexity.
\newblock {\em arXiv preprint arXiv:2401.00108}.

\bibitem[Faw et~al., 2023]{faw2023beyond}
Faw, M., Rout, L., Caramanis, C., and Shakkottai, S. (2023).
\newblock Beyond uniform smoothness: A stopped analysis of adaptive sgd.
\newblock In {\em The Thirty Sixth Annual Conference on Learning Theory}, pages 89--160. PMLR.

\bibitem[Galli et~al., 2023]{galli2023donot}
Galli, L., Rauhut, H., and Schmidt, M. (2023).
\newblock Don\textquotesingle t be so monotone: Relaxing stochastic line search in over-parameterized models.
\newblock In {\em Advances in Neural Information Processing Systems}.

\bibitem[Gasnikov and Nesterov, 2016]{gasnikov2016universal}
Gasnikov, A. and Nesterov, Y. (2016).
\newblock Universal fast gradient method for stochastic composit optimization problems.
\newblock {\em arXiv preprint arXiv:1604.05275}.

\bibitem[Goodfellow et~al., 2016]{Goodfellow-et-al-2016}
Goodfellow, I., Bengio, Y., and Courville, A. (2016).
\newblock {\em Deep Learning}.
\newblock MIT Press.
\newblock \url{http://www.deeplearningbook.org}.

\bibitem[Gorbunov et~al., 2020]{gorbunov2020stochastic}
Gorbunov, E., Danilova, M., and Gasnikov, A. (2020).
\newblock Stochastic optimization with heavy-tailed noise via accelerated gradient clipping.
\newblock {\em Advances in Neural Information Processing Systems}, 33:15042--15053.

\bibitem[Gorbunov et~al., 2021]{gorbunov2021near}
Gorbunov, E., Danilova, M., Shibaev, I., Dvurechensky, P., and Gasnikov, A. (2021).
\newblock Near-optimal high probability complexity bounds for non-smooth stochastic optimization with heavy-tailed noise.
\newblock {\em arXiv preprint arXiv:2106.05958}.

\bibitem[Gorbunov et~al., 2022]{gorbunov2022extragradient}
Gorbunov, E., Loizou, N., and Gidel, G. (2022).
\newblock Extragradient method: {O}$(1/k)$ last-iterate convergence for monotone variational inequalities and connections with cocoercivity.
\newblock In {\em International Conference on Artificial Intelligence and Statistics}, pages 366--402. PMLR.

\bibitem[Goujaud et~al., 2024]{goujaud2024pepit}
Goujaud, B., Moucer, C., Glineur, F., Hendrickx, J.~M., Taylor, A.~B., and Dieuleveut, A. (2024).
\newblock {PEPit}: computer-assisted worst-case analyses of first-order optimization methods in python.
\newblock {\em Mathematical Programming Computation}, pages 1--31.

\bibitem[Gower et~al., 2021]{gower2021sgd}
Gower, R., Sebbouh, O., and Loizou, N. (2021).
\newblock {SGD} for structured nonconvex functions: Learning rates, minibatching and interpolation.
\newblock In {\em International Conference on Artificial Intelligence and Statistics}, pages 1315--1323. PMLR.

\bibitem[Gower et~al., 2022]{gower2022cutting}
Gower, R.~M., Blondel, M., Gazagnadou, N., and Pedregosa, F. (2022).
\newblock Cutting some slack for sgd with adaptive polyak stepsizes.
\newblock {\em arXiv preprint arXiv:2202.12328}.

\bibitem[Hazan and Kakade, 2019]{hazan2019revisiting}
Hazan, E. and Kakade, S.~M. (2019).
\newblock Revisiting the {P}olyak step size.
\newblock In {\em arXiv}.

\bibitem[He et~al., 2016]{he2016deep}
He, K., Zhang, X., Ren, S., and Sun, J. (2016).
\newblock Deep residual learning for image recognition.
\newblock In {\em Proceedings of the IEEE conference on computer vision and pattern recognition}, pages 770--778.

\bibitem[Horv{\'a}th et~al., 2022]{horvath2022adaptive}
Horv{\'a}th, S., Mishchenko, K., and Richt{\'a}rik, P. (2022).
\newblock Adaptive learning rates for faster stochastic gradient methods.
\newblock {\em arXiv preprint arXiv:2208.05287}.

\bibitem[H{\"u}bler et~al., 2024]{hubler2024parameter}
H{\"u}bler, F., Yang, J., Li, X., and He, N. (2024).
\newblock Parameter-agnostic optimization under relaxed smoothness.
\newblock In {\em International Conference on Artificial Intelligence and Statistics}, pages 4861--4869. PMLR.

\bibitem[Jin et~al., 2021]{jin2021non}
Jin, J., Zhang, B., Wang, H., and Wang, L. (2021).
\newblock Non-convex distributionally robust optimization: Non-asymptotic analysis.
\newblock {\em Advances in Neural Information Processing Systems}, 34:2771--2782.

\bibitem[Johnson and Zhang, 2013]{johnson2013accelerating}
Johnson, R. and Zhang, T. (2013).
\newblock Accelerating stochastic gradient descent using predictive variance reduction.
\newblock {\em Advances in neural information processing systems}, 26.

\bibitem[Karimireddy et~al., 2021]{karimireddy2021learning}
Karimireddy, S.~P., He, L., and Jaggi, M. (2021).
\newblock Learning from history for byzantine robust optimization.
\newblock In {\em International Conference on Machine Learning}, pages 5311--5319. PMLR.

\bibitem[Koloskova et~al., 2023]{koloskova2023revisiting}
Koloskova, A., Hendrikx, H., and Stich, S.~U. (2023).
\newblock Revisiting gradient clipping: Stochastic bias and tight convergence guarantees.
\newblock In {\em International Conference on Machine Learning}.

\bibitem[Li et~al., 2024a]{li2024convex}
Li, H., Qian, J., Tian, Y., Rakhlin, A., and Jadbabaie, A. (2024a).
\newblock Convex and non-convex optimization under generalized smoothness.
\newblock {\em Advances in Neural Information Processing Systems}, 36.

\bibitem[Li et~al., 2024b]{li2024convergence}
Li, H., Rakhlin, A., and Jadbabaie, A. (2024b).
\newblock Convergence of adam under relaxed assumptions.
\newblock {\em Advances in Neural Information Processing Systems}, 36.

\bibitem[Li et~al., 2022]{li2022sp2}
Li, S., Swartworth, W.~J., Tak{\'a}{\v{c}}, M., Needell, D., and Gower, R.~M. (2022).
\newblock Sp2: A second order stochastic polyak method.
\newblock {\em ICLR 2023}.

\bibitem[Liang and Rakhlin, 2020]{liang2020just}
Liang, T. and Rakhlin, A. (2020).
\newblock Just interpolate.
\newblock {\em The Annals of Statistics}, 48(3):1329--1347.

\bibitem[Liu et~al., 2022]{liu2022loss}
Liu, C., Zhu, L., and Belkin, M. (2022).
\newblock Loss landscapes and optimization in over-parameterized non-linear systems and neural networks.
\newblock {\em Applied and Computational Harmonic Analysis}, 59:85--116.

\bibitem[Loizou et~al., 2021]{loizou2021stochastic}
Loizou, N., Vaswani, S., Laradji, I.~H., and Lacoste-Julien, S. (2021).
\newblock Stochastic polyak step-size for sgd: An adaptive learning rate for fast convergence.
\newblock In {\em International Conference on Artificial Intelligence and Statistics}, pages 1306--1314. PMLR.

\bibitem[Mai and Johansson, 2021]{mai2021stability}
Mai, V.~V. and Johansson, M. (2021).
\newblock Stability and convergence of stochastic gradient clipping: Beyond lipschitz continuity and smoothness.
\newblock In {\em International Conference on Machine Learning}, pages 7325--7335. PMLR.

\bibitem[Malinovsky et~al., 2023]{malinovsky2023byzantine}
Malinovsky, G., Richt{\'a}rik, P., Horv{\'a}th, S., and Gorbunov, E. (2023).
\newblock Byzantine robustness and partial participation can be achieved simultaneously: Just clip gradient differences.
\newblock {\em arXiv preprint arXiv:2311.14127}.

\bibitem[Malitsky and Mishchenko, 2020]{malitsky2019adaptive}
Malitsky, Y. and Mishchenko, K. (2020).
\newblock Adaptive gradient descent without descent.
\newblock In {\em International Conference on Machine Learning}, pages 6702--6712. PMLR.

\bibitem[Merity et~al., 2018]{merity2018regularizing}
Merity, S., Keskar, N.~S., and Socher, R. (2018).
\newblock Regularizing and optimizing {LSTM} language models.
\newblock In {\em International Conference on Learning Representations}.

\bibitem[Nemirovski and Yudin, 1983]{nemirovsky1983problem}
Nemirovski, A.~S. and Yudin, D.~B. (1983).
\newblock {\em Problem Complexity and Method Efficiency in Optimization}.
\newblock A Wiley-Interscience publication. Wiley.

\bibitem[Nemirovskii and Nesterov, 1985]{nemirovskii1985optimal}
Nemirovskii, A.~S. and Nesterov, Y.~E. (1985).
\newblock Optimal methods of smooth convex minimization.
\newblock {\em USSR Computational Mathematics and Mathematical Physics}, 25(2):21--30.

\bibitem[Nesterov, 2018]{nesterov2018lectures}
Nesterov, Y. (2018).
\newblock {\em {Lectures on Convex Optimization}}.
\newblock Springer.

\bibitem[Nesterov, 1983]{nesterov1983method}
Nesterov, Y.~E. (1983).
\newblock A method for solving the convex programming problem with convergence rate {O}$(1/k^2)$.
\newblock In {\em Dokl. akad. nauk Sssr}, volume 269, pages 543--547.

\bibitem[Nguyen et~al., 2017]{nguyen2017sarah}
Nguyen, L., Liu, J., Scheinberg, K., and Tak{\'a}{\v{c}}, M. (2017).
\newblock Sarah: A novel method for machine learning problems using stochastic recursive gradient.
\newblock In {\em In 34th International Conference on Machine Learning, ICML 2017}.

\bibitem[Nguyen et~al., 2021]{nguyen2021inexact}
Nguyen, L.~M., Scheinberg, K., and Tak{\'a}{\v{c}}, M. (2021).
\newblock Inexact sarah algorithm for stochastic optimization.
\newblock {\em Optimization Methods and Software}, 36(1):237--258.

\bibitem[Pascanu et~al., 2013]{pascanu2013on}
Pascanu, R., Mikolov, T., and Bengio, Y. (2013).
\newblock On the difficulty of training recurrent neural networks.
\newblock In {\em International Conference on Machine Learning}.

\bibitem[Patel and Berahas, 2022]{patel2022gradient}
Patel, V. and Berahas, A.~S. (2022).
\newblock Gradient descent in the absence of global lipschitz continuity of the gradients.
\newblock {\em arXiv preprint arXiv:2210.02418}.

\bibitem[Patel et~al., 2022]{patel2022global}
Patel, V., Zhang, S., and Tian, B. (2022).
\newblock Global convergence and stability of stochastic gradient descent.
\newblock {\em Advances in Neural Information Processing Systems}, 35:36014--36025.

\bibitem[Polyak, 1987]{polyak1987introduction}
Polyak, B. (1987).
\newblock {\em Introduction to Optimization}.
\newblock Optimization Software.

\bibitem[Polyak, 1964]{polyak1964some}
Polyak, B.~T. (1964).
\newblock Some methods of speeding up the convergence of iteration methods.
\newblock {\em Ussr computational mathematics and mathematical physics}, 4(5):1--17.

\bibitem[Robbins and Monro, 1951]{robbins1951stochastic}
Robbins, H. and Monro, S. (1951).
\newblock A stochastic approximation method.
\newblock {\em The annals of mathematical statistics}, pages 400--407.

\bibitem[Sadiev et~al., 2023]{sadiev2023high}
Sadiev, A., Danilova, M., Gorbunov, E., Horv\'{a}th, S., Gidel, G., Dvurechensky, P., Gasnikov, A., and Richt\'{a}rik, P. (2023).
\newblock High-probability bounds for stochastic optimization and variational inequalities: the case of unbounded variance.
\newblock In {\em International Conference on Machine Learning}.

\bibitem[Schmidt et~al., 2017]{schmidt2017minimizing}
Schmidt, M., Le~Roux, N., and Bach, F. (2017).
\newblock Minimizing finite sums with the stochastic average gradient.
\newblock {\em Mathematical Programming}, 162:83--112.

\bibitem[Shalev-Shwartz and Ben-David, 2014]{shalev2014understanding}
Shalev-Shwartz, S. and Ben-David, S. (2014).
\newblock {\em Understanding machine learning: From theory to algorithms}.
\newblock Cambridge university press.

\bibitem[Shi et~al., 2023]{shi2023ai}
Shi, Z., Sadiev, A., Loizou, N., Richt{\'a}rik, P., and Tak{\'a}{\v{c}}, M. (2023).
\newblock Ai-sarah: Adaptive and implicit stochastic recursive gradient methods.

\bibitem[Sutskever et~al., 2013]{sutskever2013importance}
Sutskever, I., Martens, J., Dahl, G., and Hinton, G. (2013).
\newblock On the importance of initialization and momentum in deep learning.
\newblock In {\em International conference on machine learning}, pages 1139--1147. PMLR.

\bibitem[Takezawa et~al., 2024]{takezawa2024polyak}
Takezawa, Y., Bao, H., Sato, R., Niwa, K., and Yamada, M. (2024).
\newblock Polyak meets parameter-free clipped gradient descent.
\newblock {\em arXiv preprint arXiv:2405.15010}.

\bibitem[Vankov et~al., 2024]{vankov2024optimizing}
Vankov, D., Rodomanov, A., Nedich, A., Sankar, L., and Stich, S.~U. (2024).
\newblock Optimizing $({L}_0, {L}_1)$-smooth functions by gradient methods.
\newblock {\em arXiv preprint arXiv:2410.10800}.

\bibitem[Vaswani et~al., 2019a]{vaswani2019fast}
Vaswani, S., Bach, F., and Schmidt, M. (2019a).
\newblock Fast and faster convergence of sgd for over-parameterized models and an accelerated perceptron.
\newblock In {\em The 22nd international conference on artificial intelligence and statistics}, pages 1195--1204. PMLR.

\bibitem[Vaswani et~al., 2019b]{vaswani2019painless}
Vaswani, S., Mishkin, A., Laradji, I., Schmidt, M., Gidel, G., and Lacoste-Julien, S. (2019b).
\newblock Painless stochastic gradient: Interpolation, line-search, and convergence rates.
\newblock {\em Advances in neural information processing systems}, 32.

\bibitem[Wang et~al., 2023]{wang2023convergence}
Wang, B., Zhang, H., Ma, Z., and Chen, W. (2023).
\newblock Convergence of adagrad for non-convex objectives: Simple proofs and relaxed assumptions.
\newblock In {\em The Thirty Sixth Annual Conference on Learning Theory}, pages 161--190. PMLR.

\bibitem[Wang et~al., 2022]{wang2022provable}
Wang, B., Zhang, Y., Zhang, H., Meng, Q., Ma, Z.-M., Liu, T.-Y., and Chen, W. (2022).
\newblock Provable adaptivity in adam.
\newblock {\em arXiv preprint arXiv:2208.09900}.

\bibitem[Zhang et~al., 2020a]{zhang2020improved}
Zhang, B., Jin, J., Fang, C., and Wang, L. (2020a).
\newblock Improved analysis of clipping algorithms for non-convex optimization.
\newblock In {\em Advances in Neural Information Processing Systems}.

\bibitem[Zhang et~al., 2021]{zhang2021understanding}
Zhang, C., Bengio, S., Hardt, M., Recht, B., and Vinyals, O. (2021).
\newblock Understanding deep learning (still) requires rethinking generalization.
\newblock {\em Communications of the ACM}, 64(3):107--115.

\bibitem[Zhang et~al., 2020b]{zhang2020why}
Zhang, J., He, T., Sra, S., and Jadbabaie, A. (2020b).
\newblock Why gradient clipping accelerates training: A theoretical justification for adaptivity.
\newblock In {\em International Conference on Learning Representations}.

\bibitem[Zhang et~al., 2020c]{zhang2020adaptive}
Zhang, J., Karimireddy, S.~P., Veit, A., Kim, S., Reddi, S., Kumar, S., and Sra, S. (2020c).
\newblock Why are adaptive methods good for attention models?
\newblock In {\em Advances in Neural Information Processing Systems}.

\bibitem[Zhao et~al., 2021]{zhao2021convergence}
Zhao, S.-Y., Xie, Y.-P., and Li, W.-J. (2021).
\newblock On the convergence and improvement of stochastic normalized gradient descent.
\newblock {\em Science China Information Sciences}, 64:1--13.

\end{thebibliography}
\bibliographystyle{apalike}

\clearpage

\appendix

\section{Extra Related Work}\label{appendix:generalized_smoothness}

\revision{\paragraph{On the importance of convex analysis.} Although many existing problems are not convex, it is useful to understand methods behavior under the convexity assumption as well due to several reasons. First of all, since the class of non-convex functions is too broad, the existing results for this class are quite pessimistic. In particular, among first-order methods, Gradient Descent is the best (in theory) first-order method if only smoothness is assumed \citep{carmon2021lower}. In contrast, while accelerated/momentum methods do not have theoretical advantages over Gradient Descent for non-convex problems and shine in theory only under convexity-like assumptions, they work better in practice even when the problems are not convex \citep{sutskever2013importance}. Last but not least, several recent works show that some problems appearing in Deep Learning, Optimal Control, and Reinforcement Learning have properties akin to (strongly) convex functions \citep{liu2022loss} and are even hiddenly convex \citep{fatkhullin2023stochastic}.}

\paragraph{Results for stochastic methods.}  In the main part of the paper, we mainly discuss the existing convergence results under $(L_0,L_1)$-smoothness for deterministic methods. However, there exist several stochastic extensions. For non-convex twice differentiable $(L_0,L_1)$-smooth problems with $\sigma$-bounded noise, \citet{zhang2020why} show $\cO\left(\max\left\{\nicefrac{(L_0+L_1\sigma) \hat\Delta}{\varepsilon^2}, \nicefrac{(L_1\hat\Delta}{\varepsilon}, \nicefrac{\hat\Delta^2}{\varepsilon^4}\right\}\right)$ complexity bound with $\hat\Delta \eqdef f(x) - \inf_{x\in\R^d}f(x) + (5L_0 + 2L_1\sigma)\sigma^2 + \nicefrac{9\sigma L_0^2}{L_1}$ for \algname{Clip-SGD} with a clipping level dependent on the noise level. \citet{zhang2020improved} generalize the result from \citep{zhang2020why} to the case of $(L_0,L_1)$-smooth functions that are not necessarily twice differentiable and derive improved $\cO\left(\nicefrac{\Delta L_0 \sigma^2}{\varepsilon^4}\right)$ complexity for sufficiently small $\varepsilon$. Next, \citet{zhao2021convergence} derive $\cO\left(\max\left\{\nicefrac{L_1 \Delta}{\varepsilon}, \nicefrac{L_0\Delta}{\varepsilon^2}, \nicefrac{L_1\Delta \sigma^2}{\varepsilon^3}, \nicefrac{L_0\Delta\sigma^2}{\varepsilon^4}\right\}\right)$ complexity bound for Normalized \algname{SGD} with sufficiently large batchsize under $\sigma$-bounded variance assumption. Under the additional assumption of expected $(L_0,L_1)$-smoothness, \citet{chen2023generalized} improve the previous results and derive $\cO\left(\nicefrac{(L_1\sigma + L_0)\Delta}{\varepsilon^{-3}}\right)$ complexity for \algname{SPIDER} \citep{fang2018spider}. \citet{crawshaw2022robustness} derive a similar bound to the one from \citep{zhang2020improved} for a Generalized \algname{SignSGD} under coordinate-wise $(L_0,L_1)$-smoothness. Next, \citet{faw2023beyond, wang2023convergence} derive (high-probability) complexity bounds for \algname{AdaGrad-Norm} under the affine variance assumption. Similar results are obtained for \algname{Adam} by \citet{wang2022provable}, and \citet{li2024convergence} analyze \algname{Adam} for $(L_0,L_1)$-smooth problems under bounded noise assumption. Recently, \citet{koloskova2023revisiting} derive $\cO\left(\max\left\{\nicefrac{\Delta}{\eta \varepsilon^2}, \nicefrac{\Delta}{\eta c \varepsilon} \right\}\right)$ complexity bound of finding $\varepsilon$-stationary point for \algname{Clip-SGD} under $\sigma$-bounded variance assumption, where $c$ is the clipping level, $\eta \leq \nicefrac{1}{9(L_0 + cL_1)}$ is the stepsize, and $\varepsilon = \Omega\left(\min\left\{\sigma^2, \nicefrac{\sigma^2}{c}\right\} + \sqrt{\eta(L_0 + cL_1)}\sigma\right)$. Next, \citet{li2024convex} obtain $\cO\left(\max\left\{\nicefrac{L\tilde{\Delta}}{\varepsilon^2}, \nicefrac{(L_1^2\tilde{\Delta}^4 + L_0 \tilde{\Delta}^3)}{\varepsilon^4}\right\}\right)$ high-probability complexity bound\footnote{The complexity bound from \citep{li2024convex} for more general notion of smoothness. The complexity bound we provide in the text is the special case of the one from \citep{li2024convex}.} for \algname{SGD} with sufficiently small stepsize, where $\tilde{\Delta} \eqdef \nicefrac{(\Delta + \sigma)}{\delta}$, $\delta$ is failure probability, and $L \eqdef L_0 + L_1(L_1\tilde{\Delta} + \sqrt{L_0 \tilde{\Delta}})$
Finally, \citet{hubler2024parameter} prove $\cO\left(\nicefrac{(\Delta e^{L_1^2} + \sigma + e^{L_1^2}L_0)^4}{\varepsilon^4}\right)$ and $\cO\left(\nicefrac{(L_1\Delta + \sigma + \frac{L_0}{L_1})^4}{\varepsilon^4}\right)$ complexity bounds for parameter-agnostic and parameter-non-agnostic versions of Normalized \algname{SGD} with momentum. To the best of our knowledge, there are no results in the literature for convex $(L_0,L_1)$-smooth stochastic optimization.

\revision{\paragraph{Gradient clipping.} As follows from the above discussion, gradient clipping is a useful tool for handling possible non-smoothness of the objective, which is also confirmed in practice \citep{Goodfellow-et-al-2016}. However, it is worth mentioning that clipping has also other applications. In particular, gradient clipping is used to handle heavy-tailed noise \citep{zhang2020adaptive, gorbunov2020stochastic, cutkosky2021high}, to achieve differentiable privacy \citep{abadi2016deep, chen2020understanding}, and also to tolerate Byzantine attacks \citep{karimireddy2021learning, malinovsky2023byzantine}.}

\revision{\paragraph{Polyak Stepsizes.} \algname{GD} with Polyak Stepsizes (\algname{GD-PS}) is a celebrated approach for making \algname{GD} parameter-free (under the assumption that $f(x^*)$ is known) \citep{polyak1987introduction}. In particular, \citet{hazan2019revisiting} show that \algname{GD-PS} achieves the same rate as \algname{GD} with optimally chosen constant stepsize (up to a constant factor) for convex Lipschitz functions, convex smooth functions, and strongly convex smooth functions. Moreover, some recent works \citep{loizou2021stochastic, galli2023donot, berrada2020training, horvath2022adaptive, gower2022cutting, abdukhakimov2024stochastic,li2022sp2} also consider different stochastic extensions of \algname{GD-PS}.}

\paragraph{Other Notions of Generalized Smoothness.} $(L_0,L_1)$-smoothness belongs to the class of assumptions on so-called generalized smoothness. Classical assumptions of this type include H\"older continuity of the gradient \citep{nemirovsky1983problem, nemirovskii1985optimal}, relative smoothness \citep{bauschke2017descent}, and local smoothness, i.e., Lipschitzness of the gradient on any compact \citep{malitsky2019adaptive, patel2022gradient, patel2022global, gorbunov2021near, sadiev2023high}. Although these assumptions are quite broad (e.g., for local smoothness, it is sufficient to assume just continuity of the gradient), they do not relate the growth of non-smoothness/local Lipschitz constant of the gradient with the growth of the gradient or distance to the solution. From this perspective, assumptions such as polynomial growth of the gradient norm \citep{mai2021stability}, $\alpha$-symmetric $(L_0,L_1)$-smoothness \citep{chen2023generalized} \revision{which also introduces the key technical tools}, and $(r,\ell)$-smoothness \citep{li2024convex} are closer to Assumption~\ref{as:symmetric_L0_L1} than local Lipschitz/H\"older continuity of the gradient and relative smoothness.

\clearpage

\section{Examples of $(L_0,L_1)$-Smooth Functions}\label{appendix:examples}

\begin{example}[Power of Norm]
    Let $f(x) = \|x\|^{2n}$, where $n$ is a positive integer. Then, $f(x)$ is convex and $(2n,2n-1)$-smooth. Moreover, $f(x)$ is not $L$-smooth for $n\geq 2$.
\end{example}
\begin{proof}
    Convexity of $f$ follows from convexity and monotonicity of $\varphi(t) = t^{2n}$ for $t\geq 0$ and convexity of $h(x) = \|x\|$, since $f(x) = \varphi(h(x))$. To show $(L_0,L_1)$-smoothness, we compute gradient and Hessian of $f(x)$:
    \begin{eqnarray*}
        \nabla f(x) &=&  2n \|x\|^{2(n-1)}x,\\
        \nabla^2 f(x) &=& \begin{cases} 2 \mI,& \text{if } n = 1\\ 4n(n-1)\|x\|^{2(n-2)}xx^\top + 2n\|x\|^{2(n-1)}\mI,& \text{if } n > 1.  \end{cases}
    \end{eqnarray*}
    Therefore,
    \begin{eqnarray*}
        \|\nabla f(x)\| &=&  2n \|x\|^{2n-1},\\
        \|\nabla^2 f(x)\|_2 &=& \begin{cases} 2,& \text{if } n = 1\\ 2n(2n-1)\|x\|^{2(n-1)},& \text{if } n > 1  \end{cases}\\
        &=& 2n(2n-1)\|x\|^{2n-2},
    \end{eqnarray*}
    which implies
    \begin{eqnarray}
        \|\nabla^2 f(x)\|_2 - (2n-1)\|\nabla f(x)\| &=& 2n(2n-1)\|x\|^{2n-2}(1 - \|x\|). \notag
    \end{eqnarray}
    If $\|x\| \geq 1$, then we have $\|\nabla^2 f(x)\|_2 \leq (2n-1)\|\nabla f(x)\|$. If $\|x\| \leq 1$, then
    \begin{equation*}
        \|\nabla^2 f(x)\|_2 - (2n-1)\|\nabla f(x)\| \leq 2n(2n-1)\max_{t\in [0,1]}\psi(t),
    \end{equation*}
    where $\psi(t) \eqdef t^{2n-2}(1- t)$. For $n = 1$ we have $\max_{t\in [0,1]}\psi(t) = 1$ and $\|\nabla^2 f(x)\|_2 - (2n-1)\|\nabla f(x)\| \leq 2$. For $n > 1$ we have $\max_{t\in [0,1]}\psi(t) = \left(\frac{2n-2}{2n-1}\right)^{2n-2}\frac{1}{2n-1} \leq \frac{1}{2n-1}$, which gives $\|\nabla^2 f(x)\|_2 - (2n-1)\|\nabla f(x)\| \leq 2n$. Putting two cases together, we get
    \begin{equation*}
        \|\nabla^2 f(x)\|_2 \leq 2n + (2n-1)\|\nabla f(x)\|
    \end{equation*}
    that is equivalent to $(2n,2n-1)$-smoothness \citep[Theorem 1]{chen2023generalized}. Non-smoothness of $f$ for $n> 1$ follows from the unboundedness of $\|\nabla^2 f(x)\|_2$ in this case. 
\end{proof}

\begin{example}[Exponent of the Inner Product]
    Function $f(x) = \exp(a^\top x)$ for some $a\in \R^d$ is convex, $(0,\|a\|)$-smooth, but not $L$-smooth for any $L \geq 0$.
\end{example}
\begin{proof}
    Let us compute the gradient and Hessian of $f$:
    \begin{equation*}
        \nabla f(x) = a\exp(a^\top x), \quad \nabla^2 f(x) = aa^\top \exp(a^\top x).
    \end{equation*}
    Clearly $\nabla^2 f(x) \succcurlyeq 0$, meaning that $f(x)$ is convex. Moreover,
    \begin{equation*}
        \|\nabla^2 f(x)\|_2 = \|a\|^2 \exp(a^\top x) = \|a\|\cdot \|\nabla f(x)\|
    \end{equation*}
    that is equivalent to $(0,\|a\|)$-smoothness \citep[Theorem 1]{chen2023generalized}. When $a\neq 0$ function $f$ has unbounded Hessian, i.e., $f$ is not $L$-smooth for any $L \geq 0$ in this case.
\end{proof}

\begin{example}[Logistic Function]
    Consider logistic function: $f(x) = \log\left(1 + \exp(-a^\top x)\right)$, where $a \in \R^d$ is some vector. Function $f$ is $(L_0, L_1)$-smooth with $L_0 = 0$ and $L_1 = \|a\|$.
\end{example}
\begin{proof}
    The gradient and the Hessian of $f(x)$ equal
    \begin{equation*}
        \nabla f(x) = -\frac{a}{1+\exp(a^\top x)},\quad \nabla^2 f(x) = \frac{aa^\top}{\left(\exp\left(-\frac{1}{2}a^\top x\right)+\exp\left(\frac{1}{2}a^\top x\right)\right)^2}.
    \end{equation*}
    Moreover, 
    \begin{equation*}
        \|\nabla f(x)\| = \frac{\|a\|}{1+\exp(a^\top x)},\quad \|\nabla^2 f(x)\|_2 = \frac{\|a\|^2}{\left(\exp\left(-\frac{1}{2}a^\top x\right)+\exp\left(\frac{1}{2}a^\top x\right)\right)^2}.
    \end{equation*}
    This leads to 
    \begin{eqnarray*}
        \frac{\|\nabla^2 f(x)\|}{\|\nabla f(x)\|} &=& \frac{1+\exp(a^\top x)}{\left(\exp\left(-\frac{1}{2}a^\top x\right)+\exp\left(\frac{1}{2}a^\top x\right)\right)^2} \|a\|\\
        &=& \frac{1+\exp(a^\top x)}{\exp\left(-a^\top x\right)\left(1+\exp\left(a^\top x\right)\right)^2} \|a\|\\
        &=& \frac{1}{1 + \exp(-a^\top x)} \|a\| \leq \|a\|,
    \end{eqnarray*}
    implying that $\|\nabla^2 f(x)\| \leq \|a\| \cdot \|\nabla f(x)\|$ for all $x\in \R^d$. This condition is equivalent to $(0,\|a\|)$-smoothness \citep[Theorem 1]{chen2023generalized}.
\end{proof}

\clearpage

\section{Proof of Lemma~\ref{lem:new_results_for_L0_L1}}\label{appendix:proofs_of_technical_lemmas}

\begin{lemma}[Lemma~\ref{lem:new_results_for_L0_L1}]
    Let Assumption~\ref{as:symmetric_L0_L1} hold and $\nu$ satisfy\footnote{One can check numerically that $0.56 < \nu < 0.57$.} $\nu = e^{-\nu}$. Then, the following statements hold.
    \begin{enumerate}
        \item For $f_* \eqdef \inf_{x\in\R^d} f(x)$, arbitrary $x\in \R^d$, and $\nu$ such that $\nu \exp(\nu) = 1$, we have
        \begin{equation}
            \frac{\nu\|\nabla f(x)\|^2}{2(L_0 + L_1 \|\nabla f(x)\|)} \leq f(x) - f_*. \label{eq:sym_gradient_bound_appendix}
        \end{equation}

        \item If additionally Assumption~\ref{as:convexity} with $\mu = 0$ holds, then for any $x,y\in\R^d$ such that
        \begin{equation}\label{eq:proximity_rule_appendix}
        L_1 \|x-y\| \exp{\rbr*{L_1 \|x-y\|}} \leq 1,
    \end{equation}
   we have 
    \begin{equation}
        \frac{\nu\|\nabla f(x) - \nabla f(y)\|^2}{2(L_0 + L_1\|\nabla f(y)\|)} \leq f(y) - f(x) - \langle\nabla f(x), y - x \rangle, \label{eq:grad_difference_bound_through_bregman_div_appendix}
    \end{equation}
    and
    \begin{equation}
        \frac{\nu\|\nabla f(x) - \nabla f(y)\|^2}{2(L_0 + L_1\|\nabla f(y)\|)} + \frac{\nu\|\nabla f(x) - \nabla f(y)\|^2}{2(L_0 + L_1\|\nabla f(x)\|)} \leq \langle \nabla f(x) - \nabla f(y), x - y \rangle. \label{eq:grad_difference_bound_sym_L0_L1_appendix}
    \end{equation}
    \end{enumerate}
\end{lemma}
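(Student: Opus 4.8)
The plan is to establish the three inequalities in order, using the quadratic-type upper bound \eqref{eq:sym_upper_bound} as the main engine and reducing \eqref{eq:grad_difference_bound_sym_L0_L1_appendix} to \eqref{eq:grad_difference_bound_through_bregman_div_appendix} by a symmetry argument. For \eqref{eq:sym_gradient_bound_appendix}, I would substitute the one-step descent point $y = x - \tfrac{\nu}{L_0 + L_1\|\nabla f(x)\|}\nabla f(x)$ into \eqref{eq:sym_upper_bound}. Writing $A \eqdef L_0 + L_1\|\nabla f(x)\|$, this gives $\|x-y\| = \tfrac{\nu}{A}\|\nabla f(x)\|$, so that $L_1\|x-y\| = \tfrac{\nu L_1\|\nabla f(x)\|}{A} \leq \nu$ since $L_1\|\nabla f(x)\| \leq A$. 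Plugging in and using $\exp(L_1\|x-y\|) \leq e^{\nu} = \nu^{-1}$ (from $\nu = e^{-\nu}$) together with $f_* \leq f(y)$ yields $f(x) - f_* \geq \tfrac{\nu\|\nabla f(x)\|^2}{A}\bigl(1 - \tfrac{\nu}{2}e^{\nu}\bigr) = \tfrac{\nu\|\nabla f(x)\|^2}{2A}$, which is exactly \eqref{eq:sym_gradient_bound_appendix}; the case $\nabla f(x) = 0$ is trivial.

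For \eqref{eq:grad_difference_bound_through_bregman_div_appendix}, I would shift to the auxiliary function $\phi(z) \eqdef f(z) - f(x) - \langle \nabla f(x), z - x\rangle$, which by convexity is nonnegative with global minimum $0$ at $z = x$ and whose gradient is $\nabla\phi(z) = \nabla f(z) - \nabla f(x)$. Subtracting $f(x) + \langle\nabla f(x), w - x\rangle$ from the instance of \eqref{eq:sym_upper_bound} based at $y$ produces the shifted descent inequality $\phi(w) \leq \phi(y) + \langle\nabla\phi(y), w - y\rangle + \tfrac{A_y}{2}\exp(L_1\|w-y\|)\|w-y\|^2$ with $A_y \eqdef L_0 + L_1\|\nabla f(y)\|$. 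Repeating the one-step argument with $w = y - \tfrac{\nu}{A_y}\nabla\phi(y)$ and using $\phi(w) \geq 0$ gives $\phi(y) \geq \tfrac{\nu\|\nabla\phi(y)\|^2}{A_y}\bigl(1 - \tfrac{\nu}{2}\exp(L_1\|w-y\|)\bigr)$; since $\phi(y)$ equals the Bregman divergence $f(y)-f(x)-\langle\nabla f(x),y-x\rangle$ and $\|\nabla\phi(y)\| = \|\nabla f(x)-\nabla f(y)\|$, the claim follows once the exponential is bounded by $e^{\nu}$.

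The main obstacle is precisely this last point. Unlike in \eqref{eq:sym_gradient_bound_appendix}, the step length $\|w-y\| = \tfrac{\nu}{A_y}\|\nabla f(x)-\nabla f(y)\|$ is governed by the gradient-difference norm, which is decoupled from the denominator $A_y$, so $L_1\|w-y\|$ is not automatically at most $\nu$. This is where the proximity hypothesis \eqref{eq:proximity_rule_appendix} enters: combining \eqref{eq:symmetric_L0_L1_corollary}, which gives $\|\nabla f(x)-\nabla f(y)\| \leq A_y\exp(L_1\|x-y\|)\|x-y\|$, with \eqref{eq:proximity_rule_appendix} yields $L_1\|w-y\| \leq \nu L_1\|x-y\|\exp(L_1\|x-y\|) \leq \nu$, hence $\exp(L_1\|w-y\|) \leq e^{\nu} = \nu^{-1}$ and $1 - \tfrac{\nu}{2}\exp(L_1\|w-y\|) \geq \tfrac12$, completing \eqref{eq:grad_difference_bound_through_bregman_div_appendix}. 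Finally, \eqref{eq:grad_difference_bound_sym_L0_L1_appendix} follows by adding \eqref{eq:grad_difference_bound_through_bregman_div_appendix} to its copy with $x$ and $y$ interchanged (the proximity rule being symmetric in $x,y$, so both instances are valid), since the two Bregman divergences telescope to $\langle\nabla f(x)-\nabla f(y), x-y\rangle$.
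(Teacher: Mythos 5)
Your proof is correct and follows essentially the same route as the paper's: the same one-step descent argument from \eqref{eq:sym_upper_bound} with stepsize $\nicefrac{\nu}{(L_0+L_1\|\nabla f(x)\|)}$ for \eqref{eq:sym_gradient_bound_appendix}, the same auxiliary function (the paper's $\varphi_x(z) \eqdef f(z) - \langle\nabla f(x), z\rangle$ equals your $\phi$ up to an additive constant), the same combination of \eqref{eq:symmetric_L0_L1_corollary} with the proximity condition \eqref{eq:proximity_rule_appendix} to bound the exponential factor by $e^{\nu} = \nu^{-1}$, and the same add-the-swapped-copy step for \eqref{eq:grad_difference_bound_sym_L0_L1_appendix}. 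The one small deviation is a harmless streamlining: you obtain the quadratic upper bound for the shifted function by directly subtracting an affine function from the instance of \eqref{eq:sym_upper_bound} based at $y$ — valid because the coefficient $L_0 + L_1\|\nabla f(\cdot)\|$ involves $\nabla f$ rather than $\nabla\varphi_x$ and is unaffected by the affine shift — whereas the paper rederives the same inequality \eqref{eq:technical_quadr_upper_bound_sym} from scratch via a Newton--Leibniz integration argument.
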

\begin{proof}
    To prove \eqref{eq:sym_gradient_bound_appendix}, we apply \eqref{eq:sym_upper_bound} with $y = x - \frac{\nu}{L_0 + L_1\|\nabla f(x)\|}\nabla f(x)$ for given $x\in \R^d$ and $\nu$ such that $\nu \exp(\nu) = 1$:
\begin{eqnarray*}
    f_* \leq f(y) &\overset{\eqref{eq:sym_upper_bound}}{\leq}& f(x) + \langle \nabla f(x), y-x \rangle + \frac{L_0 + L_1\|\nabla f(x)\|}{2} \exp\rbr{L_1\|x - y\|}\|x - y\|^2\\
    &=& f(x) - \frac{\nu\|\nabla f(x)\|^2}{L_0 + L_1\|\nabla f(x)\|} 
    \\ 
    &&\quad + \frac{L_0 + L_1\|\nabla f(x)\|}{2} \cdot \exp\rbr*{ \frac{L_1\nu\|\nabla f(x)\|}{L_0 + L_1\|\nabla f(x)\|} } \cdot \frac{\nu^2\|\nabla f(x)\|^2}{(L_0 + L_1\|\nabla f(x) \|)^2}
    \\ 
    &\leq& f(x) - \frac{\nu\|\nabla f(x)\|^2}{L_0 + L_1\|\nabla f(x)\|}  + \frac{\nu\|\nabla f(x)\|^2}{2(L_0 + L_1\|\nabla f(x)\|)} \cdot \nu\exp(\nu)
    \\
    &\overset{\nu = e^{-\nu}}{\leq}& f(x) - \frac{\nu\|\nabla f(x)\|^2}{ 2(L_0 + L_1 \|\nabla f(x)\|)}.
\end{eqnarray*}
Rearranging the terms, we get \eqref{eq:sym_gradient_bound_appendix}.

Next, we will prove \eqref{eq:grad_difference_bound_through_bregman_div_appendix} and \eqref{eq:grad_difference_bound_sym_L0_L1_appendix} under Assumptions~\ref{as:convexity} and \ref{as:symmetric_L0_L1}. The proof follows similar steps to the one that holds for standard $L$-smoothness (i.e., cocoercivity of the gradient) \citep{nesterov2018lectures}:
\begin{equation*}
        \|\nabla f(x) - \nabla f(y)\|^2 \leq L\langle \nabla f(x) - \nabla f(y), x - y \rangle.
\end{equation*}
That is, for given $x$ we consider function $\varphi_x(y) := f(y) - \langle \nabla f(x), y \rangle$. This function is differentiable and $\nabla \varphi_x(y) = \nabla f(y) - \nabla f(x)$. Moreover, for any $u, y \in \R^d$ we have
    \begin{equation}\label{eq:symmetric_phi}
        \|\nabla \varphi_x(u) - \nabla \varphi_x(y)\| = \|\nabla f(u) - \nabla f(y)\| \overset{\eqref{eq:symmetric_L0_L1_corollary}}{\leq} \left(L_0 + L_1 \|\nabla f(u)\|\right)\|u - y\| \exp\rbr*{L_1\|u - y\| },
    \end{equation}
Next, for given $x$ and for any $y, u \in \R^d$ we define function $\psi_{x,y,u}(t):\R \to \R$ as $\psi_{x,y,u}(t) \eqdef \varphi_x(u + t(y-u))$. Then, by definition of $\psi_{x,y,u}$, we have $\varphi_x(u) = \psi_{x,y,u}(0)$, $\varphi_x(y) = \psi_{x,y,u}(1)$, and $\psi_{x,y,u}'(t) = \langle \nabla \varphi_x(u + t(y-u)), y - u \rangle$. Therefore, using Newton-Leibniz formula, we derive
    \begin{eqnarray}
        \varphi_x(y) - \varphi_x(u) &=& \psi_{x,y,u}(1) - \psi_{x,y,u}(0) = \int\limits_{0}^1 \psi_{x,y,u}'(t)dt \notag
        \\
        &=& \int_{0}^1 \langle \nabla \varphi_x(u + t(y-u)), y - u \rangle dt \notag
        \\
        &=& \langle \nabla \varphi_x(u), y-u \rangle + \int_{0}^1 \langle \nabla \varphi_x(u + t(y-u)) - \nabla \varphi_x(u), y - u \rangle dt \notag
        \\
        &\leq& \langle \nabla \varphi_x(u), y-u \rangle + \int_{0}^1 \|\nabla \varphi_x(u + t(y-u)) - \nabla \varphi_x(u)\| \cdot \|u - y \| dt \notag
        \\
        &\overset{\eqref{eq:symmetric_phi}}{\leq}& \langle \nabla \varphi_x(u), y-u \rangle + \int_{0}^1  \left(L_0 + L_1 \|\nabla f(u)\|\right) \exp\rbr*{tL_1\|u - y\|} \|u - y \|^2 tdt \notag
        \\
        &\leq& \langle \nabla \varphi_x(u), y-u \rangle + \frac{L_0 + L_1\|\nabla f(u)\|}{2}\exp\rbr*{L_1\|u - y\|}\|u - y\|^2\notag
    \end{eqnarray}
    that implies $\forall u,y \in \R^d$
    \begin{equation}
        \varphi_x(y) \leq \varphi_{x}(u) + \langle \nabla \varphi_x(u), y - u \rangle + \frac{L_0 + L_1\|\nabla f(u)\|}{2}\exp\rbr*{L_1\|u - y\|}\|u - y\|^2. \label{eq:technical_quadr_upper_bound_sym}
    \end{equation}

    To proceed, we will need the following inequality:
    \begin{eqnarray}
        \nu\exp\rbr*{\nu\frac{L_1\|\nabla \varphi_x(u)\|}{L_0 + L_1\|\nabla f(u)\|}} &=& \nu\exp\rbr*{\nu\frac{L_1\|\nabla f(u) - \nabla f(x)\|}{L_0 + L_1\|\nabla f(u)\|}} \notag\\
        &\overset{\eqref{eq:symmetric_L0_L1_corollary}}{\leq}& \nu\exp\rbr*{\nu\frac{L_1\|x - u\|\exp\rbr*{L_1\|x - u\|} \rbr*{{L_0 + L_1\|\nabla f(u)\|}} }{L_0 + L_1\|\nabla f(u)\|}} \notag
        \\ 
        &=& \nu\exp\rbr*{\nu L_1\|x - u\|\exp\rbr*{L_1\|x - u\|} } \notag\\
        &\overset{\eqref{eq:proximity_rule_appendix}}{\leq}& \nu\exp\rbr*{\nu} \overset{\nu = e^{-\nu}}{=} 1. \label{exp bound}
    \end{eqnarray}
    Using the above bound and \eqref{eq:technical_quadr_upper_bound_sym} with $y = u - \frac{\nu}{L_0 + L_1\|\nabla f(u)\|}\nabla \varphi_x(u)$, we derive
    \begin{eqnarray*}
        \lefteqn{\varphi_x\left(u - \frac{\nu}{L_0 + L_1\|\nabla f(u)\|}\nabla \varphi_x(u)\right) }
        \\
        &\overset{\eqref{eq:technical_quadr_upper_bound_sym}}{\leq}& \varphi_{x}(u) - \nu\frac{\|\nabla \varphi_x(u)\|^2}{L_0 + L_1\|\nabla f(u)\|} + \frac{\nu^2\|\nabla \varphi_x(u)\|^2}{2(L_0 + L_1\|\nabla f(u)\|)}\exp\rbr*{\nu\frac{L_1\|\nabla \varphi_x(u)\|}{L_0 + L_1\|\nabla f(u)\|}}
        \\
        & \overset{\eqref{exp bound}}{\leq}& \varphi_{x}(u) - \nu\frac{\|\nabla \varphi_x(u)\|^2}{L_0 + L_1\|\nabla f(u)\|} + \frac{\nu\|\nabla \varphi_x(u)\|^2}{2(L_0 + L_1\|\nabla f(u)\|)}
        \\
        &\leq& \varphi_{x}(u) - \nu\frac{\|\nabla \varphi_x(u)\|^2}{2(L_0 + L_1\|\nabla f(u)\|)},
    \end{eqnarray*}
    Taking into account that $x$ is an optimum for $\varphi_x(u)$ ($\nabla \varphi_x(x) = 0$) and the definition of $\varphi_x(u)$, we get the following inequality from the above one:
    \begin{equation*}
        f(x) - \langle \nabla f(x), x \rangle \leq f(u) - \langle\nabla f(x), u \rangle - \frac{\nu\|\nabla f(x) - \nabla f(u)\|^2}{2(L_0 + L_1\|\nabla f(u)\|)}, \quad \forall x,u \in \R^d,
    \end{equation*}
    which is equivalent to
    \begin{equation*}
        \frac{\nu\|\nabla f(x) - \nabla f(y)\|^2}{2(L_0 + L_1\|\nabla f(y)\|)} \leq f(y) - f(x) - \langle\nabla f(x), y - x \rangle, \quad \forall x,y \in \R^d.
    \end{equation*}
    Therefore, we established \eqref{eq:grad_difference_bound_through_bregman_div_appendix}. Moreover, by swapping $x$ and $y$ in the above inequality, we also get
    \begin{equation*}
        \frac{\nu\|\nabla f(x) - \nabla f(y)\|^2}{2(L_0 + L_1\|\nabla f(y)\|)} \leq f(x) - f(y) - \langle\nabla f(y), x - y \rangle, \quad \forall x,y \in \R^d.
    \end{equation*}
    To get \eqref{eq:grad_difference_bound_sym_L0_L1_appendix}, it remains to sum the above two inequalities.
\end{proof}

\clearpage

\section{Missing Proofs for \algname{$(L_0,L_1)$-GD}}\label{appendix:GD}

\begin{lemma}[Lemma~\ref{lem:monotonicity_of_function_value_GD}: monotonicity of function value]\label{lem:monotonicity_of_function_value_GD_appendix}
    Let Assumption~\ref{as:symmetric_L0_L1} hold. Then, for all $k\geq 0$ the iterates generated by \algname{$(L_0,L_1)$-GD} with $\eta \leq \nu$, $\nu = e^{-\nu}$ satisfy
    \begin{equation}
        f(x^{k+1}) \leq f(x^k) -  \frac{\eta\|\nabla f(x^k)\|^2}{2(L_0 + L_1\|\nabla f(x^k)\|)} \leq f(x^k). \label{eq:monotonicity_function_asym_L0_L1_appendix}
    \end{equation}
\end{lemma}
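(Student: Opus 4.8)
The plan is to apply the $(L_0,L_1)$-smooth quadratic upper bound \eqref{eq:sym_upper_bound} to two consecutive iterates, with $x = x^k$ and $y = x^{k+1}$, and then simplify using the explicit form of the \algname{$(L_0,L_1)$-GD} step. Writing $g_k \eqdef \nabla f(x^k)$, the update reads $x^{k+1} - x^k = -\tfrac{\eta}{L_0 + L_1\|g_k\|}g_k$, so that $\langle g_k, x^{k+1}-x^k\rangle = -\tfrac{\eta\|g_k\|^2}{L_0 + L_1\|g_k\|}$ and $\|x^{k+1}-x^k\| = \tfrac{\eta\|g_k\|}{L_0 + L_1\|g_k\|}$. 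Plugging these into \eqref{eq:sym_upper_bound} collapses the right-hand side into an expression governed by the single scalar $\tfrac{\eta\|g_k\|^2}{L_0 + L_1\|g_k\|}$, multiplied by an exponential factor coming from $\exp(L_1\|x^k - x^{k+1}\|)$.

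Concretely, I would obtain
\begin{equation*}
    f(x^{k+1}) \leq f(x^k) - \frac{\eta\|g_k\|^2}{L_0 + L_1\|g_k\|} + \frac{\eta^2\|g_k\|^2}{2(L_0 + L_1\|g_k\|)}\exp\!\left(\frac{L_1\eta\|g_k\|}{L_0 + L_1\|g_k\|}\right).
\end{equation*}
Factoring out $\tfrac{\eta\|g_k\|^2}{L_0 + L_1\|g_k\|} \geq 0$, the claim reduces to showing that the bracket $-1 + \tfrac{\eta}{2}\exp(\cdot)$ is at most $-\tfrac12$, i.e. that $\eta\exp(\cdot) \leq 1$; the surviving term is then exactly the asserted descent $-\tfrac{\eta\|g_k\|^2}{2(L_0 + L_1\|g_k\|)}$.

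The crux is controlling this exponential factor, and it is the only nontrivial step. The key observation is that $\tfrac{L_1\|g_k\|}{L_0 + L_1\|g_k\|} < 1$ for every $k$ (since $L_0 > 0$), so the exponent is strictly below $\eta$, and by $\eta \leq \nu$ it is at most $\nu$. Hence $\exp(\cdot) \leq e^{\nu} = 1/\nu$, where I use the defining relation $\nu = e^{-\nu}$ (equivalently $\nu e^{\nu} = 1$). Combining with $\eta \leq \nu$ gives $\tfrac{\eta}{2}\exp(\cdot) \leq \tfrac{\eta}{2\nu} \leq \tfrac12$, so the bracket is bounded by $-\tfrac12$ and the descent inequality $f(x^{k+1}) \leq f(x^k) - \tfrac{\eta\|g_k\|^2}{2(L_0 + L_1\|g_k\|)}$ follows. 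The second inequality in the statement is then immediate because the subtracted term is nonnegative.

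This is precisely the exponential-taming mechanism already used to prove \eqref{eq:sym_gradient_bound_appendix} in Lemma~\ref{lem:new_results_for_L0_L1}, so no new idea is required. The one point to watch is that the threshold $\eta \leq \nu$ is invoked twice — once to push the exponent below $\nu$, and once as the coefficient $\eta/(2\nu) \leq \tfrac12$ — and both uses are tight at $\eta = \nu$, which is why the monotonicity lemma is stated with the larger admissible range $\eta \leq \nu$ rather than the $\eta \leq \nu/2$ needed later for the full convergence theorem.
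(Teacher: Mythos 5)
Your proof is correct and takes essentially the same route as the paper's: both apply \eqref{eq:sym_upper_bound} with $x = x^k$, $y = x^{k+1}$, observe that $L_1\|x^{k+1}-x^k\| = \frac{\eta L_1\|\nabla f(x^k)\|}{L_0+L_1\|\nabla f(x^k)\|} \leq \eta \leq \nu$, and use $\nu e^{\nu}=1$ to absorb the exponential factor, your decomposition of $\eta\exp(\cdot)\leq 1$ into $\exp(\cdot)\leq e^{\nu}=\nicefrac{1}{\nu}$ together with $\eta\leq\nu$ being merely a cosmetic rearrangement of the paper's single bound $\eta\exp(\eta)\leq\nu\exp(\nu)=1$. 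Your closing observation about the two (tight) uses of $\eta\leq\nu$, and why the lemma admits the larger range than the $\eta\leq\nicefrac{\nu}{2}$ required by Theorem~\ref{thm:L0_L1_GD_sym_main_result}, is also accurate.
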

\begin{proof}
    Applying \eqref{eq:sym_upper_bound} with $y = x^{k+1}$ and $x = x^k$ and using
    \begin{equation}
        \exp(L_1\|x^{k+1} - x^k\|) = \exp\left(\eta\frac{L_1\|\nabla f(x^k)\|}{L_0 + L_1\|\nabla f(x^k)\|}\right) \leq \exp(\eta) \label{eq:exponent_bound_GD}
    \end{equation}
    we get
    \begin{eqnarray*}
        f(x^{k+1}) &\leq& f(x^k) + \langle \nabla f(x^k), x^{k+1} - x^k \rangle + \frac{L_0 + L_1\|\nabla f(x^k)\|}{2}\|x^{k+1} - x^k\|^2\exp(\eta)
        \\
        &=& f(x^k) - \frac{\eta\|\nabla f(x^k)\|^2}{L_0 + L_1\|\nabla f(x^k)\|} + \frac{\eta^2\exp\rbr{\eta}\|\nabla f(x^k)\|^2}{2(L_0 + L_1\|\nabla f(x^k)\|)}\\
        &=& f(x^k) - \eta\left(1 - \frac{\eta\exp(\eta)}{2}\right)\frac{\|\nabla f(x^k)\|^2}{L_0 + L_1\|\nabla f(x^k)\|}\\
        &\overset{\eta \leq \nu}{\leq}& f(x^k) - \eta\left(1 - \frac{\nu\exp(\nu)}{2}\right)\frac{\|\nabla f(x^k)\|^2}{L_0 + L_1\|\nabla f(x^k)\|}\\
        &\overset{\nu = e^{-\nu}}{=}& f(x^k) -  \frac{\eta\|\nabla f(x^k)\|^2}{2(L_0 + L_1\|\nabla f(x^k)\|)} \leq f(x^k),
    \end{eqnarray*}
    which finishes the proof.
\end{proof}

\begin{lemma}[Lemma~\ref{lem:monotonicity_of_gradient_norm_GD}: monotonicity of gradient norm]\label{lem:monotonicity_of_gradient_norm_GD_appendix}
    Let Assumptions~\ref{as:convexity} with $\mu = 0$~and~\ref{as:symmetric_L0_L1} hold. Then, for all $k\geq 0$ the iterates generated by \algname{$(L_0,L_1)$-GD} with $\eta \leq \nu$, $\nu = e^{-\nu}$ satisfy
    \begin{equation}
        \|\nabla f(x^{k+1})\| \leq \|\nabla f(x^{k})\|. \label{eq:monotonicity_gradient_sym_L0_L1_appendix}
    \end{equation}
\end{lemma}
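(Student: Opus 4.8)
The plan is to follow the proof sketch literally: apply the cocoercivity-type bound \eqref{eq:grad_difference_bound_sym_L0_L1} from Lemma~\ref{lem:new_results_for_L0_L1} with the substitution $x = x^{k+1}$, $y = x^k$, and then convert the resulting inner-product inequality into the desired comparison of gradient norms by plugging in the \algname{$(L_0,L_1)$-GD} update. Throughout I would abbreviate $u \eqdef \nabla f(x^{k+1}) - \nabla f(x^k)$, and I would use that both Assumption~\ref{as:convexity} (with $\mu=0$) and Assumption~\ref{as:symmetric_L0_L1} are in force, which is exactly what \eqref{eq:grad_difference_bound_sym_L0_L1} requires.

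First I would verify that the pair $(x^{k+1}, x^k)$ satisfies the proximity condition \eqref{eq:proximity_rule}, since \eqref{eq:grad_difference_bound_sym_L0_L1} is only valid under that restriction. From the update rule, $\|x^{k+1}-x^k\| = \tfrac{\eta\|\nabla f(x^k)\|}{L_0 + L_1\|\nabla f(x^k)\|}$, so $L_1\|x^{k+1}-x^k\| = \tfrac{\eta L_1\|\nabla f(x^k)\|}{L_0 + L_1\|\nabla f(x^k)\|} \leq \eta \leq \nu$ because the fraction is at most $1$ (here $L_0>0$ is used). Since $t\mapsto t e^{t}$ is increasing on $[0,\infty)$, this gives $L_1\|x^{k+1}-x^k\|\exp(L_1\|x^{k+1}-x^k\|) \leq \nu e^{\nu} = 1$, where the last equality is just $\nu = e^{-\nu}$. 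Hence \eqref{eq:proximity_rule} holds and \eqref{eq:grad_difference_bound_sym_L0_L1} is applicable.

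The core of the argument is then a short algebraic reduction. I would drop the second (non-negative) term on the left-hand side of \eqref{eq:grad_difference_bound_sym_L0_L1}, keeping only $\tfrac{\nu\|u\|^2}{2(L_0+L_1\|\nabla f(x^k)\|)} \leq \langle u, x^{k+1}-x^k\rangle$. Substituting $x^{k+1}-x^k = -\tfrac{\eta}{L_0+L_1\|\nabla f(x^k)\|}\nabla f(x^k)$ and multiplying through by the (positive) factor $L_0+L_1\|\nabla f(x^k)\|$ clears the denominators and yields
\[
    \tfrac{\nu}{2}\|u\|^2 \leq -\eta\,\langle u, \nabla f(x^k)\rangle.
\]
Finally I would expand $\|\nabla f(x^{k+1})\|^2 = \|\nabla f(x^k)\|^2 + 2\langle u, \nabla f(x^k)\rangle + \|u\|^2$, so that $\|\nabla f(x^{k+1})\|^2 - \|\nabla f(x^k)\|^2 = 2\langle u, \nabla f(x^k)\rangle + \|u\|^2$; combining the displayed inequality (which gives $\langle u,\nabla f(x^k)\rangle \leq -\tfrac{\nu}{2\eta}\|u\|^2$) with the hypothesis $\eta \leq \nu$ produces $2\langle u, \nabla f(x^k)\rangle + \|u\|^2 \leq \bigl(1-\tfrac{\nu}{\eta}\bigr)\|u\|^2 \leq 0$, which is precisely \eqref{eq:monotonicity_gradient_sym_L0_L1_appendix}.

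The only genuinely delicate point is the verification of the proximity rule \eqref{eq:proximity_rule}, since \eqref{eq:grad_difference_bound_sym_L0_L1} silently fails without it; the rest is routine manipulation. It is worth noting that the step-size threshold $\eta \leq \nu$ is used twice and is tight in spirit: once to guarantee the proximity bound $L_1\|x^{k+1}-x^k\|e^{L_1\|x^{k+1}-x^k\|}\leq 1$, and once to force the factor $1-\nu/\eta$ to be non-positive, so the same constant $\nu = e^{-\nu}$ governs both halves of the proof.
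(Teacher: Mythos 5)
Your proof is correct, and while it invokes the same key lemma as the paper --- inequality \eqref{eq:grad_difference_bound_sym_L0_L1} with $x = x^{k+1}$, $y = x^k$, preceded by the same verification of the proximity rule \eqref{eq:proximity_rule} via $\eta e^{\eta} \leq \nu e^{\nu} = 1$ --- your algebraic route from there is genuinely different and noticeably leaner. The paper keeps \emph{both} terms on the left-hand side of \eqref{eq:grad_difference_bound_sym_L0_L1}; writing $\omega_k \eqdef L_0 + L_1\|\nabla f(x^k)\|$, it multiplies by $\nicefrac{2\omega_k}{\nu}$, expands the squared norm, and rearranges into
\begin{equation*}
    \left(1 + \tfrac{\omega_k}{\omega_{k+1}}\right)\|\nabla f(x^{k+1})\|^2 \leq \left(1 + \tfrac{\omega_k}{\omega_{k+1}}\right)\|\nabla f(x^{k})\|^2 - \tfrac{2\omega_k}{\eta}\left(1 + \tfrac{\omega_{k}}{\omega_{k+1}} - \tfrac{\eta}{\nu} \right)\langle \nabla f(x^{k+1}) - \nabla f(x^k), x^{k+1} - x^k \rangle,
\end{equation*}
after which it needs two further observations to discard the last term: the sign check $1 + \nicefrac{\omega_k}{\omega_{k+1}} - \nicefrac{\eta}{\nu} \geq \nicefrac{\omega_k}{\omega_{k+1}} \geq 0$ (using $\eta \leq \nu$) and a \emph{second} appeal to convexity through the monotonicity of the gradient, $\langle \nabla f(x^{k+1}) - \nabla f(x^k), x^{k+1} - x^k \rangle \geq 0$. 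You instead drop the non-negative $\omega_{k+1}$-term upfront, which collapses everything to $\tfrac{\nu}{2}\|u\|^2 \leq -\eta\langle u, \nabla f(x^k)\rangle$ plus a one-line expansion of $\|\nabla f(x^{k+1})\|^2$; convexity then enters only once, through the validity of \eqref{eq:grad_difference_bound_sym_L0_L1} itself, and the awkward $\omega_k/\omega_{k+1}$ bookkeeping disappears entirely. What the paper's heavier version buys is negligible here: retaining the second term would tolerate the weaker condition $\eta \leq \nu(1 + \nicefrac{\omega_k}{\omega_{k+1}})$ in the final sign check, but this slack is unusable because the proximity verification already forces $\eta \leq \nu$. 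Your closing remark is also accurate --- the threshold $\eta \leq \nu$ is used in exactly the same two places in both proofs (proximity and the final non-positivity), with the same constant $\nu = e^{-\nu}$ governing both.
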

\begin{proof}
    For convenience, we introduce the following notation: $\omega_k \eqdef L_0 + L_1\|\nabla f(x^k)\|$ for all $k\geq 0$. Since
    \begin{eqnarray*}
        L_1 \|x^{k+1}-x^k\| \exp{\rbr*{L_1 \|x^{k+1}-x^k\|}} &=&  \frac{\eta L_1\|\nabla f(x^k)\|}{L_0 + L_1\|\nabla f(x^k)\|} \exp{\rbr*{\frac{\eta L_1\|\nabla f(x^k)\|}{L_0 + L_1\|\nabla f(x^k)\|}}}\\
        &\leq& \eta\exp(\eta) ~~\overset{\eta\leq \nu}{\leq}~~ \nu\exp(\nu) ~~\overset{\nu = e^{-\nu}}{=} 1,
    \end{eqnarray*}
    the assumptions for the second part of Lemma~\ref{lem:new_results_for_L0_L1} are satisfied for $x = x^{k+1}$ and $y = x^k$, and inequality \eqref{eq:grad_difference_bound_sym_L0_L1} implies
    \begin{eqnarray*}
       \left(\frac{\nu}{2\omega_k} + \frac{\nu}{2\omega_{k+1}}\right)\|\nabla f(x^{k+1}) - \nabla f(x^k)\|^2  &\leq& \langle \nabla f(x^{k+1}) - \nabla f(x^k), x^{k+1} - x^k \rangle\\
        &=& -\frac{\eta}{\omega_k}\langle \nabla f(x^{k+1}) - \nabla f(x^k),  \nabla f(x^k)\rangle,
    \end{eqnarray*}
    where in the second line we use $x^{k+1} = x^k - \frac{\eta}{\omega_k}\nabla f(x^k)$. Multiplying both sides by $\nicefrac{2\omega_k}{\nu}$ and rearranging the terms, we get
    \begin{align*}
        \left(1 + \frac{\omega_k}{\omega_{k+1}}\right)\big(\|\nabla f(x^{k+1})\|^2 &+ \|\nabla f(x^{k})\|^2 -2\langle\nabla f(x^{k+1}), \nabla f(x^k) \rangle\big) \\
        &\leq -\frac{2\eta}{\nu} \cdot\langle \nabla f(x^{k+1}), \nabla f(x^k) \rangle + \frac{2\eta}{\nu} \cdot\|\nabla f(x^k)\|^2,
    \end{align*}
    which is equivalent to
    \begin{eqnarray}
        \left(1 + \frac{\omega_k}{\omega_{k+1}}\right)\|\nabla f(x^{k+1})\|^2 &\leq& \left(1 + \frac{\omega_k}{\omega_{k+1}}\right)\|\nabla f(x^{k})\|^2 \notag\\
        &&\quad + 2\left(1 + \frac{\omega_{k}}{\omega_{k+1}} - \frac{\eta}{\nu} \right)\langle \nabla f(x^{k+1}), \nabla f(x^k) \rangle\notag\\
        &&\quad -2\left(1 + \frac{\omega_k}{\omega_{k+1}} - \frac{\eta}{\nu} \right)\|\nabla f(x^k)\|^2\notag\\
        &=&\left(1 + \frac{\omega_k}{\omega_{k+1}}\right)\|\nabla f(x^{k})\|^2\notag\\
        &&\quad + 2\left(1 + \frac{\omega_{k}}{\omega_{k+1}} - \frac{\eta}{\nu} \right)\langle \nabla f(x^{k+1}) - \nabla f(x^k), \nabla f(x^k) \rangle\notag\\
        &=& \left(1 + \frac{\omega_k}{\omega_{k+1}}\right)\|\nabla f(x^{k})\|^2 \label{eq:technical_non_increasing_norm_asym}\\
        &&\quad - \frac{2\omega_k}{\eta}\left(1 + \frac{\omega_{k}}{\omega_{k+1}} - \frac{\eta}{\nu} \right)\langle \nabla f(x^{k+1}) - \nabla f(x^k), x^{k+1} - x^k \rangle. \notag
    \end{eqnarray}
    We notice that $\frac{2\omega_k}{\eta} > 0$ and $1 + \frac{\omega_{k}}{\omega_{k+1}} - \frac{\eta}{\nu}  \geq \frac{\omega_{k}}{\omega_{k+1}} \geq 0$ since $0 < \frac{\eta}{\nu} \leq 1$. Moreover, due to the convexity of $f$ we also have $\langle \nabla f(x^{k+1}) - \nabla f(x^k), x^{k+1} - x^k \rangle \geq 0$. Therefore, we have
    \begin{equation*}
        - \frac{2\omega_k}{\eta}\left(1 + \frac{\omega_{k}}{\omega_{k+1}} - \frac{\eta}{\nu} \right)\langle \nabla f(x^{k+1}) - \nabla f(x^k), x^{k+1} - x^k \rangle \leq 0.
    \end{equation*}
    Together with \eqref{eq:technical_non_increasing_norm_asym}, the above inequality implies
    \begin{equation*}
        \left(1 + \frac{\omega_k}{\omega_{k+1}}\right)\|\nabla f(x^{k+1})\|^2 \leq \left(1 + \frac{\omega_k}{\omega_{k+1}}\right)\|\nabla f(x^{k})\|^2, \quad \forall k\geq 0,
    \end{equation*}
    which is equivalent to \eqref{eq:monotonicity_gradient_sym_L0_L1_appendix}.
\end{proof}

\begin{theorem}[Theorem~\ref{thm:L0_L1_GD_sym_main_result}]\label{thm:L0_L1_GD_sym_main_result_appendix}
    Let Assumptions~\ref{as:convexity} with $\mu = 0$ and \ref{as:symmetric_L0_L1} hold. Then, the iterates generated by \algname{$(L_0,L_1)$-GD} with $0 < \eta \leq \frac{\nu}{2}$, $\nu = e^{-\nu}$
    satisfy the following implication:
    \begin{equation}
        \|\nabla f(x^k)\| \geq \frac{L_0}{L_1}\;  \Longrightarrow \; k \leq \frac{8L_1^2\|x^0 - x^*\|^2}{\nu\eta} - 1~~ \text{and}~~ \|x^{k+1} - x^*\|^2 \leq \|x^k - x^*\|^2 - \frac{\nu\eta}{8L_1^2}. \label{eq:implication_for_large_norm_asym_GD_appendix}
    \end{equation}
Moreover, the output after $N > \frac{8L_1^2\|x^0 - x^*\|^2}{\nu\eta} - 1$ iterations satisfies
    \begin{equation}
        f(x^N) - f(x^*) \leq \frac{2L_0\|x^0 - x^*\|^2}{\eta (N + 1 - T)} - \frac{\nu L_0 T}{4 L_1^2 (N+1-T)} \leq \frac{2L_0\|x^0 - x^*\|^2}{\eta (N + 1)}, \label{eq:main_result_L0_L1_GD_asym_appendix}
    \end{equation}
    where $T \eqdef |\cT|$ for the set $\cT \eqdef \{ k\in \{0,1,\ldots N-1\}\mid \|\nabla f(x^k)\| \geq \frac{L_0}{L_1}\}$. In addition, if $\mu > 0$ and $N > \frac{8L_1^2\|x^0 - x^*\|^2}{\eta} - 1$, then
    \begin{equation}
        \|x^N - x^*\|^2 \leq \left(1 - \frac{\mu\eta}{4L_0}\right)^{N-T}\left(\|x^0 - x^*\|^2  - \frac{\nu\eta T}{8L_1^2}\right) \leq \left(1 - \frac{\mu\eta}{4L_0}\right)^{N-T}\|x^0 - x^*\|^2. \label{eq:L0_L1_GD_str_cvx_appendix}
    \end{equation}
\end{theorem}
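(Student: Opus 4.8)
The plan is to run a distance-to-solution analysis and split the iterations into two regimes according to whether $\|\nabla f(x^k)\|$ lies above or below $\nicefrac{L_0}{L_1}$. Writing $\omega_k \eqdef L_0 + L_1\|\nabla f(x^k)\|$ so that $x^{k+1} = x^k - \nicefrac{\eta}{\omega_k}\,\nabla f(x^k)$, I would start from the expansion $\|x^{k+1}-x^*\|^2 = \|x^k-x^*\|^2 - \nicefrac{2\eta}{\omega_k}\langle \nabla f(x^k), x^k - x^*\rangle + \nicefrac{\eta^2}{\omega_k^2}\|\nabla f(x^k)\|^2$. Lower-bounding the inner product by convexity, $\langle \nabla f(x^k), x^k-x^*\rangle \geq f(x^k)-f(x^*)$, and controlling the last term via the gradient bound \eqref{eq:sym_gradient_bound}, which rearranges to $\|\nabla f(x^k)\|^2 \leq \nicefrac{2\omega_k}{\nu}(f(x^k)-f(x^*))$, together with $\eta \leq \nicefrac{\nu}{2}$, yields the master per-step inequality $\nicefrac{\eta}{\omega_k}(f(x^k)-f(x^*)) \leq \|x^k-x^*\|^2 - \|x^{k+1}-x^*\|^2$, valid for every $k$; in particular the squared distance is non-increasing. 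A crucial structural fact I would invoke is Lemma~\ref{lem:monotonicity_of_gradient_norm_GD}: since $\|\nabla f(x^k)\|$ is non-increasing, the index set $\cT$ where $\|\nabla f(x^k)\| \geq \nicefrac{L_0}{L_1}$ is a prefix $\{0,1,\dots,T-1\}$, so the large-gradient phase precedes the small-gradient phase.

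For the large-gradient phase ($k \in \cT$) the bound $\omega_k \leq 2L_1\|\nabla f(x^k)\|$ combined with convexity and \eqref{eq:sym_gradient_bound} gives $\|x^{k+1}-x^*\|^2 \leq \|x^k-x^*\|^2 - \nicefrac{\eta(\nu-\eta)\|\nabla f(x^k)\|^2}{\omega_k^2} \leq \|x^k-x^*\|^2 - \nicefrac{\nu\eta}{8L_1^2}$, which is exactly the claimed per-step decrease. Because the squared distance is non-negative and non-increasing, and because $\cT$ is a prefix, at any large-gradient step $k$ all of $0,\dots,k$ lie in $\cT$; summing the constant decrease over these $k+1$ steps forces $(k+1)\nicefrac{\nu\eta}{8L_1^2} \leq \|x^0-x^*\|^2$, i.e. $k \leq \nicefrac{8L_1^2\|x^0-x^*\|^2}{\nu\eta} - 1$. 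This establishes implication \eqref{eq:implication_for_large_norm_asym_GD_appendix}.

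For the convergence rate I would telescope over the small-gradient phase. For $k \geq T$ we have $\omega_k \leq 2L_0$, so the master inequality upgrades to $\nicefrac{\eta}{2L_0}(f(x^k)-f(x^*)) \leq \|x^k-x^*\|^2 - \|x^{k+1}-x^*\|^2$. Summing over $k = T, T+1, \dots, N$ (that is $N+1-T$ terms) telescopes the right-hand side to $\|x^T-x^*\|^2 - \|x^{N+1}-x^*\|^2 \leq \|x^T-x^*\|^2$; running the sum all the way to $k=N$ and discarding $\|x^{N+1}-x^*\|^2 \geq 0$ is precisely the move that produces the denominator $N+1-T$ rather than $N-T$. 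Bounding $\|x^T-x^*\|^2 \leq \|x^0-x^*\|^2 - \nicefrac{\nu\eta T}{8L_1^2}$ from the phase-1 telescoping and using Lemma~\ref{lem:monotonicity_of_function_value_GD} (so that $f(x^N) \leq f(x^k)$ for every term, hence each summand is at least $f(x^N)-f(x^*)$) gives \eqref{eq:main_result_L0_L1_GD_asym_appendix}. The final simplification to $\nicefrac{2L_0\|x^0-x^*\|^2}{\eta(N+1)}$ is the elementary inequality $\frac{A-BT}{N+1-T} \leq \frac{A}{N+1}$ with $A = \nicefrac{2L_0\|x^0-x^*\|^2}{\eta}$ and $B = \nicefrac{\nu L_0}{4L_1^2}$, which holds exactly when $A \leq B(N+1)$, i.e. under the stated assumption $N > \nicefrac{8L_1^2\|x^0-x^*\|^2}{\nu\eta}-1$.

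For the strongly convex case ($\mu > 0$) I would repeat the distance expansion but keep the strong-convexity surplus $\langle \nabla f(x^k), x^k-x^*\rangle \geq f(x^k)-f(x^*) + \nicefrac{\mu}{2}\|x^k-x^*\|^2$. In the small-gradient phase, after absorbing the $f(x^k)-f(x^*)$ contributions (non-positive since $\eta \leq \nicefrac{\nu}{2}$) and using $\omega_k \leq 2L_0$, this yields a multiplicative contraction $\|x^{k+1}-x^*\|^2 \leq (1-\nicefrac{\mu\eta}{4L_0})\|x^k-x^*\|^2$ (in fact one gets the slightly stronger factor $1-\nicefrac{\mu\eta}{2L_0}$, which only improves the bound). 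Since phase~1 is the prefix $\{0,\dots,T-1\}$, applying the additive phase-1 decrease first to reach $\|x^T-x^*\|^2 \leq \|x^0-x^*\|^2 - \nicefrac{\nu\eta T}{8L_1^2}$ and then contracting over the $N-T$ phase-2 steps gives \eqref{eq:L0_L1_GD_str_cvx_appendix}. The main obstacle throughout is not any single estimate but the bookkeeping that ties the two phases together: the argument closes cleanly only because Lemma~\ref{lem:monotonicity_of_gradient_norm_GD} makes $\cT$ a contiguous prefix, which is what lets the telescoped (convex) or multiplicative (strongly convex) phase-2 analysis act on the single block $\{T,\dots,N\}$ and what lets the phase-1 additive gain be carried intact into $\|x^T-x^*\|^2$.
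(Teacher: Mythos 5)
Your proposal is correct and follows essentially the same route as the paper's proof: the same distance expansion combined with convexity and \eqref{eq:sym_gradient_bound}, the same two-phase split made contiguous by Lemma~\ref{lem:monotonicity_of_gradient_norm_GD}, the same phase-1 constant decrease $\nicefrac{\nu\eta}{8L_1^2}$ and phase-2 telescoping with Lemma~\ref{lem:monotonicity_of_function_value_GD}, and the same strongly convex contraction. The only deviations are cosmetic: you verify $\frac{A-BT}{N+1-T}\leq\frac{A}{N+1}$ by cross-multiplication where the paper differentiates $\varphi(T)$, and in phase 1 you feed \eqref{eq:sym_gradient_bound} directly into the inner product (yielding $\eta(\nu-\eta)\|\nabla f(x^k)\|^2/\omega_k^2$, and correctly noting the sharper factor $1-\nicefrac{\mu\eta}{2L_0}$ in the strongly convex case) rather than passing through the master inequality as the paper does.
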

\begin{remark}
    In the strongly convex case, our convergence bound implies that \algname{$(L_0,L_1)$-GD} with $\eta = \nicefrac{\nu}{2}$ satisfies $\|x^N - x^*\|^2 \leq \varepsilon$ after $N = \cO\left(\max\left\{\nicefrac{L_0 \log(\nicefrac{R_0^2}{\varepsilon})}{\mu}, L_1^2 R_0^2\right\}\right)$ iterations, while the complexity of \algname{Clip-GD} derived by \citet{koloskova2023revisiting} is $\cO\left(\max\left\{\nicefrac{L_0 \log(\nicefrac{R_0^2}{\varepsilon})}{\mu}, L_0R_0\min\left\{\sqrt{\nicefrac{L}{\mu}}, LR_0\right\}\right\}\right)$, which again can be arbitrarily worse than our bound due to the dependence on $L$.
\end{remark}
\begin{proof}[Proof of Theorem~\ref{thm:L0_L1_GD_sym_main_result_appendix}]
    We start by expanding the squared distance to the solution:
    \begin{eqnarray}
        \|x^{k+1} - x^*\|^2 &=& \left\|x^k - x^* - \frac{\eta}{L_0 + L_1 \|\nabla f(x^k)\|}\nabla f(x^k) \right\|^2 \notag\\
        &=& \|x^k - x^*\|^2 - \frac{2\eta}{L_0 + L_1\|\nabla f(x^k)\|}\langle x^k - x^*, \nabla f(x^k) \rangle \notag\\
        &&\quad + \frac{\eta^2\|\nabla f(x^k)\|^2}{(L_0 + L_1\|\nabla f(x^k)\|)^2} \notag\\
        &\overset{\eqref{eq:convexity}}{\leq}& \|x^k - x^*\|^2 - \frac{2\eta}{L_0 + L_1\|\nabla f(x^k)\|}\left( f(x^k) - f(x^*) \right) \notag\\
        &&\quad + \frac{\eta^2\|\nabla f(x^k)\|^2}{(L_0 + L_1\|\nabla f(x^k)\|)^2} \notag
        \\
        &\overset{\eqref{eq:sym_gradient_bound}}{\leq}& \|x^k - x^*\|^2 - 2\eta \left(1-\frac{\eta}{\nu}\right) \frac{f(x^k) - f(x^*)}{L_0 + L_1\|\nabla f(x^k)\|}\notag
        \\
        &\overset{\eta \leq \frac{\nu}{2}}{\leq}& \|x^k - x^*\|^2 - \eta \frac{f(x^k) - f(x^*)}{L_0 + L_1\|\nabla f(x^k)\|}. \label{eq:descent_inequality_asym_case}
    \end{eqnarray}
    To continue the derivation, we consider two possible cases: $\|\nabla f(x^k)\| \geq \frac{L_0}{L_1}$ or $\|\nabla f(x^k)\| < \frac{L_0}{L_1}$.

    \noindent \textbf{Case 1: $\|\nabla f(x^k)\| \geq \frac{L_0}{L_1}$.} In this case, we have
    \begin{gather}
        L_0 + L_1 \|\nabla f(x^k)\| \leq 2L_1\|\nabla f(x^k)\|,\label{eq:case_1_main_ineq_asym_L0_L1_GD}
        \\
        \frac{\nu\|\nabla f(x^k)\|}{4L_1} \overset{\eqref{eq:case_1_main_ineq_asym_L0_L1_GD}}{\leq} \frac{\nu\|\nabla f(x^k)\|^2}{2(L_0 + L_1 \|\nabla f(x^k)\|)} \overset{\eqref{eq:sym_gradient_bound}}{\leq} f(x^k) - f(x^*). \label{eq:case_1_grad_bound_asym_L0_L1_GD}
    \end{gather}
    Plugging the above inequalities in \eqref{eq:descent_inequality_asym_case}, we continue the derivation as follows:
    \begin{eqnarray}
        \|x^{k+1} - x^*\|^2 &\overset{\eqref{eq:descent_inequality_asym_case}, \eqref{eq:case_1_main_ineq_asym_L0_L1_GD}}{\leq}& \|x^k - x^*\|^2 - \eta \frac{f(x^k) - f(x^*)}{2L_1\|\nabla f(x^k)\|} \notag\\
        &\overset{\eqref{eq:case_1_grad_bound_asym_L0_L1_GD}}{\leq}& \|x^k - x^*\|^2 - \frac{\nu\eta}{8L_1^2}. \label{eq:case_1_descent_asym_L0_L1_GD}
    \end{eqnarray}
    We notice that if $\|\nabla f(x^k)\| \geq \frac{L_0}{L_1}$, then, in view of Lemma~\ref{lem:monotonicity_of_gradient_norm_GD}, we also have $\|\nabla f(x^t)\| \geq \frac{L_0}{L_1}$ for all $t = 0,1,\ldots, k$. Therefore, \eqref{eq:case_1_descent_asym_L0_L1_GD} implies
    \begin{equation}
        \|x^{k+1} - x^*\|^2 \leq \|x^0 - x^*\|^2 - \frac{\nu\eta}{8L_1^2}(k+1). \label{eq:case_1_sq_distance_decrease_asym_L0_L1_GD}
    \end{equation}
    Since $\|x^{k+1} - x^*\|^2 \geq 0$, $k$ should be bounded as $k \leq \frac{8L_1^2\|x^0 - x^*\|^2}{\nu\eta} - 1$, which gives \eqref{eq:implication_for_large_norm_asym_GD}. We denote $T \eqdef |\cT|$ for the set $\cT \eqdef \{ k\in \{0,1,\ldots N-1\}\mid \|\nabla f(x^k)\| \geq \frac{L_0}{L_1}\}$. Therefore, in view of \eqref{eq:case_1_sq_distance_decrease_asym_L0_L1_GD} and non-negativity of the squared distance, $T$ is bounded as $T \leq \frac{8L^2\|x^0 - x^*\|^2}{\nu\eta}$.

    \noindent \textbf{Case 2: $\|\nabla f(x^k)\| < \frac{L_0}{L_1}$.} In this case, we have
    \begin{gather}
        L_0 + L_1 \|\nabla f(x^k)\| \leq 2L_0,\label{eq:case_2_main_ineq_asym_L0_L1_GD}
    \end{gather}
    implying that
    \begin{equation}
        \|x^{k+1} - x^*\|^2 \overset{\eqref{eq:descent_inequality_asym_case}, \eqref{eq:case_2_main_ineq_asym_L0_L1_GD}}{\leq} \|x^k - x^*\|^2 - \frac{\eta}{2L_0}\left(f(x^k) - f(x^*)\right).\label{eq:case_2_descent_ineq_L0_L1_GD}
    \end{equation}
    Moreover, since the norm of the gradient is non-increasing along the trajectory of \algname{$(L_0, L_1)$-GD} (Lemma~\ref{lem:monotonicity_of_gradient_norm_GD}), $\|\nabla f(x^k)\| < \frac{L_0}{L_1}$ implies that $k > T$. Therefore, we can sum up inequalities \eqref{eq:case_2_descent_ineq_L0_L1_GD} for $k = T, T+1, \ldots, N$, rearrange the terms, and get
    \begin{eqnarray*}
        \frac{1}{N+1-T}\sum\limits_{k=T}^{N}\left(f(x^k) - f(x^*)\right) &\leq& \frac{2L_0}{\eta(N+1-T)}\sum\limits_{k=T}^N\left(\|x^k - x^*\|^2 - \|x^{k+1} - x^*\|^2\right)\\
        &=& \frac{2L_0\left(\|x^{T} - x^*\|^2 - \|x^{N+1} - x^*\|^2\right)}{\eta(N+1-T)}\\
        &\leq& \frac{2L_0\|x^{T} - x^*\|^2}{\eta(N-T)}.
    \end{eqnarray*}
    Finally, we take into account that for $k=0,1,\ldots,T-1$, we have $\|\nabla f(x^k)\| \geq \frac{L_0}{L_1}$:
    \begin{eqnarray}
        \frac{1}{N+1-T}\sum\limits_{k=T}^{N}\left(f(x^k) - f(x^*)\right) 
        &\overset{\eqref{eq:case_1_sq_distance_decrease_asym_L0_L1_GD}}{\leq}& \frac{2L_0\|x^{0} - x^*\|^2}{\eta(N+1-T)} - \frac{\nu L_0 T}{4L_1^2 (N+1-T)}. \label{eq:technical_inequality_to_maximize_GD}
    \end{eqnarray}
    It remains to notice that Lemma~\ref{lem:monotonicity_of_function_value_GD} implies $f(x^{N}) - f(x^*) \leq \frac{1}{N-T}\sum_{k=T+1}^{N}\left(f(x^k) - f(x^*)\right)$. Together with the above inequality, it implies the first part \eqref{eq:main_result_L0_L1_GD_asym}. To derive the second part of \eqref{eq:main_result_L0_L1_GD_asym}, it remains to notice that for $N > \frac{8L_1^2\|x^0 - x^*\|^2}{\nu\eta} - 1$ the right-hand side of \eqref{eq:technical_inequality_to_maximize_GD} as a function of $T$ attains its maximum at $T = 0$. Indeed, the derivative of function
    \begin{eqnarray*}
        \varphi(T) &\eqdef& \frac{2L_0\|x^{0} - x^*\|^2}{\eta(N+1-T)} - \frac{\nu L_0 T}{4L_1^2 (N+1-T)}\\
        &=& \frac{2L_0\|x^{0} - x^*\|^2}{\eta(N+1-T)} + \frac{\nu L_0}{4L_1^2} - \frac{\nu L_0 (N+1)}{4 L_1^2 (N - T + 1)}
    \end{eqnarray*}
    equals
    \begin{eqnarray*}
        \varphi'(T) &=& \frac{2L_0\|x^0 - x^*\|^2}{\eta(N - T + 1)^2} - \frac{\nu L_0 (N+1)}{4 L_1^2 (N - T + 1)^2}.
    \end{eqnarray*}
    Since $N > \frac{8L_1^2\|x^0 - x^*\|^2}{\nu\eta} - 1$, we have $\varphi'(T) < 0$, i.e., $\varphi(T)$ is a decreasing function of $T$, meaning that 
    \begin{equation*}
        \frac{2L_0\|x^{0} - x^*\|^2}{\eta(N+1-T)} - \frac{\nu L_0 T}{4L_1^2 (N+1-T)} \leq \frac{2L_0\|x^{0} - x^*\|^2}{\eta(N+1)},
    \end{equation*}
    which gives \eqref{eq:main_result_L0_L1_GD_asym_appendix}.

    To prove \eqref{eq:L0_L1_GD_str_cvx_appendix}, we notice that for $\mu > 0$ we have $f(x^k) - f(x^*) \geq \frac{\mu}{2}\|x^k - x^*\|^2$ implying
    \begin{equation}
        \|x^{k+1} - x^*\|^2 \overset{\eqref{eq:case_2_descent_ineq_L0_L1_GD}}{\leq} \left(1 - \frac{\mu\eta}{4L_0}\right)\|x^k - x^*\|^2,\label{eq:case_2_descent_ineq_L0_L1_GD_str_cvx}
    \end{equation}
    when $\|\nabla f(x^k)\| < \frac{L_0}{L_1}$. Therefore, for $N > T$ we have
    \begin{eqnarray*}
        \|x^N - x^*\|^2 &\overset{\eqref{eq:case_2_descent_ineq_L0_L1_GD_str_cvx}}{\leq}& \left(1 - \frac{\mu\eta}{4L_0}\right)^{N-T}\|x^T - x^*\|^2\\
        &\overset{\eqref{eq:case_1_sq_distance_decrease_asym_L0_L1_GD}}{\leq}& \left(1 - \frac{\mu\eta}{4L_0}\right)^{N-T}\left(\|x^0 - x^*\|^2  - \frac{\nu\eta T}{8L_1^2}\right)\\
        &\leq& \left(1 - \frac{\mu\eta}{4L_0}\right)^{N-T}\|x^0 - x^*\|^2,
    \end{eqnarray*}
    which gives \eqref{eq:L0_L1_GD_str_cvx_appendix} and concludes the proof.
\end{proof}

\subsection{Comparison with the Proofs from  \citep{koloskova2023revisiting, takezawa2024polyak, chen2023generalized}}\label{appendix:GD_discussion}

\revision{\paragraph{Comparison with \citep{koloskova2023revisiting, takezawa2024polyak}.} In this paragraph, we further elaborate on the difference between our proof and the ones given by \citep{koloskova2023revisiting, takezawa2024polyak}. As we explain in the main text, our proof follows the one from \citep{koloskova2023revisiting, takezawa2024polyak}, which follows the analysis of standard \algname{GD}. Then, similarly to \citep{koloskova2023revisiting}, we consider two possible situations: either $\| \nabla f(x^k) \| \geq \nicefrac{L_0}{L_1}$ or $\| \nabla f(x^k) \| < \nicefrac{L_0}{L_1}$. In the second case, the gradient is small, and $L_1$-term in $(L_0, L_1)$-smoothness is dominated by $L_0$-term, i.e., in inequality \eqref{eq:sym_gradient_bound}, the denominator of the left-hand side is $\mathcal{O}(L_0)$. In this case, the method behaves as standard \algname{GD} for $L$-smooth problems with $L = \mathcal{O}(L_0)$. However, when $\| \nabla f(x^k) \| \geq \nicefrac{L_0}{L_1}$, the $L_1$-term in $(L_0, L_1)$-smoothness is the leading one and this is the crucial difference between our proof and the one obtained by \citet{koloskova2023revisiting}: to handle this case, we use \eqref{eq:sym_gradient_bound} and show that such situations lead to the decrease of $\| x^{k+1} - x^\ast \|^2$ by some positive constant $\nicefrac{\nu\eta}{(8L_1^2)}$. In contrast, \citet{koloskova2023revisiting} use traditional smoothness, which leads to worse complexity, as we explained in our first response. Moreover, Lemma~\ref{lem:monotonicity_of_gradient_norm_GD} shows that the gradient norm is non-increasing along the trajectory of the method – similar to standard \algname{GD} (e.g., see Lemma C.3 from \citep{gorbunov2022extragradient}).}

\revision{\paragraph{Comparison with \citep{chen2023generalized}.} Although \citet{chen2023generalized} analyze $\beta$-Normalized \algname{GD} without assuming convexity, their results can be used to derive convergence bounds in the convex case as well. In particular, from convexity, we have
\begin{eqnarray*}
    f(x^k) - f(x^*) \leq \langle \nabla f(x^k), x^k - x^* \rangle \leq \|\nabla f(x^k)\|\cdot \|x^k - x^*\|.
\end{eqnarray*}
Although the proof from \citet{chen2023generalized} does not give a bound for $\|x^k - x^*\|$, one can assume for simplicity that it is bounded by $D > 0$ (e.g., we derive the boundedness of $\|x^k - x^*\|$ for \algname{$(L_0,L_1)$-GD} in \eqref{eq:descent_inequality_asym_case}). Then, using the bound for $\frac{1}{N}\sum_{k=0}^{N-1}\|\nabla f(x^k)\|$ from \citep[Theorem 2]{chen2023generalized}, we get
\begin{eqnarray*}
    \frac{1}{N}\sum\limits_{k=0}^{N-1}(f(x^k) - f(x^*)) \leq \frac{2L_0 D(f(x^0) - f(x^*))}{N\varepsilon} + \frac{D\varepsilon}{2},
\end{eqnarray*}
where $\varepsilon > 0$. In the original paper, $\varepsilon$ is the target accuracy. However, one can optimize it in the above bound and get $\varepsilon_{\text{opt}} = 2\sqrt{\frac{L_0(f(x_0) - f(x^*))}{N}}$ and
$$
\frac{1}{N}\sum\limits_{k=0}^{N-1}(f(x^k) - f(x^*)) \leq \frac{4D\sqrt{L_0(f(x^0) - f(x^*)))}}{\sqrt{N}}.
$$
The above rate is $\sqrt{N}$-worse than the leading term in the results we obtained for \algname{$(L_0,L_1)$-GD}. This fact illustrate non-triviality of the extension of the known results from non-convex case to the convex case with the right dependence on $N$.}

\clearpage

\section{Missing Proofs for Gradient Descent with Polyak Stepsizes}\label{appendix:Polyak}

\begin{theorem}[Theorem~\ref{thm:GD_PS_sym_main_result}]\label{thm:GD_PS_sym_main_result_appendix}
    Let Assumptions~\ref{as:convexity} with $\mu = 0$ and \ref{as:symmetric_L0_L1} hold. Then, the iterates generated by \algname{GD-PS} satisfy the following implication:
    \begin{equation}
        \|\nabla f(x^k)\| \geq \frac{L_0}{L_1}\;  \Longrightarrow \; \|x^{k+1} - x^*\|^2 \leq \|x^k - x^*\|^2 - \frac{\nu^2}{16L_1^2}. \label{eq:implication_for_large_norm_asym_GD_PS_appendix}
    \end{equation}
Moreover, the output after $N$ steps the iterates satisfy
\begin{equation}
    \frac{4L_0}{\nu}\|x^{N+1} - x^*\|^2 + \sum\limits_{k \in \{0,1,\ldots,N\}\setminus \cT} \left(f(x^k) - f(x^*)\right) \leq \frac{4L_0}{\nu}\|x^0 - x^*\|^2 - \frac{\nu L_0 T}{4L_1^2}, \label{eq:main_result_GD_PS_asym_1_appendix}
\end{equation}
where $\cT \eqdef \{k\in \{0,1,\ldots,N\}\mid \|\nabla f(x^k)\| \geq \frac{L_0}{L_1}\}$, $T \eqdef |\cT|$, and if $N > T-1$, it holds that
    \begin{equation}
        f(\hat x^N) - f(x^*) \leq \frac{4L_0\|x^0 - x^*\|^2}{\nu(N - T + 1)} - \frac{\nu L_0 T}{4 L_1^2 (N - T + 1)} \label{eq:main_result_GD_PS_asym_2_appendix}
    \end{equation}
    where $\hat x^N \in \{x^0, x^1, \ldots, x^N\}$ is such that $f(\hat x^N) = \min_{x \in \{x^0, x^1, \ldots, x^N\}}f(x)$. In particular, for $N > \frac{16L_1^2\|x^0 - x^*\|^2}{\nu^2} - 1$ inequality $N > T-1$ is guaranteed and
    \begin{equation}
        f(\hat x^N) - f(x^*) \leq \frac{4L_0\|x^0 - x^*\|^2}{\nu(N + 1)}. \label{eq:main_result_GD_PS_asym_3_appendix}
    \end{equation}
    In addition, if $\mu > 0$ and $N > \frac{16L_1^2\|x^0 - x^*\|^2}{\nu^2} - 1$, then
    \begin{equation}
        \|x^N - x^*\|^2 \leq \left(1 - \frac{\mu\nu}{8L_0}\right)^{N-T}\left(\|x^0 - x^*\|^2  - \frac{\nu^2 T}{16L_1^2}\right) \leq \left(1 - \frac{\mu\nu}{8L_0}\right)^{N-T}\|x^0 - x^*\|^2. \label{eq:GD-PS_str_cvx_appendix}
    \end{equation}
\end{theorem}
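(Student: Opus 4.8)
The plan is to start from the one-step identity for the Polyak stepsize. Expanding $\|x^{k+1}-x^*\|^2$ using the update $x^{k+1} = x^k - \frac{f(x^k)-f(x^*)}{\|\nabla f(x^k)\|^2}\nabla f(x^k)$ and invoking convexity \eqref{eq:convexity} (with $\mu=0$) to bound $\langle \nabla f(x^k), x^k - x^*\rangle \geq f(x^k)-f(x^*)$, I would obtain the familiar Polyak descent inequality
\begin{equation*}
\|x^{k+1}-x^*\|^2 \leq \|x^k - x^*\|^2 - \frac{(f(x^k)-f(x^*))^2}{\|\nabla f(x^k)\|^2}.
\end{equation*}
In particular $\|x^{k+1}-x^*\|^2 \leq \|x^k-x^*\|^2$, so the squared distance is non-increasing; this monotonicity of the distance plays the role that gradient-norm monotonicity (Lemma~\ref{lem:monotonicity_of_gradient_norm_GD}) played for \algname{$(L_0,L_1)$-GD}, which is \emph{not} available for \algname{GD-PS}.

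Next I would split into the two regimes and use the gradient bound \eqref{eq:sym_gradient_bound} in each. When $\|\nabla f(x^k)\| \geq \nicefrac{L_0}{L_1}$ we have $L_0 + L_1\|\nabla f(x^k)\| \leq 2L_1\|\nabla f(x^k)\|$, so \eqref{eq:sym_gradient_bound} yields $\frac{f(x^k)-f(x^*)}{\|\nabla f(x^k)\|} \geq \frac{\nu}{4L_1}$; squaring and substituting into the Polyak inequality gives the additive decrease $\|x^{k+1}-x^*\|^2 \leq \|x^k-x^*\|^2 - \frac{\nu^2}{16L_1^2}$, which is exactly the implication \eqref{eq:implication_for_large_norm_asym_GD_PS_appendix}. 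When $\|\nabla f(x^k)\| < \nicefrac{L_0}{L_1}$ we instead have $L_0 + L_1\|\nabla f(x^k)\| \leq 2L_0$, so \eqref{eq:sym_gradient_bound} gives $\|\nabla f(x^k)\|^2 \leq \frac{4L_0}{\nu}(f(x^k)-f(x^*))$, and the Polyak inequality becomes $\|x^{k+1}-x^*\|^2 \leq \|x^k-x^*\|^2 - \frac{\nu}{4L_0}(f(x^k)-f(x^*))$.

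Then I would telescope over $k=0,\dots,N$: summing the additive decrease over $k\in\cT$ and the functional decrease over $k\notin\cT$ and multiplying through by $\frac{4L_0}{\nu}$ gives \eqref{eq:main_result_GD_PS_asym_1_appendix}. Dropping the nonnegative term $\frac{4L_0}{\nu}\|x^{N+1}-x^*\|^2$ and bounding $f(\hat x^N)-f(x^*)$ by the average of $f(x^k)-f(x^*)$ over the $N+1-T$ indices outside $\cT$ yields \eqref{eq:main_result_GD_PS_asym_2_appendix}. For \eqref{eq:main_result_GD_PS_asym_3_appendix}, distance monotonicity together with the additive decrease forces $T \leq \frac{16L_1^2\|x^0-x^*\|^2}{\nu^2}$, which both guarantees $N>T-1$ in the stated regime and lets me view the right-hand side of \eqref{eq:main_result_GD_PS_asym_2_appendix} as a function $\varphi(T)$; differentiating exactly as in the proof of Theorem~\ref{thm:L0_L1_GD_sym_main_result_appendix} shows $\varphi'(T)<0$ when $N+1 > \frac{16L_1^2\|x^0-x^*\|^2}{\nu^2}$, hence $\varphi(T)\leq\varphi(0)$.

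The main obstacle is the strongly convex bound \eqref{eq:GD-PS_str_cvx_appendix}. Here the Case-2 inequality combined with $f(x^k)-f(x^*)\geq \frac{\mu}{2}\|x^k-x^*\|^2$ gives a multiplicative contraction $\|x^{k+1}-x^*\|^2 \leq (1-\frac{\mu\nu}{8L_0})\|x^k-x^*\|^2$, while Case~1 only gives the additive drop $-\frac{\nu^2}{16L_1^2}$. Unlike \algname{$(L_0,L_1)$-GD}, the set $\cT$ need not be an initial segment (the gradient norm is not monotone for \algname{GD-PS}), so I cannot simply apply all contractions after all additive drops. The clean resolution is an exchange argument: each per-step map $v\mapsto v-b$ and $v\mapsto cv$, with $b=\frac{\nu^2}{16L_1^2}$ and $c=1-\frac{\mu\nu}{8L_0}\in(0,1)$, is monotone in $v$, and swapping an adjacent (multiply, subtract) pair into (subtract, multiply) never decreases the composed output since $cv-cb \geq cv-b$. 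Consequently, over any ordering with $T$ subtractions and $N-T$ multiplications the composition applied to $\|x^0-x^*\|^2$ is maximized by performing all subtractions first, giving $\|x^N-x^*\|^2 \leq (1-\frac{\mu\nu}{8L_0})^{N-T}(\|x^0-x^*\|^2-\frac{\nu^2 T}{16L_1^2})$; the bound $T\leq\frac{16L_1^2\|x^0-x^*\|^2}{\nu^2}$ keeps the parenthesized factor nonnegative, and dropping $-\frac{\nu^2 T}{16L_1^2}$ yields the second inequality in \eqref{eq:GD-PS_str_cvx_appendix}.
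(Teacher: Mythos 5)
Your proposal is correct and matches the paper's proof essentially step for step: the same Polyak descent inequality sharpened via \eqref{eq:sym_gradient_bound}, the same two-case split at $\|\nabla f(x^k)\| = \nicefrac{L_0}{L_1}$ yielding the additive drop $\nicefrac{\nu^2}{16L_1^2}$ and the functional decrease $\frac{\nu}{4L_0}(f(x^k)-f(x^*))$, the same telescoping to \eqref{eq:main_result_GD_PS_asym_1_appendix}--\eqref{eq:main_result_GD_PS_asym_2_appendix}, and the same monotone-in-$T$ function $\varphi(T)$ for \eqref{eq:main_result_GD_PS_asym_3_appendix}. The only local difference is the interleaving step in \eqref{eq:GD-PS_str_cvx_appendix}: the paper unrolls the mixed recursion directly, obtaining $\|x^{N} - x^*\|^2 \leq \bigl(1 - \frac{\mu\nu}{8L_0}\bigr)^{N-T}\|x^0 - x^*\|^2 - \frac{\nu^2}{16L_1^2}\sum_{k\in\cT}\bigl(1 - \frac{\mu\nu}{8L_0}\bigr)^{t_k}$ and then using $t_k \leq N-T$, which is precisely the computational form of your exchange argument, so both routes give the identical bound.
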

\begin{proof}
    As for \algname{$(L_0, L_1)$-GD}, we start by expanding the squared distance to the solution:
    \begin{eqnarray}
        \|x^{k+1} - x^*\|^2 &=& \left\|x^k - x^* - \frac{f(x^k) - f(x^*)}{\|\nabla f(x^k)\|^2} \nabla f(x^k)\right\|^2\notag\\
        &=& \|x^k - x^*\|^2 - \frac{2\left(f(x^k) - f(x^*)\right)}{\|\nabla f(x^k)\|^2}\langle x^k - x^*, \nabla f(x^k) \rangle + \frac{\left(f(x^k) - f(x^*)\right)^2}{\|\nabla f(x^k)\|^2}\notag\\
        &\overset{\eqref{eq:convexity}}{\leq}& \|x^k - x^*\|^2 - \frac{\left(f(x^k) - f(x^*)\right)^2}{\|\nabla f(x^k)\|^2}\notag\\
        &\overset{\eqref{eq:sym_gradient_bound}}{\leq}& \|x^k - x^*\|^2 - \frac{\nu}{2} \cdot\frac{f(x^k) - f(x^*)}{L_0 + L_1\|\nabla f(x^k)\|}. \label{eq:descent_inequality_GD_PS}
    \end{eqnarray}

    To continue the derivation, we consider two possible cases: $\|\nabla f(x^k)\| \geq \frac{L_0}{L_1}$ or $\|\nabla f(x^k)\| < \frac{L_0}{L_1}$.

    \noindent \textbf{Case 1: $\|\nabla f(x^k)\| \geq \frac{L_0}{L_1}$.} In this case, inequalities \eqref{eq:case_1_main_ineq_asym_L0_L1_GD} and \eqref{eq:case_1_grad_bound_asym_L0_L1_GD} hold and the derivation from \eqref{eq:descent_inequality_GD_PS} can be continued as follows:
    \begin{eqnarray}
        \|x^{k+1} - x^*\|^2 &\overset{\eqref{eq:descent_inequality_GD_PS}, \eqref{eq:case_1_main_ineq_asym_L0_L1_GD}}{\leq}& \|x^k - x^*\|^2 - \frac{\nu}{2} \cdot \frac{f(x^k) - f(x^*)}{2L_1\|\nabla f(x^k)\|} \notag\\
        &\overset{\eqref{eq:case_1_grad_bound_asym_L0_L1_GD}}{\leq}& \|x^k - x^*\|^2 - \frac{\nu^2}{16L_1^2}, \label{eq:case_1_descent_ineq_GD_PS}
    \end{eqnarray}
    which gives \eqref{eq:implication_for_large_norm_asym_GD_PS_appendix}.

    \noindent \textbf{Case 2: $\|\nabla f(x^k)\| < \frac{L_0}{L_1}$.} In this case, inequality \eqref{eq:case_2_main_ineq_asym_L0_L1_GD} holds and we have
    \begin{eqnarray}
        \|x^{k+1} - x^*\|^2 &\overset{\eqref{eq:descent_inequality_GD_PS}, \eqref{eq:case_2_main_ineq_asym_L0_L1_GD}}{\leq}& \|x^k - x^*\|^2 - \frac{\nu}{2} \cdot\frac{f(x^k) - f(x^*)}{2L_0}.  \label{eq:case_2_descent_ineq_GD_PS}
    \end{eqnarray}

    Next, we introduce the set of indices $\cT \eqdef \{k\in \{0,1,\ldots,N\}\mid \|\nabla f(x^k)\| \geq \frac{L_0}{L_1}\}$ of size $T \eqdef |\cT|$. In view of the above derivations, if $k \in \cT$, inequality \eqref{eq:case_1_descent_ineq_GD_PS} holds, and if $k \in \{0,1,\ldots, N\}\setminus \cT$, inequality \eqref{eq:case_2_descent_ineq_GD_PS} is satisfied. Therefore, unrolling the pair of inequalities \eqref{eq:case_1_descent_ineq_GD_PS} and \eqref{eq:case_2_descent_ineq_GD_PS}, we get
    \begin{eqnarray}
        \|x^{N+1} - x^*\|^2 &\leq& \|x^0 - x^*\|^2 - \frac{\nu^2 T}{16L_1^2} - \frac{\nu}{4L_0}\sum\limits_{k \in \{0,1,\ldots, N\}\setminus \cT}\left(f(x^k) - f(x^*)\right),\notag
    \end{eqnarray}
    which is equivalent to \eqref{eq:main_result_GD_PS_asym_1_appendix}. Therefore, if $N > T-1$, set $\{0,1,\ldots, N\}\setminus \cT$ is non-empty and the above inequality implies
    \begin{eqnarray*}
        f(\hat x^N) - f(x^*) &\leq& \frac{1}{N-T+1}\sum\limits_{k \in \{0,1,\ldots, N\}\setminus \cT}\left(f(x^k) - f(x^*)\right)\\
        &\leq& \frac{4L_0\|x^0 - x^*\|^2}{\nu(N - T + 1)} - \frac{\nu L_0 T}{4 L_1^2 (N - T + 1)},
    \end{eqnarray*}
    where $\hat x^N \in \{x^0, x^1, \ldots, x^N\}$ is such that $f(\hat x^N) = \min_{x \in \{x^0, x^1, \ldots, x^N\}}f(x)$. Moreover, since the left-hand side of \eqref{eq:main_result_GD_PS_asym_1_appendix} is non-negative, we have $T \leq \frac{16L_1^2\|x^0 - x^*\|^2}{\nu^2}$. Therefore, for $N > \frac{16L_1^2\|x^0 - x^*\|^2}{\nu^2} - 1$ inequality $N > T-1$ is guaranteed as well as \eqref{eq:main_result_GD_PS_asym_2_appendix}. Finally, to derive \eqref{eq:main_result_GD_PS_asym_3_appendix}, we consider the right-hand side of \eqref{eq:main_result_GD_PS_asym_2_appendix} as a function of $T$:
    \begin{eqnarray*}
        \varphi(T) &\eqdef& \frac{4L_0\|x^0 - x^*\|^2}{\nu (N - T + 1)} - \frac{\nu L_0 T}{4 L_1^2 (N - T + 1)}\\
        &=& \frac{4L_0\|x^0 - x^*\|^2}{\nu (N - T + 1)} + \frac{\nu L_0}{4L_1^2} - \frac{\nu L_0 (N+1)}{4 L_1^2 (N - T + 1)}.
    \end{eqnarray*}
    The derivative of this function equals
    \begin{eqnarray*}
        \varphi'(T) &=& \frac{4L_0\|x^0 - x^*\|^2}{\nu(N - T + 1)^2} - \frac{\nu L_0 (N+1)}{4 L_1^2 (N - T + 1)^2}.
    \end{eqnarray*}
    Since $N > \frac{16L_1^2 \|x^0 - x^*\|^2}{\nu^2} - 1$, we have $\varphi'(T) < 0$, i.e., $\varphi(T)$ is a decreasing function of $T$. Therefore, since $T \geq 0$, we have that
    \begin{equation*}
        \varphi(T) \leq \varphi(0) \; \Longleftrightarrow \; \frac{4L_0\|x^0 - x^*\|^2}{\nu(N - T + 1)} - \frac{\nu L_0 T}{4 L_1^2 (N - T + 1)} \leq \frac{4L_0\|x^0 - x^*\|^2}{\nu(N + 1)}.
    \end{equation*}
    Combining the above inequality with \eqref{eq:main_result_GD_PS_asym_2_appendix}, we obtain \eqref{eq:main_result_GD_PS_asym_3_appendix}.

    To prove \eqref{eq:GD-PS_str_cvx_appendix}, we notice that for $\mu > 0$ we have $f(x^k) - f(x^*) \geq \frac{\mu}{2}\|x^k - x^*\|^2$ implying
    \begin{equation}
        \|x^{k+1} - x^*\|^2 \overset{\eqref{eq:case_2_descent_ineq_GD_PS}}{\leq} \left(1 - \frac{\mu\nu}{8L_0}\right)\|x^k - x^*\|^2,\label{eq:case_2_descent_ineq_GD_PS_str_cvx}
    \end{equation}
    when $\|\nabla f(x^k)\| < \frac{L_0}{L_1}$. Unrolling the pair of inequalities \eqref{eq:case_1_descent_ineq_GD_PS} and \eqref{eq:case_2_descent_ineq_GD_PS_str_cvx}, we get for $N > T$
    \begin{eqnarray*}
        \|x^{N} - x^*\|^2 \leq \left(1 - \frac{\mu\nu}{8L_0}\right)^{N-T}\|x^0 - x^*\|^2 - \frac{\nu^2}{16L_1^2}\sum\limits_{k\in \cT}\left(1 - \frac{\mu\nu}{8L_0}\right)^{t_k},
    \end{eqnarray*}
    where $t_k$ is the cardinality of $\{k+1, \ldots, N\}\setminus \cT$. Since $|\cT| = T$ we have that $t_k \leq N-T$ for all $k \in \cT$. Therefore, we can continue the derivation as follows:
    \begin{eqnarray*}
        \|x^N - x^*\|^2 &\leq& \left(1 - \frac{\mu\nu}{8L_0}\right)^{N-T}\left(\|x^0 - x^*\|^2  - \frac{\nu^2 T}{16L_1^2}\right)\\
        &\leq& \left(1 - \frac{\mu\nu}{8L_0}\right)^{N-T}\|x^0 - x^*\|^2,
    \end{eqnarray*}
    which gives \eqref{eq:GD-PS_str_cvx_appendix} and concludes the proof.
\end{proof}

\clearpage

\section{Missing Proofs for $(L_0,L_1)$-Similar Triangles Method}\label{appendix:acceleration}

\begin{lemma}[Lemma E.1 from \citep{gorbunov2020stochastic}]\label{lem:stepsizes_lemma_STM}
    Let sequences $\{\alpha_{k}\}_{k\geq 0}$ and $\{A_{k}\}_{k\geq 0}$ be defined as follows:
    \begin{equation*}
        \alpha_{0} = A_0 = 0,\quad \alpha_{k+1} = \frac{\eta(k + 2)}{2},\quad A_{k+1} = A_k + \alpha_{k+1},\quad \forall k \geq 0.
    \end{equation*}
    Then, for all $k \geq 0$
    \begin{eqnarray}
        A_{k+1} &\geq& \frac{\eta(k+1)(k + 4)}{4}, \label{eq:A_k+1_lower_bound_1_new}\\
        A_{k+1} &\geq& \frac{\alpha_{k+1}^2}{\eta}. \label{eq:A_k+1_lower_bound_2_new}
    \end{eqnarray}
\end{lemma}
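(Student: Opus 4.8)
The plan is to obtain a closed-form expression for $A_{k+1}$ by summing the arithmetic progression defining the increments, and then to read off both inequalities directly from it. First I would use $A_0 = 0$ and $A_{k+1} = A_k + \alpha_{k+1}$ to write $A_{k+1} = \sum_{j=0}^{k}\alpha_{j+1} = \frac{\eta}{2}\sum_{j=0}^{k}(j+2)$. Summing the arithmetic series (shifting the index to $i = j+2$, which runs from $2$ to $k+2$) gives $\sum_{j=0}^{k}(j+2) = \frac{(k+2)(k+3)}{2} - 1 = \frac{(k+1)(k+4)}{2}$, and hence $A_{k+1} = \frac{\eta(k+1)(k+4)}{4}$. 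Alternatively, one could establish this identity by a one-line induction on $k$ using the recursion $A_{k+1} = A_k + \alpha_{k+1}$ together with $\alpha_{k+1} = \frac{\eta(k+2)}{2}$.

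With this identity in hand, inequality \eqref{eq:A_k+1_lower_bound_1_new} holds \emph{with equality}, so no further work is needed there. For \eqref{eq:A_k+1_lower_bound_2_new}, I would compute $\frac{\alpha_{k+1}^2}{\eta} = \frac{1}{\eta}\cdot\frac{\eta^2(k+2)^2}{4} = \frac{\eta(k+2)^2}{4}$ and thereby reduce the claim to the elementary polynomial inequality $(k+1)(k+4) \geq (k+2)^2$. Expanding both sides yields $(k+1)(k+4) - (k+2)^2 = (k^2+5k+4) - (k^2+4k+4) = k \geq 0$ for every $k \geq 0$, which finishes the proof.

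There is no genuine obstacle here: the statement is a purely arithmetic fact about the two prescribed sequences, and the only point requiring a little care is the index bookkeeping in the summation (equivalently, the base case and inductive step if one proceeds by induction instead). It is worth noting that the first bound is in fact tight, whereas the second carries slack exactly $\frac{\eta k}{4}$; both forms are convenient in the accelerated analysis, where \eqref{eq:A_k+1_lower_bound_1_new} supplies the quadratic growth of $A_N$ and \eqref{eq:A_k+1_lower_bound_2_new} controls the ratio $\frac{\alpha_{k+1}^2}{A_{k+1}}$.
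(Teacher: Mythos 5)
Your proof is correct: the closed-form summation gives $A_{k+1} = \frac{\eta(k+1)(k+4)}{4}$ exactly, so \eqref{eq:A_k+1_lower_bound_1_new} holds with equality, and \eqref{eq:A_k+1_lower_bound_2_new} reduces to $(k+1)(k+4) \geq (k+2)^2$, i.e., $k \geq 0$. The paper itself states this lemma without proof, importing it from \citet{gorbunov2020stochastic}, and your argument is essentially the same standard computation used there, so there is nothing to add.
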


\begin{lemma}[Lemma~\ref{lem:descent_lemma_asym_L0_L1_STM}]\label{lem:descent_lemma_asym_L0_L1_STM_appendix}
    Let $f$ satisfy Assumptions~\ref{as:convexity} with $\mu = 0$ and \ref{as:symmetric_L0_L1}. Then, the iterates generated by \algname{$(L_0,L_1)$-STM} with $0 < \eta \leq \frac{\nu}{2}$, $\nu = e^{-\nu}$ satisfy for all $N \geq 0$
    \begin{eqnarray}
        A_N\left(f(y^N) - f(x^*)\right) + \frac{G_{N}}{2}R_{N}^2 &\leq& \frac{G_1}{2}R_0^2 + \sum\limits_{k=1}^{N-1} \frac{G_{k+1} - G_{k}}{2}R_k^2 \label{eq:main_inequality_asym_L0_L1_STM_part_1_appendix}\\
        &&\quad - \sum\limits_{k=0}^{N-1}\frac{\alpha_{k+1}^2}{4G_{k+1}}\|\nabla f(x^{k+1})\|^2, \label{eq:main_inequality_asym_L0_L1_STM_appendix}
    \end{eqnarray}
    where $R_k \eqdef \|z^k - x^*\|$ for all $k\geq 0$.
\end{lemma}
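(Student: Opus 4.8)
The plan is to derive a single-step potential inequality and then telescope it. Writing $R_k \eqdef \|z^k - x^*\|$ as in the statement, the target one-step estimate is
\[
A_{k+1}\left(f(y^{k+1}) - f(x^*)\right) + \frac{G_{k+1}}{2}R_{k+1}^2 \leq A_k\left(f(y^k) - f(x^*)\right) + \frac{G_{k+1}}{2}R_k^2 - \frac{\alpha_{k+1}^2}{4G_{k+1}}\|\nabla f(x^{k+1})\|^2,
\]
valid for each $k \in \{0,\ldots,N-1\}$. Summing this over $k$ and regrouping the $\tfrac{G_k}{2}R_k^2$ terms (using $A_0 = 0$) yields the claim, since $\sum_{k=0}^{N-1}\tfrac{G_{k+1}}{2}R_{k+1}^2 = \tfrac{G_N}{2}R_N^2 + \sum_{k=1}^{N-1}\tfrac{G_k}{2}R_k^2$ while $\sum_{k=0}^{N-1}\tfrac{G_{k+1}}{2}R_k^2 = \tfrac{G_1}{2}R_0^2 + \sum_{k=1}^{N-1}\tfrac{G_{k+1}}{2}R_k^2$, so the mixed $R_k$ sums combine into $\sum_{k=1}^{N-1}\tfrac{G_{k+1}-G_k}{2}R_k^2$.

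First I would record the geometry of the $y$-step. From the updates in Algorithm~\ref{alg:L0L1-STM} one computes $y^{k+1} - x^{k+1} = \tfrac{\alpha_{k+1}}{A_{k+1}}(z^{k+1}-z^k) = -\tfrac{\alpha_{k+1}^2}{A_{k+1}G_{k+1}}\nabla f(x^{k+1})$. Applying the quadratic upper bound \eqref{eq:sym_upper_bound} with $x = x^{k+1}$, $y = y^{k+1}$, bounding the prefactor $L_0 + L_1\|\nabla f(x^{k+1})\| \leq G_{k+1}$, and controlling the exponential factor through $L_1\|x^{k+1}-y^{k+1}\| = \tfrac{\alpha_{k+1}^2}{A_{k+1}}\cdot\tfrac{L_1\|\nabla f(x^{k+1})\|}{G_{k+1}} \leq \eta$ (using $G_{k+1} \geq L_1\|\nabla f(x^{k+1})\|$ and $\tfrac{\alpha_{k+1}^2}{A_{k+1}} \leq \eta$ from \eqref{eq:A_k+1_lower_bound_2_new}), I obtain
\[
A_{k+1}f(y^{k+1}) \leq A_{k+1}f(x^{k+1}) - \left(1 - \frac{\exp(\eta)}{2}\cdot\frac{\alpha_{k+1}^2}{A_{k+1}}\right)\frac{\alpha_{k+1}^2}{G_{k+1}}\|\nabla f(x^{k+1})\|^2.
\]

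Next I would process $A_{k+1}f(x^{k+1}) = A_k f(x^{k+1}) + \alpha_{k+1}f(x^{k+1})$ by applying convexity \eqref{eq:convexity} (with $\mu=0$) at $y^k$ and at $x^*$ respectively, and then invoking the similar-triangles identity $A_{k+1}x^{k+1} = A_k y^k + \alpha_{k+1}z^k$, which collapses the two linear terms into $\alpha_{k+1}\langle \nabla f(x^{k+1}), z^k - x^*\rangle$. Substituting $\nabla f(x^{k+1}) = \tfrac{G_{k+1}}{\alpha_{k+1}}(z^k - z^{k+1})$ and expanding through $\langle z^k - z^{k+1}, z^k - x^*\rangle = \tfrac12\big(\|z^k - z^{k+1}\|^2 + R_k^2 - R_{k+1}^2\big)$ converts this inner product into $\tfrac{\alpha_{k+1}^2}{2G_{k+1}}\|\nabla f(x^{k+1})\|^2 + \tfrac{G_{k+1}}{2}(R_k^2 - R_{k+1}^2)$. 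Combined with the descent estimate, the net coefficient in front of $\tfrac{\alpha_{k+1}^2}{G_{k+1}}\|\nabla f(x^{k+1})\|^2$ becomes $\tfrac12 - \big(1 - \tfrac{\exp(\eta)}{2}\tfrac{\alpha_{k+1}^2}{A_{k+1}}\big)$, and subtracting $\alpha_{k+1}f(x^*)$ from both sides gives the one-step recursion.

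The main obstacle is the final sign check: I need this coefficient to be at most $-\tfrac14$, i.e. $1 - \tfrac{\exp(\eta)}{2}\tfrac{\alpha_{k+1}^2}{A_{k+1}} \geq \tfrac34$, which reduces to $\eta\exp(\eta) \leq \tfrac12$. This is precisely where the $(L_0,L_1)$-specific constant enters, since the $\exp$ factor in \eqref{eq:sym_upper_bound} is the only departure from the classical smooth STM analysis. Using $\eta \leq \tfrac{\nu}{2}$ with $\nu = e^{-\nu}$ (hence $\nu e^{\nu} = 1$), I would note $\tfrac{\nu}{2}\exp\!\big(\tfrac{\nu}{2}\big) = \tfrac12\,\nu e^{\nu}e^{-\nu/2} = \tfrac12 e^{-\nu/2} \leq \tfrac12$, so indeed $\eta\exp(\eta) \leq \tfrac{\nu}{2}\exp\!\big(\tfrac{\nu}{2}\big) \leq \tfrac12$. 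This closes the step-size requirement, and the remaining telescoping is routine.
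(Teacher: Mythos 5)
Your proposal is correct and takes essentially the same route as the paper's proof: the same identities $A_{k+1}(y^{k+1}-x^{k+1})=\alpha_{k+1}(z^{k+1}-z^k)$ and $\alpha_{k+1}(x^{k+1}-z^k)=A_k(y^k-x^{k+1})$, the same three-point expansion of $\alpha_{k+1}\langle\nabla f(x^{k+1}),z^k-x^*\rangle$, the same application of \eqref{eq:sym_upper_bound} with prefactor bounded by $G_{k+1}$ and exponential argument bounded through $\nicefrac{\alpha_{k+1}^2}{A_{k+1}}\leq\eta$ from \eqref{eq:A_k+1_lower_bound_2_new}, and the same telescoping with $A_0=0$. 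If anything, your constant check $\eta e^{\eta}\leq\tfrac{\nu}{2}e^{\nu/2}=\tfrac{1}{2}e^{-\nu/2}\leq\tfrac{1}{2}$ is handled more cleanly than the paper's intermediate display \eqref{eq:exponent_bound_STM}, which asserts $\exp(\eta)\leq\exp(\nu)=\nu$ — literally impossible since an exponential of a nonnegative argument is at least $1>\nu$ — although the paper's final coefficient $-\tfrac{G_{k+1}}{4}\|z^k-z^{k+1}\|^2$ remains valid for exactly the reason your verification shows.
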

\begin{proof}
    The proof follows the one of Lemma~F.4 from \citep{gorbunov2020stochastic}.
    From the update rule, we have $z^{k+1} = z^k - \frac{\alpha_{k+1}}{G_{k+1}}\nabla f(x^{k+1})$ and 
    \begin{eqnarray*}
        \alpha_{k+1}\langle \nabla f(x^{k+1}), z^k - x^* \rangle &=& \alpha_{k+1}\langle \nabla f(x^{k+1}), z^k - z^{k+1} \rangle + \alpha_{k+1}\langle \nabla f(x^{k+1}), z^{k+1} - x^* \rangle\\
        &=& \alpha_{k+1}\langle \nabla f(x^{k+1}), z^k - z^{k+1} \rangle + G_{k+1}\langle z^{k+1} - z^k, x^* - z^{k+1} \rangle\\
        &=& \alpha_{k+1}\langle \nabla f(x^{k+1}), z^k - z^{k+1} \rangle - \frac{G_{k+1}}{2}\|z^k - z^{k+1}\|^2\\
        &&\quad + \frac{G_{k+1}}{2}\|z^k - x^*\|^2 - \frac{G_{k+1}}{2}\|z^{k+1} - x^*\|^2.
    \end{eqnarray*}
    The update rules for $y^{k+1}$ and $x^{k+1}$ imply
    \begin{equation}
        A_{k+1}(y^{k+1} - x^{k+1}) = \alpha_{k+1}(z^{k+1} - z^k). \label{eq:update_rule_impl_L0_L1_STM}
    \end{equation}
    Moreover, to proceed, we will need the following upper-bound:
    \begin{eqnarray}
        \exp\left(L_1\|x^{k+1} - y^{k+1}\|\right) &\overset{\eqref{eq:update_rule_impl_L0_L1_STM}}{=}& \exp\left(\frac{L_1\alpha_{k+1}\|z^{k+1} - z^k\|}{A_{k+1}}\right)\notag\\
        &=& \exp\left(\frac{\alpha_{k+1}^2L_1\|\nabla f(x^{k+1})\|}{A_{k+1}(L_0 + L_1\|\nabla f(x^{k+1})\|)}\right) \leq \exp\left(\frac{\alpha_{k+1}^2}{A_{k+1}}\right)\notag\\
        &\overset{\eqref{eq:A_k+1_lower_bound_2_new}}{\leq}& \exp(\eta) ~ \overset{\eta \leq \nu}{\leq} ~ \exp(\nu) ~ \overset{\nu = e^{-\nu}}{=} ~ \nu. \label{eq:exponent_bound_STM}
    \end{eqnarray}
    Using these formulas, we continue the derivation as follows:
    \begin{eqnarray}
        \alpha_{k+1}\langle \nabla f(x^{k+1}), z^k - x^* \rangle &=& A_{k+1}\langle \nabla f(x^{k+1}), x^{k+1} - y^{k+1} \rangle - \frac{G_{k+1}}{2}\|z^k - z^{k+1}\|^2 \notag\\
        &&\quad + \frac{G_{k+1}}{2}\|z^k - x^*\|^2 - \frac{G_{k+1}}{2}\|z^{k+1} - x^*\|^2 \notag\\
        &\overset{\eqref{eq:sym_upper_bound}}{\leq}& A_{k+1}\left(f(x^{k+1}) - f(y^{k+1})\right) \notag \\
        &&\quad + \frac{A_{k+1}G_{k+1}\exp\left(L_1\|x^{k+1} - y^{k+1}\|\right)}{2}\|x^{k+1} - y^{k+1}\|^2 \notag\\
        &&\quad - \frac{G_{k+1}}{2}\|z^k - z^{k+1}\|^2 + \frac{G_{k+1}}{2}\|z^k - x^*\|^2\notag\\
        &&\quad - \frac{G_{k+1}}{2}\|z^{k+1} - x^*\|^2 \notag\\
        &\overset{\eqref{eq:update_rule_impl_L0_L1_STM},\eqref{eq:exponent_bound_STM}}{\leq}& A_{k+1}\left(f(x^{k+1}) - f(y^{k+1})\right) + \frac{G_{k+1}}{2}\cdot\frac{\nu\alpha_{k+1}^2}{A_{k+1}}\|z^k - z^{k+1}\|^2 \notag\\
        &&\quad - \frac{G_{k+1}}{2}\|z^k - z^{k+1}\|^2 + \frac{G_{k+1}}{2}\|z^k - x^*\|^2\notag\\
        &&\quad - \frac{G_{k+1}}{2}\|z^{k+1} - x^*\|^2 \notag\\
        &=& A_{k+1}\left(f(x^{k+1}) - f(y^{k+1})\right)\notag\\
        &&\quad + \frac{G_{k+1}}{2}\left(\frac{\nu\alpha_{k+1}^2}{A_{k+1}} - 1\right)\|z^k - z^{k+1}\|^2 \notag\\
        &&\quad + \frac{G_{k+1}}{2}\|z^k - x^*\|^2 - \frac{G_{k+1}}{2}\|z^{k+1} - x^*\|^2 \notag\\
        &\overset{\eqref{eq:A_k+1_lower_bound_2_new}, \eta\leq\frac{\nu}{2}}{\leq}& A_{k+1}\left(f(x^{k+1}) - f(y^{k+1})\right) -\frac{G_{k+1}}{4}\|z^k - z^{k+1}\|^2 \notag\\
        &&\quad + \frac{G_{k+1}}{2}\|z^k - x^*\|^2 - \frac{G_{k+1}}{2}\|z^{k+1} - x^*\|^2. \label{eq:descent_1_L0_L1_STM}
    \end{eqnarray}
    Next, using the definition of $x^{k+1}$ and $A_{k+1} = A_k + \alpha_{k+1}$, we get
    \begin{equation}
        \alpha_{k+1}(x^{k+1} - z^k) = A_k(y^k - x^{k+1}). \label{eq:update_rule_impl_L0_L1_STM_1}
    \end{equation}
    Combining the established inequalities, we obtain
    \begin{eqnarray}
        \alpha_{k+1} \langle \nabla f(x^{k+1}), x^{k+1} - x^* \rangle &=& \alpha_{k+1} \langle \nabla f(x^{k+1}), x^{k+1} - z^k \rangle \notag\\
        &&\quad + \alpha_{k+1} \langle \nabla f(x^{k+1}), z^{k} - x^* \rangle \notag \\
        &\overset{\eqref{eq:descent_1_L0_L1_STM},\eqref{eq:update_rule_impl_L0_L1_STM_1}}{\leq}& A_k \langle \nabla f(x^{k+1}), y^k - x^{k+1}\rangle \notag\\
        &&\quad + A_{k+1}\left(f(x^{k+1}) - f(y^{k+1})\right) -\frac{G_{k+1}}{4}\|z^k - z^{k+1}\|^2 \notag\\
        &&\quad + \frac{G_{k+1}}{2}\|z^k - x^*\|^2 - \frac{G_{k+1}}{2}\|z^{k+1} - x^*\|^2 \notag\\
        &\overset{\eqref{eq:convexity}}{\leq}& A_k \left( f(y^k) - f(x^{k+1})\right) + A_{k+1}\left(f(x^{k+1}) - f(y^{k+1})\right) \notag\\
        &&\quad -\frac{G_{k+1}}{4}\|z^k - z^{k+1}\|^2 + \frac{G_{k+1}}{2}\|z^k - x^*\|^2 \notag\\
        &&\quad - \frac{G_{k+1}}{2}\|z^{k+1} - x^*\|^2, \notag
    \end{eqnarray}
    which can be rewritten as
    \begin{eqnarray}
        A_{k+1} f(y^{k+1}) - A_k f(y^k) &\leq& \alpha_{k+1}\left(f(x^{k+1}) + \langle \nabla f(x^{k+1}), x^* - x^{k+1} \rangle\right)\notag\\
        &&+ \frac{G_{k+1}}{2}\|z^k - x^*\|^2 - \frac{G_{k+1}}{2}\|z^{k+1} - x^*\|^2 - \frac{\alpha_{k+1}^2}{4G_{k+1}}\|\nabla f(x^{k+1})\|^2\notag\\
        &\overset{\eqref{eq:convexity}}{\leq}& \alpha_{k+1} f(x^*) \notag\\
        &&+ \frac{G_{k+1}}{2}\|z^k - x^*\|^2 - \frac{G_{k+1}}{2}\|z^{k+1} - x^*\|^2 - \frac{\alpha_{k+1}^2}{4G_{k+1}}\|\nabla f(x^{k+1})\|^2. \notag
    \end{eqnarray}
    Summing up the above inequality for $k = 0,1,\ldots, N-1$ and using $A_0 = \alpha_0 = 0$, $\sum_{k=0}^{N-1}\alpha_{k+1} = A_{N}$, and new notation $R_k \eqdef \|z^k - x^*\|$, we derive
    \begin{eqnarray}
        A_N\left(f(y^N) - f(x^*)\right) + \frac{G_{N}}{2}R_{N}^2 &\leq& \frac{G_1}{2}R_0^2 + \sum\limits_{k=1}^{N-1} \frac{G_{k+1} - G_{k}}{2}R_k^2 - \sum\limits_{k=0}^{N-1}\frac{\alpha_{k+1}^2}{4G_{k+1}}\|\nabla f(x^{k+1})\|^2,\notag
    \end{eqnarray}
    which finishes the proof.
\end{proof}

\begin{theorem}[Theorem~\ref{thm:STM_convergence}]\label{thm:STM_convergence_appendix}
    Let $f$ satisfy Assumptions~\ref{as:convexity} with $\mu = 0$ and \ref{as:symmetric_L0_L1}. Then, the iterates generated by \algname{$(L_0,L_1)$-STM} with $0 < \eta \leq \frac{\nu}{2}$, $\nu = e^{-\nu}$, $G_1 = L_0 + L_1 \|\nabla f(x^0)\|$, and
    \begin{equation}
        G_{k+1} = \max\{G_k, L_0 + L_1\|\nabla f(x^{k+1})\|\},\quad k \geq 0, \label{eq:G_k_condition_STM_asym_appendix}
    \end{equation}
    satisfy
    \begin{equation}
        f(y^N) - f(x^*) \leq \frac{2L_0(1+ L_1 \|x^0 - x^*\|\exp(L_1 \|x^0 - x^*\|)) \|x^0 - x^*\|^2}{\eta N(N+3)}. \label{eq:STM_asym_rate_appendix}
    \end{equation}
\end{theorem}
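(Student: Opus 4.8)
The plan is to build on the descent inequality of Lemma~\ref{lem:descent_lemma_asym_L0_L1_STM}, whose right-hand side splits into the telescoping-type sum \eqref{eq:main_inequality_asym_L0_L1_STM_part_1_appendix} involving the increments $G_{k+1}-G_k$ and the nonpositive gradient term \eqref{eq:main_inequality_asym_L0_L1_STM_appendix}. The choice \eqref{eq:G_k_condition_STM_asym_appendix} guarantees that $\{G_k\}$ is nondecreasing, so each increment $G_{k+1}-G_k$ is nonnegative; the whole strategy hinges on exploiting this monotonicity. The first and central step is to prove, by induction on $k$, that $R_k \eqdef \|z^k - x^*\| \leq R_0$ for all $k \geq 0$, i.e., that the $z$-iterates never leave the ball of radius $R_0 = \|x^0 - x^*\|$ centered at $x^*$.

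For the induction, the base case is immediate since $z^0 = x^0$. Assuming $R_j \leq R_0$ for all $j \leq k$, I apply Lemma~\ref{lem:descent_lemma_asym_L0_L1_STM} at horizon $k+1$, drop the nonnegative term $A_{k+1}(f(y^{k+1})-f(x^*))$ on the left and the nonpositive sum \eqref{eq:main_inequality_asym_L0_L1_STM_appendix} on the right, and bound every $R_j^2$ by $R_0^2$ inside the telescoping sum:
\begin{equation*}
\frac{G_{k+1}}{2}R_{k+1}^2 \leq \frac{G_1}{2}R_0^2 + \sum_{j=1}^{k}\frac{G_{j+1}-G_j}{2}R_j^2 \leq \frac{G_1}{2}R_0^2 + \frac{R_0^2}{2}\left(G_{k+1}-G_1\right) = \frac{G_{k+1}}{2}R_0^2,
\end{equation*}
which yields $R_{k+1}\leq R_0$ and closes the induction.

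With $R_k \leq R_0$ in hand, the same manipulation applied at horizon $N$ gives $A_N(f(y^N)-f(x^*)) \leq \tfrac{G_N}{2}R_0^2$, i.e. $f(y^N)-f(x^*) \leq \tfrac{G_N R_0^2}{2A_N}$. It then remains to bound $G_N$ and $A_N$. For $A_N$, inequality \eqref{eq:A_k+1_lower_bound_1_new} of Lemma~\ref{lem:stepsizes_lemma_STM} with $k=N-1$ gives $A_N \geq \tfrac{\eta N(N+3)}{4}$. For $G_N$, which is a maximum of quantities $L_0 + L_1\|\nabla f(x^j)\|$ over the $x$-iterates, I first observe that the whole trajectory stays in the ball: since $x^{k+1}$ and $y^{k+1}$ are convex combinations of $\{y^k, z^k\}$ and $\{y^k, z^{k+1}\}$ respectively, a short induction (using convexity of the ball and the already-established $R_k\leq R_0$) shows $\|x^k - x^*\| \leq R_0$ and $\|y^k - x^*\| \leq R_0$ for all $k$. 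Then, using $\nabla f(x^*)=0$ together with \eqref{eq:symmetric_L0_L1_corollary},
\begin{equation*}
\|\nabla f(x^j)\| \leq L_0 \exp\!\left(L_1\|x^j - x^*\|\right)\|x^j - x^*\| \leq L_0 R_0 \exp\!\left(L_1 R_0\right),
\end{equation*}
so $G_N \leq L_0\left(1 + L_1 R_0 \exp(L_1 R_0)\right)$. Substituting the bounds on $A_N$ and $G_N$ into $f(y^N)-f(x^*) \leq \tfrac{G_N R_0^2}{2A_N}$ gives exactly \eqref{eq:STM_asym_rate_appendix}.

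The main obstacle is the boundedness induction $R_k \leq R_0$: it is what replaces the usual ``distance contracts'' argument that is unavailable here, since accelerated methods need not be monotone in the gradient norm or in $\|z^k-x^*\|$. The key is that monotonicity of $G_k$ turns the dangerous positive sum in \eqref{eq:main_inequality_asym_L0_L1_STM_part_1_appendix} into a telescoping expression controlled by $R_0^2$ precisely when all earlier radii are $\leq R_0$, which makes the induction self-consistent. Everything afterwards --- the containment of the $x$- and $y$-iterates and the exponential gradient bound --- is routine given \eqref{eq:symmetric_L0_L1_corollary} and the convex-combination structure of the algorithm.
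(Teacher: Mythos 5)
Your proposal is correct and follows essentially the same route as the paper's own proof: the identical induction establishing $R_k \leq R_0$ by telescoping the nonnegative increments $G_{k+1}-G_k$, the same convex-combination argument placing the $x$- and $y$-iterates in $B_{R_0}(x^*)$, and the same final bounds $G_N \leq L_0\left(1 + L_1 R_0 \exp(L_1 R_0)\right)$ via \eqref{eq:symmetric_L0_L1_corollary} and $A_N \geq \frac{\eta N(N+3)}{4}$ via Lemma~\ref{lem:stepsizes_lemma_STM}. No gaps.
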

\begin{proof}
    Let us prove by induction that $R_k \leq R_0$ for all $k \geq 0$. For $k = 0$, the statement is trivial. Next, we assume that the statement holds for $k = N$ and derive that it also holds for $k = N+1$. Indeed, from Lemma~\ref{lem:descent_lemma_asym_L0_L1_STM} we have
    \begin{eqnarray}
        \frac{G_{N+1}}{2}R_{N+1}^2 &\leq& A_{N+1}\left(f(y^{N+1}) - f(x^*)\right) + \frac{G_{N+1}}{2}R_{N+1}^2 \notag\\
        &\overset{\eqref{eq:main_inequality_asym_L0_L1_STM}}{\leq}& \frac{G_1}{2}R_0^2 + \sum\limits_{k=1}^{N} \frac{G_{k+1} - G_{k}}{2}R_k^2 \notag\\
        &\leq& \frac{G_1}{2}R_0^2 + \sum\limits_{k=1}^{N} \frac{G_{k+1} - G_{k}}{2}R_0^2  = \frac{G_{N+1}}{2}R_0^2, \label{eq:useful_bound_STM}
    \end{eqnarray}
    implying that $R_{N+1} \leq R_0$. That is, we proved that $R_k \leq R_0$ for all $k \geq 0$, i.e., the sequence $\{z^k\}_{k\geq 0}$ stays in $B_{R_0}(x^*) \eqdef \{x \in \R^d\mid \|x - x^*\| \leq R_0\}$. Since $x^0 = y^0 = z^0$, $x^{k+1}$ is a convex combination of $y^k$ and $z^k$, $y^{k+1}$ is a convex combination of $y^k$ and $z^{k+1}$, we also have that sequences $\{x^k\}_{k\geq 0}$ and $\{y^k\}_{k\geq 0}$ stay in $B_{R_0}(x^*)$, which can be formally shown using an induction argument. Therefore, we can upper-bound $G_k$ for all $k \geq 0$ as follows
    \begin{eqnarray}
        G_k &=& L_0 + L_1 \max\limits_{t=0,\ldots,k}\|\nabla f(x^t)\| \overset{\eqref{eq:symmetric_L0_L1_corollary}}{\leq} L_0 + L_1 L_0 \max\limits_{t=0,\ldots,k} \exp(L_1\|x^t - x^*\|)\|x^t - x^*\|\notag\\
        &\leq& L_0\left(1 + L_1 R_0\exp(L_1R_0)\right). \label{eq:G_k_upper_bound_asym}
    \end{eqnarray}
    Moreover, from \eqref{eq:useful_bound_STM} we also have
    \begin{eqnarray*}
        f(y^N) - f(x^*) &\leq& \frac{G_{N}R_0^2}{2A_{N}} \overset{\eqref{eq:G_k_upper_bound_asym}, \eqref{eq:A_k+1_lower_bound_1_new}}{\leq} \frac{2L_0(1+ L_1R_0\exp(L_1R_0)) R_0^2}{\eta N(N+3)}, 
    \end{eqnarray*}
    which finishes the proof.
\end{proof}

\clearpage

\section{Missing Proofs for Adaptive Gradient Descent}\label{appendix:AdGD}

\subsection{Derivation of \eqref{eq:AdGD_convergence_symmetric}}

The key lemma about the convergence of \algname{AdGD} holds for any convex function regardless of the smoothness properties.

\begin{lemma}[Lemma~1 from \cite{malitsky2019adaptive}]\label{lemma:energy}
Let Assumption~\ref{as:convexity} with $\mu = 0$ hold, and $x^*$ be any minimizer of $f$. Then, the iterates generated
by Algorithm~\ref{alg:main} with $\gamma=\frac{1}{2}$ satisfy
\begin{multline}\label{eq:lemma_ineq}
    \norm{x^{k+1}-x^*}^2+ \frac 1 2 \norm{x^{k+1}-x^k}^2  + 2\lambda_{k}(1+\theta_{k}) (f(x^k)-f(x^*))  
    \\
    \leq \norm{x^k-x^*}^2  + \frac 1 2 \norm{x^k-x^{k-1}}^2    + 2\lambda_k \theta_k (f(x^{k-1})-f(x^*)).
\end{multline}
\end{lemma}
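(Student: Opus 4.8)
The plan is to reduce the energy inequality to a one-step algebraic estimate and then to a perfect square. Write $g_k \eqdef \nabla f(x^k)$ and use the update rule $x^{k+1} = x^k - \lambda_k g_k$ from Algorithm~\ref{alg:main} throughout.

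First I would expand the squared distance to the minimizer,
\[
\|x^{k+1}-x^*\|^2 = \|x^k-x^*\|^2 - 2\lambda_k\langle g_k, x^k-x^*\rangle + \|x^{k+1}-x^k\|^2,
\]
and bound the middle term by convexity (Assumption~\ref{as:convexity} with $\mu=0$): $-\langle g_k, x^k-x^*\rangle \leq f(x^*)-f(x^k)$. Substituting this into the left-hand side of the claimed inequality, cancelling $\|x^k-x^*\|^2$, and combining the $f(x^k)-f(x^*)$ terms (their coefficient collapses from $2\lambda_k(1+\theta_k)-2\lambda_k$ to $2\lambda_k\theta_k$), the entire statement reduces to the single-step inequality
\[
2\lambda_k\theta_k\bigl(f(x^k)-f(x^{k-1})\bigr) + \tfrac{3}{2}\|x^{k+1}-x^k\|^2 - \tfrac{1}{2}\|x^k-x^{k-1}\|^2 \leq 0.
\]
The coefficient $\tfrac32$ here --- one copy of $\|x^{k+1}-x^k\|^2$ from the expansion plus the $\tfrac12$ carried by the potential --- is exactly what must be absorbed, and getting this bookkeeping right is the heart of the argument.

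Next I would remove the function-value gap by convexity at $x^k$ tested against $x^{k-1}$, namely $f(x^k)-f(x^{k-1}) \leq \langle g_k, x^k-x^{k-1}\rangle$. Introducing $u \eqdef x^{k+1}-x^k$ and $v \eqdef x^k-x^{k-1}$ and using $\lambda_k\theta_k\, g_k = \theta_k\,(x^k-x^{k+1})$ (which follows from $\theta_k=\lambda_k/\lambda_{k-1}$ and the update rule), the gap term becomes $-2\theta_k\langle u,v\rangle$, so it remains to prove
\[
\tfrac{3}{2}\|u\|^2 - 2\theta_k\langle u,v\rangle - \tfrac{1}{2}\|v\|^2 \leq 0.
\]
This is where the adaptive stepsize rule enters decisively: since $\lambda_k(g_k-g_{k-1}) = -u+\theta_k v$, the defining bound $\lambda_k\|g_k-g_{k-1}\| \leq \gamma\|x^k-x^{k-1}\|$ with $\gamma=\tfrac12$ becomes $\|u-\theta_k v\| \leq \tfrac12\|v\|$.

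Finally I would set $w \eqdef u-\theta_k v$, so $\|w\|\leq\tfrac12\|v\|$, and rewrite the target quadratic purely in $w$ and $v$; a short expansion yields
\[
\tfrac{3}{2}\|u\|^2 - 2\theta_k\langle u,v\rangle - \tfrac{1}{2}\|v\|^2 = \tfrac{3}{2}\|w\|^2 + \theta_k\langle w,v\rangle - \tfrac{1}{2}\theta_k^2\|v\|^2 - \tfrac{1}{2}\|v\|^2.
\]
Bounding $\tfrac32\|w\|^2 \leq \tfrac38\|v\|^2$ and $\theta_k\langle w,v\rangle \leq \tfrac{\theta_k}{2}\|v\|^2$ (both from $\|w\|\leq\tfrac12\|v\|$, Cauchy--Schwarz, and $\theta_k>0$) collapses the right-hand side to $-\tfrac18\|v\|^2(2\theta_k-1)^2 \leq 0$, which closes the proof. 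I expect the main obstacle to be not any single estimate but the exact tracking of the $\tfrac32$ coefficient together with recognizing that the choice $\gamma=\tfrac12$ is precisely what turns the residual into the perfect square $(2\theta_k-1)^2$; a looser use of Young's inequality on the cross term would leave an uncontrolled multiple of $\|u\|^2$ and break the bound.
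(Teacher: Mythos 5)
Your proof is correct, and every step checks out: the reduction to $2\lambda_k\theta_k\bigl(f(x^k)-f(x^{k-1})\bigr)+\tfrac32\norm{u}^2-\tfrac12\norm{v}^2\leq 0$ is exact, the identity $\lambda_k(\nabla f(x^k)-\nabla f(x^{k-1}))=-u+\theta_k v$ is right, and the final expansion in $w=u-\theta_k v$ together with $\norm{w}\leq\tfrac12\norm{v}$ does collapse to $-\tfrac18(2\theta_k-1)^2\norm{v}^2\leq 0$. The ingredients are the same three used in the paper's proof of the refined Lemma~\ref{lemma:energy_2} (which follows \citet{malitsky2019adaptive}): convexity tested at $x^*$, convexity between consecutive iterates, and the stepsize rule via Cauchy--Schwarz. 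Where you diverge is the final algebra. The paper's route rewrites $\norm{x^{k+1}-x^k}^2$ as $2\norm{x^{k+1}-x^k}^2-\norm{x^{k+1}-x^k}^2$, splits $\nabla f(x^k)$ into $(\nabla f(x^k)-\nabla f(x^{k-1}))+\nabla f(x^{k-1})$, and then applies Young's inequality to the cross term $2\lambda_k\langle\nabla f(x^k)-\nabla f(x^{k-1}),x^k-x^{k+1}\rangle$, distributing the slack between $\norm{u}^2$ and $\norm{v}^2$. You instead gather everything into a single quadratic form in $u$ and $v$ and complete the square under the constraint $\norm{u-\theta_k v}\leq\gamma\norm{v}$. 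Your version buys a sharper piece of information: the residual is the exact perfect square $-\tfrac18(2\theta_k-1)^2\norm{v}^2$, which shows the estimate is tight precisely when $\theta_k=\tfrac12$ and makes transparent that $\gamma=\tfrac12$ is the largest constant for which the one-step inequality closes; the Young-inequality route discards exactly this square. The paper's route, on the other hand, localizes the role of the stepsize rule in one displayed inequality and adapts more mechanically to variants (it is reused nearly verbatim for the $\gamma=\tfrac14$ potential with the extra term $\tfrac12\sum_{i}\norm{x^{i+1}-x^i}^2$ in Lemma~\ref{lemma:energy_2}, where the asymmetric split $\tfrac14\norm{v}^2+\tfrac14\norm{u}^2$ is what frees up the $\tfrac12\norm{u}^2$ that feeds the telescoping sum). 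If you wanted to recover that refinement from your calculation, you would need to track how much of the $(2\theta_k-1)^2$ slack survives uniformly in $\theta_k$, which is exactly what the smaller $\gamma$ accomplishes.
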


In particular, the above lemma implies boundedness of $\|x^k - x^*\|$ and $\|x^{k} - x^{k-1}\|$, which allows us to get the upper bound on the gradient norm \eqref{eq:grad_bound_AdGD} and a lower bound for $\lambda_k$ as stated in the paragraph before \eqref{eq:AdGD_convergence_symmetric}. For completeness, we provide a detailed statement of the result and its proof below.

\begin{theorem}\label{thm:AdGD_bad_result_appendix}
    Let Assumptions~\ref{as:convexity} with $\mu = 0$ and \ref{as:symmetric_L0_L1} hold. For all $N \geq 1$ we define point $\hat{x}^N \eqdef \frac{1}{S_N}\left(\lambda_N (1+\theta_N) + \sum_{k=1}^N w_k x^k\right)$, where $w_k \eqdef \lambda_k(1+\theta_k) - \lambda_{k+1}\theta_{k+1}$, $S_N \eqdef \lambda_1\theta_1 + \sum_{k=1}^N \lambda_k$, and $\{x^k\}_{k\geq 0}$ are the iterates produced by \algname{AdGD} with $\gamma = \nicefrac{1}{2}$. Then, $\hat{x}^N$ satisfies
    \begin{equation}
        f(\hat{x}^N) - f(x^*) \leq \frac{L_0(1 + L_1D \exp{\left(L_1 D \right)})\exp{\left(\sqrt{2}L_1D \right)}D^2}{N}, \label{eq:AdGD_convergence_symmetric_appendix}
    \end{equation}
    where $D > 0$ and $D^2 \eqdef \norm{x^1-x^*}^2  + \frac 1 2 \norm{x^1-x^{0}}^2    + 2\lambda_1 \theta_1 (f(x^{0})-f(x^*))$.
\end{theorem}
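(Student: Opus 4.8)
The plan is to bootstrap from the energy inequality of Lemma~\ref{lemma:energy}, which holds for every convex $f$ irrespective of smoothness, and only afterwards invoke $(L_0,L_1)$-smoothness to turn the resulting a priori bounds into an explicit effective Lipschitz constant. First I would rewrite Lemma~\ref{lemma:energy} as a telescoping potential. Setting $E_k \eqdef \|x^k-x^*\|^2 + \tfrac12\|x^k-x^{k-1}\|^2$, the lemma reads $E_{k+1} + 2\lambda_k(1+\theta_k)(f(x^k)-f(x^*)) \leq E_k + 2\lambda_k\theta_k(f(x^{k-1})-f(x^*))$. The stepsize rule gives $\lambda_{k+1}\leq\sqrt{1+\theta_k}\,\lambda_k$, hence $\lambda_{k+1}^2\leq(1+\theta_k)\lambda_k^2$ and therefore $\lambda_{k+1}\theta_{k+1}=\lambda_{k+1}^2/\lambda_k\leq\lambda_k(1+\theta_k)$. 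Since $f(x^k)-f(x^*)\geq0$, this lets me replace the coefficient $\lambda_k(1+\theta_k)$ on the left by the smaller $\lambda_{k+1}\theta_{k+1}$ and conclude $\Phi_{k+1}\leq\Phi_k$ for $\Phi_k\eqdef E_k+2\lambda_k\theta_k(f(x^{k-1})-f(x^*))$. Thus $\Phi_k\leq\Phi_1=D^2$ for all $k$, which immediately yields the a priori bounds $\|x^k-x^*\|\leq D$ and $\|x^k-x^{k-1}\|\leq\sqrt2\,D$.

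Next I would convert these bounds into a uniform lower bound on the stepsizes. Applying \eqref{eq:symmetric_L0_L1_corollary} with $y=x^*$ and $\nabla f(x^*)=0$ gives the gradient bound \eqref{eq:grad_bound_AdGD}, namely $\|\nabla f(x^k)\|\leq L_0\exp(L_1D)D$. Applying \eqref{eq:symmetric_L0_L1_corollary} once more with $x=x^k$, $y=x^{k-1}$ and substituting the two a priori bounds produces $\|\nabla f(x^k)-\nabla f(x^{k-1})\|\leq L\,\|x^k-x^{k-1}\|$ with the effective constant $L\eqdef L_0(1+L_1D\exp(L_1D))\exp(\sqrt2 L_1D)$. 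Consequently the second term in the $\min$ defining $\lambda_k$ is at least $\gamma/L$. Because $\theta_0=+\infty$ forces $\lambda_1=\tfrac{\gamma\|x^1-x^0\|}{\|\nabla f(x^1)-\nabla f(x^0)\|}\geq\gamma/L$, and because $\sqrt{1+\theta_{k-1}}\geq1$ gives $\lambda_k\geq\min\{\lambda_{k-1},\gamma/L\}$, a one-line induction yields $\lambda_k\geq\gamma/L$ for all $k\geq1$.

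Finally I would sum the energy inequality and average. Telescoping the displayed inequality from $k=1$ to $N$ and shifting the index on the right-hand side collects, besides the nonnegative term $E_{N+1}$, exactly the weights $w_k=\lambda_k(1+\theta_k)-\lambda_{k+1}\theta_{k+1}\geq0$ (whose nonnegativity is the same estimate used in the first paragraph) together with the endpoint weight $\lambda_N(1+\theta_N)$; their total telescopes to $S_N=\lambda_1\theta_1+\sum_{k=1}^N\lambda_k$, and the right-hand side reduces to $E_1+2\lambda_1\theta_1(f(x^0)-f(x^*))=D^2$. Dropping $E_{N+1}\geq0$ and applying Jensen's inequality via \eqref{eq:convexity} to the convex combination defining $\hat x^N$ gives $f(\hat x^N)-f(x^*)\leq D^2/(2S_N)$. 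Lower-bounding $S_N\geq N\gamma/L$ and setting $\gamma=\tfrac12$ yields \eqref{eq:AdGD_convergence_symmetric_appendix}.

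The main obstacle is the ordering of the argument rather than any individual estimate: the lower bound on $\lambda_k$ needs the a priori boundedness of the iterates, so this boundedness must be extracted first from Lemma~\ref{lemma:energy} \emph{without} reference to smoothness, after which the two applications of \eqref{eq:symmetric_L0_L1_corollary} feed on it to produce $L$. The small computational lemma $\lambda_{k+1}\theta_{k+1}\leq\lambda_k(1+\theta_k)$ underpins both the telescoping of $\Phi_k$ and the nonnegativity of the weights $w_k$, so I would verify it once at the outset. The genuinely unpleasant feature of the result — the factor $(1+L_1D\exp(L_1D))\exp(\sqrt2 L_1D)$ — is unavoidable here precisely because the gradient and gradient-difference bounds carry the exponential $\exp(L_1\|\cdot\|)$ from \eqref{eq:symmetric_L0_L1_corollary}; this is the weakness that the refined Theorem~\ref{thm:AdGD_good_result} later removes through a modified potential.
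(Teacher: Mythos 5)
Your proposal is correct and is essentially the paper's own argument: it telescopes the energy lemma (equivalently, your monotone potential $\Phi_k \leq \Phi_1 = D^2$, justified by the same estimate $\lambda_{k+1}\theta_{k+1} \leq \lambda_k(1+\theta_k)$ the paper uses for nonnegativity of the $w_i$) to get $\|x^k - x^*\| \leq D$ and $\|x^k - x^{k-1}\| \leq \sqrt{2}D$, lower-bounds $\lambda_k \geq \gamma/L$ with $L = L_0(1+L_1D\exp(L_1D))\exp(\sqrt{2}L_1D)$ via \eqref{eq:symmetric_L0_L1_corollary} and the same induction on the two branches of the stepsize rule, and finishes with Jensen and $S_N \geq N\gamma/L$. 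One micro-correction: in the gradient-difference step, apply \eqref{eq:symmetric_L0_L1_corollary} with the \emph{later} iterate in the role of $y$ (as the paper does, yielding the factor $L_0 + L_1\|\nabla f(x^k)\|$), since your orientation $y = x^{k-1}$ requires a bound on $\|\nabla f(x^0)\|$ at $k=1$, which the potential argument does not provide.
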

\begin{proof}
    The proof follows almost the same lines as the proof from \citep{malitsky2019adaptive}. Telescoping inequality \eqref{eq:lemma_ineq}, we get
    \begin{align}\label{eq:convex_proof1}
        \|x^{k+1} - x^*\|^2 + \frac{1}{2} \|x^{k+1} - x^k\|^2 & + 2 \lambda_k (1 + \theta_k)(f(x^k) - f(x^*)) \notag \\
        + 2 \sum_{i = 1}^{k-1} & \left[ \lambda_i(1 + \theta_i) - \lambda_{i+1} \theta_{i+1}\right] (f(x^i) - f(x^*)) \notag \\
        & \leq \|x^1 - x^*\|^2 + \frac{1}{2} \|x^1 - x^0\|^2 + 2 \lambda_1 \theta_1 (f(x^0) - f(x^*)).
   \end{align}
    Since $\lambda_i(1 + \theta_i) - \lambda_{i+1} \theta_{i+1} \geq 0$ by definition of $\lambda_i$, we conclude that the term in the second line of the above inequality is non-negative. Therefore, for any $k\geq 1$ we have
    \begin{eqnarray}
        \|x^{k} - x^*\|^2 & \leq & D^2, \label{eq:convex_proof2}\\
        \|x^{k} - x^{k-1}\|^2 & \leq & 2D^2. \label{eq:convex_proof3}
    \end{eqnarray}
    Using Jensen's inequality in \eqref{eq:convex_proof1}, we derive
    \begin{eqnarray*}
        S_k(f(\hat{x}^k) - f(x_*)) \leq \frac{D^2}{2},
    \end{eqnarray*}
    where 
    \begin{eqnarray}
        \hat{x}^k & = & \frac{\lambda_k (1 + \theta_k)x^k + \sum_{i = 1}^{k-1}w_i x^i}{S_k}, \label{eq:hat_x_def} \\
        w_k & = & \lambda_i (1 + \theta_i) - \lambda_{i+1} \theta_{i + 1}, \label{eq:w_K_Def} \\
        S_k & = & \lambda_1 \theta_1 + \sum_{i = 1}^k \lambda_i. \label{eq:S_k_Def}
    \end{eqnarray}
    Thus, we have 
    \begin{equation}
        f(\hat{x}^k) - f_* \leq \frac{D^2}{2 S_k}. \label{eq:AdGD_bound_with_S_k}
    \end{equation}
    Next, we notice that for any $k\geq 1$
    \begin{eqnarray}
        \frac{\|x^k - x^{k-1}\|}{\|\nabla f(x^k) - \nabla f(x^{k-1})\|} &\overset{\eqref{eq:symmetric_L0_L1_corollary}}{\geq}& \frac{1}{(L_0 + L_1\|\nabla f(x^k)\|)\exp(L_1\|x^k - x^{k-1}\|)} \notag \\
        &\overset{\eqref{eq:grad_bound_AdGD}}{\geq}& \frac{1}{L_0(1 + L_1D\exp(L_1D))\exp(\sqrt{2}L_1D)}. \notag
    \end{eqnarray}
    Since $\theta_0 = +\infty$, we have $\lambda_1 = \frac{\|x^1 - x^{0}\|}{2\|\nabla f(x^1) - \nabla f(x^{0})\|}$. Moreover, for $k > 1$ we have either $\lambda_k \geq \lambda_{k-1}$ or $\lambda_k = \frac{\|x^k - x^{k-1}\|}{2\|\nabla f(x^k) - \nabla f(x^{k-1})\|}$. Therefore, by induction we can prove that
    \begin{equation}
        \lambda_k \geq \frac{1}{2L_0(1 + L_1D\exp(L_1D))\exp(\sqrt{2}L_1D)} \label{eq:lambda_lower_bound}
    \end{equation}
    that implies 
    \begin{eqnarray*}
        S_k & = & \lambda_1 \theta_1 + \sum_{i = 1}^k \lambda_i \geq \frac{k}{2L_0(1 + L_1D\exp(L_1D))\exp(\sqrt{2}L_1D)}.
    \end{eqnarray*}
    Therefore, we have 
    \begin{equation*}
        f(\hat{x}^k) - f(x^*) \leq \frac{D^2}{2 S_k} \leq \frac{L_0(1 + L_1D \exp{\left(L_1 D \right)})\exp{\left(\sqrt{2}L_1D \right)}D^2}{k},  
    \end{equation*}
    which is equivalent to \eqref{eq:AdGD_convergence_symmetric_appendix} when $k = N$.
\end{proof}

\subsection{Proof of Theorem~\ref{thm:AdGD_good_result}}

To show an improved result, we consider Algorithm~\ref{alg:main} with $\gamma=\frac{1}{4}$ and refine Lemma~\ref{lemma:energy} as follows.

\begin{lemma}\label{lemma:energy_2}
Let Assumption~\ref{as:convexity} with $\mu = 0$ hold, and $x^*$ be any minimizer of $f$. Then, the iterates generated
by Algorithm~\ref{alg:main} with $\gamma=\frac{1}{4}$ satisfy for all $k \geq 1$
\begin{multline} \label{eq:lemma_ineq2}
    \norm{x^{k+1}-x^*}^2 + \frac 1 4 \norm{x^{k+1}-x^k}^2  + 2\lambda_{k}(1+\theta_{k})
    (f(x^k)-f(x^*))  + \frac 1 2 \sum_{i=0}^{k} \|x^{i+1} - x^{i}\|^2
    \\
    \leq \norm{x^k-x^*}^2  + \frac 1 4
    \norm{x^k-x^{k-1}}^2    + 2\lambda_k \theta_k (f(x^{k-1})-f(x^*)) + \frac 1 2 \sum_{i=0}^{k-1} \|x^{i+1} - x^{i}\|^2.
\end{multline}
\end{lemma}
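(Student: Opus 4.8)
The plan is to prove the one-step inequality \eqref{eq:lemma_ineq2} directly, mimicking the strategy behind Lemma~\ref{lemma:energy} but carrying the new incremental term. First I would note that the two sums differ only by the $i=k$ summand $\frac12\norm{x^{k+1}-x^k}^2$, so \eqref{eq:lemma_ineq2} is equivalent to the sum-free inequality
\[
\norm{x^{k+1}-x^*}^2 + \tfrac34\norm{x^{k+1}-x^k}^2 + 2\lambda_k(1+\theta_k)(f(x^k)-f(x^*)) \leq \norm{x^k-x^*}^2 + \tfrac14\norm{x^k-x^{k-1}}^2 + 2\lambda_k\theta_k(f(x^{k-1})-f(x^*));
\]
that is, the new sum term simply promotes the coefficient of $\norm{x^{k+1}-x^k}^2$ from $\tfrac14$ to $\tfrac34$. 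Writing $g_k=\nabla f(x^k)$ and $d_k=f(x^k)-f(x^*)$, I expand $\norm{x^{k+1}-x^*}^2=\norm{x^k-x^*}^2-2\lambda_k\langle g_k,x^k-x^*\rangle+\norm{x^{k+1}-x^k}^2$ using $x^{k+1}-x^k=-\lambda_k g_k$, and split $2\lambda_k(1+\theta_k)d_k=2\lambda_k d_k+2\lambda_k\theta_k d_k$. Convexity at $x^k$ against $x^*$ gives $2\lambda_k d_k\leq 2\lambda_k\langle g_k,x^k-x^*\rangle$, cancelling the linear term, and convexity at $x^k$ against $x^{k-1}$ gives $2\lambda_k\theta_k(d_k-d_{k-1})\leq 2\lambda_k\theta_k\langle g_k,x^k-x^{k-1}\rangle$. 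After these reductions it remains to show $\tfrac74\norm{x^{k+1}-x^k}^2+2\lambda_k\theta_k\langle g_k,x^k-x^{k-1}\rangle\leq \tfrac14\norm{x^k-x^{k-1}}^2$.

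The crux is the cross term, and here a naive Cauchy--Schwarz/Young estimate is fatally lossy: it leaves a positive, uncontrolled $\norm{x^{k+1}-x^k}^2=\lambda_k^2\norm{g_k}^2$ that cannot be absorbed. Instead I would use the exact polarization identity. Since $x^k-x^{k-1}=-\lambda_{k-1}g_{k-1}$ and $\lambda_k\theta_k\lambda_{k-1}=\lambda_k^2$, the cross term equals $-2\lambda_k^2\langle g_k,g_{k-1}\rangle=-\lambda_k^2\norm{g_k}^2-\lambda_k^2\norm{g_{k-1}}^2+\lambda_k^2\norm{g_k-g_{k-1}}^2$. The first summand is exactly $-\norm{x^{k+1}-x^k}^2$ (this is what tames the troublesome positive term), the second is $-\theta_k^2\norm{x^k-x^{k-1}}^2$ via $\lambda_k^2\norm{g_{k-1}}^2=\theta_k^2\lambda_{k-1}^2\norm{g_{k-1}}^2$, and the third is bounded by the step-size rule $\lambda_k\norm{g_k-g_{k-1}}\leq\gamma\norm{x^k-x^{k-1}}$ (the second branch in the definition of $\lambda_k$), giving $\lambda_k^2\norm{g_k-g_{k-1}}^2\leq\gamma^2\norm{x^k-x^{k-1}}^2$. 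Substituting, the task collapses to showing $\tfrac34\norm{x^{k+1}-x^k}^2+(\gamma^2-\theta_k^2-\tfrac14)\norm{x^k-x^{k-1}}^2\leq 0$.

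Finally I would control the surviving $\norm{x^{k+1}-x^k}$ by combining the triangle inequality with the same step-size rule: $\norm{x^{k+1}-x^k}=\lambda_k\norm{g_k}\leq\lambda_k\norm{g_{k-1}}+\lambda_k\norm{g_k-g_{k-1}}\leq(\theta_k+\gamma)\norm{x^k-x^{k-1}}$, where $\lambda_k\norm{g_{k-1}}=\theta_k\lambda_{k-1}\norm{g_{k-1}}=\theta_k\norm{x^k-x^{k-1}}$. Then every term is a multiple of $\norm{x^k-x^{k-1}}^2$ and the remaining scalar inequality is $\tfrac34(\theta_k+\gamma)^2+\gamma^2-\theta_k^2-\tfrac14\leq 0$; with $\gamma=\tfrac14$ the left-hand side factors as $-\tfrac1{64}(4\theta_k-3)^2\leq 0$, which closes the proof (the same computation with $\gamma=\tfrac12$ yields $-\tfrac18(2\theta_k-1)^2$, recovering Lemma~\ref{lemma:energy}). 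The main obstacle is exactly the treatment of $\norm{x^{k+1}-x^k}^2$: because \algname{AdGD} takes non-monotone steps and there is no global Lipschitz constant, this quantity appears with a net positive coefficient $\tfrac74$ after the two convexity reductions and is uncontrollable by convexity alone; the argument only closes because the \emph{exact} polarization identity donates a compensating $-\norm{x^{k+1}-x^k}^2$ and the step-size rule is invoked twice (once on $\norm{g_k-g_{k-1}}^2$, once inside the triangle bound), so that the enlarged coefficient $\tfrac34$ coming from the new sum term is still swallowed by a perfect square.
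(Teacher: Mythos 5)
Your proof is correct, and every step checks out (in particular, the final scalar identity $\tfrac34(\theta_k+\tfrac14)^2+\tfrac1{16}-\theta_k^2-\tfrac14=-\tfrac1{64}(4\theta_k-3)^2$ is right, and your reduction of \eqref{eq:lemma_ineq2} to the sum-free inequality with coefficient $\tfrac34$ is exactly how the extra $\Sigma$-term enters). However, your route differs from the paper's in the key estimate. The paper never forms the explicit product $\langle \nabla f(x^k),\nabla f(x^{k-1})\rangle$: it first rewrites $\|x^{k+1}-x^k\|^2 = -2\lambda_k\langle \nabla f(x^k),x^{k+1}-x^k\rangle - \|x^{k+1}-x^k\|^2$ (the "$2a^2-a^2$" trick), splits $\nabla f(x^k)=(\nabla f(x^k)-\nabla f(x^{k-1}))+\nabla f(x^{k-1})$, bounds the first cross term by Cauchy--Schwarz, the stepsize rule with $\gamma=\tfrac14$, and Young's inequality as $\tfrac14\|x^k-x^{k-1}\|^2+\tfrac14\|x^{k+1}-x^k\|^2$, and converts the second via the swap identity $2\lambda_k\langle\nabla f(x^{k-1}),x^k-x^{k+1}\rangle = 2\lambda_k\theta_k\langle \nabla f(x^k), x^{k-1}-x^k\rangle$ and convexity into $2\lambda_k\theta_k(f(x^{k-1})-f(x^k))$. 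You instead apply convexity first (turning $f(x^k)-f(x^{k-1})$ into $\langle \nabla f(x^k),x^k-x^{k-1}\rangle$), then use the exact polarization identity on $-2\lambda_k^2\langle g_k,g_{k-1}\rangle$ and a triangle-inequality bound $\|x^{k+1}-x^k\|\leq(\theta_k+\gamma)\|x^k-x^{k-1}\|$ that the paper does not need, invoking the stepsize rule twice. Both arguments hinge on the same mechanism -- a donated $-\|x^{k+1}-x^k\|^2$ (yours from polarization, the paper's from the $2a^2-a^2$ rewrite) -- so your remark that Cauchy--Schwarz/Young is ``fatally lossy'' should be qualified: it is only fatal when applied naively, and the paper's proof uses exactly CS+Young after the rewrite. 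What your version buys is an explicit slack term $-\tfrac1{64}(4\theta_k-3)^2\|x^k-x^{k-1}\|^2$, which shows where the estimate is tight ($\theta_k=\tfrac34$) and makes transparent how $\gamma$ trades against the potential coefficients (your $\gamma=\tfrac12$ computation yielding $-\tfrac18(2\theta_k-1)^2$ indeed recovers Lemma~\ref{lemma:energy}); the paper's version is shorter and stays closer to the original argument of \citet{malitsky2019adaptive}.
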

\begin{proof}
    The proof is almost identical to the one from~\citep{malitsky2019adaptive} and starts as the standard proof for \algname{GD}:
    \begin{align*}
      \|x^{k+1}- x^*\|^2
      &= \|x^k - x^*\|^2 + 2\lr{x^{k+1} - x^{k}, x^k-x^*}+ \|x^{k+1} - x^{k}\|^2
      \\
      &= \|x^k - x^*\|^2 + 2\lambda_k \lr{\nabla f(x^k), x^* - x^k}  + \|x^{k+1} - x^{k}\|^2.
      \\
      &\overset{\eqref{eq:convexity}}{\leq} \|x^k - x^*\|^2 + 2\lambda_k (f(x^*) - f(x^k))  + \|x^{k+1} - x^{k}\|^2.
    \end{align*}
    Introducing $\Sigma_{k+1} = \frac 1 2 \sum_{i=0}^{k} \|x^{i+1} - x^{i}\|^2$, we rewrite the above inequality as
    \begin{eqnarray}
          \|x^{k+1}- x^*\|^2 + \Sigma_{k+1}
          &\le& \|x^k - x^*\|^2 - 2\lambda_k(f(x^k)-f(x^*))  + \|x^{k+1} - x^{k}\|^2\notag\\
          &&\quad + \Sigma_{k} + \frac 1 2 \|x^{k+1} - x^{k}\|^2. \label{eq:norms}
    \end{eqnarray}
    Next, we transform $\norm{x^{k+1}-x^k}^2$
    similarly to the original proof:
    \begin{eqnarray}
      \|x^{k+1} -x^k\|^2 & =& 2 \norm{x^{k+1}-x^k}^2 -
      \norm{x^{k+1}-x^k}^2 \notag\\
      &=& -2\lambda_k \lr{\nabla f(x^k),
                               x^{k+1}-x^k}- \norm{x^{k+1}-x^k}^2\notag
                               \\ 
        &=& 2\lambda_k \lr{\nabla f(x^k)-\nabla f(x^{k-1}), x^{k}-x^{k+1}}\notag\\
        && \quad + 2\lambda_k\lr{\nabla f(x^{k-1}), x^k-x^{k+1}} -  \norm{x^{k+1}-x^k}^2. \label{dif_x}
  \end{eqnarray}
  Next, we apply Cauchy-Schwarz inequality, the definition of $\lambda_k$ with $\gamma = \frac{1}{4}$, and Young's inequality to estimate the first inner-product in the right-hand side:
    \begin{align}\label{cs}
        2\lambda_k \lr{\nabla f(x^k) -\nabla f(x^{k-1}), x^k - x^{k+1}}
        & \leq
        2\lambda_k \norm{\nabla f(x^k) -\nabla f(x^{k-1})} \norm{x^k - x^{k+1}} \notag 
        \\ &\leq
        \frac 1 2 \norm{x^k -x^{k-1}} \norm{x^k - x^{k+1}} \notag \\ &\leq
      \frac 1 4 \norm{x^{k}-x^{k-1}}^2 + \frac{1}{4}\norm{x^{k+1}-x^k}^2.
    \end{align}
    Then, using the convexity of $f$, we handle the second inner product from the right-hand side of \eqref{dif_x}:
    \begin{align}    \label{eq:terrible_simple}
            2\lambda_k\lr{\nabla f(x^{k-1}), x^k-x^{k+1}}
          &=  \frac{2\lambda_k}{\lambda_{k-1}}\lr{x^{k-1} - x^{k},
            x^{k}-x^{k+1}}\notag \\
            & =  2\lambda_k\theta_k \lr{x^{k-1}-x^{k}, \nabla
            f(x^k)}  \notag \\ 
            & \leq  2\lambda_k\theta_k
        (f(x^{k-1})-f(x^k)).
    \end{align}
    Plugging~\eqref{cs} and \eqref{eq:terrible_simple}
    in~\eqref{dif_x}, we get
    \begin{align*}
      \norm{x^{k+1}-x^k}^2 \leq \frac 1 4 \norm{x^k-x^{k-1}}^2 - \frac 3 4
      \norm{x^{k+1}-x^k}^2 + 2\lambda_k\theta_k(f(x^{k-1})-f(x^{k})).
    \end{align*}
    Finally, using the above upper bound for $\norm{x^{k+1}-x^k}^2$ in
    \eqref{eq:norms}, we obtain
    \begin{eqnarray*}
          \|x^{k+1}- x^*\|^2 + \Sigma_{k+1}
          &\le& \|x^k - x^*\|^2 - 2\lambda_k(f(x^k)-f(x^*))\\
          &&\quad  + \frac 1 4 \norm{x^k-x^{k-1}}^2 - \frac 3 4
      \norm{x^{k+1}-x^k}^2 + 2\lambda_k\theta_k(f(x^{k-1})-f(x^{k}))\\
          &&\quad + \Sigma_{k} + \frac 1 2 \|x^{k+1} - x^{k}\|^2\\
          &=& \|x^k - x^*\|^2 +  \frac 1 4 \norm{x^k-x^{k-1}}^2 + 2\lambda_k\theta_k(f(x^{k-1})-f(x^{*})) + \Sigma_{k}\\
          &&\quad - \frac 1 4
      \norm{x^{k+1}-x^k}^2 - 2\lambda_k(1+\theta_k)(f(x^k)-f(x^*)).
    \end{eqnarray*}
    Rearranging the terms, we derive \eqref{eq:lemma_ineq2}.
\end{proof}

The above lemma implies not only the boundedness of the iterates but also the boundedness of $\sum_{i=0}^{k-1} \|x^{i+1} - x^{i}\|^2$ for $k\geq 1$.

\begin{corollary} \label{cor:it_boundness_refined}
    Let Assumption~\ref{as:convexity} with $\mu = 0$ hold, and $x^*$ be any minimizer of $f$. Then, the iterates generated
by Algorithm~\ref{alg:main} with $\gamma=\frac{1}{4}$ satisfy for all $k\geq 1$
    \begin{equation}\label{eq:norm_bound_refined}
        \norm{x^{k+1}-x^*}^2 \leq D^2,
    \end{equation}
    \begin{equation}\label{eq:snorm_bound_refined}
        \norm{x^{k+1}-x^k}^2 \leq 4D^2,
    \end{equation}
    \begin{equation}\label{eq:series_bound}
        \sum_{i=0}^{k-1} \|x^{i+1} - x^{i}\|^2 \leq 2D^2,
    \end{equation}
    where $D > 0$ and $D^2 \eqdef \norm{x^1-x^*}^2  + \frac 3 4 \norm{x^1-x^{0}}^2    + 2\lambda_1 \theta_1 (f(x^{0})-f(x^*))$.
\end{corollary}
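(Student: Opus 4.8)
The plan is to recognize that Lemma~\ref{lemma:energy_2} is exactly a near-monotonicity statement for the potential $\Phi_k$ defined in \eqref{eq:new_potential_forAdGD}, and then to extract all three bounds from $\Phi_k \leq D^2$ together with the term-by-term non-negativity of $\Phi_k$. First I would observe that the right-hand side of \eqref{eq:lemma_ineq2} is \emph{verbatim} $\Phi_k$. Its left-hand side is almost $\Phi_{k+1}$, except that the function-value term carries the coefficient $2\lambda_k(1+\theta_k)$ rather than the $2\lambda_{k+1}\theta_{k+1}$ appearing in $\Phi_{k+1}$.

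To bridge this gap I would invoke the stepsize rule of Algorithm~\ref{alg:main}: since $\lambda_{k+1}\leq\sqrt{1+\theta_k}\,\lambda_k$, squaring and dividing by $\lambda_k$ gives $\lambda_{k+1}\theta_{k+1}=\lambda_{k+1}^2/\lambda_k\leq(1+\theta_k)\lambda_k=\lambda_k(1+\theta_k)$, i.e. $w_k\geq 0$. Because $x^*$ minimizes $f$ we have $f(x^k)-f(x^*)\geq 0$, so replacing the coefficient $2\lambda_k(1+\theta_k)$ by the smaller $2\lambda_{k+1}\theta_{k+1}$ only decreases the left-hand side. Hence the left-hand side of \eqref{eq:lemma_ineq2} dominates $\Phi_{k+1}$, while its right-hand side equals $\Phi_k$, whence $\Phi_{k+1}\leq\Phi_k$ for every $k\geq 1$. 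Next I would compute the base value: taking $k=1$ in \eqref{eq:new_potential_forAdGD}, the telescoping sum reduces to the single term $\tfrac12\norm{x^1-x^0}^2$, which combines with $\tfrac14\norm{x^1-x^0}^2$ to give $\Phi_1=\norm{x^1-x^*}^2+\tfrac34\norm{x^1-x^0}^2+2\lambda_1\theta_1(f(x^0)-f(x^*))=D^2$. Chaining the monotonicity down to the base case yields $\Phi_m\leq D^2$ for all $m\geq 1$.

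Finally, each of the four summands making up $\Phi_m$ is non-negative (the squared norms trivially, the function-value term because $\lambda_m,\theta_m>0$ and $f(x^{m-1})\geq f(x^*)$, and the telescoping term as a sum of squares), so each summand is individually bounded by $\Phi_m\leq D^2$. Applying $\Phi_{k+1}\leq D^2$ to its first two summands gives $\norm{x^{k+1}-x^*}^2\leq D^2$ and $\tfrac14\norm{x^{k+1}-x^k}^2\leq D^2$, i.e. \eqref{eq:norm_bound_refined} and \eqref{eq:snorm_bound_refined}; applying $\Phi_k\leq D^2$ to its last summand gives $\tfrac12\sum_{i=0}^{k-1}\norm{x^{i+1}-x^i}^2\leq D^2$, i.e. \eqref{eq:series_bound}. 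The only genuinely delicate point in the whole argument is the coefficient mismatch in the function-value term, resolved precisely by the inequality $w_k\geq 0$ forced by the adaptive stepsize rule; everything else is bookkeeping, and the key conceptual contribution is already baked into the choice of $\Phi_k$ (in particular the added $\tfrac12\sum\norm{x^{i+1}-x^i}^2$ term) and into Lemma~\ref{lemma:energy_2}.
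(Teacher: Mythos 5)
Your proof is correct and follows essentially the same route as the paper: the paper itself frames the result via the potential $\Phi_k$ from \eqref{eq:new_potential_forAdGD}, shows $\Phi_{k+1}\leq\Phi_k$ by telescoping \eqref{eq:lemma_ineq2} (using $w_k=\lambda_k(1+\theta_k)-\lambda_{k+1}\theta_{k+1}\geq 0$, which is exactly your stepsize-rule argument since $\lambda_{k+1}\theta_{k+1}=\lambda_{k+1}^2/\lambda_k\leq(1+\theta_k)\lambda_k$), and then reads off the three bounds from $\Phi_m\leq\Phi_1=D^2$ and term-by-term non-negativity. Your base-case computation $\Phi_1=D^2$ and the extraction of \eqref{eq:norm_bound_refined}, \eqref{eq:snorm_bound_refined}, and \eqref{eq:series_bound} are all accurate.
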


Using the above results, we derive the following theorem.

\begin{theorem}[Theorem~\ref{thm:AdGD_good_result}]\label{thm:AdGD_good_result_appendix}
    Let Assumptions~\ref{as:convexity} with $\mu = 0$ and \ref{as:symmetric_L0_L1} hold. For all $N \geq 1$ we define point $\hat{x}^N \eqdef \frac{1}{S_N}\left(\lambda_N (1+\theta_N) + \sum_{k=1}^N w_k x^k\right)$, where $w_k \eqdef \lambda_k(1+\theta_k) - \lambda_{k+1}\theta_{k+1}$, $S_N \eqdef \lambda_1\theta_1 + \sum_{k=1}^N \lambda_k$, and $\{x^k\}_{k\geq 0}$ are the iterates produced by \algname{AdGD} with $\gamma = \nicefrac{1}{4}$. Then, for $N > mK - \frac{\sqrt{2N}(m+1)L_1D}{\nu}$ iterate $\hat{x}^N$ satisfies
    \begin{equation}
        f(\hat{x}^N) - f(x^*) \leq \frac{2L_0 D^2}{\nu(N - mK) - \sqrt{2N}(m+1)L_1D}, \label{eq:AdGD_convergence_symmetric_refined_appendix}
    \end{equation}
    where $D > 0$ and $D^2 \eqdef \norm{x^1-x^*}^2  + \frac 3 4 \norm{x^1-x^{0}}^2    + 2\lambda_1 \theta_1 (f(x^{0})-f(x^*))$, $m \eqdef 1 + \log_{\sqrt{2}}\left\lceil\frac{(1 + L_1D \exp{\left(2L_1 D \right)})}{2}\right\rceil$, $K \eqdef \frac{2L_1^2D^2}{\nu^2}$, and $\nu = e^{-\nu}$. In particular, for $N \geq \left(2mK + \frac{4(m+1)L_1D}{\nu}\right)^2$, we have
    \begin{equation}
        f(\hat{x}^N) - f(x^*) \leq \frac{4L_0D^2}{\nu N}. \label{eq:AdGD_convergence_symmetric_refined_simple_appendix}
    \end{equation}
\end{theorem}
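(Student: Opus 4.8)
The plan is to follow the skeleton of the proof of Theorem~\ref{thm:AdGD_bad_result_appendix}: telescoping the refined energy inequality \eqref{eq:lemma_ineq2} together with Jensen's inequality reduces the whole statement to the single estimate
\[
f(\hat{x}^N) - f(x^*) \le \frac{D^2}{2S_N}, \qquad S_N = \lambda_1\theta_1 + \sum_{k=1}^N \lambda_k,
\]
with $D^2 = \|x^1-x^*\|^2 + \tfrac34\|x^1-x^0\|^2 + 2\lambda_1\theta_1(f(x^0)-f(x^*))$, the extra $\tfrac12\|x^1-x^0\|^2$ coming precisely from the new $\Sigma$-term in Lemma~\ref{lemma:energy_2}. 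Everything then reduces to a sharp lower bound on $S_N$, and the whole improvement over \eqref{eq:AdGD_convergence_symmetric_appendix} lives in replacing the crude uniform bound $\lambda_k \ge \lambda_{\min}$, where $\lambda_{\min}$ carries the exponential factor $(1+L_1D\exp(L_1D))\exp(\sqrt2 L_1D)$, by an estimate showing that $\lambda_k$ is of the good order $\Theta(\nu/L_0)$ for all but a controlled number of indices.

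The first new ingredient is the bound $\sum_{i=0}^{N-1}\|x^{i+1}-x^i\|^2 \le 2D^2$ from Corollary~\ref{cor:it_boundness_refined}, unavailable in the $\gamma=\tfrac12$ analysis. A counting argument shows that at most $K = \tfrac{2L_1^2D^2}{\nu^2}$ indices $k$ satisfy $\|x^k-x^{k-1}\| > \nu/L_1$; for every other index the exponential factor is tame, $\exp(L_1\|x^k-x^{k-1}\|)\le e^{\nu} = 1/\nu$. Combined with the lower bound on the second branch of the step-size rule coming from \eqref{eq:symmetric_L0_L1_corollary},
\[
\frac{\gamma\|x^k-x^{k-1}\|}{\|\nabla f(x^k)-\nabla f(x^{k-1})\|} \ge \frac{\gamma}{(L_0+L_1\|\nabla f(x^{k-1})\|)\exp(L_1\|x^k-x^{k-1}\|)},
\]
a step with a small jump and a small previous gradient ($\|\nabla f(x^{k-1})\|\le L_0/L_1$, so $L_0+L_1\|\nabla f(x^{k-1})\|\le 2L_0$) has step-size bounded below by $\Theta(\nu/L_0)$. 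A large gradient at $x^{k-1}$ forces either a large jump $\|x^k-x^{k-1}\|=\lambda_{k-1}\|\nabla f(x^{k-1})\|$, already counted, or an already-collapsed step-size, so the only mechanism that can spoil the good lower bound is a large jump.

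The second new ingredient is a recovery argument for the $\min$ in the step-size rule. After a large jump the gradient branch can collapse $\lambda_k$ down to the worst-case $\lambda_{\min}$, but on the growth branch $\lambda_{k+1}\le\sqrt{1+\theta_k}\,\lambda_k$ with $\theta_k\ge1$ during recovery forces $\lambda_{k+1}\ge\sqrt2\,\lambda_k$; hence $\lambda$ climbs back to the good level in at most $m-1$ steps, where $m-1 = \log_{\sqrt2}\lceil (1+L_1D\exp(2L_1D))/2\rceil$ is exactly $\log_{\sqrt2}$ of the worst-case ratio between the good level and $\lambda_{\min}$. Consequently the set of indices where $\lambda_k$ fails to be $\Theta(\nu/L_0)$ has size at most $mK$, so at least $N-mK$ indices are good. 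Summing the good contributions while keeping the factor $\exp(-L_1\|x^k-x^{k-1}\|)\ge 1-L_1\|x^k-x^{k-1}\|$ rather than discarding it, together with Cauchy--Schwarz $\sum_{k}\|x^k-x^{k-1}\| \le \sqrt{N}\bigl(\sum_k\|x^k-x^{k-1}\|^2\bigr)^{1/2}\le \sqrt{2N}\,D$, yields
\[
S_N \ge \frac{\nu(N-mK)-\sqrt{2N}(m+1)L_1D}{4L_0},
\]
which is exactly \eqref{eq:AdGD_convergence_symmetric_refined_appendix}. I expect this recovery/counting step to be the main obstacle: one must make the epoch decomposition rigorous in the presence of the coupled $\min$ (a collapse at one index propagates through several subsequent ones), control the interaction between large-jump and large-gradient events so that both are charged to the same $\le K$ large jumps, and track constants carefully so that the logarithmic recovery length collapses to the stated $m$ and $(m+1)$ factors.

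Finally, the simplified bound \eqref{eq:AdGD_convergence_symmetric_refined_simple_appendix} follows by substituting the threshold $N \ge (2mK + 4(m+1)L_1D/\nu)^2$ into \eqref{eq:AdGD_convergence_symmetric_refined_appendix}: this choice makes each of $\nu mK$ and $\sqrt{2N}(m+1)L_1D$ at most $\tfrac14\nu N$, so the denominator is at least $\tfrac12\nu N$ and the rate becomes $f(\hat x^N)-f(x^*)\le 4L_0D^2/(\nu N)$, a routine final estimate.
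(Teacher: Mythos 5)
Your overall architecture coincides with the paper's: the refined potential of Lemma~\ref{lemma:energy_2} with $\gamma=\nicefrac{1}{4}$ yields Corollary~\ref{cor:it_boundness_refined} (in particular $\sum_{i}\|x^{i+1}-x^i\|^2\le 2D^2$), telescoping plus Jensen reduces everything to $f(\hat{x}^N)-f(x^*)\le \nicefrac{D^2}{2S_N}$, the count $K=\nicefrac{2L_1^2D^2}{\nu^2}$ of large jumps and the geometric recovery in $m$ steps along the branch $\lambda_k=\sqrt{1+\theta_{k-1}}\lambda_{k-1}$ are exactly the paper's sets $\cM$, $\cT_1$, $\cT_3$, and Cauchy--Schwarz produces the $\sqrt{2N}(m+1)L_1D$ correction, as in \eqref{eq:sum_lambda_k_lower_bound}. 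However, your handling of the gradient branch $\lambda_k=\frac{\|x^k-x^{k-1}\|}{4\|\nabla f(x^k)-\nabla f(x^{k-1})\|}$ has a genuine gap, which you flag but do not close. Your dichotomy --- small gradient at $x^{k-1}$ gives $L_0+L_1\|\nabla f(x^{k-1})\|\le 2L_0$; large gradient forces ``either a large jump, already counted, or an already-collapsed step-size'' --- is circular in the last case: a collapsed step-size is precisely the bad event whose frequency you are trying to bound, and it is not charged to any large jump. Concretely, in a region of sustained large gradients ($\|\nabla f\|\gg \nicefrac{L_0}{L_1}$) the gradient branch can bind for many consecutive iterations with jumps of order $\nicefrac{\nu}{8L_1}$ --- below your $\nicefrac{\nu}{L_1}$ threshold, hence not counted in $K$ --- while $\lambda_k$ stays at the level $\nicefrac{1}{L_1\|\nabla f(x^k)\|}\ll \nicefrac{\nu}{4L_0}$; since the growth branch never binds there, your $m$-step recovery never starts, and the number of bad indices is not bounded by $mK$. (Lowering the jump threshold would repair the counting only at the cost of inflating $K$ by a large constant, so the stated constants would not come out.)

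The paper avoids this case analysis altogether by an algebraic rearrangement your proposal is missing: applying \eqref{eq:symmetric_L0_L1_corollary} with $y=x^k$ gives $\lambda_k \ge \frac{1}{4(L_0+L_1\|\nabla f(x^k)\|)\exp(L_1\|x^k-x^{k-1}\|)}$, and since $\lambda_k L_1\|\nabla f(x^k)\| = L_1\|x^{k+1}-x^k\|$, this resolves into the bound \eqref{eq:lower_bound_lambda_AdGD_option_2}, i.e.\ $\lambda_k \ge \frac{\exp(-L_1\|x^k-x^{k-1}\|)}{4L_0} - \frac{L_1}{4L_0}\|x^{k+1}-x^k\|$, valid \emph{regardless} of the gradient's magnitude. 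Individual $\lambda_k$'s may thus receive vacuous (even negative) lower bounds, but the deficits are summable: $\sum_k\|x^{k+1}-x^k\|\le\sqrt{2N}\,D$ by Cauchy--Schwarz, which is exactly where $-\nicefrac{\sqrt{2N}(m+1)L_1D}{4L_0}$ comes from, the factor $m+1$ arising because the correction attached to a gradient-branch index $t-1$ is reused for the first $m$ indices of the ensuing growth run (those are lower-bounded by $\lambda_{t-1}$ rather than by $\nicefrac{\nu}{4L_0}$). Your proposed mechanism for this term --- linearizing $\exp(-L_1 s)\ge 1-L_1 s$ --- acts only on the exponential of the jump, which is already tamed by $\exp(-\nu)=\nu$ on small-jump indices; it does nothing about the $(L_0+L_1\|\nabla f\|)$ denominator, which is the actual source of the exponential blow-up in \eqref{eq:AdGD_convergence_symmetric_appendix}. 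Your final step deducing \eqref{eq:AdGD_convergence_symmetric_refined_simple_appendix} from \eqref{eq:AdGD_convergence_symmetric_refined_appendix} is routine and matches the paper.
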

\begin{proof}
    Using Lemma~\ref{lem:known_technical_lemma}, we obtain
    \begin{equation}\label{eq:gen_smooth_unified}
        \left\| \nabla f(x^{k}) - \nabla f(x^{k-1}) \right\| \leq \|x^k - x^{k-1}\| \left( L_0 + L_1 \left\| \nabla f(x^k)\right\| \right)\exp{\left(L_1 \norm{x^{k}-x^{k-1}}\right)}.
    \end{equation}
    Moreover, since\footnote{In practice $\theta_0$ it is sufficient to take $\theta_0 \geq \frac{\norm{x^{1}-x^{0}}^2}{16\lambda_0^2\norm{\nabla f(x^{1})-\nabla f(x^{0})}^2} - 1$.} $\theta_0 = +\infty$, we have $\lambda_{1} = \frac{\norm{x^{1}-x^{0}}}{4\norm{\nabla f(x^{1})-\nabla f(x^{0})}}$.

    Next, for $k > 1$ we have either $\lambda_k = \sqrt{1+\theta_{k-1}}\lambda_{k-1}$ or $\lambda_{k} = \frac{\norm{x^{k}-x^{k-1}}}{4\norm{\nabla f(x^{k})-\nabla f(x^{k-1})}}$. For convenience of the analysis of these two options, we let $\cK$ be the set of indices $k > 1$ such that $\lambda_k = \sqrt{1+\theta_{k-1}}\lambda_{k-1}$ and $\lambda_{k-1} = \frac{\norm{x^{k-1}-x^{k-2}}}{4\norm{\nabla f(x^{k-1})-\nabla f(x^{k-2})}}$.


    
    \textbf{Option 1: $\lambda_k = \sqrt{1+\theta_{k-1}}\lambda_{k-1}$.} Then, by definition of $\cK$, there exists $\tau \geq 1$ and index $t$ such that $t \in \cK$, $\lambda_l = \sqrt{1+\theta_{l-1}}\lambda_{l-1}$ for all $l \in \{t, t+1,\ldots, t+\tau-1\}$, $k \in \{t, t+1, \ldots, t+\tau-1\}$, and $\lambda_{t+\tau} = \frac{\norm{x^{t+\tau}-x^{t+\tau-1}}}{4\norm{\nabla f(x^{t+\tau})-\nabla f(x^{t+\tau-1})}}$, i.e., $k$ belongs to some sub-sequence of indices such that Option 1 holds. Following exactly the same steps as in the derivation of \eqref{eq:lambda_lower_bound}, we conclude that  
    $$
        \lambda_k \geq \frac{1}{2 L_0\exp{\rbr{L_1 D}}(1 + DL_1\exp{\left(L_1 D \right)} )} =: \lambda_{\text{min}}
    $$ 
    for any $k \geq 1$. Since $\theta_l \geq 1$ for all $l \in \{t, t+1,\ldots, t+\tau-1\}$, we get that $\lambda_l \geq \sqrt{2}\lambda_{l-1} \geq 2^{\frac{l-t}{2}}\lambda_t \geq 2^{\frac{l-t}{2}}\lambda_{t-1}$ for $l \in \{t+1,\ldots, t+\tau-1\}$, meaning that for $l-t$ larger than 
    $1 + \log_{\sqrt{2}}\left\lceil\frac{\nu}{4L_0\lambda_{\text{min}}}\right\rceil \leq 1 + \log_{\sqrt{2}}\left\lceil\frac{\nu\exp{\rbr{L_1 D}}(1 + DL_1\exp{\rbr{L_1 D}} )}{2}\right\rceil =: m$ we have $\lambda_l \geq \frac{\nu}{4L_0}$, where $\nu = e^{-\nu}$. Putting all together, we conclude that
    \begin{equation}
        \lambda_{l} \geq 
            \begin{cases}
                \lambda_{t-1},  & \text{for } l \in \{t,t+1,\ldots,t+m\},
                \\
                \frac{\nu}{4 L_0}, & \text{for } l \in \{t+m+1, t+m+2, \ldots, t+\tau-1\}.
            \end{cases} \label{eq:lower_bound_lambda_AdGD_option_1}
    \end{equation}

    \textbf{Option 2: $\lambda_k = \frac{\norm{x^{k}-x^{k-1}}}{4\norm{\nabla f(x^{k})-\nabla f(x^{k-1})}}$.} Then, using  \eqref{eq:gen_smooth_unified}, we get
    \begin{equation*}
        \lambda_k = \frac{\norm{x^k-x^{k-1}}}{4\norm{\nabla f(x^{k})-\nabla f(x^{k-1})}} 
        \geq \frac{\exp{\rbr{-L_1 \norm{x^k-x^{k-1}}}}}{L_0 + L_1\|\nabla f(x^k)\|} =
        \frac{\lambda_k\exp{\rbr{-L_1 \norm{x^k-x^{k-1}}}}}{4\rbr*{\lambda_k L_0 + L_1\norm{x^{k+1} -x^k}}},
    \end{equation*}
    implying that
    \begin{equation}
        \lambda_k \geq \frac{\exp{\rbr{-L_1 \norm{x^k-x^{k-1}}}}}{4L_0} -  \frac{L_1}{4L_0}\norm{x^{k+1} -x^k}. \label{eq:lower_bound_lambda_AdGD_option_2}
     \end{equation}

    To continue the proof, we split the set of indices $\{1,2, \ldots, N\}$ into three disjoint sets $\cT_1, \cT_2, \cT_3$ defined as follows: $\cT_2 \eqdef \left\{k \in \{1,2, \ldots, N\} \mid \lambda_k = \frac{\norm{x^{k}-x^{k-1}}}{4\norm{\nabla f(x^{k})-\nabla f(x^{k-1})}}\right\}$, $\cT_1 \eqdef \left\{k \in \{1,2, \ldots, N\} \mid \lambda_k = \sqrt{1+\theta_{k-1}}\lambda_{k-1} \text{ and } \exists t\in \cK \text{ such that } t \leq  k \leq t+m\right\}$, $\cT_{3} \eqdef \{1,2,\ldots, N\} \setminus (\cT_1 \cup \cT_2)$. Then, taking into account the lower bounds \eqref{eq:lower_bound_lambda_AdGD_option_1} and \eqref{eq:lower_bound_lambda_AdGD_option_2}, we have $\forall k \in \{1,2,\ldots, N\}$
    \begin{eqnarray*}
        \lambda_k &\geq& \begin{cases}
            \lambda_{t-1},& \text{if } k \in \cT_1 \text{, where } t \in \cK \text{ and } 0 \leq k -t \leq m,\\
            \frac{\exp{\rbr{-L_1 \norm{x^k-x^{k-1}}}}}{4L_0} -  \frac{L_1}{4L_0}\norm{x^{k+1} -x^k},& \text{if } k \in \cT_2,\\
            \frac{\nu}{4L_0},& \text{if } k \in \cT_3,
        \end{cases}\\
        &\overset{t-1\in \cT_2, \eqref{eq:lower_bound_lambda_AdGD_option_2}}{\geq}& \begin{cases}
            \frac{\exp{\rbr{-L_1 \norm{x^{t-1}-x^{t-2}}}}}{4L_0} -  \frac{L_1}{4L_0}\norm{x^{t} -x^{t-1}},& \text{if } k \in \cT_1 \text{, where } t \in \cK \text{ and } 0 \leq k -t \leq m,\\
            \frac{\exp{\rbr{-L_1 \norm{x^k-x^{k-1}}}}}{4L_0} -  \frac{L_1}{4L_0}\norm{x^{k+1} -x^k},& \text{if } k \in \cT_2,\\
            \frac{\nu}{4L_0},& \text{if } k \in \cT_3.
        \end{cases}
    \end{eqnarray*}

    Also the number of steps when $L_1 \norm{x^k-x^{k-1}} \geq \nu$ holds is bounded by
    \begin{equation*}
        K \eqdef \frac{2L_1^2D^2}{\nu^2},
    \end{equation*}
    since $\sum\limits_{k=0}^N\|x^{k+1} - x^k\|^2 \leq 2D^2$. For convenience, we introduce a new set of indices $\cM \eqdef \left\{ k \in \{1,2, \ldots, N\} \mid L_1 \norm{x^k-x^{k-1}} \leq \nu \right\}$. Then, for $k \in \cM$ we have
    \begin{equation}
        \exp{\rbr{-L_1 \norm{x^k-x^{k-1}}}}  \geq \exp\rbr{-\nu} = \nu. \label{eq:djfnvkjdnfvkdnfk}
    \end{equation}
    Therefore, we can lower bound the sum of stepsizes as follows:
    \begin{eqnarray}\label{eq:sum_lambda_k_lower_bound}
        \sum_{k=1}^{N} \lambda_k &\geq& \sum\limits_{k \in \cT_1, t\in \cK, 0\leq k-t\leq m} \frac{\exp{\rbr{-L_1 \norm{x^{t}-x^{t-1}}}}}{4L_0}  + \sum\limits_{k\in \cT_2} \frac{\exp{\rbr{-L_1 \norm{x^k-x^{k-1}}}}}{4L_0} \notag
        \\
        &&+ \sum\limits_{k\in \cT_3}  \frac{\nu}{4L_0} - \frac{L_1}{4L_0}\sum\limits_{k\in \cT_2}\|x^{k+1} - x^k\| - \frac{m L_1}{4L_0}\sum\limits_{t \in \cT_2: t+1 \in \cK} \|x^{t+1} - x^t\| \notag
        \\
        &\geq& \sum\limits_{k \in \cT_1 \cap \cM, t\in \cK, 0\leq k-t\leq m} \frac{\exp{\rbr{-L_1 \norm{x^{t}-x^{t-1}}}}}{4L_0}  + \sum\limits_{k\in \cT_2 \cap \cM} \frac{\exp{\rbr{-L_1 \norm{x^k-x^{k-1}}}}}{4L_0} \notag
        \\
        &&+ \sum\limits_{k\in \cT_3}  \frac{\nu}{4L_0} - \frac{(m+1)L_1}{4L_0}\sum\limits_{k\in \cT_2}\|x^{k+1} - x^k\| \notag\\
        &\overset{\eqref{eq:djfnvkjdnfvkdnfk}}{\geq}& \frac{\nu(N - mK)}{4L_0} - \frac{(m+1)L_1}{4L_0} \sum\limits_{k=0}^N \|x^{k+1} - x^k\| \notag\\
        &\geq& \frac{\nu(N - mK)}{4L_0} - \frac{(m+1)L_1}{4L_0} \sqrt{N \sum\limits_{k=0}^N \|x^{k+1} - x^k\|^2} \notag\\
        &\overset{\eqref{eq:series_bound}}{\geq}& \frac{\nu(N - mK)}{4L_0} - \frac{\sqrt{2N}(m+1)L_1 D}{4L_0}.
    \end{eqnarray}
    Since $S_N \geq \sum_{k=1}^{N} \lambda_k$ (see the definition in \eqref{eq:S_k_Def}) we have from \eqref{eq:AdGD_bound_with_S_k} and the above lower bound on $\sum_{k=1}^{N} \lambda_k$ that
    \begin{equation}
        f(\hat{x}^N) - f(x^*) \leq \frac{D^2}{2S_N} \overset{\eqref{eq:sum_lambda_k_lower_bound}}{\leq} \frac{2L_0 D^2}{\nu(N - mK) - \sqrt{2N}(m+1)L_1D}.\label{eq:final_AdGD}
    \end{equation}
    In particular, we derived \eqref{eq:AdGD_convergence_symmetric_refined_appendix} under Assumption~\ref{as:symmetric_L0_L1} holds, and when $N \geq \left(2mK + \frac{4(m+1)L_1D}{\nu}\right)^2$, we have $\nu(N - mK) - \sqrt{2N}(m+1)L_1D \geq \frac{\nu N}{2}$, which in combination with \eqref{eq:final_AdGD} implies \eqref{eq:AdGD_convergence_symmetric_refined_simple_appendix}.

\end{proof}


\subsection{Convergence in the Strongly Convex Case}\label{appendix:adgd_str_cvx}

To show an improved result in the strongly convex case ($\mu>0$ in Assumptions~\ref{as:convexity}), we consider Algorithm~\ref{alg:main} with more a more conservative stepsize selection rule:
\begin{equation}
    \lambda_k = \min\left\{ \sqrt{1+\frac{3\theta_{k-1}}{4}}\lambda_{k-1} , \frac{\norm{x^{k}-x^{k-1}}}{4\norm{\nabla
        f(x^{k})-\nabla f(x^{k-1})}}\right\}. \label{eq:lambda_str_cvx_case}
\end{equation}

For these stepsizes, Lemma~\ref{lemma:energy_2} holds as well. However, in contrast to the convex case, we will use Assumption~\ref{as:asymmetric_L0_L1} instead of Assumption~\ref{as:symmetric_L0_L1}. The key reason for this is that we need to use \eqref{eq:grad_difference_bound_through_bregman_div} for $x = x^k$ and $y = x^*$ that not necessarily satisfy \eqref{eq:proximity_rule}. In contrast, inequality \eqref{eq:grad_difference_bound_through_bregman_div} holds for any $x,y\in \R^d$ under the Assumption~\ref{as:asymmetric_L0_L1} and convexity.

\begin{theorem}\label{thm:AdGD_good_result_cv_appendix}
    Let Assumptions~\ref{as:convexity} with $\mu > 0$ and \ref{as:asymmetric_L0_L1} hold. For all $N \geq 1$ we define the Lyapunov function
    \begin{eqnarray*}
        \Psi_{k} &=&  \left(1 - \frac{\lambda_k\mu}{4}\right)\n{x^k-x^*}^2  + \frac 1 4\left(1 + (1-\alpha^*)\frac{8\mu}{L_0}\right) \n{x^k-x^{k-1}}^2  \\
        &&\quad + 2\lambda_k \theta_k (f(x^{k-1})-f_*),
    \end{eqnarray*}
    where $\{x^k\}_{k\geq 0}$ are the iterates produced by \algname{AdGD} with $\lambda_k$ defined in \eqref{eq:lambda_str_cvx_case}, and $\alpha^* =  \frac{73-\sqrt{3281}}{16} \approx 0.98$.
    Then, for $N > \sqrt{2N}(m+1)L_1D$ Lyapunov function $\Psi_{N+1}$ satisfies
    \begin{equation}
        \Psi_{N+1}
            \le \rbr*{1  - \frac{\alpha^*\mu}{8L_0} + \frac{\alpha^*\mu(m+1)L_1D}{4\sqrt{2N}L_0}}^N \Psi_1, 
        \label{eq:AdGD_convergence_sc_asymmetric_refined_appendix}
    \end{equation}
    where $D > 0$ and $D^2 \eqdef \norm{x^1-x^*}^2  + \frac 3 4 \norm{x^1-x^{0}}^2    + 2\lambda_1 \theta_1 (f(x^{0})-f(x^*))$, and $m \eqdef 1 + \log_{\sqrt{\frac{7}{4}}}\left\lceil\frac{1 + L_1D }{2}\right\rceil$. In particular, for $N \geq 8(m+1)^2L_1^2 D^2$, we have
    \begin{equation}
        \Psi_{N+1}
            \le \rbr*{1  - \frac{\alpha^*\mu}{16L_0} }^N \Psi_1 . 
        \label{eq:AdGD_convergence_sc_asymmetric_refined_simple_appendix}
    \end{equation}
\end{theorem}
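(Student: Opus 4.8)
The plan is to establish a one-step contraction of the Lyapunov function $\Psi_k$ and then multiply these contractions across $k=1,\ldots,N$, controlling the resulting product through a lower bound on $\sum_k \lambda_k$. The reason to work under Assumption~\ref{as:asymmetric_L0_L1} rather than Assumption~\ref{as:symmetric_L0_L1} is that the contraction requires applying \eqref{eq:grad_difference_bound_through_bregman_div} with $x=x^k$ and $y=x^*$; since $\|x^k-x^*\|$ need not be small, the proximity rule \eqref{eq:proximity_rule} may fail, whereas under asymmetric smoothness inequality \eqref{eq:grad_difference_bound_through_bregman_div} holds globally with $\nu=1$. Throughout I would reuse the machinery already built for the convex case: Lemma~\ref{lemma:energy_2} stays valid for the conservative stepsize rule \eqref{eq:lambda_str_cvx_case} (its proof only uses the cap $\lambda_k \le \nicefrac{\|x^k-x^{k-1}\|}{(4\|\nabla f(x^k)-\nabla f(x^{k-1})\|)}$, which \eqref{eq:lambda_str_cvx_case} still enforces), so Corollary~\ref{cor:it_boundness_refined}, and in particular the boundedness $\sum_i\|x^{i+1}-x^i\|^2 \le 2D^2$ from \eqref{eq:series_bound}, carries over unchanged.

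The heart of the argument is the one-step inequality $\Psi_{k+1}\le (1-\nicefrac{\alpha^*\mu\lambda_k}{2})\Psi_k$. I would start exactly as in Lemma~\ref{lemma:energy_2}, but replace plain convexity by $\mu$-strong convexity \eqref{eq:convexity}, so that the step $2\lambda_k\langle\nabla f(x^k),x^*-x^k\rangle \le 2\lambda_k(f(x^*)-f(x^k))-\lambda_k\mu\|x^k-x^*\|^2$ produces the contraction coefficient $(1-\nicefrac{\lambda_k\mu}{4})$ that sits in front of $\|x^k-x^*\|^2$ in $\Psi_k$. To contract the momentum term $\|x^{k+1}-x^k\|^2=\lambda_k^2\|\nabla f(x^k)\|^2$ and the function-gap term, I would apply \eqref{eq:grad_difference_bound_through_bregman_div} with $x=x^k,\,y=x^*$, which under asymmetric smoothness gives $\nicefrac{\|\nabla f(x^k)\|^2}{(2L_0)} \le f(x^*)-f(x^k)+\langle\nabla f(x^k),x^k-x^*\rangle$, and combine it with $\langle\nabla f(x^k),x^k-x^*\rangle \ge f(x^k)-f(x^*)+\nicefrac{\mu}{2}\|x^k-x^*\|^2$. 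How much of the strong-convexity budget is spent on each of the three Lyapunov components is a free parameter; optimizing it to maximize the common contraction rate is precisely what forces the quadratic $8(\alpha^*)^2-73\alpha^*+64=0$, whose smaller root is $\alpha^*=\nicefrac{(73-\sqrt{3281})}{16}\approx 0.98$, while the weaker growth factor $\sqrt{7/4}$ in \eqref{eq:lambda_str_cvx_case} is chosen to keep this balance consistent. I expect this coefficient bookkeeping to be the main obstacle.

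Once the one-step contraction holds, multiplying over $k=1,\ldots,N$ gives $\Psi_{N+1}\le\prod_{k=1}^N(1-\nicefrac{\alpha^*\mu\lambda_k}{2})\,\Psi_1$, and by the AM--GM inequality $\prod_{k=1}^N(1-a_k)\le(1-\nicefrac{1}{N}\sum_k a_k)^N$ (valid since each factor lies in $[0,1]$, which follows from $\mu\le L_0$ and the boundedness of $\lambda_k$) this is at most $(1-\nicefrac{\alpha^*\mu}{2N}\sum_{k=1}^N\lambda_k)^N\Psi_1$. It therefore remains to lower bound $\sum_{k=1}^N\lambda_k$, which I would do by repeating the $\cT_1,\cT_2,\cT_3$ decomposition of the convex-case proof. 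Here the asymmetric assumption pays off a second time: because \eqref{eq:asymmetric_L0_L1} carries no exponential factor, the Option-2 bound becomes simply $\lambda_k\ge\nicefrac{1}{(4L_0)}-\nicefrac{L_1}{L_0}\|x^{k+1}-x^k\|$ with no $\exp(-L_1\|x^k-x^{k-1}\|)$ term, so the auxiliary index set and the step count $K$ used in the convex case are no longer needed, and the geometric-growth argument on $\cT_1$ now uses ratio $\sqrt{7/4}$, yielding $m=1+\log_{\sqrt{7/4}}\lceil(1+L_1D)/2\rceil$. Summing the three cases and applying Cauchy--Schwarz with \eqref{eq:series_bound} gives $\sum_{k=1}^N\lambda_k\ge\nicefrac{N}{(4L_0)}-\nicefrac{\sqrt{2N}(m+1)L_1D}{(4L_0)}$.

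Substituting this bound into $(1-\nicefrac{\alpha^*\mu}{2N}\sum_k\lambda_k)^N$ produces exactly \eqref{eq:AdGD_convergence_sc_asymmetric_refined_appendix}, provided $N$ is large enough that the subtracted correction keeps the base inside $[0,1)$, i.e. $N>\sqrt{2N}(m+1)L_1D$. Finally, imposing $N\ge 8(m+1)^2L_1^2D^2$ makes $\sqrt{2N}(m+1)L_1D\le\nicefrac{N}{2}$, so the correction eats at most half of the $\nicefrac{\alpha^*\mu}{8L_0}$ rate, which gives the clean bound \eqref{eq:AdGD_convergence_sc_asymmetric_refined_simple_appendix}. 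The only genuinely delicate part is the per-step Lyapunov contraction and the extraction of $\alpha^*$; the telescoping via AM--GM and the stepsize lower bound are direct adaptations of the convex-case argument.
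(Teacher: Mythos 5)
Your proposal matches the paper's proof essentially step for step: the same reuse of Lemma~\ref{lemma:energy_2} and Corollary~\ref{cor:it_boundness_refined} under the conservative rule \eqref{eq:lambda_str_cvx_case}, the same $\alpha$-weighted convex combination of the strong-convexity bound with the cocoercivity-type inequality \eqref{eq:grad_difference_bound_through_bregman_div} applied at $x = x^k$, $y = x^*$ (valid globally with $\nu = 1$ under Assumption~\ref{as:asymmetric_L0_L1}), the same quadratic $8\alpha^2 - 73\alpha + 64 = 0$ yielding $\alpha^* = \frac{73 - \sqrt{3281}}{16}$, and the same $\cT_1,\cT_2,\cT_3$ stepsize decomposition (exponential-free, no $K$ term, $m$ via $\log_{\sqrt{7/4}}$) followed by AM--GM telescoping and the final simplification for $N \geq 8(m+1)^2L_1^2D^2$. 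One small correction: the one-step contraction is $\Psi_{k+1} \leq \left(1 - \frac{\alpha^*\mu}{2}\min\left\{\lambda_k, \frac{1}{4L_0}\right\}\right)\Psi_k$ rather than $\left(1 - \frac{\alpha^*\mu\lambda_k}{2}\right)\Psi_k$ as you state, because the momentum term $\|x^{k+1}-x^k\|^2$ contracts only at the fixed rate $\frac{\alpha^*\mu}{8L_0}$ no matter how large $\lambda_k$ becomes (and $\lambda_k$ can reach $\frac{1}{4\mu} > \frac{1}{4L_0}$ when $\mu < L_0$); since your Option-1/Option-2 lower bounds never exceed $\frac{1}{4L_0}$, replacing $\lambda_k$ by this minimum leaves every downstream step of your argument unchanged.
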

\begin{proof}
    The proof follows the one from \citep{malitsky2019adaptive}. First of all, we note that the stricter inequality $\lambda_k\leq \sqrt{1+\frac{3\theta_{k-1}}{4}}\lambda_{k-1}$ is not  used in the derivation of Lemma~\ref{lemma:energy_2}. Therefore, Lemma~\ref{lemma:energy_2} holds as well as Corollary~\ref{cor:it_boundness_refined}. Next, we make certain steps in the analysis tighter to use the fact that $\mu > 0$. Strong convexity implies
    \begin{eqnarray}\label{eq:sc1}
       \lambda_k \lr{\nabla f(x^k), x^*-x^{k}} & \leq \lambda_k(f(x^*) - f(x^k)) - \lambda_k\frac{\mu}{2}\|x^* - x^k\|^2,
    \end{eqnarray}
    and
    $\n{\nabla f(x^k)-\nabla f(x^{k-1})}\geq \mu \n{x^k-x^{k-1}}$. The latter implies
    that $\lambda_k \leq \frac{1}{4\mu}$ for $k\geq 1$. Since Lemma~\ref{lem:new_results_for_L0_L1} holds under Assumption~\ref{as:asymmetric_L0_L1} with $\nu = 1$ and without condition \eqref{eq:proximity_rule}, and bound $\lambda_k\le \frac{1}{4\mu}$ holds, we have
    \begin{eqnarray}
       \lambda_k \lr{\nabla f(x^k), x^*-x^{k}} &\overset{\eqref{eq:grad_difference_bound_through_bregman_div}}{\leq}& \lambda_k(f(x^*) - f(x^k)) - \frac{\lambda_k}{2(L_0+L_1\|\nabla f(x^*)\|)}\|\nabla f(x^k)\|^2 \notag
       \\
       &=& \lambda_k(f_* - f(x^k)) - \frac{1}{2L_0\lambda_k}\|x^{k+1}-x^k\|^2 \notag
       \\
       &\overset{\lambda_k\le \frac{1}{4\mu}}{\le}& \lambda_k(f_* - f(x^k)) - \frac{2\mu}{L_0}\|x^{k+1}-x^k\|^2. \label{eq:sc2}
    \end{eqnarray}
    Convex combination of~\eqref{eq:sc1} and~\eqref{eq:sc2} with $\alpha \in (0,1)$, which will be specified latter, gives 
    \begin{eqnarray*}
       \lambda_k \lr{\nabla f(x^k), x^*-x^{k}} \leq \lambda_k(f_* - f(x^k)) -
        \alpha\frac{\lambda_k\mu}{2}\|x^k - x^*\|^2 - (1-\alpha)\frac{2\mu}{L_0}\|x^{k+1}-x^k\|^2.
    \end{eqnarray*}
   Using the above inequality instead of convexity and keeping the rest of the proof of Lemma~\ref{lemma:energy_2} as is with omitted $\Sigma_i$ terms, we get an analog of~\eqref{eq:lemma_ineq}:
    \begin{eqnarray*}\label{eq:contraction}
         \lefteqn{\n{x^{k+1}-x^*}^2+ \frac 1 4\left(1 + (1-\alpha)\frac{8\mu}{L_0}\right)  \n{x^{k+1}-x^k}^2 + \frac{1+\theta_k}{1+3\theta_k/4} \cdot 2\lambda_{k+1}\theta_{k+1} (f(x^k)-f_*) \nonumber}
        \\
        &\leq &\n{x^{k+1}-x^*}^2+ \frac 1 4\left(1 + (1-\alpha)\frac{8\mu}{L_0}\right) \n{x^{k+1}-x^k}^2 + 2\lambda_k (1+\theta_k) (f(x^k)-f_*) \nonumber
        \\
        &\leq & \left(1 - \alpha\frac{\lambda_k\mu}{2}\right)\n{x^k-x^*}^2  + \frac 1 4
                \n{x^k-x^{k-1}}^2   + 2\lambda_k \theta_k (f(x^{k-1})-f_*) ,
    \end{eqnarray*}
    where the first inequality follows from $\frac{1+\theta_k}{1+3\theta_k/4} \lambda_{k+1}\theta_{k+1} \leq \lambda_k(1+\theta_{k})$ provided by the new condition on $\lambda_k$. Thus, we have contraction in every term: $1-\alpha\frac{\lambda_k\mu}{2}$ in the first, $\frac{1}{1 + (1-\alpha)\frac{8\mu}{L_0} }=1-\frac{(1-\alpha)\frac{8\mu}{L_0}}{1 + (1-\alpha)\frac{8\mu}{L_0} }$ in the second and $\frac{1+3\theta_{k}/4}{1+\theta_{k}}=1-\frac{\theta_{k}}{4(1+\theta_{k})}$
    in the last one. We bound the third term as $\frac{\theta_{k}}{4(1+\theta_{k})} = \frac{1}{4} \cdot \frac{\lambda_k}{(\lambda_k+\lambda_{k-1})} \geq \frac{\mu\lambda_k}{2}$ using $\lambda_k \leq \frac{1}{4\mu}$ for both terms in the denominator. Taking $\alpha = \alpha^* \eqdef  \frac{73-\sqrt{3281}}{16} \approx 0.98$, which is the root of $\alpha^* =\frac{64(1-\alpha^*)}{1+8(1-\alpha^*)}$, we bound the second term as $\frac{(1-\alpha)\frac{8\mu}{L_0}}{\left(1 + (1-\alpha)\frac{8\mu}{L_0}\right) } \geq \frac{\mu}{4L_0}\cdot \frac{32(1-\alpha)}{1+8(1-\alpha)}  \overset{\alpha = \alpha^*}{=} \alpha^*\frac{\mu}{8L_0}$. Therefore, for  $\Psi_{k} =  \left(1 - \frac{\lambda_k\mu}{4}\right)\n{x^k-x^*}^2  + \frac 1 4\left(1 + (1-\alpha^*)\frac{8\mu}{L_0}\right)
                \n{x^k-x^{k-1}}^2   + 2\lambda_k \theta_k (f(x^{k-1})-f_*)$
    we have
    \begin{equation}
            \Psi_{k+1}
            \le \left(1  -  \frac{\alpha^*\mu}{2}\min\left\{\lambda_k, \frac{1}{4L_0}\right\}\right) \Psi_k.\label{eq:AdGD_Lyapunov_contraction}
    \end{equation} 

    The final step of the proof is unrolling the recursion for Lyapunov function $\Psi_k$. Following the proof of Theorem~\ref{thm:AdGD_good_result_appendix}, we have that 
    \begin{eqnarray}\label{eq:lambda_cases}
        \min\left\{\lambda_k, \frac{1}{4L_0}\right\} &\geq&  \begin{cases}
            \frac{1}{4L_0} -  \frac{L_1}{4L_0}\norm{x^{t} -x^{t-1}},& \text{if } k \in \cT_1 \text{, where } t \in \cK \text{ and } 0 \leq k -t \leq m,\\
            \frac{1}{4L_0} -  \frac{L_1}{4L_0}\norm{x^{k+1} -x^k},& \text{if } k \in \cT_2,\\
            \frac{1}{4L_0},& \text{if } k \in \cT_3.
        \end{cases} 
    \end{eqnarray}
    with $m \eqdef 1 + \log_{\sqrt{\frac{7}{4}}}\left\lceil\frac{(1 + DL_1 )}{2}\right\rceil$, which differs from $m$ defined in the convex case due to the new condition on $\lambda_k$. 
    Therefore, by repeating all the steps from the proof of \eqref{eq:sum_lambda_k_lower_bound},
    we obtain
    \begin{eqnarray}
        \sum_{k=1}^{N} \min\left\{\lambda_k, \frac{1}{4L_0}\right\} &\geq&  \frac{N}{4L_0} - \frac{\sqrt{2N}(m+1)L_1D}{4L_0}. 
        \label{eq:sum_mins_lower_bound}
    \end{eqnarray}
    Next, we bound the product that arises during recursion unrolling by using the relation between the geometric mean and the arithmetic mean:
    \begin{eqnarray}
        \prod_{k=1}^{N}  \left(1  - \frac{\alpha^*\mu}{2}\min\left\{\lambda_{k}, \frac{1}{4L_0}\right\}\right) &\leq& \rbr*{1  - \frac{\alpha^*\mu}{2
        } \frac{1}{N} \sum_{k=1}^{N} \min\left\{\lambda_k, \frac{1}{4L_0}\right\}}^{N} \notag\\
        &\overset{\eqref{eq:sum_mins_lower_bound}}{\leq}&
        \rbr*{1  - \frac{\alpha^*\mu}{8L_0} + \frac{\alpha^*\mu(m+1)L_1D}{4\sqrt{2N}L_0}}^N. \label{eq:AdGD_product_bound}
    \end{eqnarray}
    Finally, we combine \eqref{eq:AdGD_Lyapunov_contraction} and \eqref{eq:AdGD_product_bound} and get
    \begin{eqnarray*}
        \Psi_{N+1} &\overset{\eqref{eq:AdGD_Lyapunov_contraction}}{\le}& \prod_{k=1}^{N}  \left(1  - \frac{\alpha^*\mu}{2}\min\left\{\lambda_{k}, \frac{1}{4L_0}\right\}\right)\Psi_1 \\
        &\overset{\eqref{eq:AdGD_product_bound}}{\le}&\rbr*{1  - \frac{\alpha^*\mu}{8L_0} + \frac{\alpha^*\mu(m+1)L_1D}{4\sqrt{2N}L_0}}^N \Psi_1.
    \end{eqnarray*}
    Moreover, when $N \geq 8(m+1)^2L_1^2D^2$, we have $\frac{\alpha^*\mu(m+1)L_1D}{4\sqrt{2N}L_0} \leq 
    \frac{\alpha^*\mu}{16L_0}$, which in combination with the above inequality implies \eqref{eq:AdGD_convergence_sc_asymmetric_refined_simple_appendix}.
    
\end{proof}

\clearpage

\section{Stochastic Extensions: Missing Proofs and Details}\label{appendix:stoch}

\subsection{$(L_0,L_1)$-Stochastic Gradient Descent}\label{appendix:L0L1_SGD}

\begin{algorithm}[H]
\caption{$(L_0, L_1)$-Stochastic Gradient Descent (\algname{$(L_0,L_1)$-SGD})}
\label{alg:L0L1-SGD}   
\begin{algorithmic}[1]
\REQUIRE starting point $x^0$, number of iterations $N$, stepsize parameter $\eta > 0$, $L_0 > 0$, $L_1 \geq 0$
\FOR{$k=0,1,\ldots, N-1$}
\STATE Sample $\xi^k \sim \{1,\ldots,n\}$ uniformly at random
\STATE $x^{k+1} = x^k - \frac{\eta}{L_0 + L_1\|\nabla f_{\xi^k}(x^k)\|}\nabla f_{\xi^k}(x^k)$
\ENDFOR
\ENSURE $x^N$ 
\end{algorithmic}
\end{algorithm}

\begin{theorem}[Theorem~\ref{thm:L0_L1_SGD_sym_main_result}]\label{thm:L0_L1_SGD_sym_main_result_appendix}
    Let Assumption~\ref{as:stochastic_assumption} hold. Then, the iterates generated by \algname{$(L_0,L_1)$-SGD} with $0 < \eta \leq \frac{\nu}{2}$, $\nu = e^{-\nu}$
    after $N$ iterations satisfy
    \begin{equation}
        \min\limits_{k=0,\ldots,N}\EE\left[\min\left\{\frac{\nu L_0}{4nL_1^2}, f(x^k) - f(x^*)\right\}\right] \leq \frac{2L_0\|x^0 - x^*\|^2}{\eta(N+1)}. \label{eq:main_result_L0_L1_SGD_sym_appendix}
    \end{equation}
\end{theorem}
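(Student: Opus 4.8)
The plan is to replicate the deterministic analysis behind Theorem~\ref{thm:L0_L1_GD_sym_main_result} at the level of each sampled summand $f_{\xi^k}$ and then average over the random index. Writing $g^k \eqdef \nabla f_{\xi^k}(x^k)$, I would first expand the squared distance,
\begin{equation*}
    \|x^{k+1}-x^*\|^2 = \|x^k - x^*\|^2 - \frac{2\eta}{L_0 + L_1\|g^k\|}\langle g^k, x^k - x^* \rangle + \frac{\eta^2\|g^k\|^2}{(L_0 + L_1\|g^k\|)^2},
\end{equation*}
then lower-bound $\langle g^k, x^k - x^*\rangle \geq f_{\xi^k}(x^k) - f_{\xi^k}(x^*)$ by convexity \eqref{eq:convexity} of $f_{\xi^k}$, and control the last term via \eqref{eq:sym_gradient_bound} applied to $f_{\xi^k}$ (legitimate because $x^*$ is a common minimizer, so $f_{\xi^k}(x^*) = \inf f_{\xi^k}$, which gives $\nicefrac{\|g^k\|^2}{(L_0+L_1\|g^k\|)} \leq \nicefrac{2}{\nu}(f_{\xi^k}(x^k)-f_{\xi^k}(x^*))$). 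With $\eta \leq \nicefrac{\nu}{2}$ the cross term dominates and one obtains the per-sample descent inequality
\begin{equation*}
    \|x^{k+1}-x^*\|^2 \leq \|x^k - x^*\|^2 - \frac{\eta\left(f_{\xi^k}(x^k) - f_{\xi^k}(x^*)\right)}{L_0 + L_1\|g^k\|},
\end{equation*}
the exact stochastic analogue of \eqref{eq:descent_inequality_asym_case}.

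Next I would split into the two regimes $\|g^k\| \geq \nicefrac{L_0}{L_1}$ and $\|g^k\| < \nicefrac{L_0}{L_1}$, exactly as in the deterministic case. In the first regime $L_0 + L_1\|g^k\| \leq 2L_1\|g^k\|$ and \eqref{eq:sym_gradient_bound} forces $f_{\xi^k}(x^k) - f_{\xi^k}(x^*) \geq \nicefrac{\nu\|g^k\|}{(4L_1)}$, so the descent coefficient is at least $\nicefrac{\nu}{(8L_1^2)}$; in the second regime $L_0 + L_1\|g^k\| \leq 2L_0$. Combining the two cases into a single bound gives
\begin{equation*}
    \|x^{k+1}-x^*\|^2 \leq \|x^k - x^*\|^2 - \frac{\eta}{2L_0}\min\left\{\frac{\nu L_0}{4L_1^2},\, f_{\xi^k}(x^k) - f_{\xi^k}(x^*)\right\}.
\end{equation*}

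The crux is passing from this per-sample truncated suboptimality to the truncated suboptimality of $f$ after taking the conditional expectation $\EE_k$ over the uniform index $\xi^k$. Setting $a_i \eqdef f_i(x^k) - f_i(x^*) \geq 0$ and $c \eqdef \nicefrac{\nu L_0}{(4L_1^2)}$, I need the elementary inequality $\frac{1}{n}\sum_{i=1}^n \min\{c, a_i\} \geq \min\{\nicefrac{c}{n},\, \frac{1}{n}\sum_{i=1}^n a_i\}$, proved by a two-line case split (if some $a_j > c$ the left side already exceeds $\nicefrac{c}{n}$ since the remaining terms are nonnegative; otherwise $\min\{c,a_i\}=a_i$ for every $i$). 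This is precisely where the extra factor $\nicefrac{1}{n}$ in the threshold $\nicefrac{\nu L_0}{(4nL_1^2)}$ originates, and it is the main conceptual obstacle: unlike the deterministic setting, Lemma~\ref{lem:monotonicity_of_gradient_norm_GD} (monotonicity of the gradient norm) is unavailable for stochastic iterates, so no clean two-phase picture is available and the truncated criterion is the natural replacement. Applying the inequality yields $\EE_k\!\left[\min\{c, f_{\xi^k}(x^k)-f_{\xi^k}(x^*)\}\right] \geq \min\{\nicefrac{\nu L_0}{(4nL_1^2)},\, f(x^k)-f(x^*)\}$. Finally I would take full expectation, telescope over $k=0,\dots,N$ using nonnegativity of $\EE\|x^{N+1}-x^*\|^2$, and bound the minimum over $k$ by the average, which delivers \eqref{eq:main_result_L0_L1_SGD_sym}.
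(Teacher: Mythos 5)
Your proposal is correct and reproduces the paper's proof skeleton almost verbatim: the same per-sample expansion of $\|x^{k+1}-x^*\|^2$ using convexity of $f_{\xi^k}$ and \eqref{eq:sym_gradient_bound} (valid because $x^*$ is a common minimizer), the same two regimes $\|\nabla f_{\xi^k}(x^k)\| \gtrless \nicefrac{L_0}{L_1}$ with the same constants $\nicefrac{\nu\eta}{8L_1^2}$ and $\nicefrac{\eta}{2L_0}$, and the same telescoping and min-versus-average conclusion. The one place where you genuinely deviate is the conditional-expectation step, which you correctly identify as the crux. The paper keeps the two regimes separate via the indicator $\mathbbm{1}_{E(x^k)}$ of the event $E(x^k) = \{\|\nabla f_{\xi^k}(x^k)\| \geq \nicefrac{L_0}{L_1} \mid x^k\}$ and argues probabilistically: since $\xi^k$ is uniform on $\{1,\ldots,n\}$, the conditional probability $p_k \eqdef \PP\{E(x^k)\}$ is either $0$ or at least $\nicefrac{1}{n}$, and when $p_k = 0$ the expected sample suboptimality equals the full suboptimality $f(x^k)-f(x^*)$. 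You instead first merge the two regimes into the single pointwise bound with $\min\left\{\frac{\nu L_0}{4L_1^2},\, f_{\xi^k}(x^k)-f_{\xi^k}(x^*)\right\}$ and then invoke the deterministic averaging inequality $\frac{1}{n}\sum_{i=1}^n \min\{c,a_i\} \geq \min\left\{\frac{c}{n}, \frac{1}{n}\sum_{i=1}^n a_i\right\}$ for nonnegative $a_i$ — whose two-case proof (some $a_j > c$, or all $a_i \leq c$) is the exact deterministic counterpart of the paper's $p_k > 0$ versus $p_k = 0$ dichotomy. Both mechanisms extract the identical factor $\nicefrac{1}{n}$ and yield the same final constant, so nothing is lost; your version buys a cleaner, self-contained lemma about nonnegative reals that avoids reasoning about the random variable $p_k$, while the paper's formulation buys direct access to $p_k$, which it reuses in the discussion after the theorem to bound $\PP\left\{f(x^k)-f(x^*) \geq \frac{\nu L_0}{4nL_1^2}\right\}$ — a probabilistic interpretation that is less immediate from your packaged inequality.
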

\begin{proof}
    Similarly to the deterministic case, we start by expanding the squared distance $x^*$, which is a common minimizer for all $\{f_i\}_{i=1}^n$:
    \begin{eqnarray}
        \|x^{k+1} - x^*\|^2 &=& \left\|x^k - x^* - \frac{\eta}{L_0 + L_1 \|\nabla f_{\xi^k}(x^k)\|}\nabla f_{\xi^k}(x^k) \right\|^2 \notag\\
        &=& \|x^k - x^*\|^2 - \frac{2\eta}{L_0 + L_1\|\nabla f_{\xi^k}(x^k)\|}\langle x^k - x^*, \nabla f_{\xi^k}(x^k) \rangle \notag\\
        &&\quad + \frac{\eta^2\|\nabla f_{\xi^k}(x^k)\|^2}{(L_0 + L_1\|\nabla f_{\xi^k}(x^k)\|)^2} \notag\\
        &\overset{\eqref{eq:convexity}}{\leq}& \|x^k - x^*\|^2 - \frac{2\eta}{L_0 + L_1\|\nabla f_{\xi^k}(x^k)\|}\left( f_{\xi^k}(x^k) - f_{\xi^k}(x^*) \right) \notag\\
        &&\quad + \frac{\eta^2\|\nabla f_{\xi^k}(x^k)\|^2}{(L_0 + L_1\|\nabla f_{\xi^k}(x^k)\|)^2} \notag
        \\
        &\overset{\eqref{eq:sym_gradient_bound}}{\leq}& \|x^k - x^*\|^2 - 2\eta \left(1-\frac{\eta}{\nu}\right) \frac{f_{\xi^k}(x^k) - f_{\xi^k}(x^*)}{L_0 + L_1\|\nabla f_{\xi^k}(x^k)\|}\notag
        \\
        &\overset{\eta \leq \frac{\nu}{2}}{\leq}& \|x^k - x^*\|^2 - \eta \frac{f_{\xi^k}(x^k) - f_{\xi^k}(x^*)}{L_0 + L_1\|\nabla f_{\xi^k}(x^k)\|}. \label{eq:descent_inequality_asym_case_sgd}
    \end{eqnarray}
    As before, we consider two possible cases: $\|\nabla f_{\xi^k}(x^k)\| \geq \frac{L_0}{L_1}$ or $\|\nabla f_{\xi^k}(x^k)\| < \frac{L_0}{L_1}$.

    \noindent \textbf{Case 1: $\|\nabla f_{\xi^k}(x^k)\| \geq \frac{L_0}{L_1}$.} In this case, we have
    \begin{gather}
        L_0 + L_1 \|\nabla f_{\xi^k}(x^k)\| \leq 2L_1\|\nabla f_{\xi^k}(x^k)\|,\label{eq:case_1_main_ineq_asym_L0_L1_SGD}
        \\
        \frac{\nu\|\nabla f_{\xi^k}(x^k)\|}{4L_1} \overset{\eqref{eq:case_1_main_ineq_asym_L0_L1_SGD}}{\leq} \frac{\nu\|\nabla f_{\xi^k}(x^k)\|^2}{2(L_0 + L_1 \|\nabla f_{\xi^k}(x^k)\|)} \overset{\eqref{eq:sym_gradient_bound}}{\leq} f_{\xi^k}(x^k) - f_{\xi^k}(x^*). \label{eq:case_1_grad_bound_asym_L0_L1_SGD}
    \end{gather}
    Plugging the above inequalities in \eqref{eq:descent_inequality_asym_case_sgd}, we continue the derivation as follows:
    \begin{eqnarray}
        \|x^{k+1} - x^*\|^2 &\overset{\eqref{eq:descent_inequality_asym_case_sgd}, \eqref{eq:case_1_main_ineq_asym_L0_L1_SGD}}{\leq}& \|x^k - x^*\|^2 - \eta \frac{f_{\xi^k}(x^k) - f_{\xi^k}(x^*)}{2L_1\|\nabla f_{\xi^k}(x^k)\|} \notag\\
        &\overset{\eqref{eq:case_1_grad_bound_asym_L0_L1_SGD}}{\leq}& \|x^k - x^*\|^2 - \frac{\nu\eta}{8L_1^2}. \label{eq:case_1_descent_asym_L0_L1_SGD}
    \end{eqnarray}

    \noindent \textbf{Case 2: $\|\nabla f_{\xi^k}(x^k)\| < \frac{L_0}{L_1}$.} In this case, we have
    \begin{gather}
        L_0 + L_1 \|\nabla f_{\xi^k}(x^k)\| \leq 2L_0,\label{eq:case_2_main_ineq_asym_L0_L1_SGD}
    \end{gather}
    implying that
    \begin{equation}
        \|x^{k+1} - x^*\|^2 \overset{\eqref{eq:descent_inequality_asym_case}, \eqref{eq:case_2_main_ineq_asym_L0_L1_SGD}}{\leq} \|x^k - x^*\|^2 - \frac{\eta}{2L_0}\left(f_{\xi^k}(x^k) - f_{\xi^k}(x^*)\right).\label{eq:case_2_descent_ineq_L0_L1_SGD}
    \end{equation}

    To combine \eqref{eq:case_1_descent_asym_L0_L1_SGD} and \eqref{eq:case_2_descent_ineq_L0_L1_SGD}, we introduce event $E(x^k) \eqdef \left\{\|\nabla f_{\xi^k}(x^k)\| \geq \frac{L_0}{L_1}\mid x^k\right\}$ for given $x^k$ and indicator of event $E(x^k)$ as $\mathbbm{1}_{E(x^k)}$, i.e., for given $x^k$, we have $\mathbbm{1}_{E(x^k)} = 1$ if $\|\nabla f_{\xi^k}(x^k)\| \geq \frac{L_0}{L_1}$, and $\mathbbm{1}_{E(x^k)} = 0$ if $\|\nabla f_{\xi^k}(x^k)\| < \frac{L_0}{L_1}$. Then, inequalities \eqref{eq:case_1_descent_asym_L0_L1_SGD} and \eqref{eq:case_2_descent_ineq_L0_L1_SGD} can be unified as follows:
    \begin{eqnarray*}
        \|x^{k+1} - x^*\|^2 \leq \|x^k - x^*\|^2 - \mathbbm{1}_{E(x^k)}\cdot \frac{\nu\eta}{8L_1^2} - (1 - \mathbbm{1}_{E(x^k)})\cdot \frac{\eta}{2L_0}\left(f_{\xi^k}(x^k) - f_{\xi^k}(x^*)\right).
    \end{eqnarray*}
    Let us denote the expectation conditioned on $x^k$ as $\EE_k[\cdot] \eqdef \EE[\cdot \mid x^k]$. Taking $\EE_k[\cdot]$ from the both sides of the above inequality, we derive
    \begin{eqnarray*}
        \EE_k\left[\|x^{k+1} - x^*\|^2\right] &\leq& \|x^k - x^*\|^2 - p_k \cdot \frac{\nu\eta}{8L_1^2} \\
        &&\quad - \EE_k\left[(1 - \mathbbm{1}_{E(x^k)})\cdot \frac{\eta}{2L_0}\left(f_{\xi^k}(x^k) - f_{\xi^k}(x^*)\right)\right],
    \end{eqnarray*}
    where $p_k \eqdef \PP\left\{\|\nabla f_{\xi^k}(x^k)\| \geq \frac{L_0}{L_1}\mid x^k\right\} = \PP\{E(x^k)\} = \EE_k[\mathbbm{1}_{E(x^k)}]$. Note that $p_k$ is a random variable itself. Nevertheless, if $p_k > 0$, it means that for at least one $\xi^k \in \{1,\ldots,n\}$ we have $\|\nabla f_{\xi^k}(x^k)\| \geq \frac{L_0}{L_1}$ for given $x^k$. Therefore, either $p_k \geq \frac{1}{n}$ or $p_k = 0$. Moreover, when $p_k = 0$, we have $1 - \mathbbm{1}_{E(x^k)} \eqdef 1$ for given $x^k$. Putting all together, we continue as follows:
    \begin{eqnarray*}
        \EE_k\left[\|x^{k+1} - x^*\|^2\right] &\leq& \|x^k - x^*\|^2 - \mathbbm{1}_{\{p_k > 0\}}\cdot p_k \cdot \frac{\nu\eta}{8L_1^2}\\
        &&\quad - \mathbbm{1}_{\{p_k = 0\}}\cdot \EE_k\left[(1 - \mathbbm{1}_{E(x^k)})\cdot \frac{\eta}{2L_0}\left(f_{\xi^k}(x^k) - f_{\xi^k}(x^*)\right)\right],\\
        &=& \|x^k - x^*\|^2 - \mathbbm{1}_{\{p_k > 0\}}\cdot p_k \cdot \frac{\nu\eta}{8L_1^2} - \mathbbm{1}_{\{p_k = 0\}}\cdot \frac{\eta}{2L_0}\left(f(x^k) - f(x^*)\right)\\
        &\leq& \|x^k - x^*\|^2 - \mathbbm{1}_{\{p_k > 0\}}\cdot \frac{\nu\eta}{8nL_1^2} - \mathbbm{1}_{\{p_k = 0\}}\cdot \frac{\eta}{2L_0}\left(f(x^k) - f(x^*)\right)\\
        &\leq& \|x^k - x^*\|^2 - \min\left\{\frac{\nu\eta}{8nL_1^2}, \frac{\eta}{2L_0}\left(f(x^k) - f(x^*)\right)\right\}.
    \end{eqnarray*}
    Taking full expectation from the above inequality and telescoping the result, we get
    \begin{eqnarray*}
        \sum\limits_{k=0}^N \EE\left[\min\left\{\frac{\nu\eta}{8nL_1^2}, \frac{\eta}{2L_0}\left(f(x^k) - f(x^*)\right)\right\}\right] &\leq& \sum\limits_{k=0}^{N+1}(\EE[\|x^{k+1}-x^*\|^2] - \EE[\|x^{k}-x^*\|^2])\\
        &\leq& \|x^0 - x^*\|^2.
    \end{eqnarray*}
    Since $\frac{\eta(N+1)}{2L_0}\min\limits_{k=0,\ldots,N}\EE\left[\min\left\{\frac{\nu L_0}{4nL_1^2}, f(x^k) - f(x^*)\right\}\right]$ is not greater than $\sum\limits_{k=0}^N \EE\left[\min\left\{\frac{\nu\eta}{8nL_1^2}, \frac{\eta}{2L_0}\left(f(x^k) - f(x^*)\right)\right\}\right]$, we also have
    \begin{eqnarray*}
        \frac{\eta(N+1)}{2L_0}\min\limits_{k=0,\ldots,N}\EE\left[\min\left\{\frac{\nu L_0}{4nL_1^2}, f(x^k) - f(x^*)\right\}\right] &\leq& \|x^0 - x^*\|^2.
    \end{eqnarray*}
    Dividing both sides by $\frac{\eta(N+1)}{2L_0}$, we obtain \eqref{eq:main_result_L0_L1_SGD_sym_appendix}.
\end{proof}

\subsection{Stochastic Gradient Descent with Polyak Stepsizes}\label{appendix:sgd_ps}

\begin{algorithm}[H]
\caption{Stochastic Gradient Descent with Polyak Stepsizes  (\algname{SGD-PS})}
\label{alg:SGD-PS}   
\begin{algorithmic}[1]
\REQUIRE starting point $x^0$, number of iterations $N$, minimal values $f_i(x^*) \eqdef \min_{x\in\R^d}f_i(x)$ for all $i \in \{1,\ldots,n\}$
\FOR{$k=0,1,\ldots, N-1$}
\STATE Sample $\xi^k \sim \{1,\ldots,n\}$ uniformly at random
\STATE $x^{k+1} = x^k - \frac{f_{\xi^k}(x^k) - f_{\xi^k}(x^*)}{\|\nabla f_{\xi^k}(x^k)\|^2}\nabla f_{\xi^k}(x^k)$
\ENDFOR
\ENSURE $x^N$ 
\end{algorithmic}
\end{algorithm}

\begin{theorem}[Theorem~\ref{thm:SGD_PS_sym_main_result}]\label{thm:SGD_PS_sym_main_result_appendix}
    Let Assumption~\ref{as:stochastic_assumption} hold. Then, the iterates generated by \algname{SGD-PS}
    after $N$ iterations satisfy
    \begin{equation}
        \min\limits_{k=0,\ldots,N}\EE\left[\min\left\{\frac{\nu L_0}{4nL_1^2}, f(x^k) - f(x^*)\right\}\right] \leq \frac{4L_0\|x^0 - x^*\|^2}{\nu(N+1)}. \label{eq:main_result_SGD_PS_sym_appendix}
    \end{equation}
\end{theorem}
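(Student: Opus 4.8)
The plan is to transplant the stochastic bookkeeping of Theorem~\ref{thm:L0_L1_SGD_sym_main_result} onto the Polyak-stepsize descent estimate of Theorem~\ref{thm:GD_PS_sym_main_result}. First I would expand the squared distance to the common minimizer $x^*$ along the \algname{SGD-PS} update. Writing the stepsize as $\lambda_k = (f_{\xi^k}(x^k) - f_{\xi^k}(x^*))/\|\nabla f_{\xi^k}(x^k)\|^2$ and using convexity of $f_{\xi^k}$ together with the fact that $x^*$ minimizes each $f_{\xi^k}$ (so that $\langle \nabla f_{\xi^k}(x^k), x^k - x^* \rangle \geq f_{\xi^k}(x^k) - f_{\xi^k}(x^*)$ by \eqref{eq:convexity}), the cross term cancels the squared-step term and yields the same bound as in the deterministic case,
\begin{equation*}
    \|x^{k+1} - x^*\|^2 \leq \|x^k - x^*\|^2 - \frac{(f_{\xi^k}(x^k) - f_{\xi^k}(x^*))^2}{\|\nabla f_{\xi^k}(x^k)\|^2}.
\end{equation*}
Applying the per-function gradient bound \eqref{eq:sym_gradient_bound} (legitimate for each $f_{\xi^k}$ precisely because the functions share the minimizer $x^*$) converts this into the analog of \eqref{eq:descent_inequality_GD_PS}, namely $\|x^{k+1} - x^*\|^2 \leq \|x^k - x^*\|^2 - \tfrac{\nu}{2}(f_{\xi^k}(x^k) - f_{\xi^k}(x^*))/(L_0 + L_1\|\nabla f_{\xi^k}(x^k)\|)$.

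Next I would split into the two regimes $\|\nabla f_{\xi^k}(x^k)\| \geq L_0/L_1$ and $\|\nabla f_{\xi^k}(x^k)\| < L_0/L_1$, reusing \eqref{eq:case_1_main_ineq_asym_L0_L1_GD}, \eqref{eq:case_1_grad_bound_asym_L0_L1_GD}, and \eqref{eq:case_2_main_ineq_asym_L0_L1_GD}. In the large-gradient regime the denominator is at most $2L_1\|\nabla f_{\xi^k}(x^k)\|$ while the numerator is at least $\nu\|\nabla f_{\xi^k}(x^k)\|/(4L_1)$, producing the constant decrease $\|x^{k+1}-x^*\|^2 \leq \|x^k-x^*\|^2 - \nu^2/(16L_1^2)$ of \eqref{eq:implication_for_large_norm_asym_GD_PS}; in the small-gradient regime the denominator is at most $2L_0$, giving $\|x^{k+1}-x^*\|^2 \leq \|x^k-x^*\|^2 - \tfrac{\nu}{4L_0}(f_{\xi^k}(x^k)-f_{\xi^k}(x^*))$.

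The crux of the argument, and the step I expect to demand the most care, is the stochastic combination, carried out exactly as in the proof of Theorem~\ref{thm:L0_L1_SGD_sym_main_result_appendix}. Introducing the indicator of the event $E(x^k) = \{\|\nabla f_{\xi^k}(x^k)\| \geq L_0/L_1 \mid x^k\}$, I would merge the two regimes into a single recursion and take the conditional expectation $\EE_k[\cdot]$. With $p_k = \PP\{E(x^k)\}$, the key combinatorial observation is that $p_k$ is either $0$ or at least $1/n$ (a single triggering sample $\xi^k$ already forces $p_k \geq 1/n$), and that on $\{p_k = 0\}$ the residual term averages to $\tfrac{\nu}{4L_0}(f(x^k)-f(x^*))$ over $\xi^k$. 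This produces
\begin{equation*}
    \EE_k[\|x^{k+1}-x^*\|^2] \leq \|x^k-x^*\|^2 - \min\left\{\frac{\nu^2}{16nL_1^2},\; \frac{\nu}{4L_0}\bigl(f(x^k)-f(x^*)\bigr)\right\}.
\end{equation*}

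Finally I would factor $\tfrac{\nu}{4L_0}$ out of the minimum, using the identity $\tfrac{\nu^2}{16nL_1^2} = \tfrac{\nu}{4L_0}\cdot\tfrac{\nu L_0}{4nL_1^2}$, take full expectation, telescope over $k=0,\ldots,N$ against $\|x^0-x^*\|^2$, and bound the resulting sum from below by $(N+1)$ times its minimum term; dividing by $\tfrac{\nu(N+1)}{4L_0}$ yields \eqref{eq:main_result_SGD_PS_sym}. The only delicate side issue is the edge case $\nabla f_{\xi^k}(x^k)=0$, where the Polyak stepsize is formally $0/0$: since convexity and the shared minimizer force $f_{\xi^k}(x^k)=f_{\xi^k}(x^*)$ there, the step is taken to be zero and the recursion is unaffected. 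As in the \algname{$(L_0,L_1)$-SGD} case, the non-standard $\min\{\cdot,\cdot\}$ criterion is unavoidable because we cannot rule out a persistently large stochastic gradient for a rare sample.
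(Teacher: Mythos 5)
Your proposal is correct and follows essentially the same route as the paper's proof of Theorem~\ref{thm:SGD_PS_sym_main_result_appendix}: the same Polyak-stepsize distance expansion with convexity, the per-function application of \eqref{eq:sym_gradient_bound} (valid since the shared minimizer gives $f_{\xi^k}(x^*) = \inf_x f_{\xi^k}(x)$), the identical two-regime split yielding the $\nicefrac{\nu^2}{16L_1^2}$ decrease, and the same indicator/probability argument with $p_k \in \{0\} \cup [\nicefrac{1}{n}, 1]$ followed by telescoping. Your explicit handling of the degenerate case $\nabla f_{\xi^k}(x^k) = 0$ is a minor refinement the paper leaves implicit, but it does not change the argument.
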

\begin{proof}
    Similarly to the deterministic case, we start by expanding the squared distance $x^*$, which is a common minimizer for all $\{f_i\}_{i=1}^n$:
    \begin{eqnarray}
        \|x^{k+1} - x^*\|^2 &=& \left\|x^k - x^* - \frac{f_{\xi^k}(x^k) - f_{\xi^k}(x^*)}{\|\nabla f_{\xi^k}(x^k)\|^2} \nabla f_{\xi^k}(x^k)\right\|^2\notag\\
        &=& \|x^k - x^*\|^2 - \frac{2\left(f_{\xi^k}(x^k) - f_{\xi^k}(x^*)\right)}{\|\nabla f_{\xi^k}(x^k)\|^2}\langle x^k - x^*, \nabla f_{\xi^k}(x^k) \rangle \notag\\
        &&\quad + \frac{\left(f_{\xi^k}(x^k) - f_{\xi^k}(x^*)\right)^2}{\|\nabla f_{\xi^k}(x^k)\|^2}\notag\\
        &\overset{\eqref{eq:convexity}}{\leq}& \|x^k - x^*\|^2 - \frac{\left(f_{\xi^k}(x^k) - f_{\xi^k}(x^*)\right)^2}{\|\nabla f_{\xi^k}(x^k)\|^2}\notag\\
        &\overset{\eqref{eq:sym_gradient_bound}}{\leq}& \|x^k - x^*\|^2 - \frac{\nu}{2} \cdot\frac{f_{\xi^k}(x^k) - f_{\xi^k}(x^*)}{L_0 + L_1\|\nabla f_{\xi^k}(x^k)\|}. \label{eq:descent_inequality_SGD_PS}
    \end{eqnarray}

    As before, we consider two possible cases: $\|\nabla f_{\xi^k}(x^k)\| \geq \frac{L_0}{L_1}$ or $\|\nabla f_{\xi^k}(x^k)\| < \frac{L_0}{L_1}$.

    \noindent \textbf{Case 1: $\|\nabla f_{\xi^k}(x^k)\| \geq \frac{L_0}{L_1}$.} In this case, inequalities \eqref{eq:case_1_main_ineq_asym_L0_L1_SGD} and \eqref{eq:case_1_grad_bound_asym_L0_L1_SGD} hold and the derivation from \eqref{eq:descent_inequality_SGD_PS} can be continued as follows:
    \begin{eqnarray}
        \|x^{k+1} - x^*\|^2 &\overset{\eqref{eq:descent_inequality_SGD_PS}, \eqref{eq:case_1_main_ineq_asym_L0_L1_SGD}}{\leq}& \|x^k - x^*\|^2 - \frac{\nu}{2} \cdot \frac{f_{\xi^k}(x^k) - f_{\xi^k}(x^*)}{2L_1\|\nabla f_{\xi^k}(x^k)\|} \notag\\
        &\overset{\eqref{eq:case_1_grad_bound_asym_L0_L1_SGD}}{\leq}& \|x^k - x^*\|^2 - \frac{\nu^2}{16L_1^2}, \label{eq:case_1_descent_ineq_SGD_PS}
    \end{eqnarray}
    which gives \eqref{eq:implication_for_large_norm_asym_GD_PS_appendix}.

    \noindent \textbf{Case 2: $\|\nabla f_{\xi^k}(x^k)\| < \frac{L_0}{L_1}$.} In this case, inequality \eqref{eq:case_2_main_ineq_asym_L0_L1_SGD} holds and we have
    \begin{eqnarray}
        \|x^{k+1} - x^*\|^2 &\overset{\eqref{eq:descent_inequality_SGD_PS}, \eqref{eq:case_2_main_ineq_asym_L0_L1_SGD}}{\leq}& \|x^k - x^*\|^2 - \frac{\nu}{2} \cdot\frac{f_{\xi^k}(x^k) - f_{\xi^k}(x^*)}{2L_0}.  \label{eq:case_2_descent_ineq_SGD_PS}
    \end{eqnarray}

    To combine \eqref{eq:case_1_descent_ineq_SGD_PS} and \eqref{eq:case_2_descent_ineq_SGD_PS}, we introduce event $E(x^k) \eqdef \left\{\|\nabla f_{\xi^k}(x^k)\| \geq \frac{L_0}{L_1}\mid x^k\right\}$ for given $x^k$ and indicator of event $E(x^k)$ as $\mathbbm{1}_{E(x^k)}$, i.e., for given $x^k$, we have $\mathbbm{1}_{E(x^k)} = 1$ if $\|\nabla f_{\xi^k}(x^k)\| \geq \frac{L_0}{L_1}$, and $\mathbbm{1}_{E(x^k)} = 0$ if $\|\nabla f_{\xi^k}(x^k)\| < \frac{L_0}{L_1}$. Then, inequalities \eqref{eq:case_1_descent_ineq_SGD_PS} and \eqref{eq:case_2_descent_ineq_SGD_PS} can be unified as follows:
    \begin{eqnarray*}
        \|x^{k+1} - x^*\|^2 \leq \|x^k - x^*\|^2 - \mathbbm{1}_{E(x^k)}\cdot \frac{\nu^2}{16L_1^2} - (1 - \mathbbm{1}_{E(x^k)})\cdot \frac{\nu}{4L_0}\left(f_{\xi^k}(x^k) - f_{\xi^k}(x^*)\right).
    \end{eqnarray*}
    Let us denote the expectation conditioned on $x^k$ as $\EE_k[\cdot] \eqdef \EE[\cdot \mid x^k]$. Taking $\EE_k[\cdot]$ from the both sides of the above inequality, we derive
    \begin{eqnarray*}
        \EE_k\left[\|x^{k+1} - x^*\|^2\right] &\leq& \|x^k - x^*\|^2 - p_k \cdot \frac{\nu^2}{16L_1^2} \\
        &&\quad - \EE_k\left[(1 - \mathbbm{1}_{E(x^k)})\cdot \frac{\nu}{4L_0}\left(f_{\xi^k}(x^k) - f_{\xi^k}(x^*)\right)\right],
    \end{eqnarray*}
    where $p_k \eqdef \PP\left\{\|\nabla f_{\xi^k}(x^k)\| \geq \frac{L_0}{L_1}\mid x^k\right\} = \PP\{E(x^k)\} = \EE_k[\mathbbm{1}_{E(x^k)}]$. Note that $p_k$ is a random variable itself. Nevertheless, if $p_k > 0$, it means that for at least one $\xi^k \in \{1,\ldots,n\}$ we have $\|\nabla f_{\xi^k}(x^k)\| \geq \frac{L_0}{L_1}$ for given $x^k$. Therefore, either $p_k \geq \frac{1}{n}$ or $p_k = 0$. Moreover, when $p_k = 0$, we have $1 - \mathbbm{1}_{E(x^k)} \eqdef 1$ for given $x^k$. Putting all together, we continue as follows:
    \begin{eqnarray*}
        \EE_k\left[\|x^{k+1} - x^*\|^2\right] &\leq& \|x^k - x^*\|^2 - \mathbbm{1}_{\{p_k > 0\}}\cdot p_k \cdot \frac{\nu^2}{16L_1^2}\\
        &&\quad - \mathbbm{1}_{\{p_k = 0\}}\cdot \EE_k\left[(1 - \mathbbm{1}_{E(x^k)})\cdot \frac{\nu}{4L_0}\left(f_{\xi^k}(x^k) - f_{\xi^k}(x^*)\right)\right],\\
        &=& \|x^k - x^*\|^2 - \mathbbm{1}_{\{p_k > 0\}}\cdot p_k \cdot \frac{\nu^2}{16L_1^2} - \mathbbm{1}_{\{p_k = 0\}}\cdot \frac{\nu}{4L_0}\left(f(x^k) - f(x^*)\right)\\
        &\leq& \|x^k - x^*\|^2 - \mathbbm{1}_{\{p_k > 0\}}\cdot \frac{\nu^2}{16nL_1^2} - \mathbbm{1}_{\{p_k = 0\}}\cdot \frac{\nu}{4L_0}\left(f(x^k) - f(x^*)\right)\\
        &\leq& \|x^k - x^*\|^2 - \min\left\{\frac{\nu^2}{16nL_1^2}, \frac{\nu}{4L_0}\left(f(x^k) - f(x^*)\right)\right\}.
    \end{eqnarray*}
    Taking full expectation from the above inequality and telescoping the result, we get
    \begin{eqnarray*}
        \sum\limits_{k=0}^N \EE\left[\min\left\{\frac{\nu^2}{16nL_1^2}, \frac{\nu}{4L_0}\left(f(x^k) - f(x^*)\right)\right\}\right] &\leq& \sum\limits_{k=0}^{N+1}(\EE[\|x^{k+1}-x^*\|^2] - \EE[\|x^{k}-x^*\|^2])\\
        &\leq& \|x^0 - x^*\|^2.
    \end{eqnarray*}
    Since $\frac{\nu(N+1)}{4L_0}\min\limits_{k=0,\ldots,N}\EE\left[\min\left\{\frac{\nu L_0}{4nL_1^2}, f(x^k) - f(x^*)\right\}\right]$ is not greater than $\sum\limits_{k=0}^N \EE\left[\min\left\{\frac{\nu^2}{16nL_1^2}, \frac{\nu}{4L_0}\left(f(x^k) - f(x^*)\right)\right\}\right]$, we also have
    \begin{eqnarray*}
        \frac{\nu(N+1)}{4L_0}\min\limits_{k=0,\ldots,N}\EE\left[\min\left\{\frac{\nu L_0}{4nL_1^2}, f(x^k) - f(x^*)\right\}\right] &\leq& \|x^0 - x^*\|^2.
    \end{eqnarray*}
    Dividing both sides by $\frac{\nu(N+1)}{4L_0}$, we obtain \eqref{eq:main_result_SGD_PS_sym_appendix}.
\end{proof}

\revision{
\begin{remark}
    Note that the LHS of equation~\eqref{eq:main_result_L0_L1_SGD_sym} can be lower bounded as follows:
\begin{eqnarray*}
    \lefteqn{\frac{\nu(N+1)}{4L_0}\min\limits_{k=0,\ldots,N}\mathbb{E}\left[\min\left\{\frac{\nu L_0}{4nL_1^2}, f(x^k) - f(x^*)\right\} \mathbf{1}_{f(x^k) - f(x^*) \geq \frac{\nu L_0}{4nL_1^2}} \right.} \\
    && \left. + \min\left\{\frac{\nu L_0}{4nL_1^2}, f(x^k) - f(x^*)\right\} \mathbf{1}_{f(x^k) - f(x^*) \leq \frac{\nu L_0}{4nL_1^2}}\right] \\
    & \geq & \min\limits_{k=0,\ldots,N}\mathbb{E}\left[\min\left\{\frac{\nu L_0}{4nL_1^2}, f(x^k) - f(x^*)\right\} \mathbf{1}_{f(x^k) - f(x^*) \geq \frac{\nu L_0}{4nL_1^2}} \right] \\
    & = & \min_{k = 0, \cdots, N} \frac{\eta L_0}{4n L_1^2}\mathbf{P} \left( f(x^k) - f(x^*) \geq \frac{\nu L_0}{4nL^2}\right) 
\end{eqnarray*}
Therefore, from equation equation (29) we have that $\min_{k = 0, \cdots, N}\mathbf{P} \left( f(x^k) - f(x^*) \geq \frac{\nu L_0}{4nL^2}\right) \leq \frac{8nL_1^2 \|x^k - x^*\|^2}{\eta \nu (L+1)}$.
\end{remark}
}

\clearpage

\section{Numerical Experiments}\label{sec:numerical_experiments}

\paragraph{Synthetic experiment.} The existing numerical studies already illustrate the benefits of many methods considered in this paper in solving $(L_0,L_1)$-smooth problems. In particular, the results of numerical experiments with \algname{Clip-GD}, which is closely related to \algname{$(L_0,L_1)$-GD}, \algname{GD-PS}, and \algname{AdGD} on training LSTM \citep{merity2018regularizing} and/or ResNet \citep{he2016deep} models are provided in \citep{zhang2020why, loizou2021stochastic, malitsky2019adaptive}. Therefore, in our numerical experiments, we focus on a simple $1$-dimensional problem that is convex, $(L_0,L_1)$-smooth, and provides additional insights to the ones presented in the literature. In particular, we consider function $f(x) = x^4$, which is convex, $(4,3)$-smooth, but not $L$-smooth as illustrated in Example~\ref{example:power_of_norm}. We run (i) $\algname{GD}$ with stepsize $\nicefrac{1}{L}$, $L = 12|x^0|^2$ (which corresponds to the worst-case smoothness constant on the interval $|x| \leq |x^0|$), (ii) $(L_0,L_1)$-GD with $L_0 = 4$, $L_1 = 3$, $\eta = \nicefrac{\nu}{2}$, (iii) \algname{$(L_0,L_1)$-STM} with $G_{k+1} = L_0 + L_1 \|\nabla f(x^{k+1})\|$ (not supported by our theory) and (iv) with $G_{k+1} = \max\{G_k, L_0 + L_1 \|\nabla f(x^{k+1})\|\}$ (called \algname{$(L_0,L_1)$-STM-max} on the plots), (v) \algname{GD-PS}, and (vi) \algname{AdGD} for starting points $x^0 \in \{1, 10, 100\}$. The results are reported in Figure~\ref{fig:last_iterate_polynomial}. In all tests, \algname{GD-PS} and \algname{AdGD} show the best results among other methods (which is expected since these methods are the only parameter-free methods). Next, standard \algname{GD} is the slowest among other methods and slow-downs once we move the starting point further from the optimum, which is also expected since $L$ increases and we have to use smaller stepsizes for \algname{GD}. Finally, let us discuss the behavior of \algname{$(L_0,L_1)$-GD}, \algname{$(L_0,L_1)$-STM-max}, and \algname{$(L_0,L_1)$-STM}. Clearly, it depends on the distance from the starting point to the solution. In particular, when $x^0 = 1$ we have $\|\nabla f(x^0)\| = 4$, meaning that $L = 16$. In this case, \algname{GD} and \algname{$(L_0,L_1)$-GD} behave similarly to each other, and \algname{$(L_0,L_1)$-STM-max} significantly outperforms both of them, which is well-aligned with the derived bounds. However, for $x^0 = 10$ and $x^0 = 100$ we have $\|\nabla f(x^0)\| = 4\cdot 10^3$ and $\|\nabla f(x^0)\| = 4\cdot 10^6$ leading to a significant slow down in the convergence of \algname{GD} and \algname{$(L_0,L_1)$-STM-max}. In particular, \algname{$(L_0,L_1)$-GD} achieves a similar optimization error to \algname{$(L_0,L_1)$-STM-max} for $x^0 = 10$ and much better optimization error for $x^0 = 100$. This is also aligned with our theoretical results: when $R_0$ is large and number of iterations is not too large, bound \eqref{eq:main_result_L0_L1_GD_asym} derived for \algname{$(L_0,L_1)$-GD} can be better than bound \eqref{eq:STM_asym_rate} derived for \algname{$(L_0,L_1)$-STM-max}. Moreover, for $x^0 = 100$, Figure~\ref{fig:last_iterate_polynomial} illustrates well the two-stages convergence behavior of \algname{$(L_0,L_1)$-GD} described in Theorem~\ref{thm:L0_L1_GD_sym_main_result}. Finally, although our theory does not provide any guarantees for \algname{$(L_0,L_1)$-STM} with $G_{k+1} = L_0 + L_1\|\nabla f(x^{k+1})\|$, this method converges faster than \algname{$(L_0,L_1)$-GD} for the considered problem but exhibits highly non-monotone behavior. 


\paragraph{Logistic regression.} We also study the behavior of the algorithms on the Logistic Regression problem of the form 
\begin{equation*}
    f(x) = \frac{1}{n}\sum\limits_{i=1}^n f_i(x), \quad \text{where}\quad f_i(x) = \log\left(1 + \exp(-y_ia_i^\top x)\right),\; a_i\in \R^d,\; y_i\in \{-1,1\}.
\end{equation*}
As Example~\ref{example:logistic} shows, each individual function $f_i$ is $(L_0,L_1)$-smooth. Moreover, function $f$ is also $L$-smooth. This implies that $f(x)$ is $(L_0,L_1)$-smooth, but the derivation of the exact constants $L_0$ and $L_1$ for $f$ is problematic and highly depends on the relation between $\{a_i\}_{i=1}^n$. Nevertheless, one can still compare the methods considered above on this problem and numerically estimate the dependency of the Hessian norm on the norm of the gradient. In particular, we observe linear-like gradient norm dependency on the hessian norm in the toy scenario, where all vectors $a_i$ are close to each other, i.e., we generated $a_i\in \R^{50}$ as $a_i = (1,2,\ldots,50)^\top + \xi_i^\top$, where $\xi_i \sim \cN(0, \mI)$ are i.i.d.\ standard Gaussian vectors, and all $y_i = 1$ except of one $y_j = -1$ for randomly selected $j$ from $\{1,\ldots, 50\}$ (Figure~\ref{fig:last_iterate_polynomial}).


\begin{figure*}[t]
\centering
\includegraphics[width=0.7\textwidth]{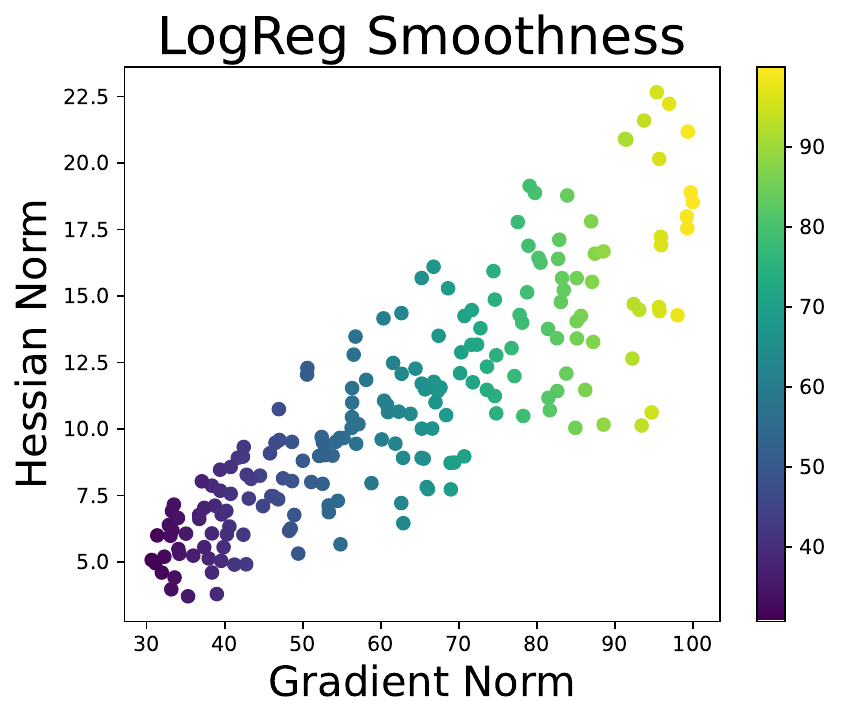}  
\caption{Smoothness dependency on the gradient norm, toy scenario logistic regression.}
\label{fig:last_iterate_logreg_norm}
\end{figure*}

\begin{figure*}[t]
\centering
\includegraphics[width=0.45\textwidth]{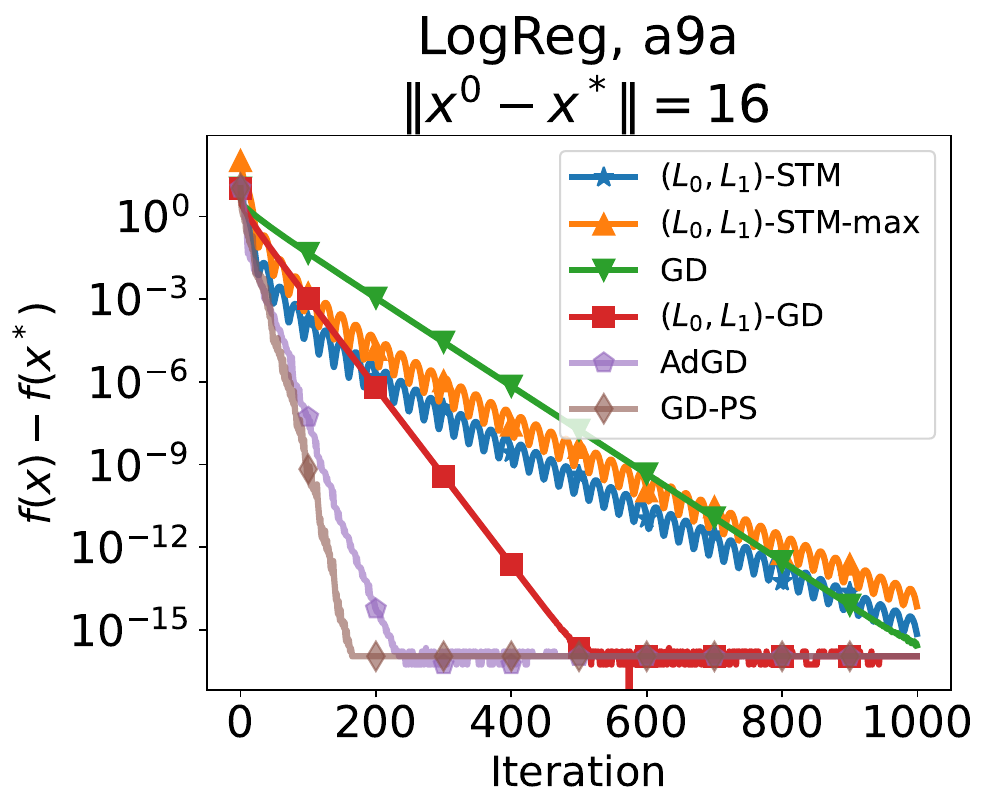}  \includegraphics[width=0.45\textwidth]{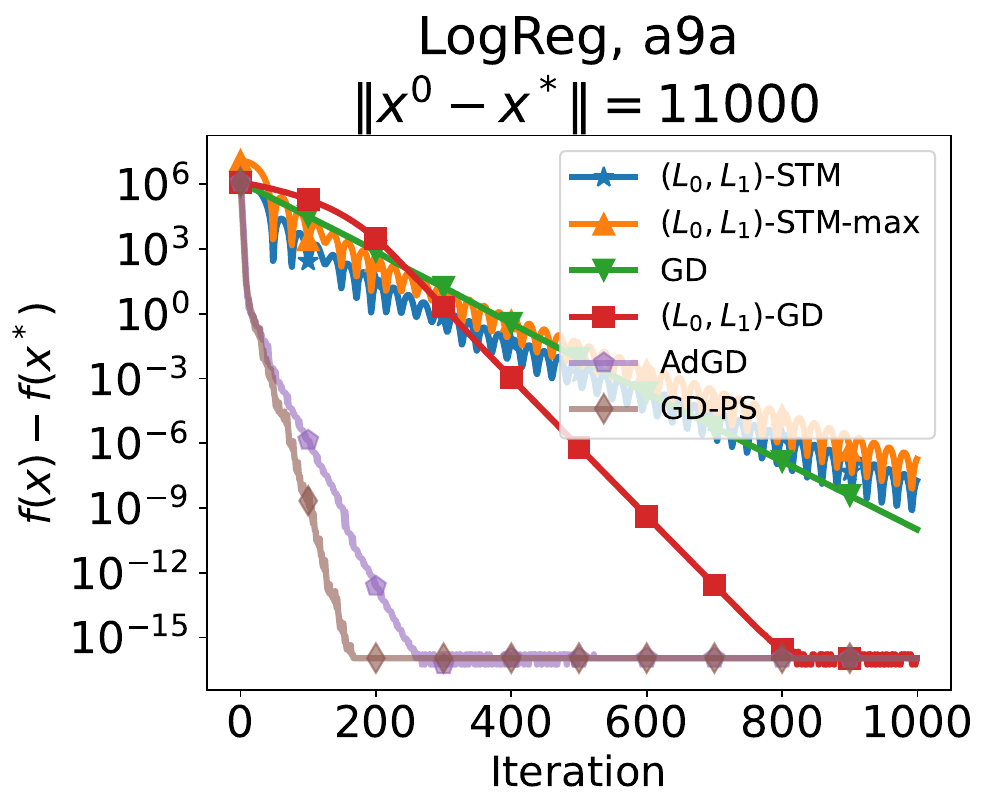}  
\includegraphics[width=0.45\textwidth]{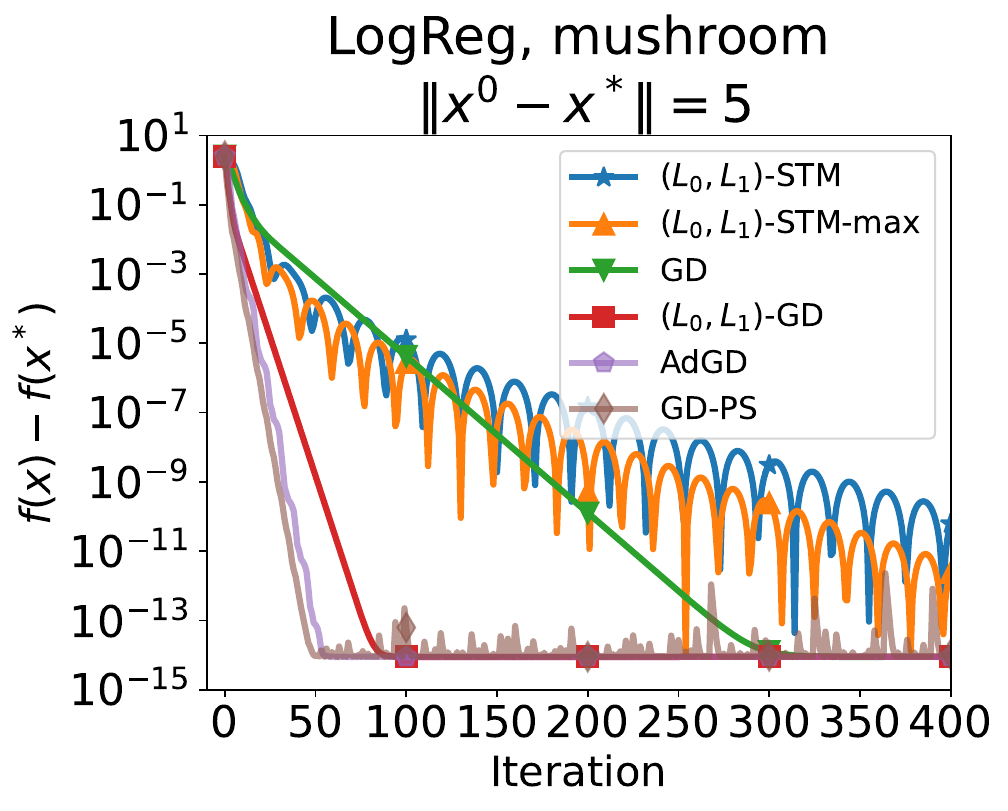}  \includegraphics[width=0.45\textwidth]{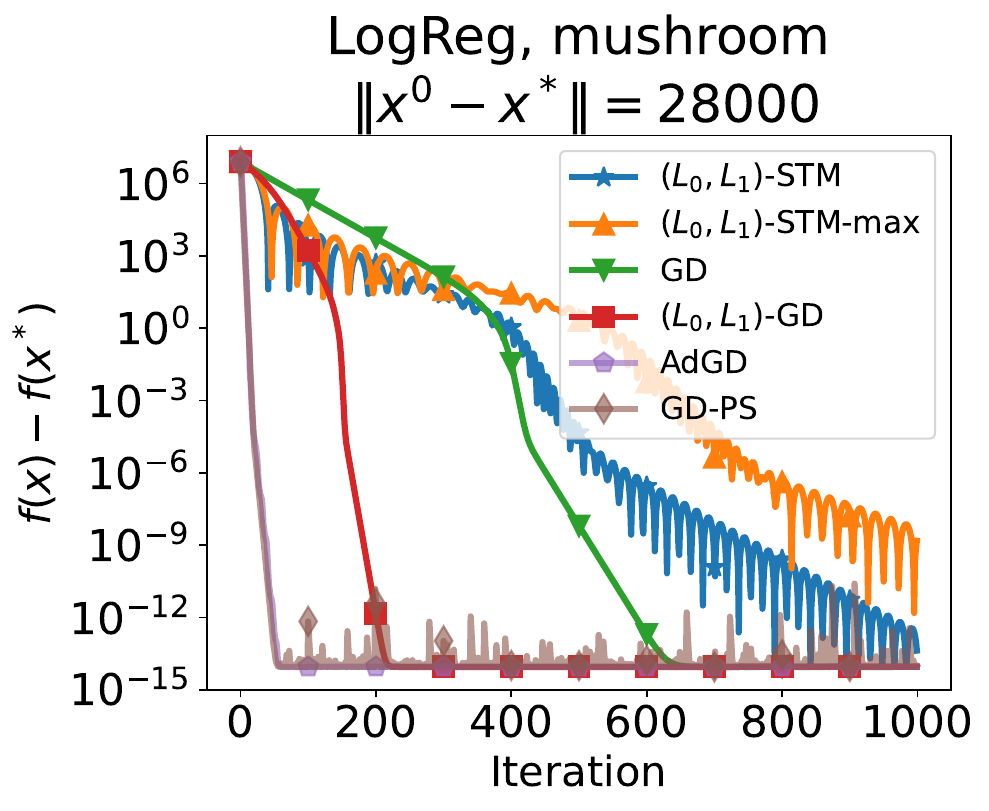}  
\caption{The last iterate discrepancy of algorithms on the logistic regression problem.}
\label{fig:last_iterate_logreg}
\end{figure*}

We also run the considered methods for real datasets from LIBSVM \citep{chang2011libsvm} -- a9a and mushrooms -- for different starting points. 
The results are presented in Figure~\ref{fig:last_iterate_logreg}. Despite the fact that for these datasets, $f$ does not have a clear linear dependence of the norm of the Hessian w.r.t.\ the norm of the gradient, the methods that are better suited for $(L_0,L_1)$-smooth problems (like \algname{$(L_0,L_1)$-GD}, \algname{GD-PS}, and \algname{AdGD}) converge significantly faster than other methods. Moreover, we also emphasize that accelerated variants -- \algname{$(L_0,L_1)$-STM} and \algname{$(L_0,L_1)$-STM-max} -- work not better than standard \algname{GD} in this case.



\end{document}